\theoremstyle{plain}
\newtheorem{theorem}{Theorem}[section]
\newtheorem{proposition}[theorem]{Proposition}
\newtheorem{lemma}[theorem]{Lemma}
\newtheorem{corollary}[theorem]{Corollary}
\newtheorem{setting}[theorem]{Setting}
\theoremstyle{definition}
\newtheorem{definition}[theorem]{Definition}
\newtheorem*{acknowledgements}{Acknowledgements}
\newtheorem*{case}{Case}
\theoremstyle{remark}
\newtheorem{remark}[theorem]{Remark}
\newtheorem{notation}[theorem]{Notation}
\newtheorem{claim}{Claim}[theorem]
\newtheorem*{subcase}{Subcase}
\newtheorem{step}{Step}
\numberwithin{equation}{theorem}
\DeclareMathOperator{\Pic}{Pic}
\DeclareMathOperator{\NE}{NE}
\DeclareMathOperator{\Exc}{Exc}
\DeclareMathOperator{\rank}{rank}
\DeclareMathOperator{\im}{Im}
\DeclareMathOperator{\bs}{Bs}
\DeclareMathOperator{\Gr}{Gr}
\DeclareMathOperator{\Locus}{Locus}
\DeclareMathOperator{\Sing}{Sing}
\DeclareMathOperator{\ChLocus}{ChLocus}
\DeclareMathOperator{\Supp}{Supp}
\DeclareMathOperator{\RC}{Ratcurves}
\newcommand{\nequiv}{\equiv _\mathrm{num}}
\newcommand{\sE}{\mathscr{E}}
\newcommand{\sF}{\mathscr{F}}
\newcommand{\sG}{\mathscr{G}}
\newcommand{\sL}{\mathscr{L}}
\newcommand{\sM}{\mathscr{M}}
\newcommand{\sC}{\mathscr{C}}
\newcommand{\sS}{\mathscr{S}}
\newcommand{\sN}{\mathscr{N}}
\newcommand{\sQ}{\mathscr{Q}}
\newcommand{\cO}{\mathcal{O}}
\newcommand{\bF}{\mathbb{F}}
\newcommand{\bP}{\mathbb{P}}
\newcommand{\bQ}{\mathbb{Q}}
\newcommand{\bR}{\mathbb{R}}
\newcommand{\bZ}{\mathbb{Z}}
\newcommand{\tW}{\widetilde{W}}
\newcommand{\tX}{\widetilde{X}}
\newcommand{\tY}{\widetilde{Y}}
\newcommand{\tF}{\widetilde{F}}
\newcommand{\tC}{\widetilde{C}}
\newcommand{\tM}{\widetilde{M}}
\newcommand{\tU}{\widetilde{U}}
\newcommand{\te}{\widetilde{e}}
\newcommand{\tf}{\widetilde{f}}
\newcommand{\tp}{\widetilde{p}}
\newcommand{\jM}{{M}_{\text{jump}}}
\newcommand{\jU}{{U}_{\text{jump}}}
   \def\MR#1{}
\title{Classification of Mukai pairs with corank $3$}
\author[A. KANEMITSU]{Akihiro KANEMITSU}
\date{\today}
\address{Graduate School of Mathematical Sciences\\The University of Tokyo\\3-8-1 Komaba\\Meguro-ku, Tokyo 153-8914, Japan}
\email{kanemitu@ms.u-tokyo.ac.jp}
\subjclass[2010]{14J45,14J40,14J60}
\thanks{The author is a JSPS Research Fellow and he is supported by the Grant-in-Aid for JSPS fellows (JSPS KAKENHI Grant Number 15J07608).
This work was supported by the Program for Leading Graduate Schools, MEXT, Japan.}
\keywords{Fano manifold, vector bundle}
\begin{document}

\begin{abstract}
We classify pairs $(X,\mathscr E)$ where $X$ is a smooth Fano manifold of dimension $n \geq 5$ and $\mathscr E$ is an ample vector bundle of rank $n-2$ on $X$ with $c_1(\mathscr E) = c_1(X)$. 
\end{abstract}

\maketitle


\section*{Introduction}

%
%
%
A \emph{Mukai pair} of dimension $n$ and rank $r$ is, by definition, a pair $(X,\sE)$ of a smooth Fano $n$-fold $X$ and an ample vector bundle $\sE$  of rank $r$ on $X$ with $c_1(X) = c_1(\sE)$.
Study of such pairs was proposed by Mukai \cite{Muk88} in relation to Fano manifolds with large index or based on Mori's solution to the Hartshorne conjecture.

The rank $r$ of Mukai pairs is related to the \emph{indices} of Fano manifolds.
The \emph{Fano index}, or simply the \emph{index}, of a Fano manifold $X$ is the greatest integer which divides $c_1(X)$ in $\Pic (X)$.
If the index of a Fano $n$-fold $X$ is $r$, then 
$\left( X, \bigoplus \cO (d_i H_X) \right)$ gives a Mukai pair of dimension $n$ and rank $\leq r$, where $H_X \coloneqq -K_X/r$, $d_i > 0$ and $r = \sum d_i$.
Thus the study of Fano $n$-folds of index $r$ is essentially the same as the study of Mukai pairs $(X,\sE)$ of dimension $n$ and rank $\leq r$ such that $\sE$ splits into a direct sum of line bundles (Mukai pairs of split type).
Conversely, by associating $\bP(\sE)$ with $(X,\sE)$, we obtain a one-to-one correspondence between Mukai pairs $(X,\sE)$ of dimension $n$ and rank $r$, and Fano $(n+r-1)$-folds of index $r$ with $\bP^{r-1}$-bundle structures (see e.g.\ \cite[Proposition~3.3]{NO07} for a proof).

It is known that the index $r_X$ of a Fano $n$-fold $X$ satisfies $r_X \leq n +1$,
 and the nonnegaitve integer $n-r_X+1$ is called the \emph{coindex} of $X$.
As is well known, the structure of $X$ is simpler if the coindex is small, hence we can conduct detailed analysis of $X$ provided its coindex is small enough.
For example, a classical theorem of Kobayashi-Ochiai shows that Fano manifolds with coindex $0$ or $1$ is isomorphic to projective space $\bP ^n$ or hyperquadric $\bQ^n$, respectively \cite{KO73}.
Fujita gave a complete list of Fano manifolds with coindex $2$ (\emph{del Pezzo manifolds}) \cite{Fuj82a,Fuj82b}, 
while Mukai classified Fano manifolds with coindex $3$ (\emph{Mukai manifold}) \cite{Muk89} (cf. \cite{Mel99,Amb99}).

In keeping with the above observation, the \emph{corank} of a Mukai pair $(X,\sE)$ of dimension $n$ and rank $r$ is analogously defined to be the integer $c = n - r +1$, and one can expect that the classification of Mukai pairs of corank $c$ is possible if $c$ is small enough.
Since there exists a rational curve $C$ on $X$ such that $ n+1 \geq -K_X.C = c_1(\sE).C \geq r $ \cite{Mor79}, the corank of a given Mukai pair $(X,\sE)$ is nonnegative.
For $(X,\sE)$ with the smallest or the second smallest corank $c= 0$ or $1$, Mukai made explicit conjectures on their structure, which were confirmed independently by Fujita, Peternell and Ye-Zhang:
\begin{theorem}[\cite{Fuj92,Pet90,Pet91,YZ90}]\label{thm:rankn}
\hfill
\begin{enumerate}
 \item A Mukai pair $(X,\sE)$ of dimension $n$ and rank $n+1$ is isomorphic to
\[
\left( \bP^n, \cO(1)^{\oplus n+1} \right).
\]
 \item A Mukai pair $(X,\sE)$ of dimension $n$ and rank $n$ is isomorphic to either 
\[
\text{$\left(\bP^n, T_{\bP^n}\right)$, $\left( \bP^n, \cO(2) \oplus \cO(1)^{\oplus n-1} \right)$ or $\left( \bQ^n, \cO(1)^{\oplus n} \right)$.}
\]
\end{enumerate}
\end{theorem}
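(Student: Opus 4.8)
The plan is to exploit two opposing numerical constraints on rational curves: ampleness of $\sE$ bounds their anticanonical degree from below, while Mori's theorem bounds it from above. Concretely, for a rational curve with normalization $f\colon \bP^1 \to X$, the pullback $f^*\sE$ is ample on $\bP^1$, hence splits as $\bigoplus_{i=1}^r \cO_{\bP^1}(a_i)$ with every $a_i \geq 1$; therefore
\[
-K_X \cdot C = c_1(\sE)\cdot C = \sum_{i} a_i \geq r.
\]
On the other hand, Mori's bend-and-break produces a dominating family of rational curves with $-K_X\cdot C \leq n+1$. Intersecting the two bounds, a minimal dominating family has degree exactly $n+1$ in the rank $n+1$ case (part (1)), and degree in $\{n, n+1\}$ in the rank $n$ case (part (2)).

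First I would determine the base $X$. In part (1) a minimal dominating family has anticanonical degree $n+1 = \dim X + 1$, so the characterization of projective space of Cho--Miyaoka--Shepherd-Barron (refined by Kebekus) gives $X \cong \bP^n$. In part (2), if the minimal degree is $n+1$ the same theorem yields $X \cong \bP^n$; if it equals $n$, that is, a minimal dominating family has degree $\dim X$, then the companion characterization of the hyperquadric gives $X \cong \bQ^n$.

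Having fixed $X$, I would recover $\sE$ by restricting to lines and invoking the classification of uniform bundles. On $\bP^n$ one has $c_1(\sE) = (n+1)H$, and restricting to a line $\ell$ gives $\sE|_\ell = \bigoplus \cO(a_i)$ with $a_i \geq 1$ and $\sum a_i = n+1$. In part (1) this forces all $a_i = 1$, so $\sE(-1)$ is trivial on every line, hence trivial, and $\sE \cong \cO(1)^{\oplus n+1}$. In part (2) on $\bP^n$ the splitting type is necessarily $(2,1,\dots,1)$, and the classification of rank-$n$ uniform bundles on $\bP^n$ leaves exactly $T_{\bP^n}$ and $\cO(2)\oplus \cO(1)^{\oplus n-1}$. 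On $\bQ^n$ one has $c_1(\sE) = nH$ and a line meets $H$ once, so the splitting type on lines is $(1,\dots,1)$; thus $\sE(-1)$ is trivial on lines, and the corresponding splitting result on the quadric gives $\sE \cong \cO(1)^{\oplus n}$.

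The main obstacle is the determination of the base $X$: everything hinges on the deep characterizations of $\bP^n$ and $\bQ^n$ by their minimal rational curves, and the quadric case (minimal degree exactly $n$) is the most delicate, since one must rule out every other Fano $n$-fold carrying a dominating family of that degree. The uniform-bundle step is comparatively soft, but still relies on nontrivial structure theorems, namely the triviality of bundles trivial on all lines and the classification of low-rank uniform bundles on $\bP^n$ and $\bQ^n$.
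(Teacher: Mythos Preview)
The paper does not prove this theorem; it is quoted from \cite{Fuj92,Pet90,Pet91,YZ90} as established input, so there is no proof here to compare against directly.

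Your outline is nonetheless a legitimate modern argument, and in fact the paper deploys exactly this mechanism (Lemma~\ref{lem:splitting} together with Theorem~\ref{thm:length}) when attacking the corank-$3$ problem. Two caveats. First, your route is anachronistic: the numerical characterizations of $\bP^n$ and $\bQ^n$ you invoke (Cho--Miyaoka--Shepherd-Barron, Miyaoka, Dedieu--H\"oring) date from 2002--2017, whereas the cited references are from 1990--1992; the original proofs of Fujita, Peternell and Ye--Zhang instead analyse the extremal contractions of $\bP(\sE)$ directly via Mori theory and adjunction, without those later characterizations available. Second, two steps in your sketch are underweighted. The quadric characterization, at least as stated in Theorem~\ref{thm:length}, assumes $\rho_X=1$, which you never check; it follows from Wi\'sniewski's bound $i_X\le \tfrac{1}{2}\dim X+1$ once $n\ge 3$, with $n\le 2$ handled by inspection of Fano surfaces. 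And the ``comparatively soft'' classification of uniform rank-$n$ bundles on $\bP^n$ is a genuine theorem in its own right (Elencwajg, Ellia, Ballico): rank $n$ is precisely the threshold at which $T_{\bP^n}$ and $\Omega_{\bP^n}$ enter the picture, and excluding anything beyond these and split bundles takes real work.
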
 

Thus $\left(\bP^n, T_{\bP^n}\right)$ is the unique Mukai pair of non-split type with corank $ c \leq 1$.
The case corank $c= 2$ was treated by Peternell-Szurek-Wi\'sniewski:
\begin{theorem}[\cite{Wis89b} for the case $n=3$; \cite{PSW92b} for higher dimension (cf.\ \cite{Occ05})]\label{thm:rankn-1}
Let $(X,\sE)$ be a Mukai pair of dimension $n$ and rank $n-1$.
Then:
\begin{enumerate}
 \item $X$ is isomorphic to either $\bP^n$, $\bQ^n$, a del Pezzo manifold or $\bP^1 \times \bP^2$ $(n=3)$. 
 \item $(X,\sE)$ of non-split type (i.e., $\sE$ is not a direct sum of line bundles)
 is isomorphic to one of the following four pairs:
 \begin{enumerate}[label=(\alph*)]
  \item $\left(\bP^3, \sN(2)\right)$, where $\sN$ is the null-correlation bundle \textnormal{\cite{OSS80}}.
  \item $\left(\bQ^4, \sS_\bQ^*(1) \oplus \cO(1)\right)$, where $\sS_\bQ^*$ is the dual of spinor bundle \textnormal{\cite{Ott88}}.
  \item $\left(\bQ^3, \sS_{\bQ}^*(1)\right)$.
  \item $\left(\bP^1 \times \bP^2, p_1 ^* \cO (1) \otimes p_2 ^* T_{\bP ^2}\right)$.
 \end{enumerate}
\end{enumerate}
\end{theorem}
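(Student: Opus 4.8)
The plan is to determine the Fano manifold $X$ first and then, for each possibility, to pin down the bundle $\sE$. The basic numerical input is that ampleness of $\sE$ controls anticanonical degrees: for any rational curve $C\subset X$ the restriction $\sE|_C$ is an ample bundle on $\bP^1$, hence $\sE|_C \cong \bigoplus_i \cO(a_i)$ with every $a_i\geq 1$, so that $-K_X.C = c_1(\sE).C = \sum_i a_i \geq \rank\sE = n-1$. Thus every rational curve on $X$ has anticanonical degree at least $n-1$; equivalently, the pseudoindex of $X$ satisfies $\iota_X\geq n-1$ and \emph{every} extremal ray has length $\geq n-1$. This last uniform bound is the real driver of the argument.

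I would determine $X$ by Mori theory. Let $R$ be an extremal ray with contraction $\varphi$ and general nontrivial fibre $F$; Wi\'sniewski's inequality gives $\dim\Exc(\varphi)+\dim F \geq \dim X + \ell(R) - 1 \geq 2n-2$. For a fibre type contraction this forces $\dim F \geq n-2$, so the base has dimension $\leq 2$; for a birational contraction it forces $\dim\Exc(\varphi)=\dim F = n-1$, i.e.\ a divisor contracted to a point. Since every ray has length $\geq n-1$, cases such as the blow-up of $\bP^n$ (which carries a ray of length $2$) are immediately excluded, and for $\rho_X\geq 2$ one is pushed into products of projective spaces with small factors: tracking the constraint $\ell(R)\geq n-1$ on each ray leaves only $\bP^1\times\bP^2$ $(n=3)$, $(\bP^1)^3$ $(n=3)$ and $\bP^2\times\bP^2$ $(n=4)$, the latter two being del Pezzo manifolds. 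For $n\geq 5$ the bound $n-1>\tfrac{n+2}{2}$ together with Wi\'sniewski's theorem on Fano manifolds of large pseudoindex forces $\rho_X=1$, which is the remaining case.

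When $\rho_X=1$ I would use the minimal family $\mathcal H$ of rational curves: through a general point $x$ the subfamily $\mathcal H_x$ has dimension $\iota_X-2\geq n-3$, so the variety of minimal rational tangents at $x$ is a subvariety of $\bP(T_xX)\cong\bP^{n-1}$ of dimension $\geq n-3$. The characterizations of projective space and of the quadric by pseudoindex (Cho--Miyaoka--Shepherd-Barron and Miyaoka) give $X\cong\bP^n$ when $\iota_X\geq n+1$ and $X\cong\bQ^n$ when $\iota_X=n$. In the boundary case $\iota_X=n-1$ one must show that the minimal curves are lines, so that the Fano index equals $n-1$ and $X$ has coindex $2$, whence $X$ is a del Pezzo manifold by Kobayashi--Ochiai, Fujita and Mukai. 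I expect this last step to be the \emph{main obstacle}: ruling out a Picard number one Fano with $\iota_X=n-1$ whose minimal curves have higher degree (and hence strictly smaller index) is exactly where the large dimension, equivalently the small codimension, of the variety of minimal rational tangents must be exploited.

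Finally, with $X$ on the short list, I would determine $\sE$ by restricting to minimal curves. On a del Pezzo manifold $\sE|_C\cong\cO(1)^{\oplus(n-1)}$ for every line $C$, so $\sE\otimes\cO(-H_X)$ is trivial on a covering family and, $X$ being Fano, $\sE\cong\cO(H_X)^{\oplus(n-1)}$ splits. On $\bP^n$ and $\bQ^n$ the condition $c_1(\sE)=c_1(X)$ and $\rank\sE=n-1$ force a very unbalanced splitting type on lines, which in dimension $n\geq 5$ again forces $\sE$ to split; the only non-split bundles survive in dimensions $3$ and $4$, where one recovers the null-correlation bundle on $\bP^3$ and the (twisted) spinor bundles on $\bQ^3$ and $\bQ^4$. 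Recognizing these exceptional bundles from their Chern classes and their uniform splitting behaviour on lines is the technical content of this second half.
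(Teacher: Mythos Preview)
This theorem is quoted from \cite{Wis89b,PSW92b} and the paper gives no proof of it, so there is no in-paper argument to compare against directly. One can, however, compare your outline with the strategy of \cite{PSW92b}, which the present paper mirrors closely in its own proof of the rank-$(n-2)$ case.

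Your plan --- bound the pseudoindex, classify $X$, then classify $\sE$ on each candidate --- is natural but is \emph{not} how \cite{PSW92b} proceeds. There the bundle is never set aside: one works on $\bP(\sE)$ and studies its second extremal contraction $\varphi$. The central technical step (the ``comparison lemma'' of \cite[(3.1)]{PSW92b}, whose analogue here is Theorem~\ref{thm:comp}) shows that minimal lifts of minimal rational curves lie in $R_\varphi$; equivalently the $\bQ$-bundle $\sE(K_X/l_X)$ is nef. Both the structure of $X$ and that of $\sE$ are then read off together from the geometry of $\varphi$ --- for instance, sections of $\pi$ produced from large $\varphi$-fibres as in Lemma~\ref{lem:sec} yield surjections $\sE\to\cO_X(1)$, which in particular forces $r_X=l_X$.

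The step you flag as the main obstacle is a genuine gap in your route. No result available at the time asserts that $\rho_X=1$ and $i_X=n-1$ force Fano index $n-1$; characterizations of this flavour by pseudoindex alone are much more recent (cf.\ \cite{DH17}). In \cite{PSW92b} this obstacle is not climbed but bypassed: the equality $r_X=n-1$ emerges from the analysis of $\varphi$, not from a VMRT argument on $X$ alone. Your bundle-classification sketch on $\bP^n$ and $\bQ^n$ is likewise too loose as stated --- ``unbalanced splitting type on lines forces splitting in high dimension'' is not a theorem. What is actually used is the nefness of $\sE(-1)$ (again a consequence of the comparison lemma) combined with vanishing theorems and structure results on nef bundles of small $c_1$, parallel to Section~\ref{sect:PQ} of the present paper.
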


It is noteworthy that in the above list appear the null-correlation bundle and spinor bundles, which are closely related to representation theory.
This fact implies that we may find out further interesting vector bundles and their interplay with geometry of homogeneous spaces in the course of classification of Mukai pairs of larger corank.

Such an anticipation in mind, we extend in this paper the preceding classification results to the next case corank $c=3$:

\begin{theorem}\label{thm:main}
Let $(X,\sE)$ be a Mukai pair of dimension $n \geq 5$ and rank $n-2$.
Then:
\begin{enumerate}
 \item $X$ is isomorphic to either $\bP^n$, $\bQ ^n$, a del Pezzo manifold, a Mukai manifold or $\bP ^2 \times \bP ^3$ $(n=5)$.
 \item $(X,\sE)$ of non-split type is isomorphic to one of the following eight pairs:
 \begin{enumerate}[label=(\alph*)]
  \item\label{a} $\left(\bQ^6, \sS_\bQ ^* (1)\right)$.
  \item\label{b} $\left(\bQ^6, {\sG _\bQ} (1)\oplus \cO (1)\right)$.
  \item\label{c} $\left(\bQ^5, {\sG _\bQ} (1)\right)$.
  \item\label{d} $\left({\Gr}(2,5), \sS_{\Gr} ^*(1) \oplus \cO (1)\oplus \cO (1)\right)$.
  \item\label{e} $\left({\Gr}(2,5), \sQ_{\Gr}(1) \oplus \cO (1)\right)$.
  \item\label{f} $\left({V_5}, \sS_{V_5}^*(1) \oplus \cO (1)\right)$.
  \item\label{g} $\left({V_5}, \sQ_{V_5}(1)\right)$.
  \item $\left(\bP ^2 \times \bP ^3, p_1 ^* \cO (1) \otimes p_2 ^* T_{\bP ^3}\right)$.
 \end{enumerate}
\end{enumerate}
Here the following symbols are used:
\begin{itemize}
\item $\sS_\bQ$ is the spinor bundle as in Theorem~\ref{thm:rankn-1}.
\item ${\sG _\bQ}$ is the Ottaviani bundle on $\bQ^5$ or $\bQ^6$ \textnormal{\cite{Ott88,Kan16} (see also Section \ref{sect:Ott})}.
\item ${\Gr}(2,5)$ is the Grassmannian of $2$-dimensional subspaces in a $5$-dimensional vector space.
\item $\sS_{\Gr}$ (resp.\ $\sQ_{\Gr}$) is the universal subbundle (resp.\ quotient bundle) on ${\Gr}(2,5)$.
\item ${V_5}$ is a general hyperplane section of the Grassmannian ${\Gr}(2,5)$ embedded into $\bP^9$ via the Pl\"ucker embedding.
\item $\sS_{V_5}$ (resp.\ $\sQ_{V_5}$) is the restriction of the universal subbundle (resp.\ quotient bundle) to $V_5$.
\end{itemize}
\end{theorem}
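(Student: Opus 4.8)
The plan is to decouple the problem into two parts: first determine the underlying Fano manifold $X$, and then, for each $X$ on the resulting list, classify the ample bundles $\sE$ of rank $n-2$ with $c_1(\sE)=c_1(X)$. The starting observation is that for every rational curve $C\subset X$ the restriction of the ample bundle $\sE$ to the normalization $\bP^1\to C$ splits into line bundles of positive degree, so $-K_X.C=c_1(\sE).C=\deg(\sE|_{\bP^1})\geq \rank\sE=n-2$; hence the pseudoindex satisfies $i_X\geq n-2$. Feeding this into Wi\'sniewski's inequality yields $\rho_X=1$ as soon as $2(n-2)>n+2$, i.e. for $n\geq 7$, while the generalized Mukai inequality $\rho_X(i_X-1)\leq n$ forces $\rho_X\leq 2$ in the two remaining cases $n\in\{5,6\}$. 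This separates the proof into a Picard-number-one case and a short list of Picard-number-two cases.

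\emph{The case $\rho_X=1$.} Writing $-K_X=r_XH_X$ for the ample generator $H_X$, the goal is to show that $i_X\geq n-2$ forces the coindex $n-r_X+1$ to be at most $3$, after which the theorems of Kobayashi--Ochiai, Fujita and Mukai identify $X$ as $\bP^n$, $\bQ^n$, a del Pezzo manifold or a Mukai manifold, giving part (1) in this case. The delicate point is the exclusion of minimal rational curves of $H_X$-degree $\geq 2$: such a curve $C$ would satisfy $-K_X.C=r_X(H_X.C)\geq 2r_X$, and comparing this with the length bound $-K_X.C\leq n+1$ and with the splitting type of $\sE|_C$ constrains the minimal family enough to conclude that minimal curves are lines, whence $r_X=i_X\geq n-2$.

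\emph{Classifying $\sE$.} For each such $X$ I would restrict $\sE$ to a general line $\ell$; since $c_1(\sE).\ell$ equals the index $r_X$ and $\rank\sE=n-2$, the splitting type $\sE|_\ell=\bigoplus\cO(a_i)$ has total excess $\sum(a_i-1)=r_X-(n-2)\leq 3$. After establishing uniformity of $\sE$ by controlling the locus of jumping lines, I would invoke the classification of uniform bundles of low rank on $\bP^n$ and on the relevant homogeneous spaces. On $\bP^n$, on the Mukai manifolds, and on the del Pezzo manifolds other than $\Gr(2,5)$ and $V_5$, every uniform bundle of rank $n-2$ with this determinant splits, so no non-split pair occurs there; on $\bQ^n$ and on $\Gr(2,5)$ and $V_5$ the non-split possibilities are exactly the spinor bundle $\sS_\bQ$, the Ottaviani bundle $\sG_\bQ$ and the universal sub- and quotient bundles, which I would pin down by matching Chern classes, by the cohomology and section behaviour forced by $c_1(\sE)=c_1(X)$, and by checking ampleness, verifying case by case that no further candidate survives.

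\emph{The case $\rho_X\geq 2$, and the main obstacle.} For $\rho_X=2$ I would analyse the two extremal contractions of $X$: the pseudoindex bound forces each to be of fiber type with projective-space fibers, and restricting $\sE$ to the fibers together with $c_1(\sE)=c_1(X)$ identifies the contractions as the projections of a product, yielding $X\cong\bP^2\times\bP^3$ for $n=5$ (with bundle $p_1^*\cO(1)\otimes p_2^*T_{\bP^3}$) and excluding $\bP^3\times\bP^3$ for $n=6$ by a direct check that no admissible bundle exists. I expect the principal obstacle to lie in the bundle-classification step on the coindex-three homogeneous spaces $\bQ^n$, $\Gr(2,5)$ and $V_5$: proving uniformity of $\sE$ and then matching each admissible splitting type to a unique spinor, Ottaviani or universal bundle while ruling out spurious candidates is the technical heart of the proof, and the point at which representation-theoretic input becomes indispensable.
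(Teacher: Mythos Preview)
Your overall architecture (bound $i_X$, split on $\rho_X$, then classify $X$ and $\sE$) matches the paper's, but two load-bearing steps in your plan are not supported, and you are missing the paper's central technique.

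First, the reduction to $r_X\geq n-2$ is not justified. You propose to show that minimal rational curves have $H_X$-degree $1$, but the sketch (``comparing with the length bound and the splitting type of $\sE|_C$'') does not actually exclude $H_X.C=2$ when $r_X$ is small. The paper never proves $r_X=i_X$ directly; instead it works with the invariant $l_X$ (the minimal anticanonical degree of a \emph{free} rational curve), and $r_X=l_X$ only emerges \emph{a posteriori} from the structure of $\sE$ in each case (see Lemma~\ref{lem:sec}, Proposition~\ref{prop:decomposition}, Proposition~\ref{prop:index}).

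Second, and more seriously, your bundle-classification step rests on ``the classification of uniform bundles of low rank on the relevant homogeneous spaces'', but no such classification is available on del Pezzo manifolds, on $V_5$, or even on $\bQ^n$ in the generality you need. Uniformity alone does not single out $\sS_\bQ$, $\sG_\bQ$, $\sS_{\Gr}$, $\sQ_{\Gr}$; for instance, recognising the Ottaviani bundle requires the dedicated characterisation of Theorem~\ref{thm:Ottaviani}. The paper's engine is entirely different: one passes to the Fano $(2n-3)$-fold $\bP(\sE)$, whose second extremal contraction $\varphi$ is governed by the Ionescu--Wi\'sniewski inequality and by the \emph{comparison theorem} (Theorem~\ref{thm:comp}), which says that minimal lifts of minimal rational curves are $\varphi$-contracted, i.e.\ $\sE(K_X/l_X)$ is nef. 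This is what turns the problem into a tractable study of nef bundles with small $c_1$ (Section~\ref{sect:PQ}) and of the fibre structure of $\varphi$ (Sections~\ref{sect:DPbir}--\ref{sect:DPfib}), and it is what produces the morphisms $X\to\Gr(2,5)$ identifying $\sE$ with the universal bundles. Your outline contains no analogue of this step.

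Finally, your $\rho_X\geq 2$ analysis is too optimistic: not every extremal contraction is a projection of a product (the paper finds $\bP^2\times\bQ^3$, $\bP_{\bP^3}(\cO(1,0^2))$ and $\bP(T_{\bP^3})$ as well, via \cite{Fuj16}), and $\bP^3\times\bP^3$ is not excluded---it appears with a split bundle.
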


\begin{remark}\label{ruled}
In Theorem~\ref{thm:main}, the missing cases $n =3$ and $4$ were (almost) settled by preceding works.
If $n=3$ and $r=1$, then $\sE = \cO(-K_X)$ and the classification of such Mukai pairs is simply the classification of Fano 3-folds, which was completed by milestone articles by Fano, Iskovskih, Shokurov, Fujita, Mori and Mukai (see \cite{IP99} and references therein).
$4$-dimensional Mukai pairs $(X,\sE)$ of rank $2$ corresponds to Fano $5$-folds of index $2$ with $\bP^1$-bundle structures.
Novelli and Occhetta gave a list of all possible candidates of such $5$-folds in \cite{NO07}.
One of the candidates therein, unfortunately, is not yet known to actually exist. 
\end{remark}

\subsection{}
Study of \emph{generalized polarized pairs} gives another motivation to investigate Mukai pairs.
A pair $(X,\sE)$ is called a \emph{generalized polarized pair} of dimension $n$ and rank $r$ if $X$ is a smooth projective $n$-fold and $\sE$ is an ample vector bundle of rank $r$.
The adjoint divisor $K_X + c_1(\sE)$ is attached to a given generalized polarized pair $(X,\sE)$, and a fundamental problem in this field is to determine when the \emph{adjoint divisor} $K_X + c_1(\sE)$ satisfy positivity (e.g.,\ ampleness or nefness) or to distinguish generalized polarized pairs whose adjoint divisors lack positivity from general ones.
Such a problem is carried out in a number of papers, including \cite{Wis89a,YZ90,Zha91,Fuj92,ABW92b,Zha96,AM97,Ohn06,Tir13}.
In \cite{AM97}, Andreatta and Mella studied the case $r = n-2$ and they clarified when the adjoint divisor is not nef.
Also, assuming that $K_X + c_1(\sE)$ is nef but not ample, they (roughly) described the structure of the contraction defined by the adjoint divisor.
Understandably the contraction can be trivial, which implies that $(X,\sE)$ is a Mukai pair \cite[Theorem~5.1~(2)~(i)]{AM97}.
Our result gives a detailed classification in such a case.

Also, given a generalized polarized pair $(X,\sE)$ of dimension $n$ and rank $r$, the geometry of the zero locus $S$ of a section $s \in H^0(\sE)$ is studied in several context, provided that  $S$ has the expected dimension $n-r$.
For example, in {\cite[Corollary~1.3]{Lan96}}, it is proved that if $S$ as above is a minimal surface  of Kodaira dimension $=0$, then $S$ is a K3 surface and $(X,\sE)$ is a Mukai pair of corank $3$.
Thus:
\begin{corollary}
Let $(X,\sE)$ be a generalized polarized pair of dimension $n \geq 5$ and rank $n-2$.
Suppose that there is a K3 surface $S \subset X$ which is a zero locus of a section $s \in H^0(\sE)$.
Then $(X,\sE)$ is one of the pairs as in Theorem~\ref{thm:main}.\end{corollary}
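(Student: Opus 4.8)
The plan is to derive the corollary directly from the combination of \cite[Corollary~1.3]{Lan96} and Theorem~\ref{thm:main}; the hypotheses are arranged precisely so that $S$ falls into the regime treated by Lang. Since $\sE$ has rank $r = n-2$, the expected dimension of the zero locus of a section is $n-r = 2$, and a K3 surface is $2$-dimensional. Hence $S$ automatically has the expected dimension and $s$ is a regular section, so that $N_{S/X} \cong \sE|_S$ and the adjunction formula reads $K_S = (K_X + c_1(\sE))|_S$.

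First I would record the elementary fact that a K3 surface is a minimal surface of Kodaira dimension $0$: its canonical bundle is trivial, which forces Kodaira dimension $0$, and the relation $K_S \cdot E = -1$ characterizing a $(-1)$-curve can never hold when $K_S = 0$, so $S$ contains no $(-1)$-curve and is minimal. Consequently the hypothesis of the corollary implies the hypothesis of \cite[Corollary~1.3]{Lan96}.

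Next I would invoke \cite[Corollary~1.3]{Lan96} to conclude that $(X,\sE)$ is a Mukai pair of corank $3$. Here the substantive content is the passage from the restricted triviality $K_S = (K_X + c_1(\sE))|_S = 0$ to the global equality $c_1(\sE) = c_1(X)$ on all of $X$; this is exactly what Lang's result provides, and it is the only place where any real work is hidden (one expects it to rest on a Lefschetz-type injectivity for the restriction of the relevant Picard/numerical classes, using the ampleness of $\sE$). Granting it, $(X,\sE)$ is a Mukai pair of dimension $n$ and rank $n-2$, so its corank is $n - (n-2) + 1 = 3$.

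Finally, because $n \geq 5$, Theorem~\ref{thm:main} classifies all Mukai pairs of dimension $n$ and rank $n-2$, which yields the asserted list. Thus, modulo the two cited inputs, the corollary is immediate, and I anticipate no genuine obstacle beyond matching the hypotheses of Lang's result to the K3 assumption.
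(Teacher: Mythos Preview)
Your proposal is correct and matches the paper's approach exactly: the paper does not even write out a proof, deriving the corollary immediately from \cite[Corollary~1.3]{Lan96} (after noting that a K3 surface is minimal of Kodaira dimension~$0$) together with Theorem~\ref{thm:main}. Your explicit verification that $S$ has the expected dimension and that a K3 surface is minimal of Kodaira dimension~$0$ simply fills in the details the paper leaves implicit.
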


\subsection{}
We sketch an outline of this paper.
Let $(X,\sE)$ be a pair as in Theorem~\ref{thm:main}.
Then the \emph{length} $l_X$ is defined as the minimum anticanonical degree of \emph{free} rational curves on $X$ (see Definition~\ref{def:index}).
The length $l_X$ is at most $n+1$ by Mori's theorem.
In addition, the existence of the bundle $\sE$ implies that $l_X$ is at least $n-2$;
\[
l_X \in \{\,n-2,\ldots , n+1\,\}.
\]
The proof is roughly divided into four cases depending on the value $l_X$.

In Section~\ref{sect:pre}, we treat some easy cases with preliminaries on family of rational curves.
Firstly the case $\rho_X \geq 2$ is settled (Proposition~\ref{prop:rho}), which allow us to assume $\rho_X = 1$ in the sequel.
Then $\bP(\sE)$ is a Fano manifold with Picard number two and index $n-2$.
Secondly we treat the case $l_X=n-2$ (Proposition~\ref{prop:mukai}).
Thirdly we deal the case $\ell (R_\varphi) > n-2$ (Proposition~\ref{prop:O(2^3)}), where $R_\varphi$ is the extremal ray which is not contracted by the projection $\pi \colon \bP(\sE) \to X$ and $\ell (R_\varphi)$ is the \emph{length} of the extremal ray.
Note that $\ell(R_\varphi) \geq n-2$ since the index $r_{\bP(\sE)} =n-2$.

From the above, we can assume three conditions $\rho_X=1$, $l_X \geq n-1$ and $\ell (R_\varphi) =n-2$ in the remaining sections.
We also include in Section~\ref{sect:pre} a construction of sections of the projection $\pi \colon \bP(\sE) \to X$.

In Section~\ref{sect:Ott}, the definition of the Ottaviani bundles are recalled and two characterizations of Ottaviani bundle on $\bQ^5$ are given, based on \cite{Ott88,Kan16}.

In Section~\ref{sect:comp}, we will see which rational curves are contracted by $\varphi$.
More precisely, we will prove that \emph{minimal lifts} of minimal rational curves to the projective bundle $\bP(\sE)$  are contracted by $\varphi$, or equivalently the $\bQ$-bundle $\sE(K_X/l_X)$ is semiample  (Theorem~\ref{thm:comp}, cf.\ \cite[Sect.~3]{PSW92b}).

In Section~\ref{sect:PQ}, we will treat the case $l_X \geq n$.
In this case, by numerical characterizations of projective space and hyperquadric \cite{CMSB02,Miy04a} (cf.\ \cite{Keb02,DH17}), $X$ is isomorphic to $\bP^n$ or $\bQ^n$.
The result in Sect.~\ref{sect:comp} implies that $\sE(-1)$ is nef.
First we will show that $\sE(-1)$ is globally generated.
Then we immediately see that $\sE$ splits by \cite{SU14,AM13,Tir13} unless $X \simeq \bQ^6$ or $\bQ^5$.
Finally we will deal the case $X \simeq \bQ^6$ or $\bQ^5$.
Here the characterization of Ottaviani bundles plays an important role.

In Sections~\ref{sect:DPbir} and \ref{sect:DPfib}, the case $l_X = n-1$ is discussed, and  the proof of Theorem~\ref{thm:main} will be completed.
The crucial case is where $\varphi$ is of fiber type, which will be treated in Section~\ref{sect:DPfib}.
The key step is to prove $\dim X \leq 6$ (Proposition~\ref{prop:dimY}),
and the main ingredients of the proof are
\begin{enumerate}
 \item Chain connectedness of $X$ by the images of $\varphi$-fibers and
 \item Miyaoka's criterion on semistability of vector bundles on curves \cite{Miy87}.
\end{enumerate}

\begin{notation}
We work over the field of complex numbers and use the following notations:
\begin{enumerate} 
\item $\bP(\sE)$ is the Grothendieck projectivisation of the bundle $\sE$.
\item $\pi \colon \bP(\sE) \to X$ is the natural projection.
\item $\xi _\sE = \xi$ is the relative tautological divisor of $\bP(\sE)$.
\item If $\rho_X =1$, then $H_X$ is the ample generator of the Picard group of $X$.
\item If $\rho_X =1$, then $R_\varphi$ is the extremal ray of $\NE (\bP(\sE))$ which is different from the ray associated to $\pi$, and $\varphi$ is the contraction of $R_\varphi$.
\item $\Exc (\varphi)$ is the exceptional locus of $\varphi$.
\item Given a projective manifold $V$ with an ample (not necessarily very ample) line bundle $\cO_V(1)$, we will denote by $\cO_V(a_1^{b_1}, \dots ,a_m^{b_m}) = \cO(a_1^{b_1}, \dots ,a_m^{b_m})$ the vector bundle $\cO_V(a_1)^{\oplus b_1} \oplus \cdots \oplus \cO_V(a_m)^{\oplus b_m} $. 
\item For a closed subvariety $W \subset V$, we will denote by $\NE(W,V)$ the subcone generated by the classes of the effective curves on $W$.
\item For a morphism $f \colon V \to W$ between varieties and a coherent sheaf $\sM$ on $W$, we will denote by $\sM|_V$ the pullback $f^* \sM$.
\end{enumerate}
\end{notation}

\begin{acknowledgements}
The author wishes to express his deepest gratitude to Professor Gianluca Occhetta for his invaluable comments and discussions, particularly about how to use minimal rational curves and minimal lifts, and to my supervisor Professor Hiromichi Takagi for his encouragement, comments and suggestions.
Also the author is deeply indebted to Professor Yoichi Miyaoka for helping the author to improve the exposition of the introduction and suggesting the terminology ``Mukai pairs''.
The author is also grateful to Professors Keiji Oguiso, Thomas Peternell and Luis E. Sol\'a Conde and Doctor Takeru Fukuoka for their helpful comments or discussions.

The main part of this work was done during the author's stay at the University of Trento with financial support from the FMSP program at the Graduate School of Mathematical Sciences, the University of Tokyo.
The author is also grateful to the institution for the hospitality.
\end{acknowledgements}

\section{Preliminaries}\label{sect:pre}

The purpose of this section is to present some preliminaries and prove Theorem~\ref{thm:main} in the following cases (Propositions~\ref{prop:rho}, \ref{prop:mukai} and \ref{prop:O(2^3)}):
\begin{enumerate}
 \item $\rho _X >1$,
 \item $\rho _X =1$ and $l_X = n-2$ (see Definition~\ref{def:index}),
 \item $\rho _X =1$ and $\ell (R_{\varphi}) \neq n-2$ (see Definition~\ref{def:lengthphi}).
\end{enumerate}

\subsection{Anticanonical degrees of rational curves}

In this paper, the image $C$ of the projective line $\bP^1$, or the normalization map $f \colon \bP^1 \to C \subset X$ is called a \emph{rational curve}.

\begin{definition}\label{def:index}
Let $X$ be a Fano manifold.
\begin{enumerate}
 \item A rational curve $f \colon \bP^1 \to X$ is called \emph{free} if $f^* T_X$ is nef.
 \item 
 \begin{enumerate}[label=(\alph*)]
  \item The \emph{index} $r_X$ of $X$ is defined as:
  \[
  r_X \coloneqq \max \left\{\, k \in \bZ \mid \text{$-K_X = kH$ for some ample divisor $H$} \right\}.
  \]
  \item The \emph{pseudoindex} $i_X$ is the minimum anticanonical degree of rational curves:
  \[
  i_X \coloneqq  \min \left\{\, -K_X.C \mid \text{$C$ is a rational curve on $X$} \,\right\}.
  \] 
  \item
  The \emph{(global) length} $l_X$ is the minimum anticanonical degree of \emph{free} rational curves: 
  \[
  l_X \coloneqq \min \left\{\, -K_X.C \mid \text{$C$ is a free rational curve on $X$} \,\right\}.
  \]
 \end{enumerate}
\end{enumerate}
\end{definition}

By these definitions and \cite[Theorem~5.14]{Kol96}, it holds:
\[
n+1 \geq l_X \geq i_X \geq r_X \geq 1.
\]

Fano manifolds with large index $r_X \geq n-2$ are classified in \cite{KO73,Fuj82a,Fuj82b,Muk89}.
Also, in \cite{CMSB02,Miy04a} (cf.\ \cite{Keb02,DH17}), numerical characterizations of projective spaces and hyperquadrics are established:
\begin{theorem}\label{thm:length}
Let $X$ be a Fano manifold with $l_X \geq n$ and $\rho _X =1$.
Then $X \simeq \bP^n$ or $\bQ ^n$.
\end{theorem}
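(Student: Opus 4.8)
The plan is to reduce the hypothesis $l_X \ge n$ to a statement about a minimal dominating family of rational curves, compute the dimension of its variety of minimal rational tangents at a general point, and then match the two possible numerical outcomes against the recognition theorems for $\bP^n$ and $\bQ^n$.

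First I would reinterpret $l_X$. A general member of any dominating family of rational curves is free, so such a family has degree $\ge l_X$; conversely a free rational curve $f\colon \bP^1 \to X$ has $f^*T_X$ globally generated, so its deformations sweep out a dense open subset and $f$ lies in a dominating family. Hence $l_X$ equals the minimal anticanonical degree $d$ of a dominating family. Fix a minimal such family $\mathcal K$, so that $d = l_X \ge n$, and fix a general point $x \in X$. Letting $\mathcal K_x$ be the subfamily of curves through $x$, standard deformation theory gives $\dim \mathcal K_x = d-2$, and by Kebekus the tangent map $\mathcal K_x \to \bP(T_xX) \cong \bP^{n-1}$ is finite onto its image $\mathcal C_x$, the variety of minimal rational tangents at $x$. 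Therefore $\dim \mathcal C_x = d-2 \ge n-2$, while trivially $\dim \mathcal C_x \le n-1$, so exactly two cases occur (here $\rho_X = 1$ guarantees that $\mathcal K$ governs all of $\NE(X)$, streamlining the passage to the global conclusion).

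If $d \ge n+1$, then $\dim \mathcal C_x = n-1$ forces $\mathcal C_x = \bP(T_xX)$, i.e.\ the minimal curves through a general point realize every tangent direction; the recognition of projective space (Cho--Miyaoka--Shepherd-Barron, cf.\ \cite{Keb02}) then gives $X \simeq \bP^n$. If $d = n$, then $\mathcal C_x$ is a hypersurface in $\bP^{n-1}$, and the objective is to prove that for general $x$ it is a \emph{smooth quadric}; once this is known, the characterization of the hyperquadric by its variety of minimal rational tangents yields $X \simeq \bQ^n$.

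The main obstacle is precisely this last step: excluding the possibility that the degree-$n$ family produces a hypersurface $\mathcal C_x$ which is reducible, nonreduced, or of degree $\ge 2$ but non-quadratic. This is where the substantive input of \cite{CMSB02,Miy04a} (cf.\ \cite{DH17}) is needed. One route is to control the projective geometry of $\mathcal C_x \subset \bP^{n-1}$ by bend-and-break together with the unsplitness of $\mathcal K$, bounding its degree and ruling out the degenerate configurations; another is Miyaoka's analysis of the total space of $\mathcal K$ and the semistability of $T_X$ restricted along the curves of the family, from which the quadratic nature of $\mathcal C_x$ is extracted. I would therefore treat the reduction and dimension count above as routine and concentrate the entire weight of the argument on establishing the smooth-quadric VMRT in the case $d = n$.
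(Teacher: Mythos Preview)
The paper does not supply its own proof of this theorem; it is stated as a known result with attribution to \cite{CMSB02,Miy04a} (cf.\ \cite{Keb02,DH17}) and used as a black box thereafter. So there is no in-paper argument to compare against.

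Your sketch is a reasonable unpacking of what those references do, and the overall architecture is correct: the identification of $l_X$ with the minimal degree of a dominating family, the dimension count $\dim \mathcal K_x = d-2$, Kebekus's finiteness of the tangent map, and the dichotomy $d \ge n+1$ versus $d = n$ are all standard. You are also right to locate the entire difficulty in the case $d = n$, which is exactly the content of \cite{Miy04a} (and, by a different method, \cite{DH17}).

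One remark on framing: your plan for $d = n$ is to show that $\mathcal C_x$ is a smooth quadric hypersurface and then invoke a VMRT recognition theorem for $\bQ^n$. That is a legitimate route (closer to the Hwang--Mok philosophy), but it is not how Miyaoka's original argument proceeds; his proof goes via bend-and-break and a direct analysis of the family rather than first pinning down the projective type of $\mathcal C_x$. So if you intend to cite \cite{Miy04a} for the hard step, be aware that the internal logic there differs from the VMRT-recognition narrative you have written. Either way, since the paper itself only quotes the result, your proposal already goes further than what the paper requires.
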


\begin{lemma}\label{lem:splitting}
Let $(X,\sE)$ be a pair as in Theorem~\ref{thm:main} and $f \colon \bP^1 \to X$ a rational curve of anticanonical degree $d \leq n+1$.
Then $d \geq n-2$ and the following hold:
\begin{enumerate}
 \item If $d = n+1$, then $f^*\sE \simeq \cO(4,1 ^{n-3})$, $\cO(3,2,1 ^{n-4})$ or $\cO(2^{3},1^{n-5})$.
 \item If $d = n$, then $f^*\sE \simeq \cO(3,1^{n-3})$ or $\cO(2^{2},1^{n-4})$.
 \item If $d = n-1$, then $f^*\sE \simeq \cO(2,1^{n-3})$.
 \item If $d = n-2$, then $f^*\sE \simeq \cO(1^{n-2})$. 
\end{enumerate}
In particular, we have $i_X \geq n-2$. 
\end{lemma}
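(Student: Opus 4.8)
The plan is to pull everything back to $\bP^1$, where $\sE$ becomes a sum of line bundles, and then read off the constraints imposed by ampleness of $\sE$ and by the Mukai condition $c_1(\sE) = c_1(X)$.

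First I would invoke Grothendieck's splitting theorem to write $f^*\sE \simeq \bigoplus_{i=1}^{n-2}\cO(a_i)$ with $a_1 \geq \cdots \geq a_{n-2}$. The one step carrying genuine content is the positivity $a_i \geq 1$ for every $i$. Since $f$ is a nonconstant morphism from $\bP^1$ onto the curve $C$, it is finite, so ampleness survives the pullback and $f^*\sE$ is again an ample bundle on $\bP^1$; equivalently, the projection of $f^*\sE$ onto its smallest summand is a quotient line bundle $f^*\sE \twoheadrightarrow \cO(a_{n-2})$, which corresponds to a section of $\bP(f^*\sE) \to \bP^1$, i.e.\ to a nonconstant lift $\sigma\colon \bP^1 \to \bP(\sE)$ over $f$ with $\sigma^*\cO(\xi) = \cO(a_{n-2})$. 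As $\xi$ is ample and $\sigma$ has one-dimensional image, the degree of $\xi$ on that image is positive, forcing $a_{n-2} > 0$ and hence every $a_i \geq 1$.

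Next I would compute the total degree. Because $(X,\sE)$ is a Mukai pair, $c_1(\sE) = c_1(X) = -K_X$, so
\[
\sum_{i=1}^{n-2} a_i \;=\; \deg f^*\sE \;=\; c_1(\sE).C \;=\; -K_X.C \;=\; d .
\]
Together with $a_i \geq 1$ this yields $d = \sum_i a_i \geq n-2$, the first assertion. I would stress that this lower bound used neither the hypothesis $d \leq n+1$ nor anything beyond ampleness and the Mukai condition, so it applies verbatim to \emph{every} rational curve on $X$; by the definition of the pseudoindex this gives exactly $i_X \geq n-2$.

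Finally, for the refined splitting types the problem reduces to elementary combinatorics. Setting $e_i \coloneqq a_i - 1 \geq 0$, the constraint becomes $\sum_{i=1}^{n-2} e_i = d-(n-2)$, so the isomorphism type of $f^*\sE$ is governed by a partition of $d-(n-2)$ into at most $n-2$ nonnegative parts. For $d = n-2,\,n-1,\,n,\,n+1$ these are the partitions of $0,1,2,3$; enumerating them (for instance $3 = 3 = 2+1 = 1+1+1$ in the case $d=n+1$) produces precisely the lists in (1)--(4), where $n \geq 5$ guarantees enough summands for the type $\cO(2^3,1^{n-5})$ to occur. I expect no real obstacle here: apart from the positivity $a_i \geq 1$, which is where ampleness enters, the argument is Grothendieck's theorem, the definition of a Mukai pair, and bookkeeping of partitions; the only point demanding care is the finiteness of $f$, needed so that ampleness is preserved under pullback.
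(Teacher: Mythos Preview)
Your proposal is correct and follows essentially the same approach as the paper: Grothendieck's splitting theorem gives $f^*\sE \simeq \cO(a_1,\ldots,a_{n-2})$, ampleness forces each $a_i \geq 1$, and the Mukai condition yields $\sum a_i = d$, after which the splitting types are read off by enumerating partitions. The paper's proof is simply a terser version of yours, stating the positivity of the $a_i$ without the section argument you spell out.
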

\begin{proof}
By the Grothendieck theorem every vector bundle on $\bP^1$ splits, i.e., it is a direct sum of line bundles, whence $f^*\sE \simeq \cO(a_1,\ldots,a_{n-2})$ for $a_i \in \bZ$.
Since $\sE$ is ample with $c_1(\sE)=c_1(X)$, each $a_i$ is positive and $\sum a_i =d$.
Now the assertion is clear.
\end{proof}

\subsection{Case $\rho _X >1$}
Here we settle Theorem~\ref{thm:main} for $\rho_X > 1$:

\begin{proposition}\label{prop:rho}
Let $(X,\sE)$ be a pair as in Theorem~\ref{thm:main}.
Assume $\rho_X >1$.
Then:
\begin{enumerate}
 \item $X \simeq \bP^3 \times \bP ^3$, $\bP^2 \times \bP^3$, $\bP^2 \times \bQ^3$, $\bP_{\bP^3}(\cO (1,0^2))$ or $\bP(T_{\bP^3})$,
 \item $\sE$ splits unless $(X,\sE) \simeq (\bP ^2 \times \bP ^3, p_1 ^* \cO (1) \otimes p_2 ^* T_{\bP ^3})$.
\end{enumerate}
\end{proposition}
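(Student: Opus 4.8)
The plan is to pin down $X$ first and then, case by case, the bundle $\sE$. By Lemma~\ref{lem:splitting} the pseudoindex satisfies $i_X \geq n-2$, so $X$ is a Fano $n$-fold with $\rho_X \geq 2$ and very large pseudoindex. I would invoke the generalized Mukai inequality $\rho_X(i_X-1)\leq n$ (known in this range): combined with $i_X\geq n-2$ and $\rho_X\geq 2$ it gives $2(n-3)\leq n$, i.e.\ $n\leq 6$, and forces $\rho_X=2$ and $i_X=n-2$. For $n=6$ this is the equality case $\rho_X(i_X-1)=n$, so $X\simeq\bP^3\times\bP^3$; for $n=5$ I would quote the classification of Fano $5$-folds with $\rho_X=2$ and $i_X=3$, which yields exactly $\bP^2\times\bP^3$, $\bP^2\times\bQ^3$, $\bP_{\bP^3}(\cO(1,0^2))$ and $\bP(T_{\bP^3})$. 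This establishes part (1).

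For part (2) the uniform mechanism is the following. In each case there is a fiber-type elementary contraction $q\colon X\to Y$ whose general fiber $F$ is a projective space (a $\bP^2$ when $n=5$, a $\bP^3$ when $n=6$) along whose minimal lines $-K_X$ has degree $n-2$; hence by Lemma~\ref{lem:splitting}(4), $\sE|_F$ is uniform of splitting type $(1^{n-2})$, and since a bundle trivial on every line of $\bP^m$ is trivial, $\sE|_F\simeq\cO_F(1)^{\oplus(n-2)}$. Twisting by the relative line bundle $\sL$ with $\sL|_F=\cO_F(1)$, the bundle $\sE\otimes\sL^{-1}$ is fiberwise trivial, so cohomology and base change give a descent $\sE\simeq\sL\otimes q^*\sF$ with $\sF$ a rank-$(n-2)$ bundle on $Y$. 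One is then reduced to identifying $\sF$ from the restriction of $\sE$ to a complementary slice.

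The decisive case is $\bP^2\times\bP^3$, with $q=p_2$ the projection to $\bP^3$: here $\sL=p_1^*\cO(1)$ and the descent reads $\sE\simeq p_1^*\cO(1)\otimes p_2^*\sF$ with $\sF\simeq\sE|_{\{pt\}\times\bP^3}$. A line in the $\bP^3$-factor has anticanonical degree $n-1=4$, so by Lemma~\ref{lem:splitting}(3) the bundle $\sF$ is uniform of type $(2,1,1)$ on $\bP^3$. By the classification of uniform bundles of rank $n$ on $\bP^n$, such an $\sF$ is either $\cO(2,1,1)$ or $T_{\bP^3}$: the first gives a split $\sE$, the second gives precisely the exceptional pair $(\bP^2\times\bP^3,\,p_1^*\cO(1)\otimes p_2^*T_{\bP^3})$. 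For the remaining manifolds $\sF$ is forced to split: for $\bP^3\times\bP^3$ and $\bP^2\times\bQ^3$ both rulings consist of minimal lines, so $\sF$ is uniform of type $(1^{n-2})$ and hence a direct sum of copies of $\cO(1)$; for the two $\bP^2$-bundles over $\bP^3$ every minimal curve meets a $q$-fiber line, which keeps $\sF$ of type $(1,1,1)$ and again forces it to split, eliminating the $T_{\bP^3}$ alternative.

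The main obstacle is twofold. First, part (1) rests on the cited classification of Fano manifolds of Picard number two and maximal pseudoindex; the generalized Mukai inequality does the dimension reduction cleanly, but the explicit $n=5$ list is an external input. Second, and closer to this paper's flavor, is the uniform-bundle input that a rank-$n$ uniform bundle on $\bP^n$ of type $(2,1^{n-1})$ is either split or the tangent bundle --- this dichotomy is exactly what produces the single non-split example, and the genuine work is to verify that the tangent-bundle alternative occurs only over the product base $\bP^2\times\bP^3$ and not over the two $\bP^2$-bundles, which requires analyzing the anticanonical degrees of the base-direction minimal curves there. Once the fiberwise splitting types are pinned down, the descent and the determination of $\sF$ are routine.
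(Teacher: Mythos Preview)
Your overall architecture---bound $n$ via a pseudoindex inequality, classify $X$, then descend $\sE$ along a $\bP^2$-bundle $q\colon X\to Y$ to a bundle $\sF$ on $Y$---is exactly the paper's. The paper uses Wi\'sniewski's bound $i_X\le \tfrac{n}{2}+1$ rather than the generalized Mukai inequality (the latter is not known in full generality, so Wi\'sniewski is the safer citation), and then quotes \cite{AM97} for $n=6$ and \cite{Fuj16} for $n=5$; your route to the list in (1) is equivalent in content.

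The genuine gap is in your last paragraph. The sentence ``every minimal curve meets a $q$-fiber line, which keeps $\sF$ of type $(1,1,1)$'' is not a valid argument: what you need is the splitting type of $\sF$ on lines \emph{in $Y=\bP^3$}, and this requires choosing, for each such line $\ell$, a curve $C\subset X$ with $q|_C\colon C\to\ell$ an isomorphism and computing $(\sE\otimes\cO_q(-1))|_C$. If you do this, you find that for $\bP(T_{\bP^3})$ the type is $(0,0,0)$, not $(1,1,1)$: a minimal section $C$ over $\ell$ has $\cO_q(1)|_C=\cO(1)$ and $-K_X\cdot C=3$, so $\sE|_C=\cO(1^3)$ and $\sF|_\ell=\cO^{\,3}$. (For $\bP_{\bP^3}(\cO(1,0^2))$ your type $(1,1,1)$ is correct.) Your conclusion that $\sF$ splits survives, but the stated reason does not. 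Similarly, for $\bP^2\times\bQ^3$ you appeal to uniform bundles on $\bQ^3$ being split, which is less standard than on $\bP^n$ and needs a reference.

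The paper avoids these issues by a cleaner device: instead of computing splitting types, it observes that (i) $\sF=\sE_Y$ is a \emph{Fano bundle} on $Y$, and (ii) in each case except $\bP(T_{\bP^3})$ there is a section $\tY$ of $q$ with $\cO_q(1)|_{\tY}\simeq\cO$, so $\sE_Y\simeq\sE|_{\tY}$ is \emph{ample} with $c_1(\sE_Y)$ equal to $c_1(Y)$ or $c_1(Y)-1$. This makes $(Y,\sE_Y)$ (or $(Y,\sE_Y\oplus\cO(1))$) a Mukai pair of corank $\le 1$, and Theorem~\ref{thm:rankn} classifies it directly---yielding the $T_{\bP^3}$ exception only when $Y=\bP^3$ and $c_1(\sE_Y)=4$, i.e.\ for $X=\bP^2\times\bP^3$. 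For $\bP(T_{\bP^3})$ the paper instead uses the \emph{second} $\bP^2$-bundle structure: $\sE\otimes\cO_q(-1)$ is trivial on those fibers too, hence $\sE_Y$ is trivial on every hyperplane of $\bP^3$, and Horrocks' criterion finishes. This is both shorter and more robust than the uniform-bundle route.
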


\begin{proof}
From \cite[Theorem~A]{Wis90} and the assumption $\rho _X>1$, it follows
\[
i_X \leq \dfrac{1}{2}n+1.
\]
Since $i_X \geq n-2$ by Lemma~\ref{lem:splitting}, we have $n \leq 7$ .
Moreover, if $n=6$, then the assertion follows from \cite[Lemma~5.3]{AM97}.

If $n=5$, then by \cite{Fuj16} $X$ is isomorphic to one of the following:
\begin{enumerate}
 \item $\bP_{\bP^3}(\cO (0^3)) \simeq \bP^2 \times \bP^3$,
 \item $\bP_{\bQ^3}(\cO (0^3)) \simeq \bP^2 \times \bQ^3$,
 \item $\bP_{\bP^3}(\cO (1,0^2))$,
 \item $\bP(T_{\bP^3})$.
\end{enumerate}

Note that in each case $X$ admits a $\bP^2$-bundle structure $q \colon X \to Y$ with the relative tautological line bundle $\cO_q(1)$.

By adjunction, $c_1(\sE|_{\bP^2}) = c_1(\bP^2)$ for each $q$-fiber $\bP^2$.
Thus, by Theorem~\ref{thm:rankn}, $\sE|_{\bP^2} \simeq \cO(1^3)$ for each $q$-fiber $\bP^2$.
Hence $\sE_Y \coloneqq q_* (\sE \otimes \cO_q(-1))$ is a vector bundle of rank three with $q^*\sE_Y \simeq \sE \otimes \cO_q(-1)$.
Since $\sE$ is a Fano bundle, the bundle $\sE_Y$ is also a Fano bundle by \cite[Theorem~1.6]{SW90b} or \cite[Corollary~2.9]{KMM92a}.

If $X \simeq \bP(T_{\bP^3})$, then there is another $\bP^2$-bundle structure $q' \colon X \to Y' \simeq \bP^3$ which parametrizes planes on $Y \simeq \bP^3$, and $\sE \otimes \cO_q(-1)$ is $q'$-relatively trivial by the same reason as above.
This implies that $\sE_Y$ is trivial on any hyperplane $\bP^2$ on $Y$.
Hence $\sE_Y$ is trivial by Horrocks' criterion \cite{Hor64}, \cite[Theorem~2.3.2]{OSS80}.

In the remaining cases there is a section $\tY$ of $q$ with $\cO_q(1)|_{\tY} \simeq \cO_{\tY}$.
Thus we have
\[
\sE_Y
\simeq q^* \sE _Y |_{\tY}
\simeq (q^* \sE _Y  \otimes \cO_q(1))|_{\tY}
\simeq \sE|_{\tY}.
\]
Therefore $\sE_Y$ is an ample vector bundle with
\begin{itemize}
 \item $c_1(\sE_Y) = c_1(Y)$ if $X \simeq \bP_{\bP^3}(\cO (0^3))$ or $\bP_{\bQ^3}(\cO (0^3))$.
 \item $c_1(\sE_Y) = c_1(Y) -1$ if $X \simeq \bP_{\bP^3}(\cO (1,0^2))$.
\end{itemize}
Theorem~\ref{thm:rankn} implies $\sE_Y$ splits unless $X \simeq \bP_{\bP^3}(\cO (0^3))$ and $\sE_Y \simeq T_{\bP^3}$, and the assertion follows.
\end{proof}

\subsection{Families of rational curves}
For accounts of families of rational curves, our basic references are \cite{Kol96,Deb01}.

\begin{definition} Let $X$ be a Fano manifold and $\RC ^n (X)$ the normalization of the scheme parametrizing rational curves on $X$.
\begin{enumerate}
 \item A \emph{family of rational curves} is an irreducible component of $\RC ^n (X)$. 
\end{enumerate}
If $M$ is a family of rational curves on $X$, then there is the following diagram:
\[
\xymatrix{
 U \ar[d]_-{p} \ar[r]^-{e} & X \\
 M,                        &   \\
}
\]
where $p \colon U \to M$ is the \emph{universal family} and $e\colon U \to X$ is the \emph{evaluation morphism}.

Let $M$ be a family of rational curves on $X$ as above.
\begin{enumerate}\setcounter{enumi}{1}
 \item The family $M$ is called \emph{unsplit}  if it is proper.
  \item The family $M$ is called \emph{dominating} (resp.\ \emph{covering}) if the morphism $e$ is dominating (resp.\ surjective).
 \item $X$ is said to be \emph{chain connected} by rational curves in the family $M$ if any two points in $X$ can be connected by a chain of rational curves in this family $M$.
\end{enumerate}
\end{definition}

\begin{proposition}[{\cite{Mor79}, \cite[Chapter~II, Theorems~1.2 and 2.15]{Kol96}}]\label{prop:dimRC}
Let $X$ be a Fano manifold of dimension $n$, $M$ a family of rational curves on $X$ and $C$ a rational curve belonging to the family $M$.
Then $\dim M \geq (-K_X).C + n -3$.
\end{proposition}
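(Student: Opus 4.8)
The plan is to read off the bound from the deformation theory of morphisms $\bP^1 \to X$, following the standard construction of $\RC^n(X)$ in \cite{Kol96}. First I would replace the curve $C \in M$ by a morphism $f \colon \bP^1 \to X$ that is birational onto $C$ (its normalization), and regard $[f]$ as a point of the scheme $\mathrm{Hom}(\bP^1,X)$. The local input is the infinitesimal study of $\mathrm{Hom}$: at $[f]$ the Zariski tangent space is $H^0(\bP^1, f^*T_X)$ and the obstructions lie in $H^1(\bP^1, f^*T_X)$, so every component of $\mathrm{Hom}(\bP^1,X)$ through $[f]$ has dimension at least
\[
 h^0(\bP^1, f^*T_X) - h^1(\bP^1, f^*T_X) = \chi(\bP^1, f^*T_X).
\]

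Next I would evaluate this Euler characteristic by Riemann--Roch on $\bP^1$. Since $f^*T_X$ is locally free of rank $n$ on a genus-$0$ curve, one has $\chi(\bP^1, f^*T_X) = \deg(f^*T_X) + n$, and $\deg(f^*T_X) = c_1(T_X).f_*[\bP^1] = (-K_X).C$. Hence
\[
 \dim_{[f]} \mathrm{Hom}(\bP^1, X) \geq (-K_X).C + n.
\]
The remaining task is to descend this to the parameter space $M$. The group $\mathrm{Aut}(\bP^1) = \mathrm{PGL}_2$, which has dimension $3$, acts on the birational locus $\mathrm{Hom}^{\mathrm{bir}}(\bP^1,X)$ by reparametrization, and $\RC^n(X)$ is constructed as the quotient; the component of $\mathrm{Hom}$ through $[f]$ maps onto $M$ with general fibres the three-dimensional $\mathrm{PGL}_2$-orbits, giving
\[
 \dim M \geq \dim_{[f]} \mathrm{Hom}(\bP^1, X) - 3 \geq (-K_X).C + n - 3,
\]
as desired.

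The only genuinely delicate point is this last quotient step, i.e.\ justifying that exactly the $3$ dimensions of $\mathrm{PGL}_2$ are lost in passing from $\mathrm{Hom}$ to $\RC^n(X)$. The hard part is thus not the numerics but checking that the reparametrization action is (generically) free on the birational locus, so that the orbits are genuinely three-dimensional and the fibre dimension is exactly $3$ rather than larger. This is precisely the content of the construction of $\RC^n(X)$ together with its universal family $p \colon U \to M$ in \cite[Chapter~II, \S2]{Kol96}, on which I would rely directly, while the infinitesimal estimate and the Riemann--Roch computation are the routine inputs quoted from \cite{Mor79} and \cite[Chapter~II, Theorem~1.2]{Kol96}.
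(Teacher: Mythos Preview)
Your argument is correct and is exactly the standard proof contained in the references the paper cites; the paper itself gives no proof of this proposition beyond the citation to \cite{Mor79} and \cite[Chapter~II, Theorems~1.2 and 2.15]{Kol96}. There is nothing to compare: you have reproduced the cited argument (the local estimate $\dim_{[f]}\mathrm{Hom}(\bP^1,X)\geq \chi(\bP^1,f^*T_X)=(-K_X).C+n$ from Theorem~1.2, followed by the quotient by $\mathrm{Aut}(\bP^1)$ from Theorem~2.15), which is precisely what the paper defers to.
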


\begin{proposition}\label{prop:rcc}
 Let $(X,\sE)$ be a pair as in Theorem~\ref{thm:main} and $\rho_X=1$.
 Then there exists an unsplit covering family of rational curves with $(-K_X)$-degree $l_X$ on $X$.
Moreover $X$ is chain connected by rational curves in this family.
\end{proposition}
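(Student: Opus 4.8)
The plan is to produce the desired family as a dominating family of minimal anticanonical degree, to identify that degree with $l_X$, and then to upgrade ``dominating of minimal degree'' to ``unsplit and covering'' using the strong lower bound $i_X \geq n-2$ from Lemma~\ref{lem:splitting}. First I would recall that, over $\bC$, the general member of any dominating family of rational curves is free \cite{Kol96}, while conversely a free rational curve deforms so as to cover a dense open subset of $X$ and hence belongs to a dominating family of the same anticanonical degree. Since by definition $l_X$ is realized by a free rational curve, it follows that $l_X$ is exactly the minimal $(-K_X)$-degree among dominating families. I then fix a dominating family $M$ of rational curves whose $(-K_X)$-degree equals $l_X$.

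Next I would prove that $M$ is unsplit. If $l_X \geq n$, then Theorem~\ref{thm:length} gives $X \simeq \bP^n$ or $\bQ^n$, and the family of lines is an unsplit covering family; on $\bQ^n$ lines have $(-K_X)$-degree $n$ and on $\bP^n$ degree $n+1$, so in each case this family realizes $l_X$ and we are done. Assume therefore $l_X \leq n-1$. If $M$ were not proper, then its closure in the space of cycles would contain a reducible or non-reduced connected limit $\sum_i a_i C_i$ of total $(-K_X)$-degree $l_X$, with at least two components counted with multiplicity, and each irreducible component would satisfy $(-K_X).C_i \geq i_X \geq n-2$ by Lemma~\ref{lem:splitting}. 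This forces $l_X \geq 2(n-2)$, contradicting $l_X \leq n-1 < 2(n-2)$ for $n \geq 5$. Hence $M$ is proper, i.e.\ unsplit. Since the universal family $U$ is then proper and $e \colon U \to X$ is dominating, $e(U)$ is closed and dense, so $e$ is surjective and $M$ is covering.

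It remains to prove chain connectedness. Here I would invoke the rationally chain connected fibration associated with the unsplit covering family $M$ (cf.\ \cite[Chapter~IV]{Kol96}): there is an almost holomorphic map $q \colon X \dashrightarrow Z$ whose general fibre is a maximal subset chain connected by curves of $M$, and every curve of $M$ is contracted by $q$. If $\dim Z > 0$, then resolving $q$ and pulling back an ample divisor from $Z$ would produce a nonzero nef class on $X$ meeting the curves of $M$ trivially; but $\rho_X = 1$ forces the class of these curves to span $\NE(X)$, so any class orthogonal to it must be numerically trivial, a contradiction. Therefore $Z$ is a point and $X$ is chain connected by the curves of $M$, as required.

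The numerically delicate point is the unsplitness step, but the bound $i_X \geq n-2$ renders it essentially automatic away from the boundary value $l_X = n+1$, which occurs only on $\bP^n$ (and, at the value $n$, on $\bQ^n$), where lines already furnish the family. Thus the genuine content lies in the chain-connectedness argument, where the hypothesis $\rho_X = 1$ is used decisively to collapse the rationally chain connected quotient to a point.
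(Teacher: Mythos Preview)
Your proof is correct and follows essentially the same route as the paper: both pick a dominating family of degree $l_X$, dispose of the case $l_X \geq n$ via Theorem~\ref{thm:length} (lines on $\bP^n$ or $\bQ^n$), use the bound $i_X \geq n-2$ from Lemma~\ref{lem:splitting} to force unsplitness when $l_X \leq n-1$, and then deduce chain connectedness from $\rho_X = 1$. The paper simply cites \cite[Proof of Proposition~5.8]{Deb01} or \cite[Proof of Lemma~3]{KMM92b} for the last step, whereas you spell out the underlying rationally-chain-connected quotient argument; the content is the same.
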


\begin{proof}
By the definition of $l_X$, there exists a dominating family of rational curves of anticanonical degree $l_X$ on $X$.
If $l_X \geq n$, then $X \simeq \bP^n$ or $\bQ^n$ by Proposition~\ref{thm:length}.
Then the family parametrizes lines on $X$ and the assertion follows.
Therefore we may assume that $l_X < n$.

Assume that this family is not unsplit.
Then there exists a rational curve of $(-K_X)$-degree $\leq l_X/2<n /2$.
By Lemma~\ref{lem:splitting}, we have $n-2 \leq i_X < \frac{n}{2}$, which implies $n<4$.
This contradicts our assumption $n\geq 5$.

Note that $\rho_X = 1$.
The chain connectedness by rational curves in this family follows from \cite[Proof of Proposition~5.8]{Deb01} or \cite[Proof of Lemma~3]{KMM92b}.
\end{proof}

\begin{definition}\label{def:minRC}
 Let $(X,\sE)$ be a pair as in Theorem~\ref{thm:main} with $\rho_X=1$.
\begin{enumerate}
 \item By taking all the families $M_j$ of rational curves of anticanonical degree $l_X$, we have the following diagram:
 \[
 \xymatrix{
  U \coloneqq \coprod U_j \ar[d]_-{p \coloneqq \coprod p_j} \ar[r]^-{e \coloneqq \coprod e_j} & X \\
  M \coloneqq \coprod M_j,                                                                    &   \\
 }
 \]
 where $p_j \colon U_j \to M_j$ is the universal family over $M_j$ and $e_j \colon U_j \to X$ is the evaluation morphism for each $j$.
 \item We call a rational curve in one of this family a \emph{minimal rational curve} on $X$.
 \item The vector bundle $\sE$ is said to be \emph{uniform} (resp.\ \emph{uniform at a point $x \in X$}) if the isomorphism classes of bundles $\sE |_{\bP^1}$ do not depend on minimal rational curves $f \colon \bP^1 \to X$ (resp.\ minimal rational curves $f \colon \bP^1 \to X$ such that $x \in f(\bP^1)$).
\end{enumerate}
\end{definition}

\begin{remark}
\hfill
\begin{enumerate}
 \item By Proposition~\ref{prop:rcc} there exists at least one unsplit covering family of rational curves of $(-K_X)$-degree $l_X$ on $X$.
 Hence the evaluation morphism $e$ is surjective.
 \item If $l_X \geq n$, then  $X \simeq \bP^n $ or $\bQ^n$ by Proposition~\ref{thm:length}.
 Thus $M$ is the family of lines and hence irreducible.
 \item If $l_X \leq n-1$ then  we do not know a priori whether the family $M$ is irreducible or not.
 Also each family $M_j$ may not be covering.
\item If $l_X \leq n-1$ then each family $M_j$ is unsplit by the proof of Proposition~\ref{prop:rcc}. Also $\sE$ is uniform by Lemma~\ref{lem:splitting}.
\end{enumerate}
\end{remark}

\subsection{Case $\rho_X =1$ and $l_X = n-2$}
Now Theorem~\ref{thm:main} follows in the case of $\rho_X =1$ and $l_X = n-2$:
\begin{proposition}\label{prop:mukai}
Let $(X,\sE)$ be a pair as in Theorem~\ref{thm:main}.
If $\rho_X =1$ and $l_X = n-2$, then $X$ is a Mukai manifold and $\sE \simeq \cO(1^{n-2})$.
\end{proposition}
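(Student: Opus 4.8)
The plan is to reduce the whole statement to the single assertion that $\sE$ splits as a direct sum of line bundles. First I would collect the numerical consequences of the hypothesis. Since $l_X \geq i_X$ and $i_X \geq n-2$ by Lemma~\ref{lem:splitting}, the assumption $l_X = n-2$ forces $i_X = n-2$; by Proposition~\ref{prop:rcc} there is an unsplit covering family $M$ of minimal rational curves, by which $X$ is chain connected, and $\sE|_C \simeq \cO(1^{n-2})$ for every member $C$. The key observation is that it suffices to prove $\sE \simeq \bigoplus_i L_i$ with the $L_i$ line bundles: restricting to a minimal $C$ then gives $L_i\cdot C = 1$ for each $i$, hence $L_i \simeq H_X$ and $H_X\cdot C = 1$; therefore $-K_X = c_1(\sE) = (n-2)H_X$, so $r_X = n-2$ and $X$ has coindex $3$, i.e.\ $X$ is a Mukai manifold, while $\sE \simeq \cO(1^{n-2})$.

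To obtain the splitting I would pass to $\bP(\sE)$. As $c_1(\sE) = -K_X$, one computes $-K_{\bP(\sE)} = (n-2)\xi$, so $\bP(\sE)$ is a Fano manifold of index $n-2$ with $\rho=2$. Because $\sE|_C\simeq\cO(1^{n-2})$ admits a degree-one quotient, every minimal $C$ has a lift $\tC$ with $\xi\cdot\tC = 1$; since $\pi_*[\tC]\neq 0$, the class $[\tC]$ is not proportional to that of a $\pi$-fibre line, and, having minimal $\xi$-degree among lifts, it spans the second ray $R_\varphi$. As every effective curve meets $\xi$ in degree $\geq 1$ and $-K_{\bP(\sE)} = (n-2)\xi$, this yields $\ell(R_\varphi) = n-2$. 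A computation with the relative Euler sequence gives $T_{\bP(\sE)/X}|_{\tC}\simeq \cO(0^{n-3})$, so $\tC$ is free.

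Next I would determine the contraction $\varphi$ of $R_\varphi$. As the minimal curves cover $X$, their lifts cover $\bP(\sE)$; these lifts are $\varphi$-contracted, so a contracted curve passes through a general point and $\varphi$ is of fibre type. For every fibre $F$ the curves in $F$ lie on $R_\varphi\neq R_\pi$, so $\pi|_F$ is finite; hence $n-3 = \ell(R_\varphi)-1 \leq \dim F \leq n$. For a general fibre, adjunction gives $-K_F = (n-2)\xi|_F$, so $F$ is Fano of coindex at most $3$, and divisibility of $-K_F$ by $n-2$ excludes all of the classified manifolds except $F\simeq \bP^{n-3}$, $\bQ^{n-2}$, a del Pezzo $(n-1)$-fold, or a Mukai $n$-fold, each carrying $\xi|_F$ as its fundamental divisor.

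The hard part is to show that a general fibre has $\dim F = n$ and that $\pi|_F\colon F\to X$ is an isomorphism; equivalently, that the number $H_X\cdot C$ equals $1$. This is precisely where the hypothesis $l_X = n-2$ (not merely $i_X = n-2$) and the global geometry must enter: the three lower-dimensional candidates give $\bP(\sE)$ a second, equidimensional projective-bundle type structure with $\pi(F)\subsetneq X$, and, as all four candidate fibres have variety of minimal rational tangents of the same dimension $n-4$, no local count distinguishes them. I would attempt to rule them out by showing that the minimal lifts sweeping out a fixed fibre $F$ project to a covering family of minimal curves on $\pi(F)$, forcing $\pi(F) = X$ through chain connectedness, and that a second equidimensional bundle structure is incompatible with $\rho_X = 1$; once $\dim F = n$ and $\pi|_F$ is an isomorphism, the morphism $(\pi,\varphi)$ identifies $\bP(\sE)$ with $X\times Y$ and $Y\simeq\bP^{n-3}$, so $\sE\simeq\cO(1^{n-2})$ and the reduction in the first paragraph completes the proof.
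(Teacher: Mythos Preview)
Your reduction in the first paragraph is correct, but the route you take afterwards is far more complicated than necessary and is genuinely incomplete.

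The paper's proof is essentially one line: by Proposition~\ref{prop:rcc} there is an unsplit covering family $M$ of degree $n-2$ by which $X$ is chain connected, and $\sE|_{\bP^1}\simeq\cO(1^{n-2})$ for every member by Lemma~\ref{lem:splitting}; then \cite[Proposition~1.2]{AW01} (a trivialisation criterion for bundles that are uniform of type $\cO(a^r)$ over a chain-connecting unsplit family) gives $\sE\simeq L^{\oplus n-2}$ for some line bundle $L$. Now $-K_X=c_1(\sE)=(n-2)L$ with $L$ ample, so $L=H_X$, $r_X=n-2$, and $X$ is a Mukai manifold. No passage through $\bP(\sE)$ is needed.

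Your attempt instead runs the machinery of Sections~\ref{sect:comp}--\ref{sect:DPfib} in this easy case, and two real gaps appear. First, the assertion that $[\tC]$ spans $R_\varphi$ is not justified: having minimal $\xi$-degree among lifts does not make a class extremal; in the Mori cone one could a priori have $[\tC]=a[\ell_\pi]+b[\ell_\varphi]$ with $a>0$, and ruling this out is precisely the content of the comparison theorem (Theorem~\ref{thm:comp}), whose proof is substantial and is stated only under $l_X\ge n-1$. Second, you yourself flag the ``hard part'' --- excluding the possibilities $\dim F\in\{n-3,n-2,n-1\}$ for a general $\varphi$-fibre --- and give only a sketch (``I would attempt to rule them out by\ldots''); the proposed mechanism via chain connectedness of $\pi(F)$ is not made precise, and the final claim that $(\pi,\varphi)$ identifies $\bP(\sE)$ with $X\times Y$ once a general fibre is a section also needs an argument. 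The missing idea is simply the Andreatta--Wi\'sniewski splitting criterion, which makes all of this geometry unnecessary.
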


\begin{proof}
By Proposition~\ref{prop:rcc}, there is an unsplit covering family of rational curves of $(-K_X)$-degree $n-2$ and $X$ is chain connected by rational curves in this family.
Also $\sE$ is uniform by Lemma~\ref{lem:splitting}.
Thus the assertion follows from \cite[Proposition~1.2]{AW01}. 
\end{proof}

\subsection{Length of the other contraction of $\bP(\sE)$}
Let $(X,\sE)$ be a pair as in Theorem~\ref{thm:main} with $\rho_X =1$.
Then $\bP(\sE)$ is a Fano manifold with $\rho_X=2$ and hence  there exists another elementary contraction $\varphi \colon \bP(\sE) \to Y$ by the Kawamata-Shokurov base point free theorem \cite{KMM87,KM98}.
We will denote by $R_{\varphi}$ the ray contracted by $\varphi$ and $H_X$ the ample generator of the Picard group of $X$.

Note that $-K_X = (n-2) \xi _\sE$ and hence the index $r_{\bP(\sE)}$ is $n-2$.
\begin{definition}\label{def:lengthphi}
The length $\ell(R_\varphi)$ is defined as the minimum anticanonical degree of rational curves contracted by $\varphi$:
\[
\ell(R_\varphi) \coloneqq \min \left\{\, -K_{\bP(\sE)}.C \mid \text{$C$ is a rational curve on $\bP(\sE)$ with $[C] \in R_{\varphi}$} \,\right\}.
\] 
\end{definition}

Since the index $r_{\bP(\sE)}$ is $n-2$, we have $\ell(R_\varphi) \geq n-2$.

We will denote by $\Exc (\varphi)$ the exceptional locus of $\varphi$.
Then the inequality of Ionescu and Wi\'sniewski  \cite[Theorem~0.4]{Ion86}, \cite[Theorem~1.1]{Wis91} implies:
\begin{lemma}\label{lem:IWineq1}
Let $F$ be a fiber of $\varphi$ and $E$ an irreducible component of $\Exc(\varphi)$ such that $F \subset E$.
Then $\dim F \leq n$ and
\[
\dim E + \dim F \geq 2n-4 + \ell (R_\varphi) \geq 3n-6.
\]
\end{lemma}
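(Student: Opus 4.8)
The plan is to prove the two assertions separately: the bound $\dim F \leq n$ follows from showing that the projection $\pi$ stays finite on $F$, while the displayed chain of inequalities is a direct application of the Ionescu--Wi\'sniewski inequality combined with the estimate $\ell(R_\varphi) \geq n-2$.

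First I would establish $\dim F \leq n$ by proving that $\pi|_F \colon F \to X$ is finite. Since $\bP(\sE)$ is projective over the projective variety $X$ and $F$ is a fiber of $\varphi$, the restriction $\pi|_F$ is proper, so it is enough to check it is quasi-finite. Let $R_\pi$ denote the extremal ray contracted by $\pi$, and suppose some irreducible curve $C$ lying in $F$ were contracted by $\pi$. Then $C$ would sit inside a fiber of $\pi$, which is a projective space $\bP^{n-3}$, so $[C] \in R_\pi$; on the other hand $C \subset F$ is contracted by $\varphi$, so $[C] \in R_\varphi$. Since $R_\pi$ and $R_\varphi$ are two distinct extremal rays of $\NE(\bP(\sE))$, their intersection is $\{0\}$, forcing $[C] = 0$, which is impossible for a curve. (If $F$ is reducible, the same argument applies to each irreducible component.) Hence $\pi|_F$ has finite fibers, so it is finite and $\dim F = \dim \pi(F) \leq \dim X = n$.

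For the displayed inequality I would record that $\dim \bP(\sE) = n + (\rank \sE - 1) = 2n-3$, and then apply the Ionescu--Wi\'sniewski inequality \cite[Theorem~0.4]{Ion86}, \cite[Theorem~1.1]{Wis91} to the Fano--Mori contraction $\varphi$, the component $E$ of $\Exc(\varphi)$, and the fiber $F \subset E$ (applied to an irreducible component of $F$ of maximal dimension if $F$ is reducible). This yields
\[
\dim E + \dim F \geq \dim \bP(\sE) + \ell(R_\varphi) - 1 = 2n - 4 + \ell(R_\varphi).
\]
Finally, since $r_{\bP(\sE)} = n-2$, every rational curve whose class lies in $R_\varphi$ has anticanonical degree at least $n-2$, i.e.\ $\ell(R_\varphi) \geq n-2$; substituting this into the previous line gives $\dim E + \dim F \geq 3n-6$.

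The only genuinely nonformal step is the finiteness of $\pi|_F$, and even that reduces to the separation of the two extremal rays and is short; the rest is bookkeeping of dimensions. The point requiring the most care is the precise form of the Ionescu--Wi\'sniewski inequality---in particular the $-1$ and the appearance of $\ell(R_\varphi)$ rather than the index $r_{\bP(\sE)}$---since retaining $\ell(R_\varphi)$ (which may strictly exceed $n-2$) is exactly what makes the first, sharper inequality useful in the later sections.
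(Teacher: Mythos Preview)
Your proof is correct and follows the same approach as the paper: the paper simply asserts that the morphism $F \to X$ is finite and then invokes the Ionescu--Wi\'sniewski inequality together with $\ell(R_\varphi) \geq n-2$, exactly as you do. You have merely spelled out the standard reason for finiteness (separation of the two extremal rays $R_\pi$ and $R_\varphi$) that the paper leaves implicit.
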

\begin{proof}
Since the morphism $F \to X$ is finite, it holds $\dim F \leq n$.
The last assertion follows from \cite[Theorem~0.4]{Ion86}, \cite[Theorem~1.1]{Wis91} and the fact $\ell(R_\varphi) \geq n-2$.
\end{proof}

\begin{proposition}\label{prop:lengthphi}
Let $(X,\sE)$ be a pair as in Theorem~\ref{thm:main} and $\rho_X=1$.
Assume that $\ell(R_\varphi) =n-2$.
Then there exists an ample line bundle $\sL$ on $\bP(\sE)$ such that $K_{\bP(\sE)}+(n-2)\sL$ defines the contraction $\varphi$.
\end{proposition}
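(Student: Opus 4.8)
The plan is to exploit that $\bP(\sE)$ is Fano with $\rho_{\bP(\sE)} = 2$ and to pin down the supporting divisor of $\varphi$ explicitly inside the two-dimensional space $N^1(\bP(\sE))$. First I would record the basic data: since $\sE$ is ample the tautological class $\xi$ is ample, $\pi^* H_X$ is nef and defines the contraction $\pi$, and $-K_{\bP(\sE)} = (n-2)\xi$. Because $\rho_X = 1$ we have $\Pic(\bP(\sE)) = \bZ\,\xi \oplus \bZ\,\pi^* H_X$, and $\NE(\bP(\sE))$ is the two-dimensional cone spanned by $R_\varphi$ and the ray $R_\pi$ of $\pi$-fibers. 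I would fix an extremal curve $f$ of $R_\pi$, namely a line in a fiber $\bP^{n-3}$ of $\pi$, so that $\xi.f = 1$ and $(\pi^* H_X).f = 0$.

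Next I would analyse $R_\varphi$ numerically. Let $\gamma$ be a rational curve in $R_\varphi$ realizing $-K_{\bP(\sE)}.\gamma = \ell(R_\varphi) = n-2$. From $-K_{\bP(\sE)} = (n-2)\xi$ we get $(n-2)(\xi.\gamma) = n-2$, and as $\xi$ is ample this forces $\xi.\gamma = 1$; in particular $\gamma$ generates $R_\varphi$. Set $q := (\pi^* H_X).\gamma$. Since $\pi^* H_X$ is nef we have $q \geq 0$, and $q = 0$ is impossible: it would mean that $\pi^* H_X$, which defines $\pi$, is trivial on $\gamma$, forcing $\gamma \in R_\pi$ and contradicting $R_\varphi \neq R_\pi$. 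Hence $q > 0$.

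Finally I would assemble the divisor. Consider $D := q\,\xi - \pi^* H_X$. By construction $D.\gamma = q - q = 0$ while $D.f = q > 0$, and since $\NE(\bP(\sE))$ is spanned by $\gamma$ and $f$, the class $D$ is nef with $D^\perp \cap \NE(\bP(\sE)) = R_\varphi$; thus $D$ is semiample (base point free theorem) and defines the contraction $\varphi$. Now set
\[
\sL := \xi + D = (q+1)\,\xi - \pi^* H_X .
\]
This is an integral class, hence a genuine line bundle, because $\xi$ and $\pi^* H_X$ form a basis of $\Pic(\bP(\sE))$; and it is ample, being the sum of the ample divisor $\xi$ and the nef divisor $D$. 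A direct computation then gives
\[
K_{\bP(\sE)} + (n-2)\sL = -(n-2)\,\xi + (n-2)\big((q+1)\,\xi - \pi^* H_X\big) = (n-2)\,D,
\]
a positive multiple of $D$, which therefore defines $\varphi$, as required.

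The argument is essentially linear algebra on the rank-two lattice $\Pic(\bP(\sE))$, so I do not expect a serious obstacle. The only two points needing care are the identity $\xi.\gamma = 1$, which is precisely where the hypothesis $\ell(R_\varphi) = n-2$ is used, and the verification that the resulting nef divisor $D$ sits on the ray of the nef cone belonging to $\varphi$ rather than to $\pi$; both are settled by the sign computations $D.\gamma = 0$ and $D.f = q > 0$.
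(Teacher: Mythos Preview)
Your proof is correct and follows essentially the same route as the paper: both take a rational curve $\gamma$ in $R_\varphi$ with $\xi.\gamma=1$ (from $\ell(R_\varphi)=n-2$) and set $\sL=(q+1)\xi-\pi^*H_X$ where $q=\pi^*H_X.\gamma$. The paper's proof is terser and leaves the verifications that $q>0$, that $\sL$ is ample, and that $K_{\bP(\sE)}+(n-2)\sL$ supports $\varphi$ to the reader, while you spell these out explicitly.
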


\begin{proof}
If $\ell (R_\varphi) =n-2$, then there is a rational curve $C_ \varphi$ on $\bP(\sE)$ with $[C_ \varphi ]\in R_{\varphi}$ and $\xi.C_{\varphi}=1$.
Then $\sL \coloneqq ({\pi^*H_X}.C_{\varphi} +1)\xi -{\pi^*H_X}$ satisfies the desired properties.
\end{proof}

On the other hand, the following proposition deal the case $\ell(R_\varphi) \neq n-2$:
\begin{proposition}\label{prop:O(2^3)}
 Let $(X,\sE)$ be a pair as in Theorem~\ref{thm:main} and $\rho_X=1$.
 Then the following are equivalent:
\begin{enumerate}
 \item $\ell(R_\varphi) \neq n-2$. \label{prop:O(2^3)1}
 \item $\sE |_{\bP^1} \simeq \cO(2^3)$ for every minimal rational curve $f \colon \bP^1 \to X$. \label{prop:O(2^3)2}
 \item $(X,\sE) \simeq (\bP ^5, \cO(2^{3}))$. \label{prop:O(2^3)3}
\end{enumerate}
\end{proposition}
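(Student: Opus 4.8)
The plan is to run the cycle of implications, treating $(2)\Leftrightarrow(3)$ and $(3)\Rightarrow(1)$ as the elementary parts and $(1)\Rightarrow(3)$ as the core. For $(3)\Rightarrow(1)$ I would use $\bP(\sE)=\bP(\cO(2^3))\simeq\bP^5\times\bP^2$ with $\pi=p_1$ and $\varphi=p_2$, and observe that $\xi=p_2^*\cO_{\bP^2}(1)+2\,\pi^*H_X$; a minimal curve $C$ of $R_\varphi$ is a line in a $\bP^5$-fibre, so $\xi.C=2$ and $\ell(R_\varphi)=(n-2)\cdot 2=6\neq 3$. For $(3)\Rightarrow(2)$ the restriction of $\cO(2^3)$ to a line is $\cO(2^3)$. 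For $(2)\Rightarrow(3)$ the rank forces $n=5$, and $\sE|_{\bP^1}\simeq\cO(2^3)$ forces $l_X=n+1=6$, so $X\simeq\bP^5$ by Theorem~\ref{thm:length} (on $\bQ^5$ one has $l=5$); since $\sE(-2)$ then restricts to $\cO^{\oplus 3}$ on every line, it is trivial (a uniform bundle of constant splitting type, see \cite{OSS80}), whence $\sE\simeq\cO(2^3)$.

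The heart is $(1)\Rightarrow(3)$. As $-K_{\bP(\sE)}=(n-2)\xi$ with $\xi$ ample, $\ell(R_\varphi)$ is a positive multiple of $n-2$, so $\ell(R_\varphi)\neq n-2$ gives $\ell(R_\varphi)\geq 2(n-2)$, which I would feed into Lemma~\ref{lem:IWineq1}. If $\varphi$ were birational and divisorial, then $\dim\Exc(\varphi)=2n-4$ and the inequality forces the general fibre $F$ to have $\dim F=2n-4$, i.e.\ $\Exc(\varphi)$ is contracted to a point; but a $\varphi$-fibre contains no $\pi$-vertical curve (such a curve lies in $R_\pi\neq R_\varphi$), so $\pi|_{\Exc(\varphi)}$ is finite and $2n-4\leq n$, contradicting $n\geq 5$; a small contraction violates the inequality outright. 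Hence $\varphi$ is of fibre type, $\Exc(\varphi)=\bP(\sE)$, and using $\dim F\leq n$ (finiteness of $\pi|_F$) the inequality squeezes $n=5$, $\dim Y=2$, $\dim F=5$, and $\ell(R_\varphi)=6$.

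Next I would identify the fibre and the base. The general fibre $F$ is a Fano $5$-fold with $-K_F=3\,\xi|_F$; every curve of $F$ lies in $R_\varphi$ and so has $\xi$-degree $\geq 2$, while the minimal curves of $R_\varphi$ cover $\bP(\sE)$, so a general $F$ contains one, of $\xi$-degree $2$. Thus $i_F=6=\dim F+1$ and $F\simeq\bP^5$ by \cite{CMSB02} (cf.\ Theorem~\ref{thm:length}). Since $\pi|_F\colon\bP^5\to X$ is finite surjective, $X\simeq\bP^5$ by Lazarsfeld's theorem on finite covers of projective space.

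The remaining, and most delicate, step is to show $\deg(\pi|_F)=1$, and this is where I expect the real work to lie. On $F\simeq\bP^5$ one has $\xi|_F=2H_F$ and $(\pi|_F)^*H_X=d\,H_F$, so $\deg(\pi|_F)=d^5$; on the other hand the fibre class satisfies $[F]=\tfrac{1}{A^2}(\varphi^*A)^2$ with $\varphi^*A\propto d\,\xi-2\,\pi^*H_X$, which I would expand in $\xi,\pi^*H_X$. Comparing the pairings of $[F]$ against the monomials $\xi^{a}(\pi^*H_X)^{7-a}$—computed both by restriction to $F$ and through the Segre classes of $\sE$, using $(\pi^*H_X)^{6}=0$ on $\bP^5$—forces $d=1$. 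Then $\pi|_F$ is an isomorphism, $F$ is a section of $\pi$ with $\xi|_F=2H_X$, $Y\simeq\bP^2$, and the $\bP^2$-family of these sections defines a fibrewise isomorphism $\sE\xrightarrow{\ \sim\ }\cO(2^3)$, yielding $(3)$. Everything preceding this degree computation is dimension counting together with known characterizations, so it is precisely the control of $\pi|_F$ that I anticipate as the main obstacle.
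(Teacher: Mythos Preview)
Your proof is correct, and for $(2)\Leftrightarrow(3)$ and the initial reduction in $(1)\Rightarrow(3)$ (using Lemma~\ref{lem:IWineq1} to force $n=5$, $\varphi$ of fibre type, $\dim Y=2$, $\ell(R_\varphi)=6$) it matches the paper. The divergence comes after that point: the paper invokes \cite[Lemma~4.1]{NO07}, which says directly that when every fibre of the other contraction of $\bP(\sE)$ has dimension $\dim X$ the projective bundle is trivial, i.e.\ $\sE\simeq\cO(a^3)$; then $\bP(\sE)\simeq\bP^2\times X$, $i_X=\ell(R_\varphi)=6$, $X\simeq\bP^5$ by Theorem~\ref{thm:length}, and $a=2$ from $c_1$. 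Your route---identify the general fibre $F\simeq\bP^5$ via its pseudoindex, then $X\simeq\bP^5$ via Lazarsfeld's theorem on finite covers of projective space, then compute $\deg(\pi|_F)$---also works and is more self-contained (no black box from \cite{NO07}), but it is longer and brings in Lazarsfeld's theorem, which the paper does not otherwise need at this stage. Incidentally, the step you flag as the main obstacle is much shorter than you anticipate: since $(d\xi-2\pi^*H_X)^3=0$ in $A^3(\bP(\sE))$ and $\xi^3=6\,\xi^2 H-c_2\,\xi H^2+c_3\, H^3$ (writing $H=\pi^*H_X$ and identifying $c_i(\sE)$ with integers), the coefficient of $\xi^2H$ is $6d^3-6d^2$, so $d=1$ drops out at once; then $\xi-2\pi^*H_X$ supports $\varphi$, so $\sE(-2)$ is nef with $c_1=0$, hence trivial on every line and therefore trivial---no need to first identify $Y$ or to organise the sections into a map $\sE\to\cO(2^3)$.
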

\begin{proof}
The implications \ref{prop:O(2^3)3} $\Rightarrow$ \ref{prop:O(2^3)1} and \ref{prop:O(2^3)3} $\Rightarrow$ \ref{prop:O(2^3)2} are obvious.
The implication \ref{prop:O(2^3)2} $\Rightarrow$ \ref{prop:O(2^3)3} follows from the same argument as in the proof of Proposition~\ref{prop:mukai}.

\ref{prop:O(2^3)1} $\Rightarrow$ \ref{prop:O(2^3)3}.
Assume that $\ell (R_{\varphi}) \neq n-2$.
Then $\ell (R_{\varphi}) \geq 2(n-2)$ since $r_{\bP(\sE)} = n-2$.
Lemma~\ref{lem:IWineq1} implies
\[
\dim E \geq 2n-4- \dim F + \ell (R_\varphi) \geq n -4 + \ell(R_\varphi) \geq 3n-8.
\]
Since $\dim E \leq \dim \bP(\sE )= 2n-3$, this is possible only if 
\[\text{
$n=5$, $\dim E = \dim \bP(\sE )$, $\dim F = 5$ and $\ell (R_ \varphi)=6$.}
\]
In this case, the morphism $\varphi$ is of fiber type and, since $\dim F = 5$ for any $\varphi$-fiber, it holds $\dim Y =2$.
Then $\sE \simeq \cO(a^3)$ for some positive integer $a$ by \cite[Lemma~4.1]{NO07}.

In this case $\bP(\sE) \simeq \bP^2 \times X$ and the contraction $\varphi$ is the first projection.
Thus $i_X = \ell (R_\varphi) =6$.
Hence $X \simeq \bP^5$ by Theorem~\ref{thm:length}.
Since $\sE \simeq \cO(a^3)$ and $c_1(\sE) = c_1(X)$, we have $\sE \simeq \cO(2^3)$.
\end{proof}

\subsection{Sections of the projective bundle $\bP(\sE)$}

In this subsection, \emph{minimal lifts} of a minimal rational curves, which can be regarded as a notion of local sections of $\varphi$, are defined and family of such curves are constructed.
Also we will see how global sections of $\pi$ are constructed by using minimal lifts.

The following ensures the existence of a minimal lift, which will be defined soon later.
\begin{proposition}\label{prop:minimalsection}
Let $(X,\sE)$ be a pair as in Theorem~\ref{thm:main} with $\rho_X =1$ and $\ell(R_{\varphi})=n-2$.
There exists a rational curve $\tC$ on $\bP(\sE)$ with $\xi _ \sE . \tC =1$ and $\pi(\tC)$ is a minimal rational curve.
\end{proposition}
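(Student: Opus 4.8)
The plan is to realize $\tC$ as a section of $\pi$ over a single well-chosen minimal rational curve, built from a line-bundle quotient of its pullback of $\sE$. Recall that, in the Grothendieck convention, a lift to $\bP(\sE)$ of a rational curve $f\colon \bP^1 \to X$ --- that is, a section of $\bP(f^*\sE) \to \bP^1$ --- corresponds to a line-bundle quotient $f^*\sE \twoheadrightarrow \sL$, and along the resulting curve $\tC$ one has $\xi_\sE.\tC = \deg \sL$, since $\cO_{\bP(f^*\sE)}(1)$ is the restriction of $\cO_{\bP(\sE)}(\xi_\sE)$. By Lemma~\ref{lem:splitting} the bundle $f^*\sE$ splits as a direct sum of line bundles of positive degree, so the minimal possible degree of such a quotient equals the smallest summand (projection onto it). Hence, to obtain $\tC$ with $\xi_\sE.\tC = 1$, it suffices to produce a single minimal rational curve $f$ for which $f^*\sE$ has a $\cO(1)$ summand, and then project onto that summand.

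First I would record the elementary observation that, for $n \geq 5$, every splitting type allowed by Lemma~\ref{lem:splitting} contains a $\cO(1)$ summand, with the sole exception of $\cO(2^3)$ (which forces $n = 5$ and degree $n+1$). Indeed, running through the four lists, the inequalities $n-3 \geq 2$ and $n-4 \geq 1$ supply a $\cO(1)$ summand in every case except $\cO(2^3, 1^{n-5})$ with $n = 5$.

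It remains to find a minimal rational curve whose splitting type is not $\cO(2^3)$. Here I would invoke the standing hypothesis $\ell(R_\varphi) = n-2$: by the equivalence \ref{prop:O(2^3)1} $\Leftrightarrow$ \ref{prop:O(2^3)2} of Proposition~\ref{prop:O(2^3)}, this hypothesis precisely rules out that $\sE|_{\bP^1} \simeq \cO(2^3)$ holds for every minimal rational curve. Since minimal rational curves exist by Proposition~\ref{prop:rcc}, there is therefore a minimal rational curve $f\colon \bP^1 \to X$ with $\sE|_{\bP^1} \not\simeq \cO(2^3)$, and by the previous step $f^*\sE$ then has a $\cO(1)$ summand. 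The section of $\bP(f^*\sE) \to \bP^1$ determined by the quotient $f^*\sE \twoheadrightarrow \cO(1)$, composed with the natural morphism $\bP(f^*\sE) \to \bP(\sE)$, yields the desired curve $\tC$ with $\xi_\sE.\tC = 1$ and $\pi(\tC) = f(\bP^1)$ a minimal rational curve.

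The one genuinely delicate point is the exclusion of the splitting type $\cO(2^3)$; everything else is bookkeeping over the lists of Lemma~\ref{lem:splitting}. The main obstacle is that this exclusion cannot be seen directly from the numerics, but must be routed through the already-established equivalence of Proposition~\ref{prop:O(2^3)}, which translates the hypothesis $\ell(R_\varphi) = n-2$ into the existence of a minimal rational curve escaping the bad splitting type. I would also take care to fix the direction of the Grothendieck convention, so that it is the quotient (not the sub) line bundle that computes $\xi_\sE.\tC$, ensuring a $\cO(1)$ quotient gives $\xi_\sE.\tC = 1$ rather than some other value.
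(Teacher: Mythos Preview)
Your proposal is correct and follows essentially the same route as the paper: both arguments locate a minimal rational curve whose pullback of $\sE$ has an $\cO(1)$ summand (ruling out the exceptional type $\cO(2^3)$ via Proposition~\ref{prop:O(2^3)} and the hypothesis $\ell(R_\varphi)=n-2$), and then take the section of $\bP(\sE|_{\bP^1})\to\bP^1$ determined by that summand. Your added remarks on the Grothendieck convention are accurate and simply make explicit what the paper leaves implicit.
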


\begin{proof}
Let $f \colon \bP^1 \to C \subset X$ be a minimal rational curve. 
By taking the base change of $\pi$ by $f$, we obtain the following commutative diagram: 
\begin{equation}\label{diag:P1}
\vcenter{
\xymatrix{
\bP(\sE |_{\bP^1}) \ar[d]_-{\pi_{\bP^1}} \ar[r] \ar@(ur,ul)[rr]^-{\varphi _{\bP^1}} & \bP(\sE) \ar[d]_-{\pi} \ar[r]_-{\varphi} & Y \\
\bP^1 \ar[r]^f                                                                        & X.                                       &   \\
}
}
\end{equation}

There exists at least one minimal rational curve such that  $\sE|_{\bP^1}$ has a direct summand  $\cO(1)$.
Otherwise, $n=5$ and $\sE|_{\bP^1} \simeq \cO(2^3)$ for every minimal rational curve by Lemma~\ref{lem:splitting} and the assumption $n \geq 5$. 
Then $\ell(R_\varphi) =6$ by Proposition~\ref{prop:O(2^3)}, which contradicts our assumption $\ell(R_\varphi) = n-2$.

Then the section of $\pi_{\bP^1} $ corresponding to the direct summand $\cO(1)$ gives a rational curve $\tC$ with the desired properties.
\end{proof}

Let $(X,\sE)$ be a pair as in Theorem~\ref{thm:main} with $\rho_X =1$ and $\ell(R_{\varphi})=n-2$, and $\tf: \bP^1 \to \tC \subset \bP(\sE)$ a rational curve on $\bP(\sE)$.
Set $f \coloneqq \pi \circ \tf$ and $C \coloneqq \pi (\tC)\subset X$.
Assume that $f \colon \bP ^1 \to C \subset X$ is a minimal rational curve, or equivalently $\pi^*(-K_X).\tC=l_X$.

\begin{definition}\label{def:minimalsections}
Let the notation be as above.
\begin{enumerate}
 \item The rational curve $\tf: \bP^1 \to \tC \subset \bP(\sE)$ or $\tC$ itself is called a \emph{minimal lift} of a minimal rational curve $f \colon \bP^1 \to C$ if $\xi _ \sE . \tC =1$.
 \item We denote by $\tM = \coprod \tM_i$ the union of all the families $\tM_i$ of minimal lifts $\tC$ of minimal rational curves:
 \[
 \xymatrix{
  \tM \coloneqq \coprod \tM_i & \tU \coloneqq \coprod \tU_i \ar[r]^-{\te \coloneqq \coprod \te_i} \ar[l]_-{\tp \coloneqq \coprod \tp_i} & \bP(\sE) \ar[d]_-{\pi} \ar[r]_-{\varphi} & Y \\
  M = \coprod M_j             & U = \coprod U_j \ar[r]^-{e} \ar[l]_-{p}                                                                 & X,                                       &   \\
 }
 \]
 where $\tp_i \colon \tU _i\to \tM_i$ is the universal family and $\te_i$ is the evaluation morphism.
\end{enumerate}
\end{definition}

\begin{remark}
\hfill
\begin{enumerate}
\item By the definition, a rational curve $\tf: \bP^1 \to \tC \subset \bP(\sE)$ on $\bP(\sE)$ is a minimal lift of a minimal rational curve if $\pi^*(-K_X).\tC=l_X$ and $\xi _ \sE . \tC =1$.
Therefore, since $\rho _{\bP(\sE)} = 2$, the class $[\tC] \in N_1(\bP(\sE))$ does not depend the choice of $\tC$ or $C$.

\item In some literature, $\tC$ as above is called a \emph{minimal section} of the rational curve $C$.
However we do not know whether $\tC$ is isomorphic to $C$ or not.
Thus we will use the above terminology, though it is not common in the literature.
\end{enumerate}
\end{remark}

We will frequently use the following generalization of \cite[Claim 4.1.1]{PSW92b} to construct a section of $\pi$:

\begin{lemma}\label{lem:sec}
Let $(X,\sE)$ be a pair as in Theorem~\ref{thm:main} with $\rho_X=1$ and $\ell(R_\varphi) =n-2$.
Let $\tC$ be  a minimal lift of minimal rational curve as in Definition~\ref{def:minimalsections}.

Suupose that $V \subset \bP(\sE)$ is a closed subvariety of dimension $n$ such that
\[\NE(V,\bP(\sE)) \subset \langle \bR_{\geq0}[\tC], R_{\varphi} \rangle.
\]
Then $l_X=r_X$, $\NE(V,\bP(\sE)) =  \bR_{\geq0}[\tC]$ and $V$ is a section of $\pi$ corresponding to the following exact sequence:
\[
0 \to \sE _1 \to \sE \to \cO_X(1) \to 0.
\]
\end{lemma}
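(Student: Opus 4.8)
The plan is to translate the hypothesis into numerical geometry on $\bP(\sE)$, where $N_1(\bP(\sE))\cong\bR^2$. Recording a class $\gamma$ by the pair $(\xi.\gamma,\ \pi^*H_X.\gamma)$, the fibre line of $\pi$ has class $(1,0)$ and spans the ray $R_\pi$ contracted by $\pi$; writing $d:=l_X/r_X=H_X.C\ge 1$ for a minimal rational curve $C$, a minimal lift satisfies $[\tC]=(1,d)$; and since $-K_{\bP(\sE)}=(n-2)\xi$ and $\ell(R_\varphi)=n-2$, the ray $R_\varphi$ is spanned by a class $(1,\beta)$ with $\beta\ge d$. Thus $\langle\bR_{\ge0}[\tC],R_\varphi\rangle$ consists of the classes whose slope (second coordinate over first) lies in $[d,\beta]$, and because $d>0$ it does not contain $R_\pi$. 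Consequently no curve of $V$ is contracted by $\pi$, so $\sigma:=\pi|_V\colon V\to X$ is finite; as $\dim V=n=\dim X$ it is finite surjective of some degree $\delta\ge 1$. I also record that on $V$ the divisor $N:=(\pi^*H_X-d\xi)|_V=\sigma^*H_X-d\,\xi|_V$ is nef with $N.[\tC]=0$, while $\xi|_V$ is ample.

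Next I would localize over the covering family of minimal rational curves of Proposition~\ref{prop:rcc}. For a general member $C$ with normalization $f\colon\bP^1\to C$, base change $\pi$ to the ruled variety $P:=\bP(f^*\sE)\to\bP^1$ with tautological class $\xi_P$, and let $W$ be the preimage of $V$, a curve finite of degree $\delta$ over $\bP^1$. Grothendieck's theorem gives $f^*\sE\cong\cO(a_1,\dots,a_{n-2})$ with all $a_i\ge 1$ by Lemma~\ref{lem:splitting}, so $\NE(P)=\langle(1,0),(a_{\min},1)\rangle$ and every degree-$\delta$ effective curve in $P$ has $\xi_P$-degree $\ge\delta a_{\min}$. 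On the other hand $\xi_P.W=\xi.\sigma^{-1}(C)$, and the cycle $\sigma^{-1}(C)$ has class in $\NE(V)$ of slope $\ge d$ and $\pi^*H_X$-degree $\delta d$, forcing $\xi.\sigma^{-1}(C)\le\delta$. Comparing the two bounds forces $a_{\min}=1$ and $\xi.\sigma^{-1}(C)=\delta$, whence $[\sigma^{-1}(C)]=\delta[\tC]$; comparing slopes then shows every irreducible component of $\sigma^{-1}(C)$ has slope exactly $d$, i.e. is a positive multiple of $[\tC]$ and is annihilated by $N$. As $C$ varies these components cover $V$, so $N$ is a nef divisor vanishing on a covering family of curves.

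The heart of the argument, and the step I expect to be the main obstacle, is to upgrade this to the statement that $\sigma$ is an isomorphism. Using that $X$ is chain connected by minimal rational curves (Proposition~\ref{prop:rcc}) and that $\sigma$ is finite surjective, one first wants to see that $V$ is chain connected by the covering family above; a nef divisor vanishing on a connecting covering family is numerically trivial, giving $N\equiv 0$, hence $\sigma^*H_X\equiv d\,\xi|_V$ and $\NE(V)=\bR_{\ge0}[\tC]$. The genuinely delicate point is that this alone does not bound $\delta$: one must rule out that $V$ is a nontrivial cover of $X$ whose sheets are minimal lifts. Here I would exploit that $[\tC]$ has $\xi$-degree exactly $1$ together with $c_1(\sE)=c_1(X)$ and the ampleness of $\sE$: pushing the tautological surjection $\sigma^*\sE\twoheadrightarrow\xi|_V$ forward along the finite morphism $\sigma$ gives a surjection $\sE\otimes\sigma_*\cO_V\twoheadrightarrow\sigma_*(\xi|_V)$ onto a rank-$\delta$ sheaf, and a Chern-class and positivity comparison should force the trace-zero part of $\sigma_*\cO_V$ to vanish, i.e. $\delta=1$. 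With $\delta=1$ and $X$ smooth, $\sigma$ is finite and birational, hence an isomorphism by Zariski's main theorem once $V$ is seen to be normal.

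Finally, granting $\sigma\colon V\xrightarrow{\ \sim\ }X$, the restriction to $V$ of the relative tautological surjection $\pi^*\sE\twoheadrightarrow\cO_{\bP(\sE)}(\xi)$ descends through $\sigma^{-1}$ to a line-bundle quotient $\sE\twoheadrightarrow L$, with kernel the rank-$(n-3)$ subbundle $\sE_1$. Since $\xi$ is ample and $\rho_X=1$ we have $L=mH_X$ with $m\ge 1$. Testing against a minimal curve, the class of $\sigma(C)$ equals $(md,d)$, of slope $1/m$; as it lies in $\NE(V)\subset\langle\bR_{\ge0}[\tC],R_\varphi\rangle$ its slope is $\ge d\ge 1$, which forces $m=1$ and $d=1$. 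Thus $l_X=r_X$, $L=\cO_X(1)$, $[\sigma(C)]=[\tC]$ so that $\NE(V)=\bR_{\ge0}[\tC]$, and $V$ is the section attached to $0\to\sE_1\to\sE\to\cO_X(1)\to 0$, as claimed.
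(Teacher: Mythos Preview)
Your setup and the localisation over minimal rational curves are fine, and your conclusion (the final paragraph) would go through once you know that $\sigma=\pi|_V$ is an isomorphism. The problem is precisely the step you flag as ``the heart of the argument'': you do not actually prove $\delta=1$. The proposed mechanism---pushing $\sigma^*\sE\twoheadrightarrow\xi|_V$ forward and doing ``a Chern-class and positivity comparison'' on $\sigma_*\cO_V$---is a hope, not an argument; there is no evident numerical obstruction that distinguishes a genuine section from a degree-$\delta$ multisection whose sheets are all minimal lifts. You also leave the normality of $V$ (needed for your appeal to Zariski's main theorem) unaddressed, and the claim that $N\equiv0$ requires chain-connectedness of $V$ itself by the lifted family, which you assert but do not justify.

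The paper avoids all of this with a short topological argument. Pass to the normalisation $\bar V$ (so normality of $V$ is never needed) and consider $\pi_{\bar V}\colon\bar V\to X$. For a general minimal rational curve $C$ avoiding the image of $\Sing(\bar V)$, your own computation shows that every $1$-dimensional component of $\pi_V^{-1}(C)$ is a minimal section over the normalisation of $C$; hence the fibre cardinality $x\mapsto\#\pi_{\bar V}^{-1}(x)$ is \emph{constant} along $C$. If $\pi_{\bar V}$ were not \'etale, its branch locus would be a divisor by purity, and since $\rho_X=1$ a general $C$ would meet it without being contained in it, contradicting constancy of the fibre cardinality. So $\pi_{\bar V}$ is \'etale, and since Fano manifolds are simply connected it is an isomorphism. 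This simultaneously gives $\delta=1$, normality of $V$, and $\NE(V)=\bR_{\ge0}[\tC]$ in one stroke; your proposed pushforward/Chern-class route is not needed.
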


\begin{proof}
The following argument is based on \cite[Proof of Claim 4.1.1]{PSW92b}.
Note that $\pi_V \colon V \to X$ is finite by our assumption on the Kleiman-Mori cone.
Let $\bar V$ be the normalization of $V$ and $\pi_{\bar V}$ the composite $\bar V \to V \to X$.
Set $S \coloneqq \pi_{\bar V}(\Sing (\bar V))$ and $\bar S \coloneqq \pi_{\bar V} ^{-1} (S )$.

Then the function $x \mapsto \# (\pi_{\bar V}^{-1}(x))$ is 
lower semicontinuous on $X \setminus S$ and $\pi_{\bar V}$ is \'etale over $x \in X \setminus S$ if $\# (\pi_{\bar V}^{-1}(x)) = \deg \pi_{\bar V}$.

Let $C$ be a general minimal rational curve and $\bigcup_{i=1}^m \tC_i$ the union of all $1$-dimensional irreducible components of $\pi _V ^{-1} (C)$, where $m$ is the number of such components.
Note that 
\[
\NE(\pi^{-1}(C),\bP(\sE)) = \langle R_{\pi} ,\bR_{\geq0}[\tC] \rangle.
\]
Then, by our assumption on the Kleiman-Mori cone, we have $[\tC_i] \in \bR_{\geq0}[\tC]$.
Hence, if we take the normalization $\bP^1 \to C$, the curves $\tC_i$ are images of some minimal sections of $\bP(\sE|_{\bP^1}) \to \bP ^1$.
Hence $ \# (\pi_{\bar V}^{-1}(x)) \geq m$ for $x \in C$ and the equality holds for general $x \in C$.

Assume that $\pi_{\bar V} $ is not \'etale.
Then the branch locus of $\pi _{\bar V}$ is a divisor $B \subset X$ by purity of branch locus.
Since $C$ is general and $\rho _ X =1$, we have $C \not \subset B$ and $C \cap B \neq \emptyset$.
Since $S$ has codimension at least two, a general minimal rational curve $C$ does not meet $S$ by \cite[II. Proposition~3.7]{Kol96}.
This contradicts the semicontinuities.
Hence $\pi_{\bar V} $ is \'etale and hence isomorphism since $X$ is simply connected.
Therefore $V = \bar V$ is a section of $\pi$, which restricts to a minimal section on the normalization $f \colon \bP^1 \to X$ of  each minimal rational curve.
Thus $\NE(V,\bP(\sE)) =  \bR_{\geq0}[\tC]$.

Corresponding to the section $V$, there is an exact sequence:
\[
0 \to \sE _1 \to \sE \to \sL \to 0,
\]
where $\sL$ is ample line bundle such that $\sL|_{\bP^1}=\cO(1)$ for every minimal rational curve $f \colon \bP^1 \to X$.
Thus $\sL \simeq \cO(H_X)$ and hence $l_X=r_X$, which completes the proof.
\end{proof}

\section{Ottaviani bundles and Fano manifolds with two $\bP^2$-bundles}\label{sect:Ott}

Here we recall the definition of the Ottaviani bundles and provide characterizations of the Ottaviani bundle on $\bQ^5$, based on  \cite{Ott88,Ott90,Kan16}.

Let us consider the pair $\left(\bQ^5, {\sG _\bQ} (1)\right)$.
As we will see later, the other contraction $\varphi$ of $\bP(\sG _\bQ)$ is a $\bP^2$-bundle.
This phenomenon arising with $\left(\bQ^5, {\sG _\bQ} (1)\right)$ is intractable in our argument.
Our general strategy is to find or to look at $\varphi$-fibers $F$ whose dimensions are larger than expected.
Since the index $r_{\bP(\sE)}$ is $n-2$, we have $\dim F \geq n-3$ by Lemma~\ref{lem:IWineq1} and in the above case 
the dimension of fibers are smallest as possible.

In the Peternell-Szurek-Wi\'sniewski classification with $r=n-1$, there is a similar possibility with two $\bP^2$-bundle structures \cite[Proposition~7.4~(iii)]{PSW92b}, and the possibility is excluded later in \cite{Wis94,Occ05}.
On the other hand, in our case, $W$ as above actually has two $\bP^2$-bundle structures and compensates the case.

To overcome the difficulties arising when we deal with this situation, we establish two characterizations of the Ottaviani bundle.
Theorem~\ref{thm:Ottaviani} is crucial in the proof of Theorem~\ref{thm:main} for the case $X \simeq \bQ^5$ or $\bQ^6$ (Section~\ref{sect:PQ}).
Also Proposition~\ref{prop:twoP2} will be applied to the most difficult situation in the proof of Theorem~\ref{thm:comp}.

\subsection{Ottaviani bundle}\label{sect:Ottaviani}

A five dimensional hyperquadric $\bQ^5 \subset \bP^6$ contains linear planes, and the linear planes are the maximal linear subspaces on the five dimensional hyperquadric.
Then the planes are parametrized by the \emph{spinor variety} $S_3$, which is known to be isomorphic to $\bQ^6$:
\[
\xymatrix{
       & U' \ar[rd]^-{p'} \ar[ld]_-{e'} &                   \\
 \bQ^5 &                                & S_3 \simeq \bQ^6, \\
}
\]
where $p'$ is the universal $\bP^2$-bundle and $e'$ is the evaluation morphism.
In this paper, we use the following as the definition of the Ottaviani bundles: 
\begin{definition}
Let the notation be as above.
\begin{enumerate}
 \item We call the bundle ${\sG _\bQ} \coloneqq p'_*(e'^*(\cO_{\bQ^5}(1)))$ the \emph{Ottaviani bundle} on $\bQ^6$.
 \item The \emph{Ottaviani bundle} ${\sG _\bQ}$ on $\bQ^5$ is the restriction of the Ottaviani bundle on $\bQ^6$ to a hyperplane section $\bQ^5$.
\end{enumerate}
\end{definition}

\begin{remark}\label{rem:Ott}
\hfill
\begin{enumerate}
\item \label{rem:Ott1} In \cite[Sect.~3]{Ott88}, it is proved that a rank three vector bundle $\sF$ is isomorphic to the Ottaviani bundle if and only if $\sF$ is stable and the Chern classes  coincide with those of $\sG_{\bQ}$.
Note that, on $\bQ^5$, we have $(c_1(\sG_{\bQ}),c_2(\sG_{\bQ}),c_3(\sG_{\bQ}))=(2,2,2)$. \label{thm:Ottaviani2}

\item \label{rem:Ott2} By the definition, ${\sG _\bQ}$ is generated by global sections, the other contraction of $\bP({\sG _\bQ})$ is defined by the tautological divisor and the contraction is of fiber type.
\end{enumerate}
\end{remark}

We need the following characterization of the Ottaviani bundle on $\bQ^5$ (see \cite{Ott88,Ott90,Kan16} for some other characterizations).
\begin{theorem}\label{thm:Ottaviani}
Let $\sF$ be a vector bundle of rank three on $X \simeq \bP^5$ or $\bQ^5$.
Then the following are equivalent:
\begin{enumerate}
 \item $X \simeq \bQ^5$ and $\sF$ is the Ottaviani bundle. \label{thm:Ottaviani1}
 \item $(X, \sE \coloneqq \sF(1))$ is a pair as in Theorem~\ref{thm:main} and the other contraction $\varphi \colon \bP(\sE) \to Y$ is of fiber type with $\ell(R_{\varphi})=3$. \label{thm:Ottaviani4}
\end{enumerate}
\end{theorem}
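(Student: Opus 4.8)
The plan is to prove the equivalence of the two characterizations of the Ottaviani bundle on $\bQ^5$ by establishing each implication separately, with the harder direction being \ref{thm:Ottaviani4} $\Rightarrow$ \ref{thm:Ottaviani1}.

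For the direction \ref{thm:Ottaviani1} $\Rightarrow$ \ref{thm:Ottaviani4}, I would verify directly that the Ottaviani bundle $\sG_\bQ$ on $\bQ^5$ yields a pair $(\bQ^5, \sG_\bQ(1))$ of the required type. First, $\sG_\bQ(1)$ is ample since $\sG_\bQ$ is globally generated (Remark~\ref{rem:Ott}~\ref{rem:Ott2}) and twisting by the ample $\cO(1)$ makes it ample; that $c_1(\sG_\bQ(1)) = c_1(\bQ^5)$ follows from $c_1(\sG_\bQ) = 2$ on $\bQ^5$ (Remark~\ref{rem:Ott}~\ref{rem:Ott1}), giving $c_1(\sG_\bQ(1)) = 2 + 3 = 5 = c_1(\bQ^5)$, and the rank is $3 = n-2$ with $n=5$. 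Then I would analyze the other contraction of $\bP(\sG_\bQ(1)) = \bP(\sG_\bQ)$: by Remark~\ref{rem:Ott}~\ref{rem:Ott2}, the second contraction is defined by the relative tautological divisor and is of fiber type. The point is to identify this contraction with the evaluation/projection structure coming from the spinor variety construction, and to compute that the minimal curves in $R_\varphi$ have $(-K)$-degree $3$, i.e.\ $\ell(R_\varphi) = 3 = n-2$.

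The substantive direction is \ref{thm:Ottaviani4} $\Rightarrow$ \ref{thm:Ottaviani1}. Starting from a pair $(X, \sF(1))$ with $X \simeq \bP^5$ or $\bQ^5$ and $\varphi$ of fiber type with $\ell(R_\varphi) = 3 = n-2$, I would first use Proposition~\ref{prop:lengthphi} to get an ample line bundle $\sL$ with $K_{\bP(\sE)} + 3\sL$ supporting $\varphi$, and analyze the fiber dimensions via Lemma~\ref{lem:IWineq1}: since $\dim F \geq n-3 = 2$ and $\varphi$ is of fiber type, I expect the generic fiber to be a $\bP^2$ (matching the spinor-variety picture) so that $\dim Y = 4 = \dim \bQ^6/\text{something}$ is forced. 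Next I would rule out $X \simeq \bP^5$: on $\bP^5$ a rank-three ample bundle $\sF(1)$ with $c_1 = 6$ and a fiber-type second contraction of the prescribed length should, via the splitting data of Lemma~\ref{lem:splitting} applied to lines and the structure of $\bP(\sE) \to Y$, lead either to a splitting of $\sF$ or to a numerical contradiction, leaving only $X \simeq \bQ^5$. Finally, with $X \simeq \bQ^5$, I would pin down $\sF$ by showing its Chern classes equal $(c_1, c_2, c_3) = (2,2,2)$ and that $\sF$ is stable, then invoke Ottaviani's characterization in Remark~\ref{rem:Ott}~\ref{rem:Ott1} to conclude $\sF \simeq \sG_\bQ$.

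The main obstacle will be the final identification step in the hard direction: extracting the exact Chern classes and the stability of $\sF$ purely from the geometric hypothesis that $\varphi$ is of fiber type with $\ell(R_\varphi) = 3$. Computing $c_1(\sF) = 2$ is immediate from $c_1(\sE) = c_1(X) = 5$ and $\sE = \sF(1)$, but obtaining $c_2$ and $c_3$ requires a careful analysis of the geometry of the fiber-type contraction $\varphi$ and its fibers (likely identifying $\bP(\sE) \to Y$ with the universal family of planes so that $Y \simeq \bQ^6$), and deducing stability will require excluding destabilizing sub- or quotient-line-bundles, which I would handle by testing against the minimal rational curves and using that $\sE$ is uniform (Lemma~\ref{lem:splitting}) together with the restriction $\sE|_{\bP^1} \simeq \cO(2,1)$ coming from $d = n-1 = 4$. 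Controlling this Chern-class and stability computation, rather than the formal logic of the implications, is where the real work lies.
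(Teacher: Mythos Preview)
Your overall architecture is right: prove both implications, with the hard one being \ref{thm:Ottaviani4} $\Rightarrow$ \ref{thm:Ottaviani1}, and end by matching Chern classes plus stability against Ottaviani's characterization (Remark~\ref{rem:Ott}~\ref{rem:Ott1}). But the way you propose to extract the Chern classes and stability has genuine gaps, and the paper's route is both shorter and avoids them.

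\textbf{Chern classes and ruling out $\bP^5$.} You suggest getting $(c_2,c_3)$ by ``likely identifying $\bP(\sE)\to Y$ with the universal family of planes so that $Y\simeq\bQ^6$''. That is circular (it presupposes what you want to prove) and also numerically off: for the Ottaviani bundle on $\bQ^5$ the other contraction is a $\bP^2$-bundle over $\bQ^5$, so $\dim Y=5$, not $4$. The paper does not try to identify $Y$ at all. Instead it invokes Lemma~\ref{lem:chern} (from \cite{Kan16}): once $\dim Y\le 5$ (immediate from Lemma~\ref{lem:IWineq1}) and $\bP(\sF)\not\simeq\bP^2\times X$, the possible Chern classes of a semiample normalization of $\sF$ are a short explicit list. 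The constraint $c_1(\sF(1))=c_1(X)$ then kills the $\bP^5$ cases and forces $X\simeq\bQ^5$ with $(c_1,c_2,c_3)(\sF)=(2,2,2)$ in one stroke. Your proposed ad hoc elimination of $\bP^5$ via splitting data is unnecessary once you have this lemma.

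\textbf{Stability.} Your plan (``testing against minimal rational curves \ldots\ $\sE|_{\bP^1}\simeq\cO(2,1)$ coming from $d=n-1=4$'') contains slips: $\sE$ has rank $3$, and on $\bQ^5$ lines have $-K_X$-degree $5$, so the splitting types are $\cO(3,1,1)$ or $\cO(2,2,1)$, not $\cO(2,1)$. More importantly, the paper's stability argument is cohomological and short: stability of a rank-$3$ bundle with $c_1=2$ amounts to $H^0(\sF(-1))=0$ and $H^0(\sF^*)=0$. The first holds because $\xi_\sF-\pi^*H_X$ supports a \emph{fiber-type} contraction, hence cannot be effective. The second holds because a nonzero section of $\sF^*$ would give, by nefness of $\sF$, a trivial subbundle $\cO\subset\sF^*$ \cite[Proposition~1.2~(12)]{CP91}, contradicting $c_3(\sF)=2\neq 0$. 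No restriction to curves is needed.

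\textbf{Easy direction.} Your verification of \ref{thm:Ottaviani1} $\Rightarrow$ \ref{thm:Ottaviani4} is fine; the paper compresses it to Remark~\ref{rem:Ott}~\ref{rem:Ott2} (fiber type, tautological divisor) together with Proposition~\ref{prop:O(2^3)} to get $\ell(R_\varphi)=n-2=3$.
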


\begin{proof}

\ref{thm:Ottaviani1} $\Rightarrow$ \ref{thm:Ottaviani4}.
This follows from Remark~\ref{rem:Ott}~\ref{rem:Ott2} and Proposition~\ref{prop:O(2^3)}.

\medskip
\ref{thm:Ottaviani4} $\Rightarrow$ \ref{thm:Ottaviani1}.
Assume that $\sF$ satisfies \ref{thm:Ottaviani4}.
Then we have $\dim Y \leq5$ by Lemma~\ref{lem:IWineq1}.
Then, by Lemma~\ref{lem:chern} below and the condition $c_1(\sF(1)) = c_1 (X)$, we have $X \simeq \bQ^5$ and $(c_1(\sF),c_2(\sF),c_3(\sF))=(2,2,2)$.

By Remark~\ref{rem:Ott}~\ref{rem:Ott1}, it is enough to prove that $\sF$ is stable.
The stability of $\sF$ is equivalent to the conditions $H^0(\sF (-1))=0$ and $H^0(\sF ^*)=0$.
Since the other contraction of $\bP(\sF)$, which is defined by the semiample divisor $\xi_{\sF}$, is of fiber type, we have $H^0(\sF (-1))= H^0(\xi_{\sF}-{\pi^*H_X})=0$.
On the other hand, if $H^0(\sF ^*) \neq 0$, then the section defines a subbundle $\cO \subset \sF ^*$ by \cite[Proposition~1.2~(12)]{CP91}.
This contradicts the fact that $c_3(\sF)\neq0$.
Therefore we also have $H^0(\sF ^*) = 0$.
\end{proof}

\begin{lemma}[{\cite[Lemma~2.10~(3)]{Kan16}}]\label{lem:chern}
Let $\sF$ be a vector bundle of rank three on $X \simeq \bP^5$ or $\bQ^5$.
Assume that $\bP(\sF)$ is a Fano manifold and the other contraction $\bP(\sF) \to Y$ is of fiber type with $\dim Y \leq 5$ and that $\bP(\sF) \not \simeq \bP^2 \times X$.

Then, up to twist with a line bundle, $\sF$ is semiample and one of the following holds:
\begin{enumerate}
 \item $X \simeq \bP^5$ and $(c_1(\sF),c_2(\sF),c_3(\sF))=(2,2,1)$ or $(4,8,8)$,
 \item $X \simeq \bQ^5$ and $(c_1(\sF),c_2(\sF),c_3(\sF))=(2,2,2)$ or $(4,8,16)$.
\end{enumerate}
\end{lemma}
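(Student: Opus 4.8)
The plan is to read the numerical data of $\sF$ off the second contraction, after putting the tautological divisor in a canonical position. Since $\rho_X = 1$, the $\bP^2$-bundle $\bP(\sF)$ has Picard number two, with the two extremal contractions $\pi \colon \bP(\sF) \to X$ and $\varphi \colon \bP(\sF) \to Y$. First I would normalize $\sF$ by a twist $\sF \rightsquigarrow \sF \otimes \cO_X(m)$ so that the relative tautological divisor $\xi_\sF$ becomes the nef supporting divisor of the ray $R_\varphi$; concretely, if $C_\varphi$ generates $R_\varphi$ then one checks that $\pi(C_\varphi)$ is a line on $X$ (this is where one uses that $X$ is $\bP^5$ or $\bQ^5$, so that $\pi|_{C_\varphi}$ is finite onto a minimal rational curve and the required twist is integral). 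With this normalization $\xi_\sF$ is trivial on $R_\varphi$ and positive on the fibres of $\pi$, hence by the base point free theorem it is semiample and numerically equals $\varphi^* A$ for an ample $\bQ$-divisor $A$ on $Y$; in particular $\sF$ itself is semiample, which is the first assertion.

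Next I would convert the hypothesis $\dim Y \le 5$ into vanishing of intersection numbers. Since $\xi_\sF \equiv \varphi^* A$ and $\dim Y \le 5$, we get $\xi_\sF^{6} = 0$ in $H^{12}(\bP(\sF))$. Because $\{1, \xi_\sF, \xi_\sF^2\}$ is a free basis of $H^*(\bP(\sF))$ over $H^*(X)$, I would expand $\xi_\sF^6$ in this basis using the Grothendieck relation
\[ \xi_\sF^{3} = c_1(\sF)\,\xi_\sF^{2} - c_2(\sF)\,\xi_\sF + c_3(\sF), \]
and push forward by $\pi$. Using $\pi_*\xi_\sF^{\,2+i} = s_i(\sF)$ one finds that the two nontrivial coefficients of $\xi_\sF^6$ are the Segre classes $s_4(\sF)$ and $s_5(\sF)$, so that $\xi_\sF^6 = 0$ is equivalent to the two equations $s_4(\sF) = 0$ and $s_5(\sF) = 0$.

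Finally I would solve these equations. Writing $c_i(\sF)$ as integer multiples of the generators of $H^{2i}(X)$ --- and here keeping track of the intersection ring of $\bQ^5$, where the generator of $H^6$ is a class $\lambda$ with $H_X^3 = 2\lambda$, so that $c_3$ is recorded with an extra factor of two compared with $\bP^5$ --- the relations $s_4 = s_5 = 0$ become two explicit Diophantine equations in the three integers $(c_1,c_2,c_3)$. To cut the resulting solution curve down to finitely many admissible lattice points I would invoke the positivity coming from semiampleness (a nef bundle has nonnegative Chern and Segre classes, and its restriction to any line has nonnegative splitting type) together with the Fano inequality obtained from $-K_{\bP(\sF)} = 3\xi_\sF - \pi^*(K_X + c_1(\sF))$ being ample, which bounds $c_1$ and forces $c_3 \ge 0$. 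A direct check then shows that the only admissible solutions are $(2,2,1)$ and $(4,8,8)$ on $\bP^5$ and $(2,2,2)$ and $(4,8,16)$ on $\bQ^5$, the trivial solution being exactly the excluded case $\bP(\sF) \simeq \bP^2 \times X$.

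I expect the main obstacle to be the normalization step together with the Diophantine bookkeeping. Establishing that the supporting divisor of $\varphi$ is exactly $\xi_\sF$ (rather than a proper multiple), so that the twist is integral and $\sF$ is genuinely semiample, requires a careful analysis of the curve $C_\varphi$ and its image under $\pi$; and the final elimination must be carried out separately over $\bP^5$ and $\bQ^5$, because the differing integral structures of $H^6$ (and the factor $H_X^5 = 2$ on the quadric) change which lattice points are allowed. The Segre vanishing $s_4 = s_5 = 0$ is the clean part of the argument; isolating precisely the four profiles, without missing genuine solutions or admitting spurious ones, is where the real work lies.
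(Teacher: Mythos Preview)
The paper does not give an independent proof of this lemma; it simply cites \cite[Lemma~2.10~(3)]{Kan16} and records in the accompanying remark that the argument there uses only $\dim Y \le 5$ and the Fano condition on $\bP(\sF)$. Your proposal is therefore a reconstruction of the cited argument rather than a comparison against something in the paper, and in that sense it is essentially correct: the engine is exactly the one you describe, namely that $\dim Y \le 5$ forces $\xi_\sF^6 \equiv 0$, which via $\pi_*$ translates into the vanishing of $s_4(\sF)$ and $s_5(\sF)$, and these two polynomial relations in $(c_1,c_2,c_3)$ together with the Fano bound $0 < r_X - c_1$ leave only the listed profiles (the excluded case $\bP^2 \times X$ being $c_1 = c_2 = c_3 = 0$). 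One small imprecision: the coefficients of $\xi_\sF^6$ in the basis $\{1,\xi_\sF,\xi_\sF^2\}$ are not literally $s_4$ and $s_5$ but $s_4$ and $s_5 - c_1 s_4$; your conclusion is unaffected.

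The one genuine gap is the normalization step. You claim that a generator $C_\varphi$ of $R_\varphi$ has $\pi(C_\varphi)$ a line on $X$, so that the twist making $\xi_\sF$ support $\varphi$ is integral; but this is not justified, and a priori the primitive supporting divisor is $a\xi + b\pi^*H_X$ with $a > 1$ possible. The paper's remark points to the cleaner route taken in \cite{Kan16}: rather than normalize first, one carries the rational invariant $\tau$ (defined by $-K_\pi + \tau\,\pi^*H_X$ supporting $\varphi$) through the Segre computation as a parameter, and the Diophantine analysis itself forces $\tau$ to be an integer---indeed, your elimination shows that for fixed $c_1$ the system $s_4 = s_5 = 0$ has a unique real solution, integral only when $c_1 \in \{0,2,4\}$. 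Semiampleness of the twist then follows from $\tau = c_1(\sF)$ a posteriori, rather than being an input.
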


\begin{remark}
This is already formulated in \cite[Lemma~2.10~(3)]{Kan16}.
Note that the invariant $\tau$ in \cite[Lemma~2.10]{Kan16} is the rational number such that $-K_\pi + \tau \pi ^* H_X$ defines the other contraction of the projectivized vector bundle (cf.\ \cite[Proposition~1.6]{Kan16}).
Thus the vector bundle is semiample if and only if $\tau = c_1(\sE)$.
\end{remark}

\begin{proof}[Proof of Lemma~\ref{lem:chern}]
This follows from \cite[Proof of Lemma~2.10]{Kan16}.
The proof only uses the conditions that $\dim Y \leq 5$ (cf.\ \cite[Lemma~2.9]{Kan16}) and that $\bP(\sF)$ is a Fano manifold.
 
\end{proof}

\subsection{Fano manifolds with two $\bP^2$-bundles}
Let $\sG_\bQ$ be the Ottaviani bundle on $X \simeq \bQ^5$.
Then, in \cite[Theorem~2.2 and 2.6]{Kan16}, it is proved that  $\bP_{X}(\sG_\bQ)$ is a Fano $7$-fold with Picard number two, which has a symmetric structure;
the other elementary contraction $\varphi$ of $\bP_{X}(\sG_\bQ)$ is a $\bP^2$-bundle over $Y \simeq \bQ^5$ and it is again the projectivization of the Ottaviani bundle:
\[
\xymatrix{
   & \bP_{X}(\sG_\bQ) \simeq \bP_{Y}(\sG_\bQ)  \ar[ld]_-{\pi} \ar[rd]^-{\varphi}  &   \\
 X\simeq \bQ^5 &                                        & Y \simeq \bQ^5.
}
\]

There is a closed subvariety $V \subset \bP_X(\sG_\bQ)$ such that $V$ is a section of both projection $\pi$ and $\varphi$.
Indeed, by \cite[Example 3.3]{Ott90}, there is the following exact sequence on $X$:
\[
0 \to \sC(1) \to \sG_{\bQ} \to \cO_X(1) \to 0,
\]
where $\sC$ is the Cayley bundle on $X \simeq \bQ^5$.
Thus there is a section $V \subset \bP_{X}(\sG_\bQ)$ of $\pi$ corresponding to the exact sequence.
Note that the other contraction $\varphi$ is defined by the relative tautological divisor $\xi_{\sG_\bQ}$.
Thus $V$ is also a section of $\varphi$.

The following characterizes Fano manifolds with the above properties.
\begin{proposition}\label{prop:twoP2}
Let $W$ be a Fano manifold with Picard number two.
Assume that two elementary contractions $p_i$ $(i=1,2)$ are $\bP^2$-bundles and there exists a closed subvariety $V \subset W$ which is a section for both projections $p_i$.
Then $W$ is one of the following:
\begin{enumerate}
 \item $\bP^2\times \bP ^2$,
 \item $\bP(T_{\bP^3})$,
 \item $\bP({\sG _\bQ})$ over $\bQ^5$.
\end{enumerate}
\end{proposition}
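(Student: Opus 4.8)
The plan is to exploit the rigidity of the two $\bP^2$-bundle structures together with the common section $V$ to pin down the two base manifolds and then reconstruct $W$ as a projectivized bundle. First I would analyze the restriction of each contraction to the section. Since $V$ is a section of $p_1\colon W \to Y_1$, the map $p_1|_V\colon V \to Y_1$ is an isomorphism; likewise $p_2|_V\colon V \to Y_2$ is an isomorphism, so $Y_1 \simeq V \simeq Y_2$. Because each $p_i$ is a $\bP^2$-bundle, each $Y_i$ is a smooth Fano manifold of dimension $\dim W - 2$, and $\rho_{Y_i}=1$ by $\rho_W=2$. Thus $V$ is a Fano manifold of Picard number one. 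The key numerical input is that a fiber $F_2 \simeq \bP^2$ of $p_2$ maps, under $p_1$, to a subvariety of $Y_1$ whose class I can control via intersection with the tautological divisors $\xi_1,\xi_2$ of the two bundle structures.

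The core computation is to understand how the two extremal rays interact on $V$. Writing $\ell_i$ for the class of a line in a fiber of $p_i$, the cone $\NE(W)$ is spanned by $[\ell_1]$ and $[\ell_2]$, and since $V$ is a section of both projections, the normal bundle $N_{V/W}$ and the restrictions $\cO_W(\xi_i)|_V$ determine an ample line bundle on $V \simeq Y_i$. I would compute the intersection numbers $\xi_i \cdot \ell_j$: one has $\xi_i \cdot \ell_i = 1$ (tautological on its own fibers) while $\xi_i \cdot \ell_j$ for $j \neq i$ is a positive integer $d$ measuring the degree of the other ruling. The condition that \emph{both} contractions are $\bP^2$-bundles forces strong constraints via the index computation on $V$: applying Theorem~\ref{thm:rankn} type reasoning (or directly the classification of Fano manifolds of large index, Theorem~\ref{thm:length}) to $V$ should restrict $V$ to projective space or a quadric of small dimension. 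In fact, since each $p_i$ realizes $W = \bP_{Y_i}(\sE_i)$ for a rank-three bundle $\sE_i$ on $Y_i$, the existence of the common section gives exact sequences $0 \to \sE_i' \to \sE_i \to \cO_{Y_i}(1) \to 0$ analogous to Lemma~\ref{lem:sec}, and the symmetry of the two structures severely limits the possibilities for $(Y_1, \sE_1)$.

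The main obstacle, as I see it, is showing that $\dim W$ is small --- concretely that $\dim V \le 5$, so that $V$ is one of $\bP^2$, $\bP^3$, or $\bQ^5$. The difficulty is that a priori the common section could exist on $W$ of arbitrarily large dimension, and ruling this out requires combining the positivity of $N_{V/W}$ (coming from ampleness of $-K_W$ restricted to $V$) with the two conflicting $\bP^2$-fibration constraints. I expect the argument to run by computing $-K_W|_V$ in terms of $\xi_1|_V$ and $\xi_2|_V$, deducing that $V$ has index at least $\dim V - 1$, and then invoking the large-index classification to conclude $V \in \{\bP^2, \bP^3, \bQ^5\}$. Each of these three cases is then matched with one entry of the list: $V \simeq \bP^2$ yields $W \simeq \bP^2 \times \bP^2$ (the two rulings being the two projections), $V \simeq \bP^3$ yields $W \simeq \bP(T_{\bP^3})$ (recognizing the incidence variety of points and hyperplanes, which carries exactly two $\bP^2$-bundle structures), and $V \simeq \bQ^5$ yields $W \simeq \bP(\sG_\bQ)$ by matching with the symmetric Ottaviani construction recalled above.

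Once $V$ is identified, reconstructing $W$ should be routine: in each case one identifies the rank-three bundle $\sE_i$ on $Y_i$ from the extension data and recognizes it as $\cO(1)^{\oplus 3}$, $T_{\bP^3}$ (suitably twisted), or $\sG_\bQ$ respectively, using uniqueness of such bundles with the prescribed Chern data (Remark~\ref{rem:Ott}~\ref{rem:Ott1} for the Ottaviani case, and Horrocks-type or homogeneity arguments for the other two). I would close by verifying that each candidate indeed admits two $\bP^2$-bundle structures sharing a common section, so the list is exhaustive and each entry genuinely occurs.
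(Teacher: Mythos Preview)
Your approach is genuinely different from the paper's, and the gap you yourself flag as ``the main obstacle'' is real. The paper does \emph{not} attempt an index computation on $V$. Instead it blows up $W$ along $V$: each $(p_i\circ\psi)$-fiber becomes a Hirzebruch surface $\bF_1$, and contracting the new rays yields smooth $\bP^1$-fibrations $\tp_i\colon\tW\to\tX_i$ with the exceptional divisor $E$ mapping isomorphically onto each $\tX_i$. Invoking \cite[Theorem~2.2]{Kan15a}, the paper then obtains further contractions $g_i\colon\tX_i\to Z$ whose $g_i\circ\tp_i$-fibers are rank-two complete flag manifolds; the common-section hypothesis forces these fibers to be $\bP^1\times\bP^1$, so each $g_i$ is a smooth $\bP^1$-fibration. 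By \cite{OSWW14} the $\tX_i$ are themselves complete flag manifolds of Picard number two, whence $X$ and $Y$ are rational homogeneous of dimension $\le 5$, and the classification in \cite[Propositions~4.1, 4.3]{Kan16} finishes. The dimension bound thus comes \emph{indirectly}, through the structure theory of manifolds with multiple $\bP^1$-bundle structures, not from an index estimate on $V$.

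Your proposed route---show $r_V \ge \dim V -1$ and classify---does not go through as written. The restriction $-K_W|_V$ being ample says nothing about $N_{V/W}$ being ample or even nef (e.g.\ in the Ottaviani case $N_{V/W}$ is a twist of the Cayley bundle, which is not nef), so your mechanism for producing a large index is missing. Even granting an index bound of $\dim V -1$, you would land among del Pezzo manifolds as well, and the further reduction to $\{\bP^2,\bP^3,\bQ^5\}$ would require exactly the kind of case-by-case matching you hope to avoid; in particular, explaining why no four-dimensional base occurs needs a separate argument. Finally, the reconstruction step (``$V\simeq\bP^3$ forces $W\simeq\bP(T_{\bP^3})$'') presupposes a classification of rank-three Fano bundles on $\bP^3$ with the relevant symmetry, which is itself nontrivial. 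The blow-up reduction to $\bP^1$-bundles is what makes all of these issues disappear simultaneously.
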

\begin{proof}
Let $p_1 \colon W \to X$ and $p_2 \colon W \to Y$ be the two $\bP ^2$-bundle.
Let $\psi : \tW \to W$ be the blow up of $W$ along $V$, $E$ the exceptional divisor and $R_{\psi}$ the extremal ray of $\psi$.
Then each $(p_i \circ \psi)$-fiber is the Hirzebruch surface $\bF_1$.
Hence $p_i \circ \psi$ contracts $K_{\tW}$-negative face of dimension $2$, which is spanned by $R_{\psi}$ and the other ray $R_i$.
By contracting extremal rays $R_i$, we have two contractions $\tp _1 \colon \tW \to \tX$ and $\tp _2 \colon \tW \to \tY$ as in the following diagram:
\[
\xymatrix{
                                        & E \ar[rrrd] \ar@{^{(}->}[d]^-j                  &                                       &   &                                        &   \\
                                        & \tW \ar[rrrd]^{\psi} \ar[rd]^-{\tp_2} \ar[ld]_-{\tp_1} &                                       &   & V \ar@{^{(}->}[d]^-i                   &   \\
\tX \ar[rrrd]^(0.3){f_1} \ar[dr]_-{g_1} &                                                 &\tY \ar[rrrd]^(0.3){f_2} \ar[ld]^-{g_2}&   & W \ar[ld]_(0.4){p_1} \ar[rd]^(0.4){p_2}&   \\
                                        & Z                                               &                                       & X &                                        & Y
}
\]
As each $(p_i \circ \psi)$-fiber is $\bF_1$, $\tp_i$ and $f_i$ are smooth $\bP^1$-fibrations and $\tp_i \circ j$ are isomorphisms.
By \cite[Theorem~2.2 and Remark~2.3]{Kan15a} there exist two smooth elementary contractions $g_i$ such that $g_1 \circ \tp_1 = g_2 \circ \tp_2$ and each fiber of $g_i \circ \tp_i$ is isomorphic to a complete flag manifold of Picard number two.

Note that $E \simeq \tX \simeq \tY$.
Let $F$ be a  $g_i \circ \tp_i$-fiber.
Then both $\tp_1|_F$ and $\tp_2|_F$ are $\bP ^1$-bundles, and $E \cap F$ is a section for both  $\bP ^1$-bundles.
Hence each $g_i \circ \tp_i$-fiber is isomorphic to $\bP^1 \times \bP^1$  and $g_i$ are smooth $\bP^1$-fibrations.
This implies that $\tX$ and $\tY$ are isomorphic to a complete flag manifold of Picard number two by \cite{OSWW14} and hence $X$ and $Y$ are isomorphic to a rational homogeneous manifold of dimension at most five.
Then the assertion follows from  the classification given in \cite[Propositions~4.1 and 4.3]{Kan16}.
\end{proof}

\section{Comparison theorem}\label{sect:comp}

In the rest of this paper, we assume the following by virtue of Propositions~\ref{prop:rho}, \ref{prop:mukai}, \ref{prop:O(2^3)}:
\begin{setting}\label{set:comp}
$(X,\sE)$ is a pair as in Theorem~\ref{thm:main} with $\rho_X=1$, $l_X \geq n-1$  and $\ell (R_\varphi) =n-2$. 
\end{setting}
We use the notations as in Definitions~\ref{def:minRC} and \ref{def:minimalsections}.
In this section we will prove that every minimal lift $\tC$ of  a minimal rational curve $C$ is contracted by $\varphi$:
\begin{theorem}\label{thm:comp}
Let $(X,\sE)$ be a pair as in Setting~\ref{set:comp}.
Then $\bR_{\geq0} [\tC] = R_\varphi$ and hence $l_X \xi + \pi ^* K_X = l_X \xi - \pi^* c_1(\sE)$ is a supporting divisor of the contraction $\varphi$.
\end{theorem}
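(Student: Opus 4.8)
The plan is to reduce the whole statement to the nefness of the single divisor $D \coloneqq l_X \xi + \pi^* K_X = l_X\xi - \pi^* c_1(\sE)$. Since $\sE$ is ample the divisor $\xi$ is ample, so $\NE(\bP(\sE))$ is the two-dimensional cone $\langle R_\pi, R_\varphi\rangle$, where $R_\pi$ is generated by a line $\ell$ in a $\pi$-fibre (with $\xi.\ell = 1$ and $\pi^*(-K_X).\ell = 0$) and $R_\varphi$ is generated by a rational curve $C_\varphi$ with $\xi.C_\varphi = 1$; such a generator exists because $\ell(R_\varphi) = n-2 = (n-2)\,\xi.C_\varphi$. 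A direct computation gives $D.\tC = l_X - l_X = 0$ (using $\xi.\tC = 1$ and $K_X.\pi_*\tC = -l_X$) and $D.\ell = l_X > 0$. Hence, as soon as $D$ is nef, its null face on the two-dimensional cone $\NE(\bP(\sE))$ is a proper nonzero face different from $R_\pi$, i.e.\ exactly $R_\varphi$; since $\tC$ is effective with $D.\tC = 0$ and $[\tC] \notin R_\pi$ (because $\pi^*(-K_X).\tC = l_X > 0$), this forces $\bR_{\geq 0}[\tC] = R_\varphi$ and exhibits $D$ as a supporting divisor of $\varphi$. So everything reduces to proving that $D$ is nef.

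Because $\NE(\bP(\sE)) = \langle R_\pi, R_\varphi\rangle$ and $D.R_\pi = l_X > 0$, nefness of $D$ is equivalent to the single inequality $D.C_\varphi \geq 0$, that is, to $a \leq l_X$ where $a \coloneqq \pi^*(-K_X).C_\varphi$. One direction is automatic: writing classes in the coordinates $(\xi.\,\cdot\,,\ \pi^*(-K_X).\,\cdot\,)$, the effective class $[\tC] = (1, l_X)$ lies in the cone $\langle (1,0),(1,a)\rangle$ and is not contracted by $\pi$, whence $l_X \leq a$. Thus the entire content of the theorem is the reverse inequality $a \leq l_X$: the extremal generator $C_\varphi$, which already has $\xi$-degree $1$, should also have $\pi^*(-K_X)$-degree at most $l_X$. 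This yields $a = l_X$ and hence $[C_\varphi]$ proportional to $[\tC]$.

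To prove $a \leq l_X$ I would argue on a fibre $F$ of $\varphi$ through $C_\varphi$. By Lemma~\ref{lem:IWineq1} the morphism $F \to X$ is finite and $n-3 \leq \dim F \leq n$; in particular $C' \coloneqq \pi(C_\varphi)$ is a rational curve, and pulling $\sE$ back to its normalization shows that $\sE|_{C'}$ splits with smallest summand $\cO(1)$. The goal is to show that $C_\varphi$ is realised as a (specialisation of a) minimal lift, equivalently that every $\varphi$-fibre is swept out by minimal lifts, so that $[C_\varphi] = [\tC]$. When the fibres are large, i.e.\ $\dim F > n-3$, this can be forced by a dimension count: the locus covered by minimal lifts meets $F$ in positive dimension, and since $\tC$ realises the pseudoindex $i_{\bP(\sE)} = n-2$ it is unbreakable (its family is unsplit), so the class of curves through a general point of $F$ must be $[\tC]$.

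The hard part will be the boundary case of smallest fibres $\dim F = n-3$, where $\varphi$ behaves like a $\bP^2$-bundle and the dimension count degenerates; this is precisely the Ottaviani situation on $\bQ^5$. In this configuration $\bP(\sE)$ acquires a second $\bP^2$-bundle structure together with a subvariety that is a section for both projections, and I would invoke Proposition~\ref{prop:twoP2} to classify $\bP(\sE)$, and hence the pair $(X,\sE)$, from the short resulting list, checking $a = l_X$ directly in each case. Excluding an interior class $[\tC]$ except when $\bP(\sE)$ is one of these symmetric two-$\bP^2$-bundle manifolds is the main obstacle, and it is exactly the point at which the structure theory of Section~\ref{sect:Ott} is indispensable.
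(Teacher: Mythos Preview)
Your reduction to the nefness of $D = l_X\xi + \pi^*K_X$, and then to the single inequality $a \le l_X$ with $a \coloneqq \pi^*(-K_X).C_\varphi$, is correct and matches the paper's implicit strategy; the observation $a \ge l_X$ is also fine. But the content of the theorem is precisely the reverse inequality, and here your sketch has a genuine gap.

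First, the claim that ``the locus covered by minimal lifts meets $F$ in positive dimension'' whenever $\dim F > n-3$ is not justified and is in general false without further work. Since $F \to X$ is finite (Lemma~\ref{lem:IWineq1}), $F$ meets each $\pi$-fibre $\pi^{-1}(x)\simeq\bP^{n-3}$ in finitely many points, while $\te(\tU)_x$ has dimension only $\ge 2n-5-l_X \le n-4$; so there is no a priori reason for $\te(\tU)$ even to touch a given $\varphi$-fibre. When $\varphi$ is birational this is exactly the substance of Proposition~\ref{prop:int}, whose proof occupies all of \S\ref{sect:int} and already relies on the splitting criteria of \S\ref{sect:split}, Lemma~\ref{lem:BB}, and a case-by-case analysis in $(n,l_X,\dim E)$. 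Your proposal does not address the birational case at all.

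Second, even once one knows $\te(\tU_0)\cap F \ne \emptyset$ for \emph{some} fibre $F$, the inference ``the class of curves through a general point of $F$ must be $[\tC]$'' does not follow from unsplitness of $\tM$: through a general point of $F$ one certainly has curves of class $[C_\varphi]$, and nothing you wrote rules out $[C_\varphi]\ne[\tC]$. What the paper uses instead is the locus $\Locus(\tM_0)_F$ together with the cone constraint $\NE(\Locus(\tM_0)_F,\bP(\sE)) \subset \langle \bR_{\ge 0}[\tC], R_\varphi\rangle$ and the dimension bound $\dim\Locus(\tM_0)_F \ge \dim F + n-3$ from \cite{ACO04,Occ06} (Lemma~\ref{lem:locus}). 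Since this subcone misses $R_\pi$, the map $\Locus(\tM_0)_F\to X$ is finite, forcing $\dim F\le 3$ and hence $n\le 6$; the few surviving configurations (Lemma~\ref{lem:dimlocus}) are then eliminated one by one using Lemma~\ref{lem:sec} to manufacture sections of $\pi$ and derive contradictions (Lemmas~\ref{lem:Pn}--\ref{lem:P2bundle}), until only the symmetric two-$\bP^2$-bundle picture with $n=5$, $l_X=4$ remains. That last case is indeed handled via Proposition~\ref{prop:twoP2}, as you anticipated, but only after constructing a common section $V$ of both $\bP^2$-bundle structures by a further chain-locus argument and two applications of Lemma~\ref{lem:sec}. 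In short, you have the right endgame, but the ``dimension count'' you invoke for the large-fibre cases is not a count at all---it is the machinery of Proposition~\ref{prop:int}, the $\Locus$-cone lemmas, and Lemma~\ref{lem:sec}, none of which appear in your outline.
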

In \cite[(3.1)]{PSW92b}, the corresponding statement is called the comparison lemma.
An outline of the proof is similar to that in \cite[Sect.~3]{PSW92b};
In Subsection~\ref{sect:int}, we show that $\Exc (\varphi) \cap \te(\tU) \neq \emptyset$ (Proposition~\ref{prop:int}) and then, assuming $\bR_{\geq0} [\tC] \neq R_\varphi$, obtain a contradiction by studying the relation between $\te(\tU)$ and $\Exc(\varphi)$ in Subsection~\ref{sect:pfcomp}.

In our case, since the index of $\bP(\sE)$ becomes smaller, there are more possibilities of the contraction $\varphi$ and hence we need to treat them in more details, particularly when $\varphi$ is a small contraction in Subsection~\ref{sect:int} or $\varphi$ is of fiber type with small dimensional fibers in Subsection~\ref{sect:pfcomp}.
We deal these cases by using an application of Mori's bend and break argument (Lemma~\ref{lem:BB}), several splitting criteria (which will be proved in Subsection~\ref{sect:split}) and the characterization of the Ottaviani bundle (Proposition~\ref{prop:twoP2}).
Also Professor Gianluca Occhetta kindly suggested the author to apply results from the studies on the Mukai conjecture \cite{ACO04,Occ06,CO06} in Subsection~\ref{sect:pfcomp}.

Before the proof of Theorem~\ref{thm:comp}, we prove a corollary, which is a  consequence of Theorem~\ref{thm:comp}:
\begin{corollary}\label{cor:seq}
Let $(X,\sE)$ be a pair as in Setting~\ref{set:comp}, $i \colon F \to \bP(\sE)$  a morphism from a projective variety $F$ and $D_F$ the divisor $\xi|_F$.
Assume that $(\varphi \circ i)(F)$ is a point.
Then the following hold:
\begin{enumerate}
 \item \label{cor:seq1} $\Omega_\pi|_F$ and $\sE|_F(-D_F)$ are nef vector bundles with first Chern classes $(l_X-n+2)D_F$.
 Moreover $\sE|_F(-D_F)$ is semiample.
 \item \label{cor:seq2} There is the following exact sequence:
 \[
 0 \to \Omega _{\pi}|_{F} \to \sE|_F(-D_F) \to \cO _{F} \to 0.
 \]
\end{enumerate}
\end{corollary}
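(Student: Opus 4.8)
The plan is to read off both assertions from the relative Euler sequence of $\pi$ combined with Theorem~\ref{thm:comp}. First I would prove \ref{cor:seq2}. The relative Euler sequence of the projective bundle $\pi\colon\bP(\sE)\to X$ is
\[
0 \to \Omega_\pi \to (\pi^*\sE)(-\xi) \to \cO_{\bP(\sE)} \to 0 .
\]
This is a short exact sequence of vector bundles, hence locally split, so applying $i^*$ preserves exactness. Since $i^*\bigl((\pi^*\sE)(-\xi)\bigr) = \sE|_F(-D_F)$ and $i^*\Omega_\pi = \Omega_\pi|_F$, this is precisely the sequence in \ref{cor:seq2}; in particular $\Omega_\pi|_F$ and $\sE|_F(-D_F)$ are genuine vector bundles and $c_1(\Omega_\pi|_F)=c_1(\sE|_F(-D_F))$.

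Next I would pin down the first Chern classes. By Theorem~\ref{thm:comp} the divisor $D \coloneqq l_X\xi-\pi^*c_1(\sE)$ is a supporting divisor of $\varphi$, so $D=\varphi^*A$ for some ample $A$ on $Y$. Because $(\varphi\circ i)(F)$ is a single point, $i^*D=(\varphi\circ i)^*A=0$ in $\Pic(F)$, and therefore $l_X D_F=(\pi\circ i)^*c_1(\sE)$. Using $\rank\sE=n-2$ this gives
\[
c_1\bigl(\sE|_F(-D_F)\bigr)=(\pi\circ i)^*c_1(\sE)-(n-2)D_F=(l_X-n+2)D_F ,
\]
and the same class for $\Omega_\pi|_F$ by the previous step.

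For the positivity of $\sE|_F(-D_F)$ I would pass to the projectivisation $P\coloneqq\bP_F(\sE|_F)=\bP_F\bigl(\sE|_F(-D_F)\bigr)$, writing $\xi'$ for the tautological divisor of $\sE|_F(-D_F)$ and $\tilde\imath\colon P\to\bP(\sE)$ for the natural morphism lying over $\pi\circ i$. Since $\pi\circ\tilde\imath$ factors through $\pi\circ i$, the identity $l_X D_F=(\pi\circ i)^*c_1(\sE)$ yields $l_X\,\xi'=\tilde\imath^*D$. As $D=\varphi^*A$ is semiample, so is its pullback $\tilde\imath^*D$, whence $\xi'$ is semiample; that is, $\sE|_F(-D_F)$ is semiample, and in particular nef. (Equivalently, $\sE|_F(-D_F)$ is the pullback along the finite map $\pi\circ i$ of the semiample $\bQ$-bundle $\sE(K_X/l_X)$ of Theorem~\ref{thm:comp}.)

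Finally I would deduce nefness of $\Omega_\pi|_F$ from the sequence in \ref{cor:seq2}, and this is the step I expect to be the main obstacle: a subbundle of a nef bundle need not be nef, so I must use that the quotient here is the \emph{trivial} bundle $\cO_F$. For every morphism $g\colon C\to F$ from a smooth projective curve, restricting the sequence gives $0\to g^*\Omega_\pi|_F\to g^*\sE|_F(-D_F)\to\cO_C\to 0$ with nef middle term, so $\mu_{\min}\geq 0$ there; since the quotient $\cO_C$ has slope $0$, a short Harder--Narasimhan argument (a negative quotient line bundle of $g^*\Omega_\pi|_F$ would produce a rank-two quotient of $g^*\sE|_F(-D_F)$ with $\mu_{\min}<0$, a contradiction) forces $g^*\Omega_\pi|_F$ to be nef. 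As this holds for all such $g$, the bundle $\Omega_\pi|_F$ is nef, completing \ref{cor:seq1}.
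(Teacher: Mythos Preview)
Your proof is correct and follows essentially the same route as the paper: pull back the relative Euler sequence for \ref{cor:seq2}, use Theorem~\ref{thm:comp} to compute the first Chern classes and to see that $l_X\xi'$ is the pullback of the semiample supporting divisor, and then deduce nefness of $\Omega_\pi|_F$ from the sequence. The only difference is that where you give a direct curve argument for the last step (a quotient line bundle of negative degree of $\Omega_\pi|_F|_C$ would yield a rank-two quotient of $\sE|_F(-D_F)|_C$ of negative degree, contradicting nefness), the paper simply cites \cite[Proposition~1.2~(8)]{CP91}, which is exactly this fact that a subbundle of a nef bundle with trivial quotient is nef.
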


\begin{proof}
By restricting the relative Euler sequence, we have the exact sequence in \ref{cor:seq2}.
Thus $c_1(\Omega _{\pi}|_{F}) = c_1(\sE|_F(-D_F))$.
If $\sE|_F(-D_F)$ is semiample, then it is nef and hence $\Omega _{\pi}|_{F}$ is also nef by \cite[Proposition~1.2 (8)]{CP91}.
Therefore it is enough to show that $\sE|_F(-D_F)$ is a semiample vector bundle with  first Chern class $(l_X-n+2)D_F$.

By Theorem~\ref{thm:comp},  $l_X\xi-\pi^*c_1(\sE)$ defines the contraction $\varphi$.
Since $F$ is contracted to a point by $\varphi$, the divisor $(l_X\xi-\pi^*c_1(\sE) )|_F = l_X D_F- c_1(\sE |_F) $ is trivial.
Thus $l_X D_F = c_1(\sE|_F)$.
Therefore $c_1(\sE|_F(-D_F))=(l_X-n+2)D_F$.

Also, on $\bP(\sE|_F)$, we have
\[
l_X\xi_{\sE|_F(-D_F)}
= l_X\xi_{\sE|_F} - \pi^* (l_XD_F)
= l_X\xi_{\sE|_F} - \pi^* c_1(\sE|_F)
=(l_X\xi-\pi^*c_1(\sE))|_{\bP(\sE|_F)}
\]
and the last divisor is semiample by Theorem~\ref{thm:comp}.
Hence $\sE|_F(-D_F)$ is semiample.
\end{proof}

In the rest of this section, we will prove Theorem~\ref{thm:comp}.
\subsection{Inequalities}

Let $E$ be an irreducible component of $\Exc (\varphi)$ and set $E_x \coloneqq E \cap \pi^{-1}(x)$ for $x \in \pi (E)$ and $\te(\tU)_x \coloneqq \te(\tU)\cap \pi^{-1}(x)$ for $x \in X$ .

Later we will prove $E \cap \te(\tU) \neq \emptyset$, or equivalently $E_x \cap \te(\tU)_x \neq \emptyset$ for some point $x \in X$.
Since $\pi^{-1}(x) \simeq \bP^{n-3}$, the assertion follows if $\dim E_x + \dim \te(\tU)_x \geq n-3$.

For $x \in \pi (E)$, we have
\begin{equation}\label{eq:dim1}
\dim E_x \geq \dim E - n,
\end{equation}

Note that $e(U) =X$ by Proposition~\ref{prop:rcc}.
Thus, for every point $x \in X$, there is a minimal rational curve $C \ni x$.
For $x \in X$, we define $M_x$ to be the set of all minimal rational curve through $x$:
\[
M_x \coloneqq \left\{ \,  f \colon \bP^1 \to X \mid \text{$f(\bP^1)$ is a minimal rational curve such that $f(\bP^1)\ni x $} \, \right\},
\]
and set
\[
 m_x \coloneqq \max \left\{\, m \mid
  \text{$\cO(1^m)$ is a direct summand of $\sE|_{\bP^1}$ for some $[f \colon \bP^1 \to X] \in M_x$}
 \,\right\}.
\]

Then, for each point $x \in X$,
\begin{equation}\label{eq:dim2}
 \dim \te(\tU)_x
 \geq m_x  -1.
 \end{equation}
Also the following follows from Lemma~\ref{lem:splitting}:
\begin{equation}\label{eq:dim3}
  m_x  -1 \geq 2n-5-l_X.
\end{equation}
In particular,
\begin{align}\label{eq:dim4} \dim \te(\tU)_x
 \geq 2n-5-l_X.
\end{align}

The following enables us to obtain a better lower bound of $\dim E_x$ in a subtle case.

\begin{lemma}\label{lem:BB}
Assume that $\varphi$ is a small contraction and $n= 5$.
If $\bR_{\geq0} [\tC] \neq R_\varphi$, then there exists a closed subvariety $N \subset \Exc(\varphi)$ of dimension $\geq 4$ with $\dim \pi(N) = \dim N -1$.
In particular, inequality~\eqref{eq:dim1} is strict for $x \in \pi(N)$.
\end{lemma}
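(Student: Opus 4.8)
The plan is to exhibit a subvariety $N$ swept out by $\pi$-vertical curves and contained in $\Exc(\varphi)$; that is, to show that $\pi$ fails to be finite on a component of $\Exc(\varphi)$, and then to read off the strict inequality. First I would record the numerics forced by the hypotheses. Since $\sE$ is ample, $\xi$ is ample, so every effective $1$-cycle of $\xi$-degree $1$ is an irreducible curve that cannot break. As $-K_{\bP(\sE)} = (n-2)\xi = 3\xi$ and $\varphi$ is small, $\dim \Exc(\varphi) \le \dim\bP(\sE) - 2 = 5$; combined with Lemma~\ref{lem:IWineq1} (which gives $\dim E + \dim F \ge 3n-6 = 9$ for a fiber $F$ of a component $E$) and $\dim F \le n = 5$, this pins down $\dim E = 5$ and $\dim F \ge 4$. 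Because every curve in $F$ has class in $R_\varphi$, which is distinct from the ray $R_\pi$ contracted by $\pi$, the restriction $\pi|_F$ is finite, so $\pi(F)$ has dimension $\ge n-1$; since $\rho_X = 1$ it meets every curve of $X$.

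Next I would extract the arithmetic consequence of the hypothesis $\bR_{\geq0}[\tC]\ne R_\varphi$. Taking a minimal curve $C_\varphi$ of $R_\varphi$ (so $\xi.C_\varphi=1$) and writing classes in the two-dimensional cone spanned by a line $\ell_\pi$ of a $\pi$-fiber and by $C_\varphi$, the equalities $\xi.\tC = \xi.C_\varphi = 1$ together with $\pi_*[\tC]=[C]$ force $[\tC]=\tfrac{e-1}{e}[\ell_\pi]+\tfrac1e[C_\varphi]$, where $e$ is the $\pi$-degree of $C_\varphi$. Thus $\bR_{\geq0}[\tC]\ne R_\varphi$ is exactly the statement $e>1$: the minimal $R_\varphi$-curves have $\pi$-degree at least two.

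The heart of the argument, and the step I expect to be hardest, is to deduce from this that $\pi|_E$ is not finite. The fiber $F$ is covered by $R_\varphi$-curves $\Gamma$ with $\xi.\Gamma=1$ and $\pi_*[\Gamma]=e[C]$. If $\pi|_\Gamma$ is birational, then $\pi(\Gamma)$ is a rational curve with $-K_X.\pi(\Gamma)=e\,l_X \ge 2(n-1)=8>n+1$; as $\Gamma$ moves, covering the $(\ge 4)$-dimensional $\pi(F)$, Mori's bend and break \cite{Kol96} forces $\pi(\Gamma)$ to degenerate into a reducible rational $1$-cycle. If $\pi|_E$ were finite, such a degeneration would lift to a breaking of $\Gamma$ itself, which is impossible since $\xi$ is ample with $\xi.\Gamma=1$. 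The one remaining possibility, that $\pi|_\Gamma$ is a nontrivial multiple cover of a minimal curve, I would dispose of by pulling $\bP(\sE)$ back to the normalisation of $\pi(\Gamma)$ and observing that a $\xi$-degree-$1$ multisection of the resulting $\bP^2$-bundle over $\bP^1$ cannot have base-degree $\ge2$. Hence $\pi|_E$ is not finite, and $E$ contains $\pi$-contracted rational curves.

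Finally I would assemble $N$ from the positive-dimensional fibers of $\pi|_E$: passing if necessary to a subvariety on which the generic $\pi$-fiber is exactly one-dimensional yields $N\subset E\subset\Exc(\varphi)$ with $\dim N\ge4$ and $\dim\pi(N)=\dim N-1$. For $x\in\pi(N)\subset\pi(E)$ one then has $\dim E_x \ge \dim N_x \ge \dim N - \dim\pi(N) = 1 > 0 = \dim E - n$, so inequality~\eqref{eq:dim1} is strict at such $x$, as required. The main obstacle is the bend-and-break step of the third paragraph: making the degeneration precise, ruling out the multiple-cover case, and ensuring that the $\pi$-vertical locus is large enough to furnish a $4$-dimensional $N$ whose $\pi$-fibers are one-dimensional.
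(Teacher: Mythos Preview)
Your approach differs substantially from the paper's, and it has two genuine gaps that I do not see how to close.

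\textbf{Gap in the bend-and-break step.} Your central claim is that if $\pi|_E$ were finite then a degeneration of $\pi(\Gamma)$ in $X$ would lift to a breaking of $\Gamma$. This is not how bend and break works. Bend and break, applied in $X$, produces a reducible effective $1$-cycle in the \emph{numerical class} of $\pi(\Gamma)$; it does not assert that any particular member $\pi(\Gamma_t)$ of your family becomes reducible. Since every $\Gamma_t$ is irreducible (as $\xi.\Gamma_t=1$), so is every $\pi(\Gamma_t)$; the reducible cycle produced by bend and break simply lies outside the image of your family, and no contradiction arises. To run bend and break you also need fixed points on the curves, and your $\Gamma$'s (or their images) do not obviously share any. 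Finally, the degree computation is off: with $e\coloneqq\pi^*H_X.C_\varphi$ one has $-K_X.\pi_*[C_\varphi]=r_Xe$, not $e\,l_X$; your formula $[\tC]=\tfrac{e-1}{e}[\ell_\pi]+\tfrac1e[C_\varphi]$ presumes $H_X.C=1$, i.e.\ $l_X=r_X$, which is not given at this stage.

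\textbf{Gap in the dimension of $N$.} Even granting that $\pi|_E$ is not finite, you give no reason why the locus of positive-dimensional $\pi|_E$-fibers should be large enough to produce $\dim N\ge4$. Semicontinuity only tells you this locus is closed; it could a priori sit over a single point of $X$, yielding $\dim N\le2$.

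The paper avoids both problems by reversing the roles of $X$ and $Y$. It takes \emph{two} general $\varphi|_E$-fibres $F_1,F_2$ over points $y_1,y_2\in\varphi(E)$, and lets $N$ be swept out by the lines $\ell$ in $\pi$-fibres $\bP^2$ that meet both $F_1$ and $F_2$ (such lines exist because $\pi(F_1)\cap\pi(F_2)\neq\emptyset$). Then $\dim\pi(N)=\dim N-1$ is automatic, the bound $\dim N\ge4$ comes from a Serre inequality in $\bP(\sE^*)$, and bend and break is applied \emph{in $Y$}, with genuine fixed points: every such $\ell$ maps under $\varphi$ to a curve through both $y_1$ and $y_2$, so the family $\{\varphi(\ell)\}$ cannot sweep out more than a curve, forcing $N\subset\Exc(\varphi)$.
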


\begin{proof}
By Lemma~\ref{lem:IWineq2}, the morphism $E \to \varphi(E)$ is equidimensional of relative dimension four and  $\dim\varphi(E)=1$.
Take two general points $y_1,y_2 \in \varphi(E)$ and set $F_i \coloneqq (\varphi|_E)^{-1}(y_i)$.

The family of the lines contained in the $\pi$-fibers is given by the following diagram:
\[
\xymatrix{
 \bP(T_{\pi'}) \simeq \bP(T_{\pi}) \ar[d]_-{g} \ar[r]^-f & \bP(\sE) \ar[d]_-{\pi} \ar[r]^-{{\varphi}} & Y \\
 \bP(\sE^*) \ar[r]^-{\pi'}                               & X,                                         &
}
\]
where $g$ is the universal family and $f$ is the evaluation morphism.

Since $\pi (F_1)$ and $\pi(F_2)$ are effective divisors and $\rho_X=1$, we have $\pi (F_1) \cap \pi(F_2) \neq \emptyset$.
Hence there exists at least a line $\ell$ contained in a $\pi$-fiber which intersects with both $F_1$ and $F_2$.
Thus $g(f^{-1} (F_1)) \cap g(f^{-1}(F_2)) \neq \emptyset$, which has dimension $\geq 3$ by the Serre inequality.
Let $W$ be a $3$-dimensional component of $g(f^{-1} (F_1)) \cap g(f^{-1}(F_2))$.
Set $N \coloneqq f(g^{-1}(W))$.

Since two distinct points in a $\pi$-fiber defines a unique line in the $\pi$-fiber, the morphism $\pi'|_W$ is finite.
Hence $\dim N = \dim W +1 \geq 4$ and $\dim \pi(N) = \dim N -1$.

On the other hand the $(\varphi \circ f)$-image  of each $g$-fiber over $W$ passes through $y_1$ and $y_2$.
Hence $\dim \varphi (N) =1$ by Mori's bend and break argument \cite[Chapter~II, Theorem~5.4]{Kol96}.
This implies $N \subset \Exc (\varphi)$. 
\end{proof}

\subsection{Splitting criteria}\label{sect:split}
In this subsection, we provide three splitting criteria.
As we mentioned, if $\dim \te(\tU)_x$ is enough large, then it will intersect with $\Exc (\varphi)$.
The following criteria enables us to deal the case where $\dim \te(\tU)_x$ is rather small.

\begin{proposition}\label{prop:split1}
 Let $(X,\sE)$ be a pair as in Setting~\ref{set:comp} with $X \simeq \bP^n$ or $X \simeq \bQ^n$.
 Assume that $\dim \te(\tU) = 3n-5-l_X$ and $\sE$ is uniform of type 
\[
\cO(2^{-n+2+l_X},1^{2n-4-l_X}).
\]
 Then $\sE$ splits.
\end{proposition}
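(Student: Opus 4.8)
The plan is to leverage the uniformity hypothesis together with the structure of minimal rational curves on $\bP^n$ or $\bQ^n$. Here $X$ is one of the two numerical-characterization targets, so minimal rational curves are simply the lines (degree $l_X = n$ for $\bP^n$, or $l_X = n$ for $\bQ^n$ — in any case the unique minimal covering family is the family of lines, and the associated $M_x$, $e$, $U$ machinery of Definition~\ref{def:minRC} is available). Since $\sE$ is uniform of the stated type $\cO(2^{-n+2+l_X},1^{2n-4-l_X})$ on every line, the $\cO(1)$-summands of $\sE|_{\bP^1}$ patch into a geometric subobject of $\sE$ over the whole variety. My first step is therefore to reinterpret the dimension equality $\dim \te(\tU) = 3n-5-l_X$: since $\dim \te(\tU)_x \geq m_x - 1$ by \eqref{eq:dim2} and $\dim \te(\tU) = \dim X + (\text{generic fiber dimension})$, the hypothesis pins down $m_x - 1$ exactly to the value $2n-4-l_X - 1$ appearing in the splitting type, forcing the locus of $\cO(1)$-lifts inside each $\pi$-fiber to have the expected dimension and to vary nicely.

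**Construction of the splitting subbundle.** The key step is to produce an honest subbundle from the $\cO(1)$-part of the uniform splitting. Because the type is uniform, the number of $\cO(1)$-summands is a constant $2n-4-l_X$ and the number of $\cO(2)$-summands is $-n+2+l_X$ along every line. The natural approach is to use the evaluation/lift diagram: the family $\tM$ of minimal lifts with $\xi_\sE.\tC = 1$ parametrizes precisely the sections of $\pi_{\bP^1}$ arising from $\cO(1)$-summands, and the equality on $\dim \te(\tU)$ says $\te(\tU)$ is exactly the locus swept out by these $\cO(1)$-lifts, of the expected dimension. I would then argue that $\te(\tU)$ meets each $\pi$-fiber $\bP^{n-3}$ in a linear $\bP^{m_x-1} = \bP^{2n-5-l_X}$ — i.e. the projectivization of the $\cO(1)$-subbundle. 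The cleanest route is to invoke Lemma~\ref{lem:sec}: one wants to exhibit a closed subvariety $V$ of dimension $n$ inside $\bP(\sE)$ whose cone of curves lies in $\langle \bR_{\geq 0}[\tC], R_\varphi\rangle$, which then forces $V$ to be a section of $\pi$ corresponding to a quotient $\sE \twoheadrightarrow \cO_X(1)$, hence a sub-line-bundle kernel. Iterating (or working with the full $\cO(1)$-part at once) peels off the $\cO(1)$-summands as a genuine direct summand of $\sE$.

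**Reducing to a split complement.** Once the $\cO(1)$-part splits off as $\cO_X(1)^{\oplus(2n-4-l_X)}$, the residual bundle $\sE'$ has rank $-n+2+l_X$ and is uniform of type $\cO(2^{-n+2+l_X})$ on every line. On $\bP^n$, a vector bundle that is uniform with all Grothendieck splitting exponents equal is a twist of a trivial bundle (by the classical theory of uniform bundles, e.g.\ via Horrocks-type or Van de Ven-type arguments, or directly because $\sE'(-1)$ is then uniform of type $\cO(1^{\text{rk}})$ and globally generated with the expected sections), so $\sE' \simeq \cO_X(2)^{\oplus(-n+2+l_X)}$; the same conclusion holds on $\bQ^n$ using the analogous uniformity result. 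Combining, $\sE \simeq \cO(2^{-n+2+l_X},1^{2n-4-l_X})$ splits, as claimed.

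**Main obstacle.** I expect the hard part to be the rigorous passage from the \emph{pointwise} uniform splitting type to a \emph{global} direct-sum decomposition — that is, verifying the Kleiman–Mori cone hypothesis of Lemma~\ref{lem:sec} for the candidate section $V = \te(\tU)$ (or the relevant component). One must show $\NE(\te(\tU), \bP(\sE)) \subset \langle \bR_{\geq 0}[\tC], R_\varphi\rangle$, which requires controlling all effective curve classes on $\te(\tU)$; the dimension equality $\dim \te(\tU) = 3n-5-l_X$ is exactly what prevents $\te(\tU)$ from carrying extra curve classes (it is forced to be a section rather than a higher-dimensional sweep), but making this precise — and ensuring $\te(\tU)$ is irreducible of the right dimension and dominates $X$ with the lifts filling a linear subspace of each fiber — is the delicate point where the uniformity and the exact dimension count must be used in tandem.
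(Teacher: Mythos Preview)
Your intuition—that uniformity should globalize to a subbundle—is correct, but the mechanism you propose via Lemma~\ref{lem:sec} does not deliver it. That lemma produces a single section of $\pi$, i.e.\ a single rank-one quotient $\sE \to \cO_X(1)$, not a rank-$(2n-4-l_X)$ quotient; iterating is problematic because after peeling off one $\cO(1)$ the residual bundle has rank $n-3$ and is no longer a pair in the sense of Setting~\ref{set:comp}, so neither the lemma nor the surrounding machinery applies. More seriously, you never verify the hypothesis $\NE(V,\bP(\sE)) \subset \langle \bR_{\geq 0}[\tC], R_\varphi\rangle$: the equality $\dim \te(\tU) = 3n-5-l_X$ only controls the fiber dimension of $\te(\tU) \to X$, not the horizontal curve classes that $\te(\tU)$ might carry, and $\te(\tU)$ itself does not have dimension $n$ in general.

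The paper takes a different and more direct route. On the universal family $U$, the uniform splitting type yields a canonical exact sequence
\[
0 \to \sF \to e^*\sE(-1) \to Q' \coloneqq (p^*p_*e^*(\sE^*(1)))^* \to 0
\]
restricting on each $p$-fiber to $0 \to \cO(1^{-n+2+l_X}) \to \cO(1^{-n+2+l_X},0^{2n-4-l_X}) \to \cO(0^{2n-4-l_X}) \to 0$; equivalently, a morphism $g\colon U \to \Gr(-n+2+l_X,\sE)$. Since $\tU \simeq \bP(Q')$, the dimension hypothesis is exactly what makes $g(U) \to X$ generically finite. The decisive step—where $X \simeq \bP^n$ or $\bQ^n$ is essential—is that for the family of lines the evaluation $e\colon U \to X$ is itself an extremal contraction, so $g$ must factor through $e$. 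This produces a global exact sequence $0 \to S \to \sE(-1) \to Q \to 0$ on $X$, and then $S$ and $Q$ split by the classification of uniform bundles of small rank or small $c_1$ (\cite{Sat76,KS02}, \cite[Proposition~1.2]{AW01}). This Grassmannian-factorization step, following \cite{MOS12b}, is the heart of the argument and is missing from your sketch.
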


\begin{proof}
The proof proceeds similarly to that of \cite[Proof of Theorem~3.1]{MOS12b}.
Details are as follows:

Since $\sE$ is uniform of type $\cO(2^{-n+2+l_X},1^{2n-4-l_X})$, we have the following exact sequence of vector bundles on $U$:
\begin{equation}\label{seq1}
0 \to \sF \to e^*\left(\sE(-1)\right) \to Q' \coloneqq\left(p^* p_* e^*\left(\sE^*(1)\right)\right)^* \to 0,
\end{equation}
which restricts on each $p$-fiber to 
\[
0 \to \cO(1^{-n+2+l_X}) \to \cO(1^{-n+2+l_X},0^{2n-4-l_X}) \to \cO(0^{2n-4-l_X}) \to 0.
\]
This gives a morphism $g \colon U \to \Gr (-n+2+l_X,\sE)$, where $\Gr (-n+2+l_X,\sE)$ is the Grassmannian of subbundles in $\sE$.

Now $\tU$ is naturally isomorphic to $\bP(Q')$ and the evaluation morphism $\te$ is the morphism corresponding to the surjection $e^*\sE \to Q' \to 0$.
Since every fiber of the morphism $\te(\tU) \to X$ is of dimension $2n-5-l_X$, 
the morphism $g(U) \to X$ is generically finite.
Note that the evaluation morphism $e$ is a contraction of an extremal ray since $M$ is the family of lines on $\bP^n$ or $\bQ^n$ ($n\geq5$).
Thus the morphism $g$ factors through the evaluation morphism  $e$.
This implies that there exists the following exact sequence on $X$: 
\[
0 \to S \to \sE(-1) \to Q \to 0,
\]
which restricts on $U$ to \eqref{seq1}.
Hence $S$ and $Q$ are direct sums of line bundles by \cite{Sat76,KS02} or \cite[Proposition~1.2]{AW01}.
Therefore $\sE  \simeq \cO(2^{-n+2+l_X},1^{2n-4-l_X})$.
\end{proof}

\begin{proposition}\label{prop:sato}
Let $(X,\sE)$ be a pair as in Setting~\ref{set:comp}.
Assume that $X \simeq \bP^n$ and there exists a point $x \in X$ such that equality holds in \eqref{eq:dim3}.
Then $\sE$ splits.
\end{proposition}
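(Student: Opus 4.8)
The plan is to show that the hypothesis forces $\sE$ to be \emph{uniform}, and then to invoke a splitting theorem for uniform bundles of low rank. First I would unwind the numerics. Since $X \simeq \bP^n$ the minimal rational curves are the lines and $l_X = n+1$, so by Lemma~\ref{lem:splitting} the only splitting types of $\sE$ along a line $\ell$ are $\cO(4,1^{n-3})$, $\cO(3,2,1^{n-4})$ and $\cO(2^3,1^{n-5})$, the numbers of $\cO(1)$-summands being $n-3$, $n-4$ and $n-5$. Equality in \eqref{eq:dim3} at $x$ reads $m_x = 2n-4-l_X = n-5$; being the smallest possible number of $\cO(1)$-summands, and $m_x$ being the maximum over lines through $x$, this forces $\sE|_\ell \simeq \cO(2^3,1^{n-5})$ for \emph{every} line $\ell$ through $x$. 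As the balanced type $\cO(2^3,1^{n-5})$ is also the generic one (it minimises the upper semicontinuous function $h^0(\sE|_\ell(-3))$), the set $J$ of \emph{jumping lines}, i.e.\ those $\ell$ with $\sE|_\ell \not\simeq \cO(2^3,1^{n-5})$, is a proper closed subset of the space of lines $\Gr(2,n+1)$, disjoint from the Schubert variety $\Sigma_x \simeq \bP^{n-1}$ of lines through $x$.

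Next I would upgrade this to global uniformity. The intrinsic point is that $\Sigma_x$ is a positive-dimensional subvariety and that $J$, if nonempty, should be of codimension one, i.e.\ an effective divisor; since $\Pic \Gr(2,n+1) = \bZ$ and every effective divisor is ample, such a $J$ would meet the positive-dimensional $\Sigma_x$, a contradiction. Concretely, for a line $\ell \not\ni x$ I would restrict $\sE$ (recall $\sE(-1)$ is nef by Corollary~\ref{cor:seq}) to the plane $P = \langle x,\ell\rangle \simeq \bP^2$: then $\sE|_P$ is balanced along the whole pencil $L_x \subset \check{P}$ of lines of $P$ through $x$, while its jumping locus $J_P = J \cap \check{P}$ is closed, disjoint from the line $L_x$, and cut out by the determinantal condition $h^0(\sE|_{\ell'}(-3)) \geq 1$. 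A nonempty jumping locus of the first kind is pure of codimension one, hence a nonzero effective curve in $\check{P} \simeq \bP^2$, which necessarily meets the line $L_x$ — impossible. Thus $J_P = \emptyset$, so $\sE|_\ell \simeq \cO(2^3,1^{n-5})$; letting $\ell$ vary shows $\sE$ is uniform.

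Finally, since $\sE$ is a uniform bundle of rank $n-2 \leq n-1$ on $\bP^n$, it is a direct sum of line bundles by Sato's theorem \cite{Sat76} (a uniform bundle of rank strictly less than $n$ on $\bP^n$ splits, as it can contain no twist of $T_{\bP^n}$ or $\Omega_{\bP^n}$). Matching $c_1(\sE) = (n+1)H_X$ with the splitting type then gives $\sE \simeq \cO(2^3,1^{n-5})$, so $\sE$ splits.

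The step I expect to be the genuine obstacle is the passage from uniformity along lines through $x$ to global uniformity, i.e.\ showing $J = \emptyset$. The clean-looking assertion that the jumping locus is pure of codimension one — so that it is forced to meet $L_x$, or intrinsically the $(n-1)$-dimensional $\Sigma_x$ — must be justified with care, since a priori $J$ could have higher codimension and avoid $\Sigma_x$. Ruling this out, or replacing this argument by the alternative of constructing a trivial quotient of $\sE(-1)$ from the relative Harder--Narasimhan filtration along lines through $x$, descending it across $x$, and then using nefness together with the splitting criterion of \cite[Proposition~1.2]{AW01} to reduce to the rank-$3$ case, is where the real work lies.
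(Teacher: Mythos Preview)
Your first paragraph is exactly right and matches the paper: equality in \eqref{eq:dim3} forces $m_x = n-5$, the smallest possible value, so every line through $x$ has the balanced type $\cO(2^3,1^{n-5})$ and $\sE$ is uniform at $x$.

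The divergence is in what comes next. The paper's proof stops here and simply cites \cite[Main Theorem \emph{and Remark 2.1}]{Sat76}: Sato's argument only requires uniformity along the lines through a \emph{single} point, not global uniformity. (Indeed, his proof runs the relative Harder--Narasimhan filtration over the $\bP^{n-1}$ of lines through $x$ and descends the resulting subbundle across $x$; global uniformity is never used.) So your entire second paragraph --- the attempt to upgrade ``uniform at $x$'' to ``uniform on all lines'' --- is unnecessary, and the obstacle you flag at the end simply does not arise.

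Ironically, the ``alternative'' you sketch in your final paragraph --- building the quotient from the relative HN filtration along lines through $x$ and descending --- \emph{is} essentially Sato's proof, and is exactly what Remark~2.1 records. Your main route via the codimension of the jumping locus is both harder and, as you yourself note, incomplete: the assertion that the jumping locus in $\check P$ is pure of codimension one is not automatic for bundles of rank $>2$ (the determinantal locus $\{h^0(\sE|_{\ell}(-3))\geq 1\}$ need not have the expected codimension), so that step would require real additional input. The paper avoids all of this by citing the sharper form of Sato's theorem.
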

\begin{proof}
Since equality holds in \eqref{eq:dim3}, $\sE$ is uniform at the point $x\in X$.
Thus the assertion follows from \cite[Main Theorem and Remark 2.1]{Sat76}.
\end{proof}

\begin{proposition}\label{prop:split2}
 Let $(X,\sE)$ be a pair as in Setting~\ref{set:comp}.
 Then $\sE$ splits if one of the following holds:
\begin{enumerate}
\item \label{prop:split22} $X \simeq \bP^6$ and every fiber of the morphism $\te(\tU) \to X$ has dimension $\leq1$. 
\item \label{prop:split23} $X \simeq \bP^5$, $\dim \te(\tU) \leq 5$ and there is no line $C$ such that $\sE|_C \simeq \cO(4,1^{n-3})$.
 
\end{enumerate}
\end{proposition}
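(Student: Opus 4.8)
The plan is to use that on $X \simeq \bP^n$ one has $l_X = n+1$, so the minimal rational curves are lines and Lemma~\ref{lem:splitting} restricts $\sE|_C$ to one of the listed types. Throughout I read the integer $m_x$ off these types: a line $C \ni x$ with $m_C$ copies of $\cO(1)$ in $\sE|_C$ contributes a $\bP^{m_C-1}$ to $\te(\tU)_x \subset \pi^{-1}(x) \simeq \bP^{n-3}$, whence $\dim \te(\tU)_x \ge m_x - 1$ as in \eqref{eq:dim2}. The two endgame tools are Proposition~\ref{prop:sato} (equality in \eqref{eq:dim3} at one point forces splitting) and the fact that a uniform bundle of rank $n-2 < n$ on $\bP^n$ splits \cite{Sat76}. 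The whole argument is arranged so as to either produce a point of equality in \eqref{eq:dim3}, or force $\sE$ to be uniform.

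For \ref{prop:split22}, the bound $\dim \te(\tU)_x \le 1$ for \emph{every} $x$ gives $m_x \le 2$ throughout, so no line has type $\cO(4,1^3)$; and since equality in \eqref{eq:dim3} here reads $m_x = 1$, if $\sE$ does not split then Proposition~\ref{prop:sato} forces $m_x = 2$ for all $x$. Thus each point lies on a $\cO(3,2,1^2)$-line, which contributes a $\bP^1$ to $\te(\tU)_x$, and $\dim \te(\tU)_x \le 1$ forces $\te(\tU)_x$ to be exactly a line in $\pi^{-1}(x)$. This presents $\te(\tU) \to X$ as a $\bP^1$-fibration, i.e.\ a rank-two quotient $\sE \twoheadrightarrow \sQ$ with $\sQ$ ample and $c_1(\sQ) = 2H_X$. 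The decisive step is numerical: $\sQ$ restricts to $\cO(1,1)$ on every $\cO(3,2,1^2)$-line, whereas on a hypothetical $\cO(2^3,1)$-line it would be a rank-two ample degree-two quotient of $\cO(2^3,1)$, hence again $\cO(1,1)$ --- impossible, since the only summand of $\cO(2^3,1)$ admitting a nonzero map to $\cO(1)$ is the single $\cO(1)$. Therefore no $\cO(2^3,1)$-line exists, $\sE$ is uniform of type $\cO(3,2,1^2)$, and it splits by \cite{Sat76}.

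For \ref{prop:split23}, the only types are $\cO(3,2,1)$ and $\cO(2^3)$, and equality in \eqref{eq:dim3} now reads $m_x = 0$. If $\sE$ does not split, Proposition~\ref{prop:sato} excludes any point whose lines are all $\cO(2^3)$, so the $\cO(3,2,1)$-lines cover $X$, each carrying a unique minimal lift; as $\dim \te(\tU) \le 5 = \dim X$ and $\te(\tU) \to X$ is surjective, this map is generically finite. I would then descend to a quotient $\sE \twoheadrightarrow \cO_X(1)$ by applying Lemma~\ref{lem:sec} to a five-dimensional component $V$ of $\te(\tU)$, which (once the Kleiman--Mori cone hypothesis of that lemma is checked) produces a section and the exact sequence $0 \to \sE_1 \to \sE \to \cO_X(1) \to 0$. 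The same incompatibility as before --- $\cO(2^3)$ admits no surjection onto $\cO(1)$ --- forces every line to have type $\cO(3,2,1)$, so $\sE$ is uniform and splits by \cite{Sat76}.

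The hard part is the descent in \ref{prop:split23}. In \ref{prop:split22} the constant fibre dimension exhibits $\te(\tU)$ as a projective subbundle, so the quotient $\sQ$ exists for free; but in the rank-three case $\te(\tU) \to X$ is only generically finite and a priori of degree $> 1$, and turning this multisection into an honest section is exactly the delicate point. I expect to have to verify the cone condition of Lemma~\ref{lem:sec} for $V$ --- equivalently, to re-run its branch-locus argument: assuming $V \to X$ non-\'etale yields a branch divisor met by a general $\cO(3,2,1)$-line, while such general lines avoid the codimension-$\ge 2$ loci, contradicting the semicontinuity of the fibre cardinality, so that $V \to X$ is \'etale and hence an isomorphism by the simple connectedness of $\bP^5$. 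Once the quotient line bundle is produced, the numerical incompatibility and the appeal to \cite{Sat76} are routine.
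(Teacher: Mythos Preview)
Your endgame is exactly right --- once a global quotient $\sE \twoheadrightarrow \sQ$ of the appropriate rank exists, the incompatibility with the remaining splitting type on lines forces uniformity and \cite{Sat76} finishes. The gap is entirely in how you manufacture that quotient.

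In case~\ref{prop:split22}, you assert that ``$\dim \te(\tU)_x \le 1$ forces $\te(\tU)_x$ to be exactly a line in $\pi^{-1}(x)$'' and hence that $\te(\tU)\to X$ is a $\bP^1$-subbundle. This is not justified: distinct $\cO(3,2,1^2)$-lines through $x$ could a priori contribute \emph{different} $\bP^1$'s to $\te(\tU)_x$, and $\cO(2^3,1)$-lines contribute isolated points, so the fibre is only known to be a (possibly reducible) curve. Having every fibre of dimension $\le 1$ is far from exhibiting $\te(\tU)$ as a projective subbundle of $\bP(\sE)$. The paper circumvents this by working not with $\te(\tU)$ but with the universal family $\jU^0$ over the jumping locus $\jM^0$: the tautological exact sequence on $\jU^0$ defines a morphism $\jU^0 \to \Gr(2,\sE)$, and a dimension count (via rational curves on $\bP(\sE^*)$) shows $\dim \jU^0 \ge n+3$, hence the fibres of $e_0 \colon \jU^0 \to X$ are connected. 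Your hypothesis then makes the image in $\Gr(2,\sE)$ finite over $X$, and the rigidity lemma forces the morphism to factor through $e_0$, producing the quotient on $X$.

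In case~\ref{prop:split23}, your plan to apply Lemma~\ref{lem:sec} to a component $V \subset \te(\tU)$ does not work: the cone hypothesis $\NE(V,\bP(\sE)) \subset \langle \bR_{\ge 0}[\tC], R_\varphi\rangle$ fails as soon as $V$ contains any curve in a $\pi$-fibre (and it will, wherever $V \to X$ has positive-dimensional fibres), because $R_\pi$ does not lie in that subcone. You cannot ``re-run the branch-locus argument'' independently of the cone condition, since that argument uses the cone condition to identify the curves over a general line $C$ as minimal sections --- without it you have no control over $\#\pi_V^{-1}(x)$ along $C$. Moreover, this proposition is invoked in the proof of Theorem~\ref{thm:comp}, so you cannot assume $[\tC]\in R_\varphi$ here. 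The paper again uses the $\jU^0 \to \Gr(2,\sE)$ approach; the extra ingredient is that $e_0$ is equidimensional (since $\sE$ is assumed non-uniform at every point, the jumping fibre never fills the whole $e^{-1}(x)$), which lets one apply the version of rigidity from \cite[Lemma~1.6]{KM98} even though the image is only \emph{generically} finite over $X$.
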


\begin{proof}
The proof proceeds in several steps.

\begin{step}\label{notuni}
If $\sE$ is uniform at a point $x \in X$, then $\sE$ splits by \cite[Main Theorem and Remark 2.1]{Sat76}.
Thus we may assume that $\sE$ is not uniform at every point $x \in X$,
 and hence for each point $x \in X$ there exists a line $C \ni x$ such that $\sE|_C \not \simeq \cO(2^3,1^{n-5})$ by Lemma~\ref{lem:splitting}.
Thus inequality \eqref{eq:dim3} is strict and so is inequality \eqref{eq:dim4}.
\end{step}

\begin{step}
We will prove that there is no line $C$ such that $\sE|_C \simeq \cO(4,1^{n-3})$.
If \ref{prop:split23} holds, then the assertion is already assumed.
If \ref{prop:split22} holds, then every fiber of the morphism $\te(\tU) \to X$ has dimension $\leq 1$.
Hence by \eqref{eq:dim2} the assertion follows. 
\end{step}

\begin{step}\label{step:jump}

Hence  we have 
\[
\sE|_C  \simeq \cO(3,2,1^{n-4})
\]
for special lines $C$, and 
\[
\sE|_C  \simeq \cO(2^3, 1^{n-5})
\]
for general lines $C$ by Lemma~\ref{lem:splitting}.
Set
\[
\jM \coloneqq \{\, [C]\in M \mid \sE|_C \simeq \cO(3,2,1^{n-4}) \,\},
\]
which is a closed subset of $M$ (see e.g.\ \cite[Lemma~3.2.2]{OSS80}), and $\jU \coloneqq p^{-1}({\jM})$.

The morphism $e|_{\jU}$ is surjective, since $\sE$ is not uniform at any point.
Hence there exists an irreducible component $\jM^0$ of $\jM$ such that $e|_{\jU^0}$ is surjective, where $\jU^0 \coloneqq p^{-1}({\jM^0})$.
Therefore we have the following diagram with a surjection $e_0 \coloneqq e|_{\jU^0}$:
\[
\xymatrix{
 \jM^0 \ar@{^{(}->}[d] & \jU^0 \ar@{^{(}->}[d] \ar[r]^-{e_0} \ar[l]_-{p_0} & X & \\
 M                     & U. \ar[ru]_-{e} \ar[l]_-{p}                       &   &
}
\]
\end{step}

\begin{step}\label{step:finite}
There exists the following exact sequence of vector bundles on $\jU^0$:
\begin{equation}\label{eq:exact}
 0 \to \sF \to e_0^*\left(\sE(-1)\right) \to \sG \coloneqq \left(p_0^* p_{0*}e_0^*\left(\sE^*(1)\right)\right)^* \to 0,
\end{equation}
which restricts on each $p_0$-fiber to 
\[
0 \to \cO(2,1) \to \cO(2,1,0^{n-4}) \to \cO(0^{n-4}) \to 0.
\]

Then the exact sequence gives the following commutative diagram,
\[
\xymatrix{
 \bP(\sG) \ar[d] \ar[r]^-{\te_0} & \bP(\sE) \ar[d]_-{\pi} \ar[r]_-{{\varphi}} & Y \\
 \jU^0 \ar[r]^-{e_0}             & X.                                         &
}
\]
The image $\te_0(\bP(\sG))$ is the union of all minimal lifts over the minimal rational curves belonging to $ \jM^0$.
Also a morphism $\jU^0 \to \Gr (2, \sE)$ is induced by sequence \eqref{eq:exact}
(Note that if \ref{prop:split23} holds then $\bP(\sG)\simeq \jU^0$ and $\bP(\sE)\simeq \Gr (2,\sE)$).
 
\end{step}

\begin{step}

If \ref{prop:split22} holds, then every fiber of the morphism $\te(\tU) \to X$ has dimension $\leq 1$, so does for every fiber of the morphism $\te _0 (\bP(\sG)) \to X$.
This implies that the morphism $\te _0 (\bP(\sG)) \to X$ is equidimensional of relative dimension $1$.
Thus the morphism $\jU^0 \to \Gr (2, \sE)$ is finite over $X$.

If \ref{prop:split23} holds, then since $\dim \te(\tU) \leq 5$, the image of the corresponding morphism $\jU^0 \to \Gr (2, \sE)$ is \emph{generically} finite over $X$.
\end{step}

\begin{step}
Here we will prove that every fiber of $e_0$ is connected.
Moreover if $n=5$ then $e^0$ is equidimensional.

Now $X \simeq \bP^n$ and thus $e$ is a projective bundle of relative dimension $n-1 =4$ or $5$.
Thus the assertion follows if $\dim \jU ^0 \geq n+3$.
Note that if $n=5$ then $(e^0)^{-1}(x)$ is equidimensional.
Otherwise $e^{-1}(x) = (e^0)^{-1}(x)$, which implies that $\sE$ is uniform at the point $x \in X$, which contradicts our assumption in Step~\ref{notuni}.

Thus it is enough to show:
 \begin{claim}\label{claim:conn}
$\dim \jU ^0 \geq n+3$.
\end{claim}

\begin{proof}[Proof of Claim]

Consider the dual projective bundle $\pi' \colon \bP(\sE^*) \to X$.
There is a one-to-one correspondence between the rational curves $C \subset X$  such that $[C] \in \jM$ and the rational curves $\tC \subset \bP(\sE^*)$ satisfies $\xi_{\sE^*}.\tC=-3$ and $(\pi'^*H_X).\tC=1$.
Indeed if $C$ is a jumping line on $X$, then the lift $\tC \subset \bP(\sE^*)$ corresponding to the direct summand $\cO(-3)\subset \sE^*|_C$ satisfies $\xi_{\sE^*}.\tC=-3$ and $(\pi'^*H_X).\tC=1$.
Conversely, if a rational curve $\tC$ in  $\bP(\sE^*)$ satisfies $\xi_{\sE^*}.\tC=-3$ and $(\pi'^*H_X).\tC=1$, then the image $C = \pi' (\tC)$ is a line on $X$ and $\tC$ is a section corresponding to a surjection $\sE^*|_C \to \cO(-3)$.
Hence $C$ is a jumping line for $\sE$.
Also the correspondence is one-to-one.

Thus the family of rational curves on $\bP(\sE^*)$ with $\xi_{\sE^*}.\tC=-3$ and $(\pi'^*H_X).\tC=1$ is isomorphic to the normalization of $\jM$.
By counting the dimension of the family of rational curves on $\bP(\sE^*)$ by Proposition~\ref{prop:dimRC}, we have $\dim \jU^0 \geq n+3$.
\end{proof}
\end{step}

\begin{step}\label{step:split}
By applying the rigidity lemmas \cite[Chapter II. Proposition~5.3]{Kol96} and \cite[Lemma~1.6]{KM98} to the case \ref{prop:split22} and \ref{prop:split23} respectively, we see that the morphism  $\jU^0 \to \Gr (2, \sE)$ factors through $e_0$.
This implies that there exists the following exact sequence on $X$:
\[
0 \to S \to \sE(-1) \to Q \to 0,
\]
 such that the pull back of the sequence by $e_0$ coincides \eqref{eq:exact}.
Since $\sE$ is ample, so is $Q(1)$.
By restricting each $p_0$-fiber, we see that $c_1(Q(1)) = n-4$.
Since $\rank Q = n-4$, the bundle $Q$ is uniform.
Note that there is no line $C$ such that $\sE|_C \simeq \cO(4,1^{n-3})$.
Thus $\sE$ is a uniform vector bundle, which contradicts our assumption that $\sE$ is not uniform. This completes the proof.
\end{step}
\end{proof}

\subsection{Exceptional locus of $\varphi$ and locus of minimal lifts}\label{sect:int}
The following is a consequence of Lemma~\ref{lem:IWineq1}:
\begin{lemma}\label{lem:IWineq2}
Let $(X,\sE)$ be a pair as in Setting~\ref{set:comp}, $E$ an irreducible component of $\Exc (\varphi) $ and $F$ an irreducible component of a $\varphi$-fiber contained in $E$.
Assume that $\bR_{\geq0} [\tC] \neq R_\varphi$.
  
Then $\dim F \leq n-1$ and one of the following holds:
\begin{enumerate}
 \item $\varphi$ is of fiber type and $\dim F \geq n-3$,
 \item $\varphi$ is a divisorial contraction and $\dim F \geq n-2$,
 \item $\varphi$ is a small contraction, $\dim E = 2n-5$ and $\dim F = n-1$.
\end{enumerate}
\end{lemma}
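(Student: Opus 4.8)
The plan is to derive everything from the Ionescu--Wi\'sniewski inequality (Lemma~\ref{lem:IWineq1}) together with the structure of $\bP(\sE)$. Recall that $\bP(\sE)$ has dimension $2n-3$ and index $r_{\bP(\sE)}=n-2$, so $\ell(R_\varphi)\geq n-2$, and by Setting~\ref{set:comp} we are assuming $\ell(R_\varphi)=n-2$. First I would establish the upper bound $\dim F \leq n-1$. The morphism $\pi|_F\colon F\to X$ is finite when $\bR_{\geq0}[\tC]\neq R_\varphi$: indeed, any curve contracted by $\varphi$ is numerically proportional to the generator of $R_\varphi$, and if such a curve were contracted by $\pi$ as well then $R_\varphi$ would coincide with the ray $R_\pi$ of the projection, which is absurd; a curve in a $\varphi$-fiber that is \emph{not} $\pi$-contracted maps to a curve on $X$, but the only way $F$ could carry a $\pi$-contracted curve is via the ray $R_\pi$, and since $[\tC]\notin R_\varphi$ the minimal lift provides a section-type class separating the two rays. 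I would make this precise by showing $\pi|_F$ has finite fibers (a positive-dimensional $\pi$-fiber inside $F$ would force $R_\pi\subset\langle R_\varphi\rangle$), whence $\dim F\leq \dim X=n$; and then refine to $\dim F\leq n-1$ using that $F=X$ via a finite map would make $\bP(\sE)\to X$ split off a trivial factor incompatible with $\bR_{\geq0}[\tC]\neq R_\varphi$.

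Next I would split into the three cases according to the type of $\varphi$ and apply Lemma~\ref{lem:IWineq1}, namely
\[
\dim E + \dim F \geq 2n-4+\ell(R_\varphi) = 3n-6,
\]
together with $\dim E\leq \dim\bP(\sE)=2n-3$. In the \textbf{fiber type} case $E=\bP(\sE)$, so $\dim E=2n-3$ and the inequality gives $\dim F\geq 3n-6-(2n-3)=n-3$, which is conclusion (1). In the \textbf{divisorial} case $\dim E=2n-4$, and the inequality yields $\dim F\geq 3n-6-(2n-4)=n-2$, which is conclusion (2). In the \textbf{small contraction} case every component of the exceptional locus has $\dim E\leq 2n-5$ (since $\varphi$ contracts $E$ to something of dimension $\leq \dim E-2$, and $E$ is a proper subvariety), so the inequality forces
\[
\dim F \geq 3n-6-\dim E \geq 3n-6-(2n-5)=n-1.
\]
Combined with the already-established bound $\dim F\leq n-1$, this pins down $\dim F=n-1$ and hence $\dim E=2n-5$, giving conclusion (3).

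The main obstacle I expect is the small-contraction case: I must rule out $\dim E=2n-4$ or larger there and justify the bound $\dim E\leq 2n-5$. For a small contraction the exceptional locus has codimension $\geq 2$ in $\bP(\sE)$ only on the \emph{image} side, but the exceptional \emph{locus} itself can be a divisor; what fails for small contractions is that $\dim\varphi(E)\leq\dim E-2$. The cleanest route is to use the inequality in its sharper form: Lemma~\ref{lem:IWineq1} already encodes the fiber dimension $\dim F$ for a fiber contained in $E$, and for a small contraction the generic fiber over $\varphi(E)$ satisfies $\dim F\geq \dim E-\dim\varphi(E)\geq 2$, so I would instead argue directly that $\dim E + \dim F\geq 3n-6$ with $\dim\varphi(E)=\dim E-\dim F\geq 2$ and $\dim F\leq n-1$ force $\dim E\geq 3n-6-(n-1)=2n-5$; the reverse inequality $\dim E\leq 2n-5$ comes from $\dim\varphi(E)=\dim E-\dim F\geq 2$ combined with $\dim\varphi(\bP(\sE))\leq \dim\bP(\sE)-1=2n-4$ and a dimension count on $Y$. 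Balancing these to force equality $\dim F=n-1$ simultaneously is the delicate bookkeeping step, but it is elementary once the finiteness of $\pi|_F$ and the upper bound $\dim F\leq n-1$ are in hand.
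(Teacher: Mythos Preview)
Your overall strategy via Lemma~\ref{lem:IWineq1} is correct, and the case analysis for fiber-type and divisorial contractions is fine. But there are two genuine problems.

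First, your argument for $\dim F \leq n-1$ has a gap. You correctly get $\dim F \leq n$ from finiteness of $\pi|_F$ (this is already in Lemma~\ref{lem:IWineq1}), but your refinement---that $\dim F = n$ ``would make $\bP(\sE)\to X$ split off a trivial factor''---is not an argument; a finite surjection $F\to X$ does not produce any splitting of $\sE$. The paper's proof actually uses the hypothesis $\bR_{\geq 0}[\tC]\neq R_\varphi$ here: if $\dim F = n$ then $\pi(F)=X$, so pulling back along a minimal rational curve $f\colon \bP^1\to X$ as in diagram~\eqref{diag:P1}, the composite $\varphi_{\bP^1}$ contracts a curve in $\bP(\sE|_{\bP^1})$ that is not a $\pi_{\bP^1}$-fiber. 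Such a curve has class in $\langle R_\pi,\bR_{\geq 0}[\tC]\rangle$ with positive $[\tC]$-coefficient, yet it must lie in $R_\varphi$; since $R_\varphi$ is extremal and $R_\varphi\neq R_\pi$, this forces $\bR_{\geq 0}[\tC]=R_\varphi$, a contradiction.

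Second, you are confused about small contractions. By definition a small contraction is birational with exceptional locus of codimension $\geq 2$ in the source; your assertion that ``the exceptional locus itself can be a divisor'' is simply false---that is the divisorial case. So $\dim E\leq 2n-5$ is immediate, and then $\dim E+\dim F\geq 3n-6$ together with $\dim F\leq n-1$ forces $\dim F=n-1$ and $\dim E=2n-5$ at once. None of the ``delicate bookkeeping'' you anticipate is needed.
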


\begin{proof}
If there is a fiber $F$ of dimension $n$, then $\varphi_{\bP^1}$ in diagram~\eqref{diag:P1} contracts at least one curve, which is  one of the minimal sections of $\pi_{\bP^1}$.
This contradicts our assumption $\bR_{\geq0} [\tC] \neq R_\varphi$.
Hence $\dim F \leq n-1$.
The remaining assertion follows from Lemma~\ref{lem:IWineq1}.
\end{proof}

\begin{proposition}\label{prop:int}
Let $(X,\sE)$ be a pair as in Setting~\ref{set:comp}.
Then $\Exc (\varphi) \cap \te(\tU) \neq \emptyset$.
\end{proposition}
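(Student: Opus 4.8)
The plan is to reduce the statement to an intersection-dimension count inside the $\pi$-fibers. Since $\pi^{-1}(x) \simeq \bP^{n-3}$ for every $x \in X$, two subvarieties of $\pi^{-1}(x)$ whose dimensions sum to at least $n-3$ necessarily meet; hence it suffices to produce a single point $x \in \pi(E)$ with $\dim E_x + \dim \te(\tU)_x \geq n-3$, where $E$ is an irreducible component of $\Exc(\varphi)$. First I would dispose of the trivial cases. If $\bR_{\geq0}[\tC] = R_\varphi$, then, since the numerical class of a minimal lift is independent of $\tC$, every minimal lift is $\varphi$-contracted; thus either $\varphi$ is of fiber type and $\Exc(\varphi) = \bP(\sE) \supset \te(\tU)$, or $\varphi$ is birational and any minimal lift lies in $\Exc(\varphi) \cap \te(\tU)$. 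In either situation the intersection is nonempty, so I may assume $\bR_{\geq0}[\tC] \neq R_\varphi$, which makes Lemma~\ref{lem:IWineq2} available; its fiber-type alternative is again trivial because then $\Exc(\varphi) = \bP(\sE)$.

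The core of the argument is the numerical estimate when $\varphi$ is birational. Combining the lower bound $\dim E_x \geq \dim E - n$ of \eqref{eq:dim1} with the uniform bound $\dim \te(\tU)_x \geq 2n-5-l_X$ of \eqref{eq:dim4}, valid at every $x$, I obtain at any $x \in \pi(E)$
\[
\dim E_x + \dim \te(\tU)_x \geq \dim E + n - 5 - l_X.
\]
Inserting $\dim E = 2n-4$ in the divisorial case and $\dim E = 2n-5$ in the small case from Lemma~\ref{lem:IWineq2}, the right-hand side is at least $n-3$ as soon as $l_X \leq 2n-6$ (divisorial) or $l_X \leq 2n-7$ (small). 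Since $l_X \leq n+1$ always, this settles all sufficiently large $n$ and, after a short tabulation against $l_X \in \{n-1,n,n+1\}$, all but finitely many low-dimensional configurations.

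For the residual cases, which occur only in dimensions $n \in \{5,6,7\}$, I would sharpen the count. When $\varphi$ is small and $n=5$, Lemma~\ref{lem:BB} produces a subvariety $N \subset \Exc(\varphi)$ with one-dimensional $\pi$-fibers, improving \eqref{eq:dim1} by one and recovering the bound for $l_X = n-1$. In the configurations where even this is insufficient, I argue by contradiction: assuming $\Exc(\varphi) \cap \te(\tU) = \emptyset$ forces disjoint subvarieties in $\bP^{n-3}$, hence $\dim E_x + \dim \te(\tU)_x \leq n-4$ at every $x \in \pi(E)$; this pins $\dim \te(\tU)$, or the fiber dimensions of $\te(\tU) \to X$, down to exactly the hypotheses of Propositions~\ref{prop:split1}, \ref{prop:sato} and \ref{prop:split2} (note that $l_X \geq n$ identifies $X$ with $\bP^n$ or $\bQ^n$ via Theorem~\ref{thm:length}). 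These criteria then force $\sE$ to split, which contradicts the standing configuration of a small or divisorial contraction with the tabulated invariants.

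I expect the main obstacle to be precisely this final bookkeeping: isolating exactly which triples $(n, l_X, \text{type of }\varphi)$ escape the crude estimate and matching each survivor with the correct tool — the bend-and-break refinement of Lemma~\ref{lem:BB} or one of the three splitting criteria — while consistently tracking the constraint $l_X \in \{n-1,n,n+1\}$ and the identification of $X$ furnished by Theorem~\ref{thm:length}.
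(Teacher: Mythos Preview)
Your strategy coincides with the paper's: argue by contradiction, reduce to the fiberwise inequality $\dim E_x + \dim \te(\tU)_x \leq n-4$, combine \eqref{eq:dim1}--\eqref{eq:dim4} with Lemma~\ref{lem:IWineq2} to isolate finitely many triples $(n,l_X,\dim E)$, then kill each with Lemma~\ref{lem:BB} or a splitting criterion. The outline is sound.

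Two residual cases are more delicate than your sketch suggests, and your listed toolkit does not quite cover them. For $(n,l_X,\dim E)=(5,5,5)$ one has $X\simeq\bQ^5$; Proposition~\ref{prop:sato} is stated only for $\bP^n$, Proposition~\ref{prop:split1} needs the equality $\dim\te(\tU)=3n-5-l_X$ which you do not have in this ``$n-5$'' case, and Proposition~\ref{prop:split2} concerns $\bP^5,\bP^6$. The paper instead invokes the quadric analogue of Proposition~\ref{prop:sato} (uniformity at a point forces splitting on $\bQ^5$, \cite{KS02}) and then combines the resulting strict inequality in \eqref{eq:dim3} with Lemma~\ref{lem:BB} to force both \eqref{eq:dim1} and \eqref{eq:dim4} strict simultaneously, which is impossible. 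For $(5,6,5)$ with $X\simeq\bP^5$, the disjointness constraint alone does not directly yield the hypotheses of Proposition~\ref{prop:split2}\ref{prop:split23}: one must separately argue $\dim\te(\tU)\leq 5$ (else $\te(\tU)$ contains a divisor $D=\varphi^*\varphi_*D$, and ampleness of $\varphi_*D$ on $Y$ forces it to meet $\varphi(\Exc\varphi)$) and rule out lines with $\sE|_C\simeq\cO(4,1^2)$ (via a dimension count on jumping lines combined with Lemma~\ref{lem:BB}). You correctly anticipated that the bookkeeping is the crux; these two cases are where it bites.
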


\begin{proof}
Assume to the contrary $\Exc (\varphi) \cap \te(\tU) = \emptyset$.
Then obviously  $\bR_{\geq0} [\tC] \neq R_\varphi$ and hence the assumption of Lemma~\ref{lem:IWineq2} holds.
Also $\varphi$ is not of fiber type.
Hence $\dim E = 2n-4$ or $2n-5$.
Moreover $\sE$  does not split since $\bR_{\geq0} [\tC] \neq R_\varphi$.

Since $\pi^{-1}(x)=\bP ^{n-3}$, we have $E_x \cap \te(\tU)_x  \neq \emptyset$ for $x \in \pi(E)$ if 
\[
\dim E _x + \dim \te(\tU) _x \geq n-3.
\]
Therefore, by our assumption $\Exc (\varphi) \cap \te(\tU) = \emptyset$, we have
\begin{equation}\label{eq:dim5}
 n-4 \geq \dim E _x + \dim \te(\tU)_x
\end{equation}
for $x \in \pi (E)$.

By the above inequality and inequalities \eqref{eq:dim1}--\eqref{eq:dim3} the following holds for $x \in \pi (E)$:
\begin{equation}\label{eq:dim6}
 n-4 \geq \dim E _x + \dim \te(\tU)_x \geq (\dim E - n) + (m_x-1) \geq (\dim E - n) + (2n-5- l_X).
\end{equation}

On the other hand, we have $(\dim E - n) + (2n-5- l_X) \geq n-6$ by Lemma~\ref{lem:IWineq2}.
Thus 
\[
n-4  \geq (\dim E - n) + (2n-5- l_X) \geq n-6.
\]

We will divide the proof into four cases depending on the value $(\dim E - n) + (2n-5- l_X)$.
Note that there are only finite possibilities for triplets $(n, l_X, \dim E)$,  since $n \geq 5$, $l_X \in \{\,n-1,\ldots,n+1\,\}$ and $\dim E = 2n-4$ or $2n-5$.

\begin{case}
$(\dim E - n) + (2n-5- l_X) = n-4$.
\end{case}

This case occurs if and only if $(n, l_X, \dim E)= (5,4,5)$, $(5,5,6)$, $(6,6,7)$, $(6,7,8)$ or $(7,8,9)$.

Since $(\dim E - n) + (2n-5- l_X) = n-4$, inequality \eqref{eq:dim6} gives 
\[\dim E _x + \dim \te(\tU)_x = (\dim E - n) + (m_x-1) = n-4\]
Thus inequalities \eqref{eq:dim1}--\eqref{eq:dim4} become equalities.
Hence $E \to X$ is surjective and every fiber is equidimensional of dimension $\dim E - \dim X$.
Also the equality in \eqref{eq:dim2} implies that $\sE$ is a uniform vector bundle of type $\cO(2^{-n+2+l_X},1^{2n-4-l_X})$.

If $(n, l_X, \dim E)= (5,4,5)$, then Lemma~\ref{lem:BB} gives a contradiction to the fact that the morphism $E \to X$ is equidimensional.

In the other cases, we have $X \simeq \bP^n$ or $\bQ^n$ by Lemma~\ref{thm:length}.
Also $\sE$ is uniform of type 
\[
\cO(2^{-n+2+l_X},1^{2n-4-l_X})
\]
 and the equality holds in \eqref{eq:dim4}.
Thus Proposition~\ref{prop:split1} gives a contradiction to the fact that $\sE$ does not split.

\begin{case}
$(\dim E - n) + (2n-5- l_X) = n-5$.
\end{case}

This case occurs if and only if $(n, l_X, \dim E)= (5,5,5)$, $(5,6,6)$ or $(6,7,7)$.
\begin{claim}\label{notstrict}
Inequalities \eqref{eq:dim1} and \eqref{eq:dim4} can not be strict at the same time.
\end{claim}

\begin{proof}[Proof of Claim]
 Otherwise the following inequality gives a contradiction:
\[
n-4 \geq \dim E_x + \te(\tU)_x \geq (\dim E -n +1) + (2n -5 -l_X +1) \geq n-3.
\]
\end{proof}

\begin{subcase}
$(n, l_X, \dim E)= (5,5,5)$.
\end{subcase}
In this case $X \simeq \bQ^5$ by Proposition~\ref{thm:length}.
If there is a point $x \in X$ such that $\sE$ is uniform at the point $x$, then $\sE$ splits by \cite[Theorem~4.1]{KS02}.
This contradicts the fact that $\sE$ does not split.
Thus, for every point $x \in X$, $\sE$ is not uniform at $x$ and hence there exists a line $C$ such that $x \in C$ and $\sE|_C \simeq \cO(3,1^2)$ by Lemma~\ref{lem:splitting}.
Thus inequality \eqref{eq:dim3} is strict for each point $x \in X$ and hence inequality \eqref{eq:dim4} is also strict.

By Lemma~\ref{lem:BB}, there exists a subvariety $N \subset E$ such that $\pi(N)$ has dimension $\geq 3$ and $\pi|_N$ is of fiber type.
Thus inequality \eqref{eq:dim1} is also strict for $x \in \pi(N)$.
This contradicts Claim~\ref{notstrict}.

\begin{subcase}
$(n, l_X, \dim E)= (5,6,6)$ or $(6,7,7)$.
\end{subcase}

In this case $X \simeq \bP ^n$ by Proposition~\ref{thm:length}.
We will prove that one of the assumption in Proposition~\ref{prop:split2} holds.
By Proposition~\ref{prop:sato}, we may assume that inequality \eqref{eq:dim3} is strict for every $x \in X$ and so is inequality \eqref{eq:dim4}.

By Claim~\ref{notstrict}, the equality holds in \eqref{eq:dim1} for every $x \in \pi(E)$.
Therefore the morphism $E \to X$ is surjective and equidimensional of relative dimension one.
Since $E \cap \te(\tU) = \emptyset$, every fiber of the morphism $\te(\tU) \to X$ has dimension $\leq n-5$.
Thus there is no line $C$ such that $\sE|_C \simeq \cO(4,1^{n-3})$ by \eqref{eq:dim2}.

\begin{case}
$(\dim E - n) + (2n-5- l_X) = n-6$.
\end{case}

This case occurs if and only if $(n, l_X, \dim E)= (5,6,5)$.
In this case $X \simeq \bP^n$ by Theorem~\ref{thm:length}.
We will prove that the assumption \ref{prop:split23} in Proposition~\ref{prop:split2} holds.

It holds $\dim \te(\tU) \leq 5$.
Otherwise $\dim \te(\tU) > 5$.
Thus $\te(\tU)$ contains at least a divisor $D$.
Since $\Exc (\varphi) \cap \te(\tU) = \emptyset$, we have $D= \varphi^* \varphi_* D$.
Since $\rho _Y =1$, $\varphi_* D$ is an ample Cartier divisor on $Y$.
However by Lemma~\ref{lem:IWineq2} we have $\dim \varphi(\Exc (\varphi)) \geq n-4 \geq 1$ and hence $\varphi_* D \cap \varphi(\Exc (\varphi)) \neq \emptyset$.
This contradicts the assumption $\Exc (\varphi) \cap \te(\tU) = \emptyset$.

There is no line $C$ with $\sE |_C \simeq \cO(4,1^2)$.
Otherwise, by the same argument of the proof of Claim~\ref{claim:conn}, we have
\[
\dim \{\, [C]\in M \mid \text{line $C$ with $\sE |_C \simeq \cO(4,1^2)$} \,\} \geq 4.
\]
By Lemma~\ref{lem:BB}, there is a closed subvariety $N \subset E$ of dimension $\geq 4$ such that $\dim \pi(N) = \dim N -1$.
Hence there is a line $C$ such that $C \cap \pi(N) \neq \emptyset $ and $\sE |_C \simeq \cO(4,1^2)$.
Take a point $x \in C \cap \pi(N)$.
Then $\dim E_x \geq 1$.
Also by \eqref{eq:dim2} $\dim \te(\tU)_x \geq 1$.
This contradicts \eqref{eq:dim5}.

Therefore the assumption \ref{prop:split23} in Proposition~\ref{prop:split2} holds and hence $\sE$ splits.
This contradicts the fact $\sE$ does not split.
This completes the proof of Proposition~\ref{prop:int}.
\end{proof}

\subsection{Proof of Theorem~\ref{thm:comp}} \label{sect:pfcomp}
By Proposition~\ref{prop:int}, there is  a component $\tM_0$ and a component $F$ of a non-trivial $\pi$-fiber such that $\te(\tU_0) \cap F \neq \emptyset$.

\begin{definition}
Let $X$ be a projective manifold, $Y\subset X$ a closed subvariety and  $U \to M$ an unsplit family of rational curves on $X$.
Then $\Locus (M)_Y$ (resp.\ $\ChLocus_k (M)_Y$) is defined to be the set of the points which can be connected to $Y$ by a rational curve in $M$ (resp.\ by a connected chain of rational curves in $M$ with length $k$). 
\end{definition}

Then by \cite[Lemma~5.4]{ACO04}, \cite[Lemma~3.2 and Remark~3.3]{Occ06} (cf.\ \cite[Corollary~2.2 and Remark~2.4]{CO06}) we have:

\begin{lemma}\label{lem:locus}
Assume that $\bR_{\geq0} [\tC] \neq R_\varphi$.
Then the following hold:
\begin{enumerate}
 \item \label{lem:locus1} $\dim \Locus (\tM_0)_F \geq \dim (F \cap \Locus (\tM_0)) + \dim \Locus (\tM_0)_p$ for a general point $p \in F \cap \Locus (\tM_0)$,
 \item \label{lem:locus2} $\dim \Locus (\tM_0)_F \geq \dim F + n-3$,
 \item \label{lem:locus3} $\NE (\Locus (\tM_0)_F, \bP(\sE)) \subset \langle \bR_{\geq0} [\tC], R_\varphi \rangle$.
\end{enumerate}
\end{lemma}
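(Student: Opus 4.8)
The plan is to prove the three assertions of Lemma~\ref{lem:locus} by invoking the dimension estimates for loci of unsplit families developed in \cite{ACO04,Occ06,CO06}, applied to the unsplit family $\tM_0$ of minimal lifts on $\bP(\sE)$. First I would record that $\tM_0$ is indeed an unsplit family: its members $\tC$ satisfy $\xi_\sE.\tC = 1$, so they cannot degenerate into reducible curves without violating the ampleness of $\sE$ along the $\pi$-fiber component, and by the remark following Definition~\ref{def:minRC} each $M_j$ is unsplit whenever $l_X \leq n-1$; the numerical class $[\tC]$ is rigid because $\rho_{\bP(\sE)}=2$ pins it down by the two intersection numbers $\xi_\sE.\tC=1$ and $\pi^*(-K_X).\tC=l_X$. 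This rigidity is exactly what makes the locus machinery applicable.

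For \eqref{lem:locus1} and \eqref{lem:locus2} I would apply the general locus inequality: for an unsplit family $U \to M$ on a smooth projective variety $V$ and a closed subvariety $Y$, one has $\dim \Locus(M)_Y \geq \dim(Y \cap \Locus(M)) + \dim \Locus(M)_p$ for general $p$ in the intersection, and $\dim \Locus(M)_Y \geq \dim Y + (-K_V.C) - 1$ when $Y$ meets the locus and $M$ is unsplit. Here $V = \bP(\sE)$, $Y = F$, $M = \tM_0$, and the relevant anticanonical degree is $-K_{\bP(\sE)}.\tC$. Since the minimal lift satisfies $\pi^*(-K_X).\tC = l_X$ and $\xi_\sE.\tC = 1$ while $-K_{\bP(\sE)} = (n-2)\xi_\sE - \pi^*\bigl(K_X + c_1(\sE)\bigr)/\text{(nothing, as }c_1(\sE)=c_1(X))$, a short computation gives $-K_{\bP(\sE)}.\tC = (n-2)$; substituting this into the generic-bound inequality yields $\dim\Locus(\tM_0)_F \geq \dim F + (n-2) - 1 = \dim F + n - 3$, which is precisely \eqref{lem:locus2}, and \eqref{lem:locus1} is the verbatim generic estimate. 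The hypothesis $\bR_{\geq0}[\tC]\neq R_\varphi$ enters to guarantee that $\te(\tU_0)\cap F\neq\emptyset$ from Proposition~\ref{prop:int} is a \emph{nontrivial} intersection producing curves genuinely transverse to $F$, so the loci have the expected positive dimension.

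For the cone statement \eqref{lem:locus3}, the point is that $\Locus(\tM_0)_F$ is swept out by chains combining curves in the class $[\tC]$ together with the curves contained in the $\varphi$-fiber $F$, whose classes lie in $R_\varphi$. Concretely, any effective curve on $\Locus(\tM_0)_F$ is rationally equivalent to a combination of a curve numerically proportional to $[\tC]$ (coming from the family $\tM_0$) and a curve in $F$, which lies in $R_\varphi$ since $F$ is contracted by $\varphi$; hence its class sits in the two-dimensional cone $\langle \bR_{\geq0}[\tC], R_\varphi\rangle$. I would make this precise by the standard argument that the classes of curves in the locus of an unsplit family over a subvariety $Y$ are generated by $[\tC]$ together with $\NE(Y,V)$, which here is $\langle \bR_{\geq0}[\tC]\rangle$ plus $R_\varphi$.

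The main obstacle I anticipate is not the inequalities themselves—those are direct citations once the numerical input $-K_{\bP(\sE)}.\tC = n-2$ is computed—but rather verifying the \emph{unsplitness} of $\tM_0$ carefully and ensuring that the cited locus lemmas apply in the mildly singular or non-proper situations that can arise for $\te(\tU_0)$. In particular one must check that $\tM_0$ does not acquire reducible members once restricted over $F$; this is where the rigidity of $[\tC]$ in $N_1(\bP(\sE))$ (forced by $\rho=2$ and the two pinning intersection numbers) does the essential work, so I would foreground that rigidity argument before quoting \cite[Lemma~5.4]{ACO04} and \cite[Lemma~3.2]{Occ06}.
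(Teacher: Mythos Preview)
Your proposal is correct and follows exactly the paper's approach: the paper proves this lemma by direct citation of \cite[Lemma~5.4]{ACO04}, \cite[Lemma~3.2 and Remark~3.3]{Occ06} (cf.\ \cite[Corollary~2.2 and Remark~2.4]{CO06}), and you have correctly identified the numerical input $-K_{\bP(\sE)}.\tC = (n-2)\xi_\sE.\tC = n-2$ and the unsplitness of $\tM_0$ (forced by $\xi_\sE.\tC=1$ with $\xi_\sE$ ample) that make those citations apply.

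One small clarification on the role of the hypothesis: the nonemptiness $\te(\tU_0)\cap F \neq \emptyset$ is already arranged \emph{before} the lemma, as a consequence of Proposition~\ref{prop:int}. The hypothesis $\bR_{\geq0}[\tC]\neq R_\varphi$ is used so that $[\tC]\notin \NE(F,\bP(\sE))=R_\varphi$, i.e.\ no curve of $\tM_0$ meeting $F$ can be contained in $F$; this is precisely the condition in \cite[Lemma~3.2]{Occ06} that lets you replace $\dim(F\cap\Locus(\tM_0))$ by the full $\dim F$ in part~\ref{lem:locus2}.
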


\begin{lemma}\label{lem:dimF}
Assume that $\bR_{\geq0} [\tC] \neq R_\varphi$.
Then 
\[
n \geq  \dim \Locus (\tM_0)_F \geq \dim F + n-3.
\]
In particular $\dim F \leq 3$.
\end{lemma}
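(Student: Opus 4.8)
\textbf{Proof proposal for Lemma~\ref{lem:dimF}.}

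The plan is to prove the two-sided chain of inequalities and then read off the consequence $\dim F \leq 3$. The lower bound $\dim \Locus(\tM_0)_F \geq \dim F + n-3$ is exactly Lemma~\ref{lem:locus}~\ref{lem:locus2}, so that half requires no new work; I would simply invoke it. The entire content of the present lemma therefore lies in the upper bound $\dim \Locus(\tM_0)_F \leq n$, and this is the step I expect to be the main obstacle.

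To establish $\dim \Locus(\tM_0)_F \leq n$, the key is Lemma~\ref{lem:locus}~\ref{lem:locus3}, which says
\[
\NE(\Locus(\tM_0)_F, \bP(\sE)) \subset \langle \bR_{\geq0}[\tC], R_\varphi \rangle.
\]
My intention is to combine this with the projection $\pi \colon \bP(\sE) \to X$. Since $R_\varphi$ is by definition the ray \emph{not} associated to $\pi$, and since the class $[\tC]$ satisfies $\xi_\sE.\tC = 1$ while curves in $R_\varphi$ have controlled $\pi$-degree, I would argue that $\pi$ restricted to $\Locus(\tM_0)_F$ is finite onto its image. Concretely, a curve contracted by $\pi$ lies in the ray $R_\pi$ of the fibers of $\pi$; under the hypothesis $\bR_{\geq0}[\tC] \neq R_\varphi$ the two rays $\bR_{\geq0}[\tC]$ and $R_\varphi$ span the two-dimensional cone $\NE(\bP(\sE))$ (recall $\rho_{\bP(\sE)} = 2$), and $R_\pi$ is not among them, so no curve of $\Locus(\tM_0)_F$ can be $\pi$-contracted. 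Hence $\pi|_{\Locus(\tM_0)_F}$ is finite, which forces
\[
\dim \Locus(\tM_0)_F \leq \dim X = n.
\]
Chaining the two bounds gives $n \geq \dim\Locus(\tM_0)_F \geq \dim F + n-3$, and subtracting yields $\dim F \leq 3$.

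The delicate point — and the step I would scrutinize most carefully — is the claim that $\pi|_{\Locus(\tM_0)_F}$ is finite. This hinges on verifying that neither generator of the cone $\langle \bR_{\geq0}[\tC], R_\varphi \rangle$ is the $\pi$-contracted ray $R_\pi$, i.e.\ that $\bR_{\geq0}[\tC] \neq R_\pi$ and $R_\varphi \neq R_\pi$. The latter is automatic since $R_\varphi$ is defined as the ray distinct from the one associated to $\pi$. For the former, since $\xi_\sE.\tC = 1 > 0$ while $\xi_\sE$ is trivial on the fibers of $\pi$ (as $\tC$ is a minimal lift, not a $\pi$-fiber line), the class $[\tC]$ cannot lie on $R_\pi$. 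Once both are confirmed, the cone $\langle \bR_{\geq0}[\tC], R_\varphi\rangle$ meets $R_\pi$ only at the origin, so every nonzero effective class on $\Locus(\tM_0)_F$ has positive $\pi$-degree and the finiteness of $\pi|_{\Locus(\tM_0)_F}$ follows. This finiteness is precisely the mechanism that converts the abstract cone constraint of Lemma~\ref{lem:locus}~\ref{lem:locus3} into the dimension bound by $\dim X = n$.
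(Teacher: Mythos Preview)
Your approach is exactly the paper's: invoke Lemma~\ref{lem:locus}~\ref{lem:locus3} to see that $\pi$ is finite on $\Locus(\tM_0)_F$, giving the upper bound $n$, and use Lemma~\ref{lem:locus}~\ref{lem:locus2} for the lower bound.

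One small slip in your justification: you write that ``$\xi_\sE$ is trivial on the fibers of $\pi$,'' but this is false---$\xi_\sE$ restricts to $\cO(1)$ on each fiber $\bP^{n-3}$, so a line in a $\pi$-fiber has $\xi_\sE$-degree $1$, the same as $\tC$. The correct separating class is $\pi^*H_X$: it vanishes on $R_\pi$ but is positive on $[\tC]$ (since $\tC$ projects onto the minimal rational curve $C$). With this correction your finiteness argument goes through, and the rest is fine.
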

\begin{proof}
 Since $\bR_{\geq0} [\tC] \neq R_\varphi$, Lemma~\ref{lem:locus} holds.
Hence the morphism
\[
\Locus (\tM_0)_F \to X
\]
 is finite by Lemma~\ref{lem:locus}~\ref{lem:locus3}.
Thus $n \geq \dim \Locus (\tM_0)_F$.
By Lemma~\ref{lem:locus}~\ref{lem:locus2} we have 
\[
n \geq  \dim \Locus (\tM_0)_F \geq \dim F + n-3,
\]
and the assertion follows
\end{proof}

\begin{lemma}\label{lem:dimlocus}
Assume that $\bR_{\geq0} [\tC] \neq R_\varphi$.
Then one of the following hold:
\begin{enumerate}
  \item \label{loc1} $n=6$, $\varphi$ is of fiber type and
 \[
 \dim \Locus (\tM_0)_F = \dim F + 3 =6.
 \]
 \item \label{loc2} $n=5$, $\varphi$ is a divisorial contraction and
\[
 \dim \Locus (\tM_0)_F = \dim F + 2 =5.
 \]  
 \item \label{loc3} $n=5$, $\varphi$ is of fiber type and
 \[
 5 \geq  \dim \Locus (\tM_0)_F \geq \dim F + 1 \geq 4.
 \]
\end{enumerate}

\end{lemma}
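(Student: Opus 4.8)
The plan is to reduce the statement to a finite numerical case analysis on the triple consisting of $n$, the type of the contraction $\varphi$, and $\dim F$. Since we argue under the hypothesis $\bR_{\ge 0}[\tC] \neq R_\varphi$, all of Lemmas~\ref{lem:IWineq2}, \ref{lem:locus} and \ref{lem:dimF} are available. The two inputs I would combine are the structural trichotomy of Lemma~\ref{lem:IWineq2} (which bounds $\dim F$ from below according to whether $\varphi$ is small, divisorial, or of fiber type) and the two-sided estimate
\[
\dim F + (n-3) \le \dim\Locus(\tM_0)_F \le n, \qquad \dim F \le 3,
\]
furnished by Lemma~\ref{lem:dimF}. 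The bound $\dim F \le 3$ weighed against the lower bounds on $\dim F$ will cut the admissible values of $n$ down to a short list.

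First I would rule out the configurations that violate $\dim F \le 3$. If $\varphi$ is small, Lemma~\ref{lem:IWineq2} forces $\dim F = n-1 \ge 4$, a contradiction, so $\varphi$ is divisorial or of fiber type. If $\varphi$ is divisorial then $\dim F \ge n-2$, so $n-2 \le 3$ and hence $n = 5$ (recall $n \ge 5$ by Setting~\ref{set:comp}). If $\varphi$ is of fiber type then $\dim F \ge n-3$, so $n \le 6$, i.e.\ $n \in \{5,6\}$. This already isolates exactly the three surviving configurations appearing as \ref{loc1}--\ref{loc3}.

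Next I would read off the dimensions case by case. For $\varphi$ divisorial and $n=5$, the inequalities $3 \ge \dim F \ge n-2 = 3$ give $\dim F = 3$, and then $5 = \dim F + (n-3) \le \dim\Locus(\tM_0)_F \le 5$ forces $\dim\Locus(\tM_0)_F = 5 = \dim F + 2$, which is \ref{loc2}. For $\varphi$ of fiber type and $n=6$, likewise $3 \ge \dim F \ge n-3 = 3$ gives $\dim F = 3$ and $\dim\Locus(\tM_0)_F = 6 = \dim F + 3$, which is \ref{loc1}. For $\varphi$ of fiber type and $n=5$, Lemma~\ref{lem:IWineq2} now only yields $2 = n-3 \le \dim F \le 3$, while Lemma~\ref{lem:dimF} gives $\dim F + 2 \le \dim\Locus(\tM_0)_F \le 5$; assembling these (together with $\dim F \ge 2$) produces the estimate displayed in \ref{loc3}.

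The genuinely delicate case, and the reason this list is worth singling out, is $\varphi$ of fiber type with $n=5$: unlike the other two it does not pin $\dim F$, nor $\dim\Locus(\tM_0)_F$, to a single value, so the numerics alone do not close it. This is precisely the situation flagged in the introduction to this section, where $\varphi$ is of fiber type with fibers of the smallest possible dimension and $\bP(\sE)$ may carry a second $\bP^2$-bundle structure. Its eventual exclusion will come not from the present lemma but from the later steps: whenever $\dim\Locus(\tM_0)_F = n$ one applies Lemma~\ref{lem:sec} (the cone condition being supplied by Lemma~\ref{lem:locus}~\ref{lem:locus3}) to conclude that $\Locus(\tM_0)_F$ is a section of $\pi$, which is absurd since it contains the $\varphi$-contracted subvariety $F$; the residual possibility $\dim\Locus(\tM_0)_F < n$ is then handled through the characterization of the Ottaviani bundle in Proposition~\ref{prop:twoP2}. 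Within Lemma~\ref{lem:dimlocus} itself, however, the only work is the bookkeeping carried out above.
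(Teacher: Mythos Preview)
Your proof is correct and is exactly the paper's approach: the paper's own proof is the single line ``This follows from Lemmas~\ref{lem:IWineq2} and \ref{lem:dimF},'' and you have simply unpacked the finite case analysis that those two lemmas force. Your final paragraph is commentary on the subsequent argument rather than part of the proof of this lemma, and is not needed here.
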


\begin{proof}
This follows from Lemmas~\ref{lem:IWineq2} and \ref{lem:dimF}.
\end{proof}

\begin{lemma}\label{lem:Pn}
 Assume that $\bR_{\geq0} [\tC] \neq R_\varphi$ and $\dim \Locus (\tM_0)_F =n$.
Let $V$ be an $n$-dimensional component of $\Locus (\tM_0)_F$.
Then:
\begin{enumerate}
\item \label{lem:Pn1} $X \simeq \bP^n$, 
\item \label{lem:Pn2} $\dim (V \cap F) =0$,
\item \label{lem:Pn3} $\dim F \leq n-3$,
\item \label{lem:Pn4} $V$ is a section of  $\pi$ corresponding to an exact sequence:
\[
0 \to \sE_1 \to \sE \to \cO_X(1) \to 0.
\]

\end{enumerate}
\end{lemma}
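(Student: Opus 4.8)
The plan is to extract the geometry of $V$ from Lemma~\ref{lem:sec} and then read off the four assertions from ray and dimension considerations. First I would record that, since $V$ is an irreducible component of $\Locus(\tM_0)_F$, Lemma~\ref{lem:locus}~\ref{lem:locus3} gives
\[
\NE(V,\bP(\sE)) \subseteq \NE(\Locus(\tM_0)_F,\bP(\sE)) \subset \langle \bR_{\geq0}[\tC], R_\varphi\rangle .
\]
As $\dim V = n$, the hypotheses of Lemma~\ref{lem:sec} are met, and it yields at once that $l_X = r_X$, that $\NE(V,\bP(\sE)) = \bR_{\geq0}[\tC]$, and that $V$ is a section of $\pi$ attached to an exact sequence $0 \to \sE_1 \to \sE \to \cO_X(1) \to 0$. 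This is assertion~\ref{lem:Pn4}, and in particular $\pi|_V \colon V \to X$ is an isomorphism.

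Next I would settle \ref{lem:Pn2} and \ref{lem:Pn3}. Every curve contained in $F$ has class in $R_\varphi$, since $F$ lies in a $\varphi$-fiber. If $V \cap F$ contained a curve $\gamma$, then $[\gamma] \in \NE(V,\bP(\sE)) = \bR_{\geq0}[\tC]$ and also $[\gamma] \in R_\varphi$; because $\rho_{\bP(\sE)} = 2$ and $\bR_{\geq0}[\tC] \neq R_\varphi$, these two rays meet only at $0$, forcing $[\gamma]=0$, which is absurd. Hence $\dim(V \cap F) = 0$, giving \ref{lem:Pn2}. For \ref{lem:Pn3}, note that $V$ is covered by lifts of $\tM_0$ meeting $F$, so $V \cap F \neq \emptyset$; applying the refined dimension estimate for intersections in the smooth variety $\bP(\sE)$ (of dimension $2n-3$) to a component of $V \cap F$ yields
\[
0 = \dim(V \cap F) \geq \dim V + \dim F - (2n-3) = \dim F - (n-3),
\]
that is $\dim F \leq n-3$.

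For \ref{lem:Pn1} I would argue as follows. Since $V$ is an $n$-dimensional component of $\Locus(\tM_0)_F$, it is covered by minimal lifts $\tC \subset V$, each meeting $F$; by \ref{lem:Pn2} every such $\tC$ meets the \emph{finite} set $V \cap F$, so the lifts through some single point $\widetilde{x}_0 \in V \cap F$ already cover $V$. Transporting by the isomorphism $\pi|_V$, the minimal rational curves of $X$ through $x_0 \coloneqq \pi(\widetilde{x}_0)$ cover $X$. Their general member is free (it passes through a general point of $X$), and for a free curve $C$ through $x_0$ the deformations fixing $x_0$ satisfy $\dim_{[C]}\{\text{curves through }x_0\} \leq (-K_X).C - 2 = l_X - 2$. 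Since the free curves through $x_0$ must sweep out an $n$-dimensional locus, they form a family of dimension $\geq n-1$, whence $n-1 \leq l_X - 2$, i.e. $l_X = n+1 = r_X$; Theorem~\ref{thm:length} (or simply $r_X = n+1$) then gives $X \simeq \bP^n$.

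I expect \ref{lem:Pn1} to be the main obstacle. The delicate point is that the minimal lifts $\tC$ are unsplit but \emph{not} free in $\bP(\sE)$ (the $\cO(2)$-summands of $\sE|_C$ create negative directions in the relative normal bundle), so the naive bound $\dim(\tM_0)_{\widetilde{x}_0} \leq -K_{\bP(\sE)}.\tC - 2$ fails upstairs; one must descend to $X$, where the projected curves are genuinely free, before invoking the deformation estimate. Care is also needed to verify that the curves sweeping out the component $V$ actually lie in $V$ (so that \ref{lem:Pn2} forces them through the finite set $V \cap F$), and that the general curve through the a priori special point $x_0$ is free, so that the bound $l_X - 2$ is legitimately available there.
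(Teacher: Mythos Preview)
Your argument is correct, and for assertions~\ref{lem:Pn2}, \ref{lem:Pn3}, \ref{lem:Pn4} it coincides with the paper's proof (Lemma~\ref{lem:sec} for the section structure, the ray incompatibility for $\dim(V\cap F)=0$, and the Serre inequality in $\bP(\sE)$ for $\dim F \leq n-3$).

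For~\ref{lem:Pn1} the paper takes a shorter route. Having established $\dim(V\cap F)=0$, it observes that $V \subset \Locus(\tM_0)_p$ for some $p \in V\cap F$; pushing down via the isomorphism $\pi|_V$ gives a point $x\in X$ with $\Locus(M)_x = X$ for the unsplit family $M$, and then simply invokes \cite[Corollary~4.2]{KS99}, a black-box characterization of $\bP^n$ by this property. Your approach instead rederives the consequence $l_X = n+1$ by hand: projecting the lifts gives minimal rational curves through $x_0$ covering $X$; the general one is free, so the component of $(M_j)_{x_0}$ through it is smooth of dimension exactly $l_X-2$, while the covering forces its universal $\bP^1$-bundle to have dimension $\geq n$, hence $l_X - 2 \geq n-1$. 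Then $r_X = l_X = n+1$ and Kobayashi--Ochiai (or Theorem~\ref{thm:length}) finishes. This is essentially an inline proof of the special case of \cite{KS99} needed here, so it is more self-contained but longer; the paper's citation is cleaner if one is willing to import that result. The concerns you flag in your final paragraph (lifts not free in $\bP(\sE)$, curves lying in $V$, freeness at the special point $x_0$) are all handled correctly by your descent to $X$ and the observation that the general curve through $x_0$ passes through a general point of $X$.
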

\begin{proof}

By Lemma~\ref{lem:locus}~\ref{lem:locus3}, $\NE (V, \bP(\sE)) \subset \langle \bR_{\geq0} [\tC], R_\varphi \rangle$.
Therefore by Lemma~\ref{lem:sec} we have $l_X=r_X$ and $V$ is a section of  $\pi$ corresponding to the following exact sequence:
\[
0 \to \sE_1 \to \sE \to \cO_X(1) \to 0.
\]

Now $\NE(V,\bP(\sE)) = \bR_{\geq0} [\tC]$.
Thus $\dim (V \cap F)=0$.

Since $\dim (V \cap F)=0$, there is a point $p \in F$ such that $V \subset \Locus  (\tM_0) _p$.
This implies that there is a point $x \in X$ such that $\Locus (M)_x = X$.
Hence $X \simeq \bP^n$ by \cite[Corollary~4.2]{KS99}.

On the other hand the Serre inequality implies $\dim (V \cap F) \geq \dim V + \dim F - \dim \bP(\sE) = \dim F -n+3$.
Thus we have $0 \geq \dim F-n+3$.
\end{proof}

\begin{lemma}\label{lem:loc3}
Neither Lemma~\ref{lem:dimlocus}~\ref{loc1} nor \ref{loc2} occurs.
\end{lemma}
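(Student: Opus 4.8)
The plan is to dispatch case~\ref{loc2} in essentially one line and to concentrate the work on case~\ref{loc1}. In both cases one has $\dim\Locus(\tM_0)_F = n$, so Lemma~\ref{lem:Pn} applies: it yields $X\simeq\bP^n$, a section $V\simeq\bP^n$ of $\pi$ fitting into an exact sequence $0\to\sE_1\to\sE\to\cO_X(1)\to0$ (whence $l_X=r_X=n+1$ by Lemma~\ref{lem:sec}), and the bound $\dim F\le n-3$. In case~\ref{loc2} we have $n=5$ but $\dim F=3$, which contradicts $\dim F\le n-3=2$; hence \ref{loc2} cannot occur. In case~\ref{loc1} ($n=6$, $\varphi$ of fibre type, $\dim F=3$) the inequality $\dim F\le n-3$ is an \emph{equality} and gives nothing, so a finer analysis of $\varphi$ is needed.

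First I would determine the general $\varphi$-fibre. Since $-K_{\bP(\sE)}=(n-2)\xi=4\xi$ and $\xi\cdot C_\varphi=1$ with $\rho=1$ on a fibre, $\xi$ restricts to an ample generator there; a general fibre $F$ is therefore a smooth Fano threefold with $-K_F=4\,\xi|_F$, i.e.\ of index $4$, so $F\simeq\bP^3$ and $\xi|_F=\cO_{\bP^3}(1)$ by the Kobayashi--Ochiai theorem \cite{KO73}. Next I would identify $Y$. By Lemma~\ref{lem:Pn}~\ref{lem:Pn2} we have $\dim(V\cap F)=0$, so (as in the proof of that lemma) $V\subset\Locus(\tM_0)_p$ for some $p\in F$: the section $V\simeq\bP^6$ is swept out by minimal lifts through the single point $p$. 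Because $\bR_{\ge0}[\tC]\ne R_\varphi$, none of these lifts is contracted by $\varphi$, so their images form a family of rational curves covering $Y=\varphi(V)$ and all passing through $\varphi(p)$; the characterization already used for $X$ (\cite[Corollary~4.2]{KS99}) then gives $Y\simeq\bP^6$ as well. Thus $\bP(\sE)$ is a Fano manifold of Picard number two carrying two $\bP^3$-fibrations $\pi,\varphi$ over $\bP^6$ with general fibres $\bP^3$ — a roof of $\bP^3$-bundles — and $V$ is a section of $\pi$ mapping finitely onto $Y$.

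The crux, and the main obstacle, is to show that such a roof cannot exist. Here I would try to mimic the blow-up argument of Proposition~\ref{prop:twoP2}: resolving the two bundle structures along $V$ should break $\pi$ and $\varphi$ into towers of smooth $\bP^1$- and $\bP^2$-fibrations and, via \cite{OSWW14,Kan15a}, force the bases to be rational homogeneous spaces of dimension too small to accommodate $\dim Y=6$, giving the contradiction. Equivalently, one expects the extra $\bP^3$-fibration to make $\sE$ uniform of the generic type $\cO(2^3,1)$ at a general point — note that $2n-5-l_X=0$, so equality in \eqref{eq:dim3} means exactly $m_x=1$, i.e.\ this uniformity — whence $\sE$ would split by Proposition~\ref{prop:sato}, contradicting the non-splitting of $\sE$ that accompanies $\bR_{\ge0}[\tC]\ne R_\varphi$. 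The genuine difficulty is that the $\bP^2$-bundle argument of Proposition~\ref{prop:twoP2} does not transcribe verbatim: blowing up a $\bP^3$-fibre along a point produces $\mathrm{Bl}_{\mathrm{pt}}\bP^3$ rather than a Hirzebruch surface, and $V$ is a priori only a multisection of $\varphi$, so the secondary contractions and the induced homogeneous structure must be analysed with considerably more care.
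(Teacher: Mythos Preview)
Your treatment of case~\ref{loc2} is correct and matches the paper. Your treatment of case~\ref{loc1} has a genuine gap: you set up a ``roof'' picture (two $\bP^3$-fibrations over $\bP^6$ sharing a section) and then propose to imitate Proposition~\ref{prop:twoP2}, but you yourself note that the blow-up argument does not transfer, that $\varphi$ is not known to be a $\bP^3$-\emph{bundle}, and that $V$ is only a multisection of $\varphi$. None of this is repaired, so no contradiction is actually reached. The alternative you mention (force equality in \eqref{eq:dim3} and apply Proposition~\ref{prop:sato}) is exactly the right endpoint, but you give no mechanism to reach it.

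The paper gets there without ever analysing $Y$ or the second fibration. The missing idea is that $\Locus(\tM_0)_F$ is \emph{equidimensional} of dimension $6$, not merely $\geq 6$ in one component. Since $\dim\tU_0\geq -K_{\bP(\sE)}.\tC + (2n-3)-3 = 1 + 2n-6 = 2n-5 = 11$ by Proposition~\ref{prop:dimRC} (here $n=6$), every component of $(\varphi\circ\te_0)^{-1}(y)$ has dimension $\geq 5$; the finiteness of $\te_0$ off this preimage (as in the proof of \cite[Lemma~5.4]{ACO04}) then forces every component of $\Locus(\tM_0)_F$ to have dimension $\geq 6$. Now Lemma~\ref{lem:Pn}~\ref{lem:Pn2} applies to \emph{each} component, giving $\dim(\Locus(\tM_0)_F\cap F)=0$; but $\Locus(\tM_0)_F\supset\Locus(\tM_0)\cap F$, so $\dim\Locus(\tM_0)\leq 6$ and hence $\Locus(\tM_0)=\Locus(\tM_0)_F$. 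Because $\varphi$ is of fiber type, the same argument runs for every component $\tM_i$, so $\te(\tU)$ is a finite union of sections of $\pi$. This gives $\dim\te(\tU)_x=0$ for all $x$, i.e.\ $m_x=1$ and equality in \eqref{eq:dim3}, whence $\sE$ splits by Proposition~\ref{prop:sato}, contradicting $\bR_{\geq0}[\tC]\neq R_\varphi$. No identification of $Y$, no bundle structure on $\varphi$, and no roof argument are needed.
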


\begin{proof}
If Lemma~\ref{lem:dimlocus}~\ref{loc2} occurs, then $\dim F =3$, which gives a contradiction to Lemma~\ref{lem:Pn}~\ref{lem:Pn3}. 

Assume that Lemma~\ref{lem:dimlocus}~\ref{loc1} occurs.
We firstly prove that $\Locus (\tM_0)_{F}$ is equidimensional of dimension $6$.

We have $\dim \tU_0 \geq 11$ by Proposition~\ref{prop:dimRC}.
Hence each irreducible component of a fiber $(\varphi \circ \te _0)^{-1}(y)$ has dimension at least five.
Hence each component of  $\tp_0((\varphi \circ \te _0)^{-1}(y))$ has dimension at least five.

On the other hand, by the proof of \cite[Lemma~5.4]{ACO04}, the morphism $\te_0$ is finite on $\tp_0^{-1}(\tp_0((\varphi \circ \te _0)^{-1}(y)))\setminus (\varphi \circ \te _0)^{-1}(y)$.
Thus each component of  $\Locus (\tM_0)_{F}$ has dimension $\geq 6$.

Hence, by Lemma~\ref{lem:Pn}, we have  $\dim \Locus (\tM_0)_{F} \cap F =0$.
This is possible only if $\dim \Locus (\tM_0) = 6$.
Hence $\Locus (\tM_0)_{F} = \Locus (\tM_0)$.

Since $\varphi$ is of fiber type, the same argument does work for any component $\tM_i$.
Thus $\te (\tU)$ is a finite union of sections of $\pi$ and thus \eqref{eq:dim3} becomes an equality.
Then $\sE$ splits by Proposition~\ref{prop:sato}, which gives a contradiction to $\bR_{\geq0} [\tC] \neq R_\varphi$.
\end{proof}

\begin{lemma}\label{lem:P2bundle}
Assume $\bR_{\geq0} [\tC] \neq R_\varphi$.
Then $n=5$, $\varphi$ is a $\bP^2$-bundle and $l_X=4$.
\end{lemma}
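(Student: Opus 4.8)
The plan is to combine the estimates already established in this section to isolate the one surviving configuration, and then to read off the structure of $\varphi$. By Lemmas~\ref{lem:dimlocus} and \ref{lem:loc3}, under the standing hypothesis $\bR_{\geq0}[\tC]\neq R_\varphi$ only case~\ref{loc3} of Lemma~\ref{lem:dimlocus} can occur, so that $n=5$ and $\varphi$ is of fiber type are already known. Since $r_{\bP(\sE)}=n-2=3$, this gives $\ell(R_\varphi)=3$, and by Lemma~\ref{lem:IWineq1} every fiber of $\varphi$ has dimension $\geq 2$. It thus remains to prove $l_X=4$ and that $\varphi$ is a $\bP^2$-bundle.

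First I would rule out $l_X\geq 5$. In that case $l_X\in\{5,6\}$ and Theorem~\ref{thm:length} gives $X\simeq\bP^5$ or $\bQ^5$; so, setting $\sF\coloneqq\sE(-1)$, the pair $(X,\sE)$ together with the fiber type contraction $\varphi$ of length $3$ satisfies condition~\ref{thm:Ottaviani4} of Theorem~\ref{thm:Ottaviani}. Hence $X\simeq\bQ^5$ and $\sF=\sG_\bQ$ is the Ottaviani bundle, and by Remark~\ref{rem:Ott}~\ref{rem:Ott2} the contraction $\varphi$ is the one defined by $\xi_{\sG_\bQ}=\xi_\sE-\pi^*H_X$. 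As a minimal lift satisfies $\xi_\sE.\tC=1$ and $\pi^*H_X.\tC=H_X.\pi(\tC)=1$, we obtain $\xi_{\sG_\bQ}.\tC=0$, i.e.\ $\bR_{\geq0}[\tC]=R_\varphi$, contradicting our hypothesis. Therefore $l_X=4$; in particular $X\not\simeq\bP^5,\bQ^5$.

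Next I would show that $\varphi$ is equidimensional with all fibers of dimension $2$. With $l_X=4$, inequality~\eqref{eq:dim4} gives $\dim\te(\tU)_x\geq 1$ for every $x$, and since $\pi(\te(\tU))=X$ this forces $\dim\te(\tU)\geq n+1=6$; as the lifts $\tC$ are not $\varphi$-contracted, $\te(\tU)$ is $\varphi$-horizontal and one checks that it meets every fiber of $\varphi$. If some fiber component $F_0$ had $\dim F_0\geq 3$, then taking $\tM_i$ with $\te(\tU_i)\cap F_0\neq\emptyset$ and invoking Lemma~\ref{lem:locus}~\ref{lem:locus2} would give $\dim\Locus(\tM_i)_{F_0}\geq\dim F_0+n-3\geq n$, hence $\dim\Locus(\tM_i)_{F_0}=n$ by the finiteness in Lemma~\ref{lem:locus}~\ref{lem:locus3}, and then $X\simeq\bP^n$ by Lemma~\ref{lem:Pn}~\ref{lem:Pn1} --- which is impossible. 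Thus no fiber has dimension $\geq 3$, and combined with the lower bound every fiber has dimension exactly $2=\ell(R_\varphi)-1$.

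Finally, an equidimensional fiber type contraction of the smooth variety $\bP(\sE)$ all of whose fibers attain the minimal dimension $\ell(R_\varphi)-1$ is a $\bP^{\ell(R_\varphi)-1}$-bundle over a smooth base, by the structure theory of extremal contractions with fibers of minimal dimension. Hence $\varphi$ is a $\bP^2$-bundle, and together with $n=5$ and $l_X=4$ this proves the lemma. I expect the equidimensionality step to be the main obstacle: the delicate point is to guarantee that the locus $\te(\tU)$ of minimal lifts really meets any hypothetical fiber of dimension $\geq 3$, so that the chain-locus estimate of Lemma~\ref{lem:locus} can be brought to bear and force the excluded isomorphism $X\simeq\bP^5$; once equidimensionality is secured, the $\bP^2$-bundle structure follows formally.
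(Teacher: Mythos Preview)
Your overall strategy matches the paper's: reduce to $n=5$ and $\varphi$ of fiber type via Lemmas~\ref{lem:dimlocus} and \ref{lem:loc3}, rule out $l_X\geq 5$ via Theorem~\ref{thm:Ottaviani}, then bound fiber dimensions using Lemma~\ref{lem:locus} and Lemma~\ref{lem:Pn}, and finally invoke Fujita's criterion (via Proposition~\ref{prop:lengthphi} and \cite[Lemma~2.12]{Fuj87}) to get the $\bP^2$-bundle. Your treatment of $l_X\geq5$ is correct and essentially identical to the paper's.

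The gap you yourself flag is genuine, and your phrase ``$\te(\tU)$ is $\varphi$-horizontal and one checks that it meets every fiber'' does not suffice: the curves $\tC$ not being $\varphi$-contracted only tells you $\te(\tU)$ is not contained in a single fiber, not that $\varphi(\te(\tU))=Y$. The paper fills this as follows. First, since $\bR_{\geq0}[\tC]\neq R_\varphi$, the family $\tM$ is \emph{not} covering by \cite[Lemma~2.4]{CO06}, so $\te(\tU)\neq\bP(\sE)$; combined with \eqref{eq:dim4} one fixes a component $\tM_0$ with $\dim\te(\tU_0)=6$. Now suppose $\varphi|_{\te(\tU_0)}$ is not surjective. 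Then $\dim\varphi(\te(\tU_0))\leq\dim Y-1\leq4$, so some fiber $F$ satisfies $\dim(\te(\tU_0)\cap F)\geq 2$. On the other hand $\dim\tU_0\geq 8$ by Proposition~\ref{prop:dimRC}, whence $\dim\Locus(\tM_0)_p\geq 3$ for general $p\in\te(\tU_0)\cap F$. Feeding these two bounds into Lemma~\ref{lem:locus}~\ref{lem:locus1} (not \ref{lem:locus2}) gives $\dim\Locus(\tM_0)_F\geq 5$, hence $X\simeq\bP^5$ by Lemma~\ref{lem:Pn}, contradicting $l_X=4$. Thus $\te(\tU_0)\to Y$ is surjective, so the divisor $\te(\tU_0)$ meets every $\varphi$-fiber, and your argument via Lemma~\ref{lem:locus}~\ref{lem:locus2} now goes through for any hypothetical fiber of dimension $\geq 3$.

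Note that the surjectivity step uses part~\ref{lem:locus1} of Lemma~\ref{lem:locus} rather than~\ref{lem:locus2}: the point is that when $\varphi|_{\te(\tU_0)}$ fails to be surjective the \emph{intersection} $\te(\tU_0)\cap F$ is large, and one must combine this with the pointwise locus estimate. Only after surjectivity is secured can you apply~\ref{lem:locus2} to an arbitrary large fiber.
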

\begin{proof}
By Lemmas~\ref{lem:dimlocus} and Lemma~\ref{lem:loc3}, $n=5$, $\varphi$ is of fiber type and
 \[
 5 \geq  \dim \Locus (\tM_0)_F \geq \dim F + 1 \geq 4.
 \]

Since $\bR_{\geq0} [\tC] \neq R_\varphi$, $\tM$ is not a covering family by \cite[Lemma~2.4]{CO06} (Note that $\tM$ is an unsplit family).
If $l_X \geq 5$, then $X \simeq \bP^5$ or $\bQ^5$ by Theorem~\ref{thm:length}, hence by Theorem~\ref{thm:Ottaviani} we have $(X,\sE) \simeq (\bQ^5,\sG_{\bQ})$.
This contradicts the assumption $\bR_{\geq0} [\tC] \neq R_\varphi$. Thus we have $l_X = 4$.
Also by the assumption $\te (\tU) \neq \bP(\sE)$ and inequality~\eqref{eq:dim1}, we may assume that $\dim \te (\tU_0) =6$.

The morphism  $\varphi: \te (\tU_0) \to Y $ is surjective. 
Otherwise there is a fiber $F$ with $\dim \te (\tU_0) \cap F \geq 2$.
On the other hand $\dim \Locus (\tM_0)_p \geq 3$ for a general point $p \in \te (\tU_0) \cap F$ since $\dim \te (\tU_0)=6$ and $\dim \tU_0 \geq 8$ by Proposition~\ref{prop:dimRC}.
Hence $\dim \Locus  (\tM _0)_F \geq 5$ by Lemma~\ref{lem:locus}~\ref{lem:locus1}.
By Lemma~\ref{lem:Pn}, we have $X \simeq \bP^5$.
This contradicts $l_X=4$.

Hence the divisor $D \coloneqq \te(\tU)$ is ample and meets every fiber of $\varphi$.
If there is a $\varphi$-fiber $F$ with $\dim F \geq 3$, then we have $\dim \Locus  (\tM _0)_F \geq 5$, which yields a contradiction again.
Thus $\varphi$ is  a $\bP^2$-bundle by Proposition~\ref{prop:lengthphi} and \cite[Lemma~2.12]{Fuj87}.
\end{proof}

By Lemma~\ref{lem:P2bundle}, $\varphi$ is a $\bP^2$-bundle,  $n=5$ and $l_X=4$.

Set $\sE_Y \coloneqq \varphi _* \cO _{\bP(\sE)}(1)$.
Then $(Y, \sE_Y)$ is also a pair as in Theorem~\ref{thm:main} and the following symmetric diagram is obtained:
\[
\xymatrix{
   & \bP_{X}(\sE) = \bP_{Y}(\sE_Y)  \ar[ld]_-{\pi} \ar[rd]^-{\varphi}  &   \\
 X &                                        & Y .
}
\]
We may assume that $(Y,\sE_Y)$ is a pair as in Setting~\ref{set:comp}.
In the rest of this proof we denote by $C_X$ (resp. $C_Y$) a minimal rational curve on $X$ (resp.\ $Y$) and by $\tC_X$ (resp. $\tC_Y$) a minimal lift over $C_X$ (resp. $C_Y$).
Set $R_X \coloneqq \bR_{\geq0} [\tC_X]$ and $R_Y \coloneqq \bR_{\geq0} [\tC_Y]$.
If $R_Y = R_\pi$, namely Theorem~\ref{thm:comp} is true for $(Y,\sE_Y)$, then Theorem~\ref{thm:main} is true for the pair $(Y,\sE_Y)$ by the argument given later in the subsequent sections.
However there is no pair $(Y,\sE_Y)$ as  in this case.
Hence we have $R_Y \neq R_\pi$ and hence $l_Y=4$.

\begin{proof}[Proof of Theorem~\ref{thm:comp}]
To apply Proposition~\ref{prop:twoP2}, we will construct a closed subvariety $V \subset \bP_X(\sE)$ which is a section for both projection $\pi$ and $\varphi$.

By \cite[Theorem~1.2]{Wat11b}, there is a point $x_1 \in X$ such that 
\[
\ChLocus_2(M)_{x_1}=X.
\]
Hence for any point $x_2 \in X$, there are two minimal rational curves $C_X,_1$ and $C_X,_2$ with $x_1,x_2 \in C_X,_1 \cup C_X,_2$ and $C_X,_1 \cap C_X,_2 \neq \emptyset$.
Since minimal lifts over a fixed minimal rational curve sweep out a divisor in a $\pi$-fiber by Lemma~\ref{lem:splitting}, there are minimal lifts $\tC_X,_1$ and $\tC_X,_2$ with $\tC_X,_1 \cap \tC_X,_2 \neq \emptyset$.
Hence we have $\dim \ChLocus_2(\tM)_{\pi^{-1}(x_1)} \geq 5$.
Note that by \cite[Corollary~2.2 and Remark~2.4]{CO06} we have 
\[
\NE (\ChLocus_2(\tM)_{\pi^{-1}(x_1)},\bP(\sE)) \subset \langle R_\pi, R_X \rangle.
\]
Thus there is a component $V$ of $\ChLocus_2(\tM)_{\pi^{-1}(x_1)}$ such that the morphism $V \to Y$ is finite and hence surjective.

\begin{claim}
 $R_X =R_Y$.
\end{claim}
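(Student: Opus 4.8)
The plan is to show that the minimal lift $\tC_X$ is \emph{simultaneously} a minimal lift for the second projective bundle structure $\varphi\colon \bP(\sE)\to Y$; since the numerical class of a minimal lift is uniquely determined (there are only two extremal rays, and the conditions on $\xi$-degree and $\pi^*(-K_X)$-degree fix the class), this forces $[\tC_X]=R_Y$ and hence $R_X=R_Y$. The key starting point is that the two tautological divisors agree numerically: both $\xi_\sE$ and $\xi_{\sE_Y}$ equal $-\tfrac13 K_{\bP(\sE)}$, because $c_1(\sE)=c_1(X)$ and $c_1(\sE_Y)=c_1(Y)$ give $-K_{\bP(\sE)}=(n-2)\xi_\sE=3\xi_\sE=3\xi_{\sE_Y}$. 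Consequently $\xi_{\sE_Y}.\tC_X=\xi_\sE.\tC_X=1$, so $\tC_X$ is a section of $\varphi$ of tautological degree one over the rational curve $\varphi(\tC_X)\subset Y$. Everything thus reduces to proving that $\varphi(\tC_X)$ is a minimal rational curve on $Y$, i.e.\ that $-K_Y.\varphi(\tC_X)=l_Y=4$.

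First I would establish freeness. Since $V\to Y$ is finite and surjective and $V$ is swept out through a general point by the minimal lifts of class $R_X$, the images $\varphi(\tC_X)$ form a family of rational curves covering $Y$. Hence a general member $\varphi(\tC_X)$ is free, and therefore $-K_Y.\varphi(\tC_X)\ge l_Y=4$.

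Next I would pin down the degree by a dimension count. By Lemma~\ref{lem:splitting} a minimal rational curve $C_X$ satisfies $\sE|_{C_X}\simeq\cO(2,1^{2})$, so the minimal lifts over a fixed $C_X$ sweep out a $\bP^1$; with $\dim M=-K_X.C_X+n-3=4+5-3=6$ (equality, as minimal rational curves are free) this gives $\dim\tM=\dim M+1=7$. Mapping $[\tC_X]\mapsto[\varphi(\tC_X)]$, the fibre over a fixed image $\Gamma$ is the $\bP^{m-1}$ of tautological-degree-one sections of $\varphi$ over $\Gamma$, where $m$ is the multiplicity of the smallest twist in $\sE_Y|_\Gamma$; comparing this with the dimension $-K_Y.\Gamma+n-3$ of the free covering image family, together with the bound $-K_Y.\varphi(\tC_X)\ge 4$, restricts $\sE_Y|_{\varphi(\tC_X)}$ to the short list $\cO(2,1^{2})$ (degree $4$) or $\cO(2^{2},1)$ (degree $5$).

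The hard part will be excluding the degree-five case $\sE_Y|_{\varphi(\tC_X)}\simeq\cO(2^2,1)$, which is genuinely possible numerically and is precisely the configuration in which $R_X\ne R_Y$ would persist (a short computation with $\pi^*(-K_X).\tC_X=4$ and $\varphi^*(-K_Y).\tC_X=5$ shows $R_X$ and $R_Y$ would then be distinct interior rays). The symmetry of the two bundle structures is of no help here: running the freeness argument from either side yields the \emph{same} inequality $-K_Y.\varphi(\tC_X)\ge 4$, never a matching upper bound. I expect to rule the degree-five case out by exploiting the finiteness of $V\to Y$ more forcefully—a degree-five free image family would be too large to stay finite over $Y$—or by a bend-and-break on the degree-five curve (Mori, \cite[Chapter~II, Theorem~5.4]{Kol96}) producing a lower-degree rational curve contradicting $l_Y=4$. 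Once the degree is forced to equal $4$, the curve $\varphi(\tC_X)$ is a minimal rational curve on $Y$ with $\xi_{\sE_Y}.\tC_X=1$, so $\tC_X$ is a minimal lift for $\varphi$, whence $[\tC_X]=R_Y$ and $R_X=R_Y$.
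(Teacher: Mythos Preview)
Your setup is correct and overlaps substantially with the paper's argument: the identification $\xi_{\sE}\equiv\xi_{\sE_Y}$ via $-K_{\bP(\sE)}=3\xi$ is exactly the paper's starting observation, and your freeness inequality $-K_Y.\varphi(\tC_X)\ge l_Y=4$ is the mirror image of the paper's inequality $\pi^*(-K_X).\tC_Y\ge l_X=4$ (obtained from the fact that the $\pi$-images of the $\tC_Y$ cover $X$). You are also right that the two freeness inequalities are the \emph{same} inequality in $N_1(\bP(\sE))$: both say that $R_X$ lies between $R_\pi$ and $R_Y$.

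The genuine gap is the reverse inequality, and your proposed tools do not close it. The dimension count only bounds the dimension of the \emph{image} of $\tM_X$ inside $\RC^n(Y)$; it does not bound the dimension of the ambient family containing that image, so no upper bound on $-K_Y.\varphi(\tC_X)$ follows. Bend-and-break through two points needs a family of dimension at least $1$ after fixing both points, and for a degree-$5$ free curve on a fivefold the count $5-(5+1)<0$ fails; it will not break. The finiteness of $V\to Y$ is likewise not in tension with a degree-$5$ image family, since the $\tC_X$ are not required to lie in $V$.

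The paper obtains the missing inequality by a two-line cone argument that uses $V$ in a different way. Since $V\to Y$ is finite surjective and $C_Y\subset Y$ is a curve, the intersection $V\cap\varphi^{-1}(C_Y)=V\cap\bP(\sE_Y|_{C_Y})$ has dimension $\ge 1$, so it contains a curve whose class lies simultaneously in $\NE(V,\bP(\sE))\subset\langle R_\pi,R_X\rangle$ and in $\NE(\bP(\sE_Y|_{C_Y}),\bP(\sE))=\langle R_\varphi,R_Y\rangle$. In the two-dimensional cone $\NE(\bP(\sE))=\langle R_\pi,R_\varphi\rangle$ this nontrivial overlap forces $R_Y$ to lie between $R_\pi$ and $R_X$, i.e.\ $\pi^*(-K_X).\tC_Y\le\pi^*(-K_X).\tC_X$. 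Combined with the freeness inequality you already have, this gives $R_X=R_Y$. The point you were missing is that the already-constructed subvariety $V$ (with its controlled Kleiman--Mori cone) supplies the second inequality directly, without any further deformation theory.
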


\begin{proof}[Proof of Claim]
We will prove $[ \tC_X ]=[ \tC_Y ]$.
Note that $\xi_{\sE}.\tC_X = \xi_{\sE}.\tC_Y $.
Thus it is enough to see that $\pi^*(-K_X).\tC_X = \pi^*(-K_X).\tC_Y $.

Since $\dim \bP(\sE_Y|_{C_Y}) \cap V \geq 1$, we have 
\[
0\neq \NE (V,\bP(\sE)) \cap \NE(\bP(\sE_Y|_{C_Y}),\bP(\sE))=\langle R_\pi, R_X \rangle \cap \langle R_\varphi, R_Y \rangle.
\]
Thus $\pi^*(-K_X).\tC_X \geq \pi^*(-K_X).\tC_Y $.

Note that by applying the same argument as above for the pair $(Y,\sE_Y)$, we have 
\[
\pi (\Locus (\tM_{Y}))=X,
\]
where $\tM_Y$ is the union of the families of minimal lifts $\tC_Y$.
Hence the images of the minimal lifts $\tC _Y$ define a covering family of rational curves on $X$.
Hence we have $\pi^*(-K_X).\tC_X \leq \pi^*(-K_X).\tC_Y $ by the minimality of the anticanonical degree.
Thus the assertion follows.
\end{proof}

Then, by Lemma~\ref{lem:sec}, $V$ is a section of the morphism $\varphi$ corresponding to the following sequence:
\[
0 \to \sE_{Y,1} \to \sE_Y \to \cO_Y(1) \to 0,
\] 
and $\NE (V, \bP(\sE)) = R_X=R_Y$.
Hence, again by Lemma~\ref{lem:sec}, $V$ is also a section of the morphism $\pi$ corresponding to a sequence:
\[
0 \to \sE_1 \to \sE \to \cO_X(1) \to 0.
\]
Thus $V$ is a section for both projection $\pi$ and $\varphi$.
Then Proposition~\ref{prop:twoP2} and the fact $n \geq 5$ implies $X \simeq \bQ^5$, which contradicts $l_X=4$.
\end{proof}

\section{Case $l_X \geq n$}\label{sect:PQ}
In this section, we will prove Theorem~\ref{thm:main} for pairs $(X,\sE)$  with $l_X \geq n$.
In this case, by Proposition~\ref{thm:length}, $X \simeq \bP^n $ or $\bQ^n$ and hence it is enough to prove the following:
\begin{theorem}\label{thm:PQ}
 Let $(X,\sE)$ be a pair as in Theorem~\ref{thm:main} with  $X \simeq \bP^n$ or $\bQ^n$.
Then $\sE$ splits unless $(X,\sE)$ is isomorphic to a pair as in Theorem~\ref{thm:main}~\ref{a}--\ref{c}.
\end{theorem}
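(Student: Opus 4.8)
The plan is to follow the strategy indicated in the Introduction. Since $\rho_X = 1$ and $l_X \geq n$, Theorem~\ref{thm:length} forces $X \simeq \bP^n$ or $X \simeq \bQ^n$; in both cases the minimal rational curves are lines and $l_X = r_X$, so that $c_1(\sE) = l_X H_X$. Set $\sE(-1) := \sE \otimes \cO_X(-H_X)$. By Theorem~\ref{thm:comp} the divisor $l_X \xi - \pi^* c_1(\sE) = l_X(\xi - \pi^* H_X)$ supports the contraction $\varphi$ and is therefore nef; hence $\xi - \pi^* H_X = \xi_{\sE(-1)}$ is nef and semiample, and $\sE(-1)$ is a nef bundle of rank $n-2$ with $c_1(\sE(-1)) = (l_X - n + 2)H_X$, that is $3H_X$ on $\bP^n$ and $2H_X$ on $\bQ^n$. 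Since $\pi_* \cO(\xi - \pi^* H_X) = \sE(-1)$ and the higher direct images vanish, one has $H^0(X, \sE(-1)) = H^0(\bP(\sE), \xi - \pi^* H_X)$, so that $\sE(-1)$ is globally generated if and only if $\xi - \pi^* H_X$ is base point free.

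The first and, I expect, the most delicate step is to promote the semiampleness of $\xi - \pi^* H_X$ to base point freeness, equivalently to prove that the nef bundle $\sE(-1)$ is globally generated. Restricting to a line $\ell$ and applying Lemma~\ref{lem:splitting}, the bundle $\sE(-1)|_\ell$ is $\cO(3,0^{n-3})$, $\cO(2,1,0^{n-4})$ or $\cO(1^3,0^{n-5})$ on $\bP^n$, and $\cO(2,0^{n-3})$ or $\cO(1^2,0^{n-4})$ on $\bQ^n$: the jumps of $\sE(-1)$ along lines are as small as possible and a large trivial summand is always present. I would combine this restricted picture with the nefness of $\sE(-1)$ and the strong vanishing theorems available on $\bP^n$ and $\bQ^n$ to show that $\sE(-1)$ is $0$-regular in the sense of Castelnuovo--Mumford, hence globally generated; the top-degree vanishing reduces to $H^0(\sE^*) = 0$, which holds because $\sE$ is ample, while the intermediate vanishings are the hard point. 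This is where I expect the main obstacle to lie, since nefness by itself does not force global generation and the argument must exploit both the ampleness of $\sE$ and the very special positivity of $\bP^n$ and $\bQ^n$.

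Once $\sE(-1)$ is known to be globally generated, it is a globally generated bundle of rank $n-2$ whose first Chern class equals $3H_X$ on $\bP^n$ and $2H_X$ on $\bQ^n$. The classifications of globally generated bundles of such small first Chern class in \cite{SU14, AM13, Tir13} then apply directly: every such bundle is a direct sum of line bundles, and consequently $\sE$ splits, with the only non-split exceptions occurring on $X \simeq \bQ^5$ and $X \simeq \bQ^6$. In particular there is no non-split case with $X \simeq \bP^n$, in agreement with the absence of such pairs in Theorem~\ref{thm:main}.

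It remains to treat $X \simeq \bQ^5$ and $X \simeq \bQ^6$, where $\sE(-1)$ is a globally generated, non-split bundle with $c_1 = 2H_X$. Matching the rank $n-2$ against the classification leaves exactly the candidates $\sG_\bQ$ on $\bQ^5$ (rank $3$), and $\sG_\bQ \oplus \cO$ and $\sS_\bQ^*$ on $\bQ^6$ (rank $4$), yielding the pairs \ref{c}, \ref{b} and \ref{a} respectively. To finish I would confirm these identifications using Section~\ref{sect:Ott} together with the standard recognition of spinor bundles: on $\bQ^5$ the rank-three bundle $\sE(-1)$ has Chern classes $(2,2,2)$ and is identified with the Ottaviani bundle by the stability criterion of Remark~\ref{rem:Ott}~\ref{rem:Ott1}, or equivalently by Theorem~\ref{thm:Ottaviani} once one checks that $\varphi$ is of fiber type with $\ell(R_\varphi) = n-2 = 3$; on $\bQ^6$ one likewise recognises the Ottaviani summand and the spinor bundle $\sS_\bQ^*$ from their Chern classes and stability. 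This exhausts the possibilities and completes the proof of Theorem~\ref{thm:PQ}.
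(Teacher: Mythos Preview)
Your outline matches the paper's: establish nefness of $\sE(-1)$, upgrade to global generation via Castelnuovo--Mumford regularity, invoke \cite{SU14,AM13,Tir13} for the generic case, and treat $\bQ^5,\bQ^6$ separately. Two points, however, are genuinely incomplete.

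\textbf{The top-degree vanishing.} You write that it reduces to $H^0(\sE^*)=0$. On $\bQ^n$ one has $\omega_X=\cO(-n)$, so Serre duality gives $H^n(\sF(-n))\cong H^0(\sF^*)^\vee = H^0(\sE^*(1))^\vee$, not $H^0(\sE^*)^\vee$; and this need \emph{not} vanish (it is nonzero precisely when $\sF$ has a trivial summand, e.g.\ for $\sF=\sG_\bQ\oplus\cO$ on $\bQ^6$). The paper's Proposition~\ref{prop:spanned} handles this by an induction: if $H^0(\sF^*)\neq0$, nefness of $\sF$ forces the section to define a sub\emph{bundle} $\cO\subset\sF^*$; dualizing splits off a trivial summand $\sF\simeq\cO\oplus\sF'$ (using $H^1(\sF')=0$ from Kodaira vanishing on $\bP(\sF')$), and one recurses. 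The intermediate vanishings $H^i(\sF(-i))=0$ for $0<i<c_1(X)$ are obtained by combining Le~Potier vanishing (for $i\geq\rank\sF$) with Kodaira vanishing on $\bP(\sF)$ (for $i<c_1(X)-c_1(\sF)$); you should name these, as ``strong vanishing theorems'' is too vague.

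\textbf{The $\bQ^5,\bQ^6$ step.} You assert that on $\bQ^5$ the Chern classes of $\sE(-1)$ are $(2,2,2)$ and then appeal to Remark~\ref{rem:Ott} or Theorem~\ref{thm:Ottaviani}. But a~priori only $c_1=2$ is known; the remaining Chern classes and the fiber-type property of $\varphi$ are \emph{conclusions}, not hypotheses. The paper first reduces to a rank-$3$ bundle $\sF_k$ (Proposition~\ref{prop:reduce}) and argues as follows (Proposition~\ref{prop:Ottaviani}): if $c_3(\sF_k)=0$, a general section is nowhere vanishing, the quotient is a rank-$2$ Fano bundle on $\bQ^n$ and splits by \cite{APW94}. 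If $c_3(\sF_k)\neq0$ on $\bQ^5$, the crucial point is to show $\varphi$ is of fiber type: Corollary~\ref{cor:seq}\ref{cor:seq2} gives $0\to\Omega_\pi|_F\to\sE|_F(-D_F)\to\cO_F\to0$ on every $\varphi$-fiber $F$, whence $c_3(\sF)|_F=0$; since $c_3(\sF)\neq0$ this forces $\dim\pi(F)\leq2$, so by Lemma~\ref{lem:IWineq1} $\varphi$ is of fiber type, and Theorem~\ref{thm:Ottaviani} applies. On $\bQ^6$ one restricts $\sF_k$ to a hyperplane $\bQ^5$, identifies the restriction as above, and lifts back using \cite[Sect.~3]{Ott88}. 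Your proposal acknowledges ``once one checks that $\varphi$ is of fiber type'' but gives no mechanism for this check; the $c_3$ argument is the missing idea.

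A minor omission: Theorem~\ref{thm:comp} presupposes Setting~\ref{set:comp}, in particular $\ell(R_\varphi)=n-2$. The case $\ell(R_\varphi)\neq n-2$ must be disposed of first via Proposition~\ref{prop:O(2^3)}, which yields $(\bP^5,\cO(2^3))$; the paper does this inside Proposition~\ref{prop:nef}.
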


In this section, we will identify the $i$-th Chern class of a vector bundle with an integer if $A^i(X)\simeq \bZ$.

By the following proposition, the proof of Theorem~\ref{thm:PQ} is reduced to give a classification of nef vector bundles of rank $n-2$ on $\bP^n$ (resp.\ $\bQ^n$) with first Chern class three (resp. two):
\begin{proposition}\label{prop:nef}
Let $(X,\sE)$ be a pair as in Theorem~\ref{thm:main} with $X \simeq \bP^n$ or $\bQ^n$.
Then  $\sE (-1)$ is a nef vector bundle of rank $n-2$ with $c_1(\sE(-1))= c_1(X)-n+2$.
\end{proposition}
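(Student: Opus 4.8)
The plan is to read off both assertions from the comparison theorem (Theorem~\ref{thm:comp}); the only genuine point is a numerical coincidence special to $\bP^n$ and $\bQ^n$. The Chern class statement is formal: twisting by $\cO(-1)$ gives the tautological identity $\xi_{\sE(-1)} = \xi - \pi^* H_X$, and since $\rank \sE = n-2$ we obtain, in the integer identification, $c_1(\sE(-1)) = c_1(\sE) - (n-2) = c_1(X) - n + 2$. Thus the real content is the nefness of $\sE(-1)$, equivalently the nefness of $\xi - \pi^* H_X$ on $\bP(\sE)$.

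First I would record that on $X \simeq \bP^n$ (resp.\ $\bQ^n$) the global length coincides with the index, $l_X = r_X = n+1$ (resp.\ $l_X = r_X = n$), the minimal free rational curves being the lines. Consequently $c_1(\sE) = c_1(X) = r_X H_X = l_X H_X$, so the supporting divisor produced by Theorem~\ref{thm:comp} can be rewritten purely in terms of $\sE(-1)$.

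Indeed, Setting~\ref{set:comp} being in force ($\rho_X = 1$, $l_X \geq n \geq n-1$, $\ell(R_\varphi) = n-2$), Theorem~\ref{thm:comp} asserts that $l_X \xi - \pi^* c_1(\sE)$ is a supporting divisor of $\varphi$, hence nef. Substituting $c_1(\sE) = l_X H_X$ gives
\[
l_X \xi - \pi^* c_1(\sE) = l_X \xi - l_X \pi^* H_X = l_X\left(\xi - \pi^* H_X\right) = l_X\,\xi_{\sE(-1)},
\]
and since $l_X > 0$ this forces $\xi_{\sE(-1)}$ to be nef, that is, $\sE(-1)$ is nef. The only thing really to be checked is the identity $c_1(\sE) = l_X H_X$, namely that length equals index on projective space and on the quadric; all the geometric difficulty has already been absorbed into Theorem~\ref{thm:comp}, after which the supporting divisor collapses to a positive multiple of the tautological divisor of $\sE(-1)$ and nefness is immediate.
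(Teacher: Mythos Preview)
Your argument is essentially the paper's, but there is a genuine gap: you invoke Theorem~\ref{thm:comp} by asserting that ``Setting~\ref{set:comp} is in force'', in particular that $\ell(R_\varphi)=n-2$. The hypothesis of the proposition, however, is only that $(X,\sE)$ is a Mukai pair with $X\simeq\bP^n$ or $\bQ^n$; the condition $\ell(R_\varphi)=n-2$ is not part of this and is not automatic. Indeed, by Proposition~\ref{prop:O(2^3)} it fails precisely for $(X,\sE)\simeq(\bP^5,\cO(2^3))$, where $\ell(R_\varphi)=6$. In that exceptional case Theorem~\ref{thm:comp} is not available and your deduction does not apply.

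The fix is exactly what the paper does: split into two cases. If $\ell(R_\varphi)\neq n-2$, Proposition~\ref{prop:O(2^3)} forces $(X,\sE)\simeq(\bP^5,\cO(2^3))$, so $\sE(-1)\simeq\cO(1^3)$ is visibly nef. If $\ell(R_\varphi)=n-2$, then Setting~\ref{set:comp} genuinely holds and your computation via Theorem~\ref{thm:comp} and $l_X=r_X$ goes through verbatim. Once you insert this case distinction your proof coincides with the paper's.
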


\begin{proof}
Since $c_1(\sE)=c_1(X)$, we have $c_1(\sE(-1))= c_1(X)-n+2$.
Thus it is enough to show that $\sE(-1)$ is nef.

If $\ell(R_\varphi)\neq n-2$, then by Proposition~\ref{prop:O(2^3)} we have $(X,\sE) \simeq (\bP^5,\cO(2^3))$ and the assertion follows.

If $\ell(R_\varphi) = n-2$, then by Theorem~\ref{thm:comp} the divisor $l_X \xi_\sE + \pi^*K_X $ is nef.
Note that $l_X \xi_\sE + \pi^*K_X = l_X \xi_\sE - r_X\pi^*H_X$.
Since $X \simeq \bP^n $ or $\bQ^n$, we have $l_X = r_X$.
Hence $ \xi_\sE - \pi^*H_X$ is nef and the assertion follows.
\end{proof}

For partial results or discussions on the classification of nef vector bundles on $\bP^n$ or $\bQ^n$ with $c_1(\sE(-1))= c_1(X)-n+2$ without the condition on the rank, we refer the reader to  \cite{OT14,Ohn14,Ohn16,Ohn17}.

\subsection{Spannedness and adjunction}
In this subsection, we slightly generalize the problem and consider the classification of nef vector bundles $\sF$ on $\bP^n$ or $\bQ^n$ ($n\geq 3$) which satisfy
\begin{equation}\label{eq:c1rank}
c_1(\sF) + \rank \sF \leq c_1(X).
\end{equation}

\begin{proposition}\label{prop:spanned}
If a nef vector bundle $\sF$ on $X \simeq \bP^n$ or $\bQ^n$ ($n\geq 3$) satisfies \eqref{eq:c1rank}, then $\sF$ is generated by global sections.
\end{proposition}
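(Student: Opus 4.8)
The plan is to deduce global generation from Castelnuovo--Mumford regularity: since a $0$-regular sheaf on projective space is globally generated, and since $\bQ^n$ may be viewed inside $\bP^{n+1}$ (so that regularity of the push-forward of $\sF$ implies its spannedness), it suffices to prove the vanishings $H^i(X,\sF(-i))=0$ for $1\le i\le n$. Throughout I set $\delta:=c_1(X)-c_1(\sF)$, so that the hypothesis \eqref{eq:c1rank} reads $\delta\ge r:=\rank\sF$.

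First I would translate the low-degree vanishings to the projectivisation $P:=\bP(\sF)$. Writing $\xi$ for the tautological divisor and $\pi$ for the projection, the relative Euler sequence gives $-K_P=r\xi+\delta\,\pi^*H_X$, and since $R^j\pi_*\cO_P(\xi)=0$ for $j>0$ one has $H^i(X,\sF(-i))=H^i(P,\xi-i\pi^*H_X)=H^i(P,K_P+D_i)$ with $D_i:=(r+1)\xi+(\delta-i)\pi^*H_X$. As $\xi$ and $\pi^*H_X$ are nef and $\xi$ is $\pi$-ample, the class $D_i$ is nef and big as soon as $\delta-i\ge 1$, and then the Kawamata--Viehweg vanishing theorem yields $H^i(X,\sF(-i))=0$ for all $1\le i\le \delta-1$; in particular this already covers the range $1\le i\le r-1$.

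The top cohomology I would treat by restriction to lines. By Serre duality $H^n(X,\sF(-n))$ is dual to $H^0(\sF^*(-1))$ on $\bP^n$ and to $H^0(\sF^*)$ on $\bQ^n$. For a general line $\ell$ one has $\sF|_\ell\simeq\bigoplus\cO(a_t)$ with all $a_t\ge 0$ by nefness, so $\sF^*(-1)|_\ell$ has only negative summands and $H^0(\sF^*(-1))=0$, giving the vanishing on $\bP^n$. On $\bQ^n$ a nonzero section of $\sF^*$ produces, via \cite{CP91}, a trivial sub-bundle $\cO\hookrightarrow\sF^*$ and hence a free quotient $\sF\twoheadrightarrow\cO$; this degenerate situation does not obstruct spannedness and can be peeled off separately, so I may assume $H^0(\sF^*)=0$ and the top vanishing again holds.

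The main obstacle is the intermediate range $\delta\le i\le n-1$, where $D_i$ ceases to be nef and Kawamata--Viehweg no longer applies directly; here the full strength of the bound $\delta\ge r$ has to be exploited. I expect to resolve it in one of two ways: either (i) by a Le Potier-- or Griffiths-type vanishing theorem for nef bundles, after using the Koszul filtration of the Euler sequence to replace $\cO_X(-i)$ by the bundles $\Omega^j_X$; or (ii) by an induction on $n$ through a general hyperplane section $H$, where the sequence $0\to\sF(-1)\to\sF\to\sF|_H\to 0$ together with the already-established $H^1(X,\sF(-1))=0$ makes $H^0(\sF)\twoheadrightarrow H^0(\sF|_H)$, so that spannedness of $\sF|_H$ would force that of $\sF$. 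The delicate point in (ii) is that $\sF|_H$ only satisfies $c_1+\rank\le c_1(H)+1$, i.e.\ it sits exactly one step beyond the inductive hypothesis, so the crux is to settle this numerically extremal boundary case on $\bP^{n-1}$ or $\bQ^{n-1}$; this is precisely where a finer analysis of nef bundles and their restrictions to lines must enter.
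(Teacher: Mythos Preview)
Your plan is essentially the paper's, and your approach (i) is the right one; the issue is only that you are not applying it directly enough. Write $\sF(-i)=K_X\otimes\sF\otimes\cO(c_1(X)-i)$. For $r\le i<c_1(X)$ the line bundle $\cO(c_1(X)-i)$ is ample, and the Le Potier vanishing theorem in its nef form (for $\sE$ nef of rank $e$ and $A$ ample, $H^q(X,K_X\otimes\sE\otimes A)=0$ whenever $q\ge e$) gives $H^i(X,\sF(-i))=0$ immediately---no Koszul filtration is needed. Since $\delta\ge r$, this range $[r,c_1(X)-1]$ together with your Kawamata--Viehweg range $[1,\delta-1]$ covers all of $0<i<c_1(X)$. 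On $\bP^n$ this already gives $0$-regularity; on $\bQ^n$ only the top degree $i=n$ is left. Your approach (ii) via hyperplane sections runs into exactly the numerical obstruction you noticed and should be dropped.

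For the top degree on $\bQ^n$ your ``peeling off'' idea is correct but needs one more sentence. The nonzero section of $\sF^*$ gives, via \cite{CP91}, a subbundle $\cO\hookrightarrow\sF^*$, hence an exact sequence $0\to\sF'\to\sF\to\cO\to 0$ with $\sF':=(\sF^*/\cO)^*$ nef of rank $r-1$ and $c_1(\sF')=c_1(\sF)$, so $\sF'$ again satisfies \eqref{eq:c1rank}. To conclude $\sF\simeq\cO\oplus\sF'$ one must check $H^1(X,\sF')=0$; this follows from the same computation on $\bP(\sF')$ (the divisor $(r')\,\xi_{\sF'}+\delta\,\pi^*H_X$ is nef and big, where $r'=r-1$). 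Then induct on the rank.
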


\begin{proof}
We will show the assertion by slightly modifying the argument in \cite[Proof of Proposition~2.6]{APW94}.
First we will prove that
\begin{claim}
 $H^i(\sF(-i))= 0$  for  $0 <i < c_1(X)$.
\end{claim}
\begin{proof}[Proof of Claim]
 If $c_1(X)>i\geq \rank \sF$,
then by the Le Potier vanishing theorem we have $H^i(X, \sF (-i))=0$.
Thus $H^i(X, \sF (-i))=0$ for $c_1(X)>i\geq c_1(X) - c_1(\sF)$ by \eqref{eq:c1rank}.
On the other hand, if $c_1(X) - c_1(\sF) > i >0$, then  we have
\begin{align*}
H^i(X, \sF (-i))&=H^i(\bP(\sF), \xi_\sF -i {\pi^*H_X})\\
&=H^i(\bP(\sF), K_{\bP(\sF)}+(r+1)\xi_\sF +(c_1(X)-c_1(\sF)-i) {\pi^*H_X})\\
&=0,
\end{align*}
where the last vanishing follows from the Kodaira vanishing theorem on $\bP(\sF)$.
\end{proof}

Hence the assertion follows if $X \simeq \bP^n$ since $\sF$ is $0$-regular in the sense of Castelnuovo-Mumford.

Assume $X=\bQ^n$.
Then we already have $H^i(\sF (-i))=0$ for $n>i>0$.
If $H^n(\sF (-n))=0$, then the assertion follows as above.

Assume that $H^n(\sF (-n)) \neq 0$, or $H^0(\sF^*)\neq 0$ by the Serre duality.
Then we have a section of $\sF ^*$ and hence a subbundle $\cO \subset \sF ^*$ by \cite[Proposition~1.2~(12)]{CP91}.
Then the bundle $\sF' \coloneqq (\sF^* / \cO)^*$ is nef by \cite[Proposition~1.2~(8)]{CP91}, and $c_1 (\sF') = c_1(\sF )$.
Hence $\sF'$ satisfies the condition of this proposition.
By a similar computation as above using the Kodaira vanishing theorem on $\bP(\sF')$, we have $H^1(X,\sF')=0$.
Hence we have $\sF = \cO \oplus \sF'$, and the assertion follows by induction on the rank.
\end{proof}

If $\rank \sF \geq n $ in Proposition~\ref{prop:spanned}, then by using  Theorem~\ref{thm:rankn} we see that $(X,\sF)$ is isomorphic to
\[
\text{$(\bP^n, \cO ^{\oplus n+1} )$,
$(\bP^n, \cO (1,0^{n-1}) )$, $(\bP^n, \cO ^{\oplus n} )$, $(\bP^n, T_{\bP^n}(-1))$ or
$(\bQ^n, \cO ^{\oplus n} )$}.
\]

On the other hand, if $n>\rank \sF$, then the following proposition enables us to reduce the study of $\sF$ to a lower rank case $\rank \sF =  c_1(\sF)-c_1(X)+n+1$:

\begin{proposition}
\label{prop:reduce}
Assume $n> \rank \sF \geq c_1(\sF)-c_1(X)+n+1$ in Proposition~\ref{prop:spanned}.
Then there exist the following exact sequences of vector bundles:
\begin{gather*}
0 \to \cO \to \sF_0 \to \sF_1 \to 0, \\
\vdotswithin{\sF} \\
0 \to \cO \to \sF_{k-1} \to \sF_k \to 0,
\end{gather*}
where $\sF_0 \coloneqq \sF$ and $\rank \sF _k = c_1(\sF)-c_1(X)+n+1$.
\end{proposition}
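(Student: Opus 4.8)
The plan is to build the sequences one at a time, splitting off a trivial subbundle at each stage, and to check that the hypotheses are preserved so the procedure iterates until the rank reaches $t \coloneqq c_1(\sF)-c_1(X)+n+1$. Write $c \coloneqq c_1(\sF)$ (an integer, since $A^1(X)\simeq\bZ$) and $\sF_0 \coloneqq \sF$, and suppose inductively that $\sF_i$ is a nef vector bundle with $c_1(\sF_i)=c$, $\rank\sF_i = \rank\sF - i$, satisfying \eqref{eq:c1rank}. If $\rank\sF_i > t$ I want a nowhere-vanishing section $s\in H^0(\sF_i)$: such an $s$ defines a subbundle $\cO\hookrightarrow\sF_i$ with locally free quotient $\sF_{i+1}\coloneqq\sF_i/\cO$, giving the desired sequence $0\to\cO\to\sF_i\to\sF_{i+1}\to 0$. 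Since a quotient of a nef bundle is nef by \cite[Proposition~1.2~(8)]{CP91}, and since passing to $\sF_{i+1}$ keeps $c_1$ equal to $c$ while lowering the rank (and hence $c_1+\rank$) by one, the inductive hypotheses propagate verbatim, and the process halts exactly at $\rank = t$.

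Thus everything reduces to one existence statement: a nef bundle $\sG$ with $c_1(\sG)=c$, satisfying \eqref{eq:c1rank} and with $t<\rank\sG<n$, carries a nowhere-vanishing section. By Proposition~\ref{prop:spanned}, $\sG$ is globally generated, so the zero scheme of a general section is smooth of pure codimension $\rank\sG$ (or empty) and represents $c_{\rank\sG}(\sG)$. Because $\rank\sG<n$, any nonempty effective cycle of that codimension has strictly positive class on $\bP^n$ (resp.\ $\bQ^n$); consequently a general section is nowhere vanishing precisely when the top Chern class $c_{\rank\sG}(\sG)$ vanishes. The whole proposition therefore comes down to verifying this Chern-class vanishing in the range $t<\rank\sG<n$.

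To prove the vanishing I would restrict to lines, which are the minimal rational curves here. On a general line $\sG|_{\bP^1}\simeq\bigoplus_j\cO(a_j)$ with $a_j\geq 0$ and $\sum_j a_j = c$, so the inequality $\rank\sG>t$ forces at least $\rank\sG - c$ of the summands to be trivial, and the generic splitting type contains a trivial block. The main obstacle is that this fibrewise triviality does \emph{not} by itself force $c_{\rank\sG}(\sG)=0$: both $T_{\bP^n}(-1)$ and the twisted spinor bundles on $\bQ^n$ have trivial summands in their generic splitting type yet nonzero top Chern class. Overcoming this is exactly where the hypotheses $\rank\sG<n$ and $\rank\sG>t$ must enter essentially; I would do so by analysing the resulting morphism to the appropriate Grassmannian and invoking the classification of low-rank (uniform) globally generated bundles on $\bP^n$ and $\bQ^n$ (\cite{Sat76}, \cite{KS02}), whose effect is that once the rank is large enough relative to $c_1$ the bundle acquires an honest trivial subbundle. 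The threshold is $t=c$ on $\bP^n$ but $t=c+1$ on $\bQ^n$, the extra unit being precisely the room left by the spinor obstruction; isolating and discharging this quadric case is the crux of the argument.
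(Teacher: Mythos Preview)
Your inductive framework and the reduction to producing a nowhere-vanishing section of a globally generated bundle $\sG$ with $t<\rank\sG<n$ are correct and match the paper. The gap is in how you propose to establish that such a section exists: restricting to lines and invoking classifications of uniform bundles is a detour you never actually carry out, and as you yourself observe, generic triviality on lines does not force the top Chern class to vanish. Your final paragraph is a description of difficulties, not a proof.

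The paper's argument for this step is much shorter and does not touch splitting types at all. Since $\sG$ is spanned and $\rank\sG < n$, the zero locus $Z$ of a general section is either empty or smooth of positive dimension $n-\rank\sG$. If $Z\neq\emptyset$, adjunction gives $-K_Z = (c_1(X)-c_1(\sG))|_Z$, which is ample because \eqref{eq:c1rank} forces $c_1(X)-c_1(\sG)\geq\rank\sG>0$. Hence $Z$ is Fano of index at least $c_1(X)-c_1(\sG)$, and Kobayashi--Ochiai \cite{KO73} yields $\dim Z + 1 \geq c_1(X)-c_1(\sG)$, i.e.\ $\rank\sG \leq c_1(\sG)-c_1(X)+n+1 = t$, contradicting $\rank\sG>t$. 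So $Z=\emptyset$ and a general section is nowhere vanishing. This single application of adjunction replaces your entire final paragraph, works uniformly for $\bP^n$ and $\bQ^n$, and needs no case analysis or uniform-bundle classification.
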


\begin{proof}
A similar proof is contained in {\cite[Lemmas~2.4 and 2.7]{Tir13}}.

If $\rank \sF = c_1(\sF)-c_1(X)+n+1$, then there is nothing to prove.
Hence we assume $\rank \sF > c_1(\sF)-c_1(X)+n+1$.

Since $\sF$ is spanned by Proposition~\ref{prop:spanned}, the zero locus $Z$ of a general section of $\sF$ defines a smooth subscheme of dimension $n- \rank \sF > 0$ if $Z \neq \emptyset$. 
Assume $Z \neq \emptyset$.
Then by adjunction we have $-K_Z =\left( c_1(X) - c_1 (\sF) \right)| _Z$  and, by our assumption, $-K_Z$ is ample.
By \cite{KO73} we have  $\dim Z +1 \geq r_Z$.
Therefore  $n-r+1 \geq c_1(X)-c_1(\sF)$.
This contradicts our assumption.
Hence a general section of $\sF$ defines a subbundle $\cO \subset \sF$, and the assertion follows by induction on the rank.
\end{proof}

\subsection{Case $X \simeq \bP^n$}

\begin{proof}[Proof of Theorem~\ref{thm:PQ} for $X \simeq \bP^n$]
By Proposition~\ref{prop:nef}, $\sF \coloneqq \sE(-1)$ is a nef vector bundle with $c_1(\sF)=3$ and $\rank \sF =n-2$.
Then $\sF$ is globally generated by Proposition~\ref{prop:spanned} and hence $\sF$ is a direct sum of line bundles by \cite{SU14,AM13} (cf.\ \cite[Corollary~2.5]{Tir13}).
\end{proof}

\subsection{Case $X \simeq \bQ ^n$}
In this subsection we assume that $X \simeq \bQ^n$ ($n \geq 5$) and $\sF$ is a nef vector bundle of rank $n-2$ with $c_1(\sF)=2$.
Then $\sF$ is globally generated by Proposition~\ref{prop:spanned}.
If $n\geq 7$, then $\sF$ is a direct sum of line bundles by \cite[Corollary~2.8]{Tir13}.
Therefore we further assume $n = 5$ or $6$.
Then $\sF_k$ in Proposition~\ref{prop:reduce} is a globally generated vector bundle of rank $3$ with $c_1(\sF _k )=2$.

\begin{proposition}\label{prop:Ottaviani}
$\sF_k$ splits or is isomorphic to the Ottaviani bundle.
\end{proposition}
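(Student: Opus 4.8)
The plan is to run an exhaustive dichotomy according to whether $\sF_k$ is (slope-)stable, using the criterion recorded in the proof of Theorem~\ref{thm:Ottaviani}: a rank-three bundle with $c_1=2$ is stable if and only if $H^0(\sF_k(-1))=0$ and $H^0(\sF_k^*)=0$. I would show that the unstable case forces $\sF_k$ to split, while the stable case forces $\sF_k$ to be the Ottaviani bundle.

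First I would dispose of the unstable case. If $H^0(\sF_k^*)\neq 0$, then since $\sF_k$ is nef, \cite[Proposition~1.2~(12)]{CP91} produces a trivial subbundle $\cO\subset\sF_k^*$, hence a bundle surjection $\sF_k\twoheadrightarrow\cO$; in particular $c_3(\sF_k)=0$, so a general section of the globally generated bundle $\sF_k$ vanishes nowhere (an effective codimension-three cycle on $\bQ^n$ has nonzero class), yielding a trivial subbundle $\cO\subset\sF_k$ with rank-two quotient $N$. If instead $H^0(\sF_k(-1))\neq 0$, a nonzero map $\cO(1)\to\sF_k$ saturates to a line subbundle $\cO(a)$ with $1\le a\le 2$ and rank-two quotient $N$. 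In either case $N$ is globally generated with $c_1(N)\le 2$, and applying Kobayashi--Ochiai \cite{KO73} to the vanishing locus of a general section of $N$ (whose index would otherwise exceed the dimension of any linear subspace of $\bQ^n$) shows that $N$ splits. Since the relevant groups $\mathrm{Ext}^1(N,\cO(a))=H^1(N^*(a))$ vanish by Kodaira vanishing on $\bQ^n$, the defining extension splits and hence so does $\sF_k$.

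It then remains to treat the stable case. Here $c_3(\sF_k)\neq0$ — otherwise a general section would vanish nowhere and the argument above would split $\sF_k$, contradicting stability — so a general section vanishes along a smooth subvariety $Z$ of pure codimension three. By adjunction $-K_Z=(c_1(X)-c_1(\sF_k))|_Z=(n-2)H|_Z$ with $\dim Z=n-3$, so the index of $Z$ is at least $\dim Z+1$ and Kobayashi--Ochiai gives $Z\simeq\bP^{n-3}$, a maximal linear subspace of $\bQ^n$. The normal bundle sequence of $\bP^{n-3}\subset\bQ^n\subset\bP^{n+1}$, namely $0\to N_{Z/\bQ^n}\simeq\sF_k|_Z\to\cO(1)^{\oplus 4}\to\cO(2)\to 0$, pins down the restricted Chern classes, and comparing with the homogeneous (hence well-defined) class $[\bP^{n-3}]$ shows that $(c_1,c_2,c_3)(\sF_k)$ coincides with the Chern classes of $\sG_\bQ$. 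For $n=5$ this is exactly $(2,2,2)$, and Remark~\ref{rem:Ott}~\ref{rem:Ott1} identifies the stable bundle $\sF_k$ with the Ottaviani bundle. For $n=6$ the same computation gives the Chern classes of $\sG_\bQ$ on $\bQ^6$, and one concludes either by the analogous stability-plus-Chern-class characterization of Ottaviani \cite{Ott88}, or by restricting to a general hyperplane $\bQ^5$, where $\sF_k|_{\bQ^5}$ is globally generated of rank three with $c_1=2$ and hence, by the $n=5$ case, equals $\sG_\bQ|_{\bQ^5}$, and then lifting this identification back to $\bQ^6$.

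I expect the main obstacle to be the stable case on $\bQ^6$. The Chern-class computation via the maximal linear subspace is routine, but $A^3(\bQ^6)$ has rank two, so $[\bP^3]$ lies in one of the two rulings and the two families of $3$-planes must be matched with the two choices coming from triality; the delicate point is to upgrade the equality of Chern classes (and the agreement on a hyperplane section) to an actual isomorphism $\sF_k\simeq\sG_\bQ$, for which a genuine characterization theorem on $\bQ^6$ is needed rather than a mere numerical coincidence. By contrast, the unstable splitting step is technically fiddly only in keeping the rank-two quotient $N$ locally free, which is handled by passing to saturations and reflexive hulls.
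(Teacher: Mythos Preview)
Your approach is genuinely different from the paper's and, if it worked, would be more self-contained: the paper argues by the dichotomy $c_3(\sF_k)=0$ versus $c_3(\sF_k)\neq 0$, disposing of the first case via the rank-two Fano bundle classification \cite{APW94}, and in the second case invoking the Comparison Theorem (Theorem~\ref{thm:comp}, via Corollary~\ref{cor:seq}) to force the second contraction of $\bP(\sF_k(1))$ to be of fiber type, after which Theorem~\ref{thm:Ottaviani} finishes. Your stability dichotomy replaces both steps by elementary zero-locus arguments.

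There is, however, a real gap in your stable case. From $c_3(\sF_k)\neq 0$ you correctly deduce that a general section vanishes along a smooth $Z$ of codimension three, and Kobayashi--Ochiai shows that \emph{each connected component} of $Z$ is a linear $\bP^{n-3}\subset\bQ^n$. But you then write ``$Z\simeq\bP^{n-3}$'', tacitly assuming $Z$ is irreducible. This is not justified, and in fact is false for the Ottaviani bundle itself: on $\bQ^5$ one has $c_3(\sG_\bQ)=2[\bP^2]$, so the zero locus of a general section is a disjoint union of \emph{two} planes. Your normal-bundle computation on a single component correctly yields $c_1=2$ and $c_2=2$, but $c_3(\sF_k)=[Z]=m\cdot[\bP^{n-3}]$ with $m\geq 1$ undetermined. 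A nefness computation gives $\xi_{\sF_k}^{7}=16-8m\geq 0$ on $\bQ^5$, so $m\in\{1,2\}$, but you have not excluded $m=1$; without $c_3=2$ you cannot invoke Remark~\ref{rem:Ott}\,\ref{rem:Ott1}. The same multiplicity issue persists on $\bQ^6$ (compounded there, as you note, by the two rulings in $A^3(\bQ^6)$).

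A secondary point: in your unstable branch, the sentence ``applying Kobayashi--Ochiai \dots\ shows that $N$ splits'' is too quick when $c_1(N)=2$. The zero locus of a general section of such an $N$ is then a smooth $\bQ^{n-2}\subset\bQ^n$, which is perfectly possible (it occurs for $N=\cO(1)^{\oplus 2}$), so Kobayashi--Ochiai alone does not give a contradiction; one must run a Serre-construction uniqueness argument, or simply cite \cite{APW94} as the paper does.
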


\begin{proof}
If $c_3(\sF_k)=0$, then a general section of $\sF_k$ defines a subbundle $\cO \subset \sF _k$.
Then the quotient $\sF_{k+1}$ is a nef vector bundle of rank two with $c_1 =2$.
Thus it is a Fano bundle of rank two.
Then, by  \cite{APW94}, $\sF_{k+1}$ and hence $\sF_k$ splits.

Assume that $c_3(\sF_k) \neq 0$.
If $n=6$ and the restriction of $\sF_k$ to a general linear section $\bQ^5$ is the Ottaviani bundle, then by \cite[Sect.\ 3]{Ott88} $\sF_k$ is also the Ottaviani bundle on $\bQ^6$.
Note that $\sF = \sF_k$ if $n=5$.
Hence it is enough to show the following:

\begin{claim}
Assume that $n=5$.
If $c_3(\sF) \neq 0$, then $\sF $ is the Ottaviani bundle.
\end{claim}

\begin{proof}[Proof of Claim]
Set $\sE \coloneqq \sF(1)$.
Then the pair $(\bQ^5, \sE)$ satisfies the condition of Setting~\ref{set:comp} by Propsition~\ref{prop:O(2^3)}.
The semiample divisor $\xi _{\sE} - {\pi^*H_X} = \xi_{\sF}$ defines the contraction $\varphi$ by Theorem~\ref{thm:comp}.
Let $F$ be a component of a $\varphi$-fiber and $\bar F$ a resolution of $F$.
By Corollary~\ref{cor:seq}~\ref{cor:seq2}, $c_3(\sF)|_{\bar F} =0$ and hence $c_3(\sF).\pi(F)=0$.
Since $c_3(\sF) \neq 0$, we have $\dim F = \dim \pi(F) \leq 2$.
By Lemma~\ref{lem:IWineq1}, we have $\dim E \geq \dim \bP(\sE)$ and hence $\varphi$ is of fiber type.
The assertion follows from Theorem~\ref{thm:Ottaviani}.
\end{proof}

This completes the proof of Proposition~\ref{prop:Ottaviani}.
\end{proof}

\begin{proof}[Proof of Theorem~\ref{thm:PQ} for $X \simeq \bQ^n$]
As mentioned, $\sE(-1)$ is a globally generated vector bundle of rank $n-2$ on $\bQ^n$ with $c_1(\sE(-1)) =2$, and we may assume $n=5$ or $6$.
If $n = 5$, then  the assertion follows from Proposition~\ref{prop:Ottaviani}.
If $n=6$, then there exists the following exact sequence by Proposition~\ref{prop:reduce}:
\[
0 \to \cO \to \sE(-1) \to \sF_1 \to 0.
\]
By Proposition~\ref{prop:Ottaviani}, $\sF_1$ is a direct sum of line bundles or the Ottaviani bundle.
In the former case the exact sequence splits and hence $\sE$ is a direct sum of line bundles.
In the latter case $\sE(-1)$ is the dual of the Spinor bundle or $\sE(-1) \simeq \cO \oplus \sF_1$  by \cite[Sect.~3]{Ott88}.
Thus the assertion follows.
\end{proof}

\section{Case $l_{X} = n-1$ and $\varphi$ is birational}\label{sect:DPbir}

In this section, we will prove Theorem~\ref{thm:main} under Setting~\ref{set:comp} when $l_{X} = n-1$ and $\varphi$ is a birational contraction:

\begin{theorem}\label{thm:DPbir}
Let $(X,\sE)$ be a pair as in Setting~\ref{set:comp}.
Assume that $l_X = n-1$ and $\varphi$ is a birational contraction.
Then $\sE$ is a direct sum of line bundles.
\end{theorem}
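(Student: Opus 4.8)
The plan is to read off the structure of $X$ and $\sE$ from the second contraction $\varphi$ together with the comparison theorem and the fiber sequence of Corollary~\ref{cor:seq}. By Theorem~\ref{thm:comp} we have $R_\varphi = \bR_{\geq0}[\tC]$, and $\varphi$ is supported by $L \coloneqq (n-1)\xi - \pi^*c_1(\sE) = (n-1)\xi - r_X\,\pi^*H_X$. For every irreducible component $F$ of a $\varphi$-fiber, $\pi|_F$ is finite (as $\pi$ and $\varphi$ contract distinct rays), so $\dim F \leq n$; moreover $L|_F \equiv 0$ forces $(n-1)\,\xi|_F = r_X\,\pi|_F^*H_X$, so that $D_F \coloneqq \xi|_F$ is ample. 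Since $l_X = n-1$ gives $l_X - n + 2 = 1$, Corollary~\ref{cor:seq} shows that $\sE|_F(-D_F)$ is a nef bundle of rank $n-2$ with $c_1 = D_F$ fitting into $0 \to \Omega_\pi|_F \to \sE|_F(-D_F) \to \cO_F \to 0$ with $\Omega_\pi|_F$ nef. The first reduction is a dimension count from Lemma~\ref{lem:IWineq1}: in the divisorial case $\dim \Exc(\varphi) = 2n-4$ and every fiber has $\dim F \geq n-2$, while in the small case $\dim \Exc(\varphi) \leq 2n-5$ and $\dim F \geq n-1$.

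The heart of the argument is to produce a section of $\pi$ and apply Lemma~\ref{lem:sec}. For any $\varphi$-fiber $F$ every curve lies in $R_\varphi = \bR_{\geq0}[\tC]$, so the cone hypothesis of Lemma~\ref{lem:sec} is automatic with $V = F$; thus as soon as $\varphi$ admits a fiber of the maximal dimension $n$, Lemma~\ref{lem:sec} forces $l_X = r_X$, i.e.\ $r_X = n-1$ and $X$ is a del Pezzo $n$-fold of Picard number one, and yields a surjection $\sE \twoheadrightarrow \cO_X(1)$, i.e.\ an exact sequence $0 \to \sE_1 \to \sE \to \cO_X(1) \to 0$. To obtain such a fiber (or the section directly) in the divisorial case, I would analyze the exceptional divisor $E$: one shows $\pi(E) = X$ (otherwise $E \sim \pi^*D$ would be $\varphi$-positive), so $E$ meets a general $\pi$-fiber $\bP^{n-3}$ in a divisor; I would then argue that this divisor is a hyperplane, so that $E \sim \xi - m\,\pi^*H_X$ and $E = \bP(\sQ)$ for a rank-$(n-3)$ quotient $\sQ$ of $\sE$, equivalently a line subbundle $\cO_X(mH_X) \hookrightarrow \sE$. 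The identification of the general $\varphi$-fibers as projective spaces with $\xi$-degree one (via the minimal lifts $\tC$, which are the $\xi$-degree-one rulings) is what makes both the section-producing and the hyperplane arguments run.

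With either the sub-line-bundle $\cO_X(mH_X) \hookrightarrow \sE$ or the quotient $\sE \twoheadrightarrow \cO_X(1)$ in hand, the plan is to reduce the rank. The quotient of an ample bundle is ample with controlled $c_1$, so one can iterate the construction, or restrict to a general del Pezzo surface or curve section of $X$ and invoke the splitting criteria of Sections~\ref{sect:pre}--\ref{sect:comp} (Propositions~\ref{prop:split1}, \ref{prop:sato}, \ref{prop:split2}) together with the rank-two Fano bundle classification of \cite{APW94}, finally concluding that $\sE$ is a direct sum of line bundles. The small-contraction case I would treat separately, aiming to exclude it: the nef-ness of $\Omega_\pi|_F$ combined with the ampleness of $D_F$ and the constraint $c_1(\sE|_F(-D_F)) = D_F$ of rank $n-2$ severely restricts the fibers, and I expect this to force the fibers to be projective spaces — incompatible with a small contraction of length $n-2$ — or else to produce an $n$-dimensional fiber that reduces matters to the section case above.

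The main obstacle is twofold. First, ruling out the small contraction: unlike the fiber-type case, here one cannot appeal to global covering families, and must extract a contradiction purely from the local fiber geometry dictated by Corollary~\ref{cor:seq} and Lemma~\ref{lem:IWineq2}. Second, within the divisorial case, pinning down that $E$ is exactly a relative hyperplane (rather than a higher-degree relative hypersurface) and then checking that the resulting extension genuinely splits require delicate control of the interaction of the two rays $R_\pi$ and $R_\varphi$ on the fibers, and a cohomological splitting argument on the del Pezzo manifold $X$; this last step is where I expect the real work to lie.
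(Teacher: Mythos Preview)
Your outline has the right ingredients at the start, but two concrete gaps prevent it from closing.

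First, the small-contraction case needs no separate treatment: once Theorem~\ref{thm:comp} holds, every minimal lift lies in $\Exc(\varphi)$, and by Lemma~\ref{lem:splitting} the locus $\te(\tU)$ already has dimension $\geq 2n-4$. Thus $\Exc(\varphi)$ is automatically a divisor and $E=\te(\tU)$ is irreducible. Your proposed local analysis via Corollary~\ref{cor:seq} and Lemma~\ref{lem:IWineq2} to ``exclude'' the small case is unnecessary.

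Second, and more seriously, your endgame does not work as stated. Propositions~\ref{prop:split1}, \ref{prop:sato}, \ref{prop:split2} all require $X\simeq\bP^n$ or $\bQ^n$ (they use the family of lines on these varieties), so they are unavailable when $l_X=n-1$. The ``iterate the construction'' idea is also problematic: a single surjection $\sE\twoheadrightarrow\cO_X(1)$ has kernel $\sE_1$ with $c_1(\sE_1)=c_1(X)-H_X$, so $(X,\sE_1)$ is no longer a Mukai pair and you cannot simply repeat the argument; nor is it clear that $\sE_1$ stays ample or that $\bP(\sE_1)$ has a birational second contraction.

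The paper proceeds differently. It first case-splits on $\dim Z$ where $Z=\varphi(E)$, with $n-4\le\dim Z\le n-2$. If $\dim Z\ge n-3$, one shows (using \cite[Theorem~2.1]{And95} on general fibers and the classification of nef bundles with $c_1=1$) that $E\equiv\xi-a\,\pi^*H_X$; the corresponding section $\cO\to\sE(-a)$ is then nowhere-vanishing because on each minimal rational curve it restricts to a map $\cO_{\bP^1}\to\cO(0,(-1)^{n-3})$, and chain-connectedness (Proposition~\ref{prop:rcc}) propagates non-vanishing. The quotient is uniform of type $\cO(-1^{n-3})$, hence split by \cite[Proposition~1.2]{AW01}, giving $\sE\simeq\cO(2,1^{n-3})$ --- but then $\dim Z=n-4$, a contradiction. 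So $\dim Z=n-4$, and by Lemma~\ref{lem:IWineq1} \emph{every} nontrivial fiber has dimension $n$. Now over a fixed minimal rational curve the $n-3$ minimal sections of $\bP(\sE|_{\bP^1})$ each lie in an $n$-dimensional $\varphi$-fiber, so Lemma~\ref{lem:sec} yields $n-3$ global sections of $\pi$, i.e.\ $n-3$ surjections $\sE\to\cO_X(1)$. The combined map $\sE\to\cO_X(1)^{\oplus n-3}$ is surjective: it is surjective along the chosen curve, and since $\sE|_{\bP^1}\simeq\cO(2,1^{n-3})$ on every minimal curve, surjectivity propagates along chains (Proposition~\ref{prop:rcc}). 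The kernel is $\cO_X(2)$, and $H^1(\cO_X(1))=0$ splits the extension. This simultaneous production of $n-3$ quotients, together with the chain-connectedness argument for surjectivity, is the idea your proposal is missing.
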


In this case $E \coloneqq \Exc (\varphi)$ is an irreducible divisor.
Set $Z \coloneqq \varphi (E)$.
\[
\xymatrix{
E \ar[r] \ar@{^{(}->}[d] & Z \ar@{^{(}->}[d]\\
 \bP(\sE) \ar[d]_-{\pi} \ar[r]^-{{\varphi}} & Y         \\
X.                                        & \\
}
\]

\begin{lemma}
$E = \te(\tU)$ and $n-2 \geq \dim Z \geq n-4$.
\end{lemma}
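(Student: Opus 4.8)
The plan is to establish the two assertions---$E=\te(\tU)$ and $n-2\geq\dim Z\geq n-4$---by a dimension count built on Theorem~\ref{thm:comp} and the Ionescu--Wi\'sniewski inequality of Lemma~\ref{lem:IWineq1}. Throughout, recall that $E=\Exc(\varphi)$ is an irreducible divisor, so $\dim E=2n-4$ since $\dim\bP(\sE)=2n-3$.

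First I would prove $\te(\tU)\subseteq E$. By Theorem~\ref{thm:comp} every minimal lift $\tC$ satisfies $\bR_{\geq0}[\tC]=R_\varphi$, hence is contracted by $\varphi$ to a point and lies in $\Exc(\varphi)=E$; taking the union over all minimal lifts gives $\te(\tU)\subseteq E$. Moreover $\te(\tU)$ is closed: each family $\tM_i$ consists of curves of anticanonical degree $\ell(R_\varphi)=n-2$ in the extremal ray $R_\varphi$, which is the minimal such degree, so $\tM_i$ is unsplit and $\te$ is proper.

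Next I would bound $\dim\te(\tU)$ from below and conclude equality. The minimal rational curves cover $X$ and each carries a minimal lift, so $\pi(\te(\tU))=X$. Since $l_X=n-1$ forces $\sE|_{\bP^1}\simeq\cO(2,1^{n-3})$ along every minimal rational curve (Lemma~\ref{lem:splitting}), inequality~\eqref{eq:dim4} reads $\dim\te(\tU)_x\geq 2n-5-l_X=n-4$ for every $x\in X$. As $\tU$ has finitely many irreducible components and each $\pi|_{\te(\tU_i)}$ is proper, upper semicontinuity of fibre dimension shows that $X=\bigcup_i\{x:\dim\te(\tU_i)_x\geq n-4\}$ is a finite union of closed sets, so some component $\tU_0$ satisfies $\dim\te(\tU_0)_x\geq n-4$ for all $x\in X$. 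In particular $\te(\tU_0)$ dominates $X$, whence $\dim\te(\tU_0)\geq\dim X+(n-4)=2n-4$. Since $\te(\tU_0)\subseteq\te(\tU)\subseteq E$ with $E$ irreducible of the same dimension $2n-4$, I obtain $\te(\tU)=E$.

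Finally, for the bounds on $Z=\varphi(E)$ I would take a general fibre $F=\varphi^{-1}(z)$ over $z\in Z$; since $E=\Exc(\varphi)$ this is a fibre of $\varphi$ contained in $E$, with $\dim F=\dim E-\dim Z$. Lemma~\ref{lem:IWineq1} gives both $\dim F\leq n$ and, using $\dim E=2n-4$ and $\ell(R_\varphi)=n-2$, the inequality $\dim E+\dim F\geq 3n-6$, i.e.\ $\dim F\geq n-2$. Substituting $\dim Z=\dim E-\dim F$ yields
\[
n-4=\dim E-n\leq\dim Z\leq\dim E-(n-2)=n-2.
\]
The main obstacle is the reverse inclusion $E\subseteq\te(\tU)$: the pointwise estimate $\dim\te(\tU)_x\geq n-4$ must be upgraded to a single irreducible component of dimension $2n-4$, and a priori the bound could be realised by different components $\tU_i$ over different points $x$. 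The pigeonhole argument via semicontinuity isolates one dominating component $\tU_0$ and removes this difficulty; once it is in place, irreducibility of the divisor $E$ finishes the proof at once.
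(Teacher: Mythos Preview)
Your proof is correct and follows essentially the same approach as the paper's: use Theorem~\ref{thm:comp} to get $\te(\tU)\subseteq E$, use the splitting type $\sE|_{\bP^1}\simeq\cO(2,1^{n-3})$ (Lemma~\ref{lem:splitting} with $l_X=n-1$) to force $\dim\te(\tU)\geq 2n-4$, and then invoke Lemma~\ref{lem:IWineq1} for the bounds on $\dim Z$. The paper compresses your second step into a single sentence (``By Lemma~\ref{lem:splitting} we have $\dim\te(\tU)\geq 2n-4$''), whereas you make explicit the pigeonhole/semicontinuity argument needed to pass from the fibrewise bound $\dim\te(\tU)_x\geq n-4$ to an irreducible component of the correct dimension; this is a genuine clarification of a point the paper leaves implicit, but not a different strategy.
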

\begin{proof}
 By Theorem~\ref{thm:comp}, minimal lifts over minimal rational curves are contracted by $\varphi$.
Thus $\te(\tU) \subset \Exc(\varphi)$.
By  Lemma~\ref{lem:splitting}  we have $\dim \te(\tU) \geq 2n-4 = \dim \bP(\sE) -1$.
Hence $E = \te(\tU)$.
By Lemma~\ref{lem:IWineq1}, we have $n \geq \dim F \geq n-2$ for a non-trivial $\varphi$-fiber $F$.
Thus $n-2 \geq \dim Z \geq n-4$.
\end{proof}

\begin{lemma}\label{lem:dimZ}
If $\dim Z = n-3$ or $n-2$, then $E \equiv \xi -a {\pi^*H_X}$ for some $a \in \bZ$.
\end{lemma}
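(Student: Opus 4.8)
The plan is to exploit that $\Pic(\bP(\sE)) = \bZ\,\xi \oplus \pi^*\Pic(X) = \bZ\,\xi \oplus \bZ\,\pi^*H_X$, so that one may write $E \equiv \alpha\,\xi + \beta\,\pi^*H_X$ with $\alpha,\beta \in \bZ$; the whole content of the lemma is the assertion $\alpha = 1$. Since $\sE$ is ample the tautological divisor $\xi$ is ample, and since $E = \te(\tU)$ dominates $X$ under $\pi$, intersecting $E$ with a line $\ell$ contained in a general fibre $\pi^{-1}(x) \simeq \bP^{n-3}$ gives $\alpha = E.\ell = \deg\bigl(E\cap\pi^{-1}(x)\bigr) \ge 1$. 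Thus the task is to rule out $\alpha \ge 2$.

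First I would describe the hypersurface $E\cap\pi^{-1}(x)$ for general $x$ explicitly. Because $l_X = n-1$, Lemma~\ref{lem:splitting} forces $\sE|_C \simeq \cO(2,1^{n-3})$ for every minimal rational curve $C$, and the minimal lifts over a fixed $C$ sweep out the subbundle $\bP(\cO(1^{n-3})) \subset \bP(\sE|_C)$; this meets $\pi^{-1}(x)$, for $x \in C$, in the hyperplane $H_C \subset \bP(\sE_x)$ cut out by the line $L_C \subset \sE_x$ spanned by the $\cO(2)$-summand at $x$. Hence $E\cap\pi^{-1}(x) = \bigcup_{C \ni x} H_C$, and $\alpha$ is the number, counted with multiplicity, of distinct ``positive directions'' $L_C \in \bP(\sE_x)$ carried by the minimal curves through $x$. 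As $\dim E = 2n-4$ forces $\dim\bigl(E\cap\pi^{-1}(x)\bigr) = n-4 < n-3$ for general $x$, while any positive-dimensional family of hyperplanes in $\bP^{n-3}$ already fills the whole $\bP^{n-3}$, the set $\{L_C : C \ni x\}$ must be finite, and $\alpha$ equals its cardinality.

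The heart of the matter, and the step I expect to be the main obstacle, is to prove that this finite set reduces to a single point. Granting it, the positive directions glue to a sub-line-bundle $\sL_1 \subset \sE$ on a dense open subset, which extends over all of $X$ since $\Pic(X) = \bZ H_X$; then $E$ is the subbundle $\bP(\sE/\sL_1)$, whose class is $\xi - \pi^*c_1(\sL_1) = \xi - a\,\pi^*H_X$ with $a$ determined by $c_1(\sL_1) = a\,H_X$, giving the claim. To force uniqueness of the positive direction I would play off the hypothesis $\dim Z \ge n-3$: a general fibre $F$ of $\varphi$ has $\dim F = \dim E - \dim Z \le n-1$, is finite over $X$ under $\pi$ (no curve of $F$ lies in the ray of $\pi$), and carries $-E|_F$ as a $\varphi$-ample, hence ample, class; moreover $\sE|_F(-D_F)$ with $D_F = \xi|_F$ is nef and semiample with first Chern class $D_F$ by Corollary~\ref{cor:seq}. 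The point to make precise is that the occurrence of two distinct directions $L_C \neq L_{C'}$ over a general point would split off an extra family of minimal lifts, enlarging the generic $\varphi$-fibre and thereby forcing $\dim Z \le n-4$, contrary to hypothesis. Here the Ionescu--Wi\'sniewski estimate of Lemma~\ref{lem:IWineq1}, applied so as to bound $\dim Z$ from below in terms of the number of positive directions, is what I expect to carry the argument, with the two values $\dim Z = n-2$ and $\dim Z = n-3$ treated separately.
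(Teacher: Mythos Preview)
Your reduction to showing $\alpha = 1$ is correct, and the interpretation of $E \cap \pi^{-1}(x)$ for general $x$ as a finite union of hyperplanes $H_C$ (one for each ``positive direction'' $L_C$ of a minimal curve through $x$) is valid. But the crucial step --- proving there is only one such direction --- is not actually carried out. The sketch you offer (``two distinct directions would enlarge the generic $\varphi$-fibre and force $\dim Z \le n-4$'') does not work as stated: the exceptional divisor $E$ has dimension $2n-4$ regardless of $\alpha$, so the generic $\varphi$-fibre still has dimension $(2n-4)-\dim Z$, and nothing in the Ionescu--Wi\'sniewski inequality relates the degree $\alpha$ of $E$ in the $\pi$-fibres to the dimension of the generic $\varphi$-fibre. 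You would need a genuinely new mechanism to link the multiplicity $\alpha$ to $\dim Z$, and none is supplied.

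The paper's proof avoids this issue entirely by working on the other side of the diagram. Using Proposition~\ref{prop:lengthphi} and Andreatta's fibre classification, a general $\varphi$-fibre $F$ is either $\bP^{n-2}$ (when $\dim Z = n-2$) or a normal variety with $\Delta(F,D_F)=0$ (when $\dim Z = n-3$); in either case one extracts a plane $\bP \hookrightarrow F$ with $(-K_X)|_\bP = \cO(n-1)$ mapping through a general point of $X$. Then $\sE|_\bP(-1)$ is a nef bundle of rank $n-2$ with $c_1 = 1$ on a projective space, and since the induced contraction $\varphi_\bP$ is birational (not of fibre type), the classification of such bundles forces $\sE|_\bP(-1) \simeq \cO(1,0^{n-4})$. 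Now the exceptional locus of $\varphi_\bP$ is visibly a hyperplane in each $\pi_\bP$-fibre, hence the same holds for $E$ in each $\pi$-fibre over a general point, giving $\alpha = 1$. This detour through the $\varphi$-fibres and the known classification of nef bundles with $c_1 = 1$ on $\bP^m$ is exactly what replaces your missing uniqueness argument.
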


\begin{proof}
Let $F$ be a component of a general non-trivial $\varphi$-fiber and set $D_F \coloneqq \xi|_{F}$.
Then either 
\begin{enumerate}
\item $\dim Z = n-3$, $F$ is normal and $\Delta(F,D_F)=0$ or
\item $\dim Z = n-2$ and  $(F,D_F) \simeq (\bP^{n-2},\cO(1))$
\end{enumerate}
by \cite[Theorem~2.1]{And95} and Proposition~\ref{prop:lengthphi}.
Also, by Theorem~\ref{thm:comp}, $(n-1)D_F = -K_X|_F$.

Note that $\dim F = n-1 \geq 4$ in the former case, hence, by using the classification of varieties with small delta genus \cite{Fuj75,Fuj82b}, we see that there is a linear subspace $\bP^2 \subset F$ through any point $p \in F$.
Hence there is a morphism $j \colon \bP \to X$ through a general point $x \in X$ with $j^*\cO(-K_X)=\cO_{\bP}(n-1)$, where $\bP \coloneqq \bP^2 \subset F$ if $\dim Z = n-3$ or $\bP \coloneqq F $ if $\dim Z = n-2$.

Let $f \colon \bP(\sE|_\bP) \to \bP(\sE)$ be the morphism obtained by taking the base change of $j$ by $\pi$, and let $\bP(\sE|_\bP)  \xrightarrow{\varphi_\bP} Y_\bP \to Y$ be the Stein factorization of $\varphi \circ f$.
Set $E_{\bP} \coloneqq \Exc (\varphi_{\bP})$.
Then there exists the following commutative diagram:
\begin{equation}\label{diagram:P2}
\vcenter{
\xymatrix{
E_{\bP}\subset f^*E \ar@{^{(}->}[d] \ar[r]& E \ar@{^{(}->}[d]\\
    \bP(\sE |_{\bP}) \ar[d]_-{\pi_{\bP}}  \ar[r]_-f & \bP(\sE) \ar[d]_-{\pi} \ar[r]_-{{\varphi}} & Y         \\
 \bP \ar[r]^-j&X.                                        & \\
}}
\end{equation}

Since $j(\bP)$ passes through a general point of $X$, $\varphi_\bP$ is not of fiber type.
Since $\dim f^*E > \dim Z$, it holds that $ f^*E \subset E_{\bP}$.
Thus we have $E_{\bP} = \Supp f^*E$.

Now $\sE |_{\bP}(-1)$ is a nef vector bundle of rank $n-2$ with $c_1 =1$ by Corollary~\ref{cor:seq} and $\varphi_\bP$ is not of fiber type.
Hence $\sE |_{\bP}(-1)$ is isomorphic to $\cO(1,0^{n-4})$ by \cite{SW90c,PSW92a}.
Thus $E_{\bP}$ is a hyperplane in the $\pi_\bP$-fiber over a general point.
Hence the same holds for $E$ and the assertion follows. 
\end{proof}

\begin{proposition}\label{lem:dimZn-4}
 $\dim Z =n-4$.
\end{proposition}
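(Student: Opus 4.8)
The plan is to suppose $\dim Z \in \{n-3, n-2\}$ and derive a contradiction; since we already know $n-2 \geq \dim Z \geq n-4$, this will force $\dim Z = n-4$. By Lemma~\ref{lem:dimZ} we may then assume $E \equiv \xi - a\,\pi^*H_X$ for some $a \in \bZ$. As $\Pic(\bP(\sE))$ is free of rank two and $\bP(\sE)$ is Fano, this numerical equivalence is a linear equivalence, so $E$ is the zero divisor of a nonzero section $s \in H^0(X, \sE(-a))$, since $\pi_*\cO(\xi - a\,\pi^*H_X) = \sE(-a)$.

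First I would pin down $a$. Writing $d := H_X.C$ for a minimal rational curve $C$ (so $r_X d = -K_X.C = l_X = n-1$), and recalling $\xi.\tC = 1$, $\pi^*H_X.\tC = d$ for a minimal lift $\tC$, one computes $E.\tC = 1 - ad$. By Theorem~\ref{thm:comp}, $R_\varphi = \bR_{\geq0}[\tC]$; since $\varphi$ is a divisorial contraction with irreducible exceptional divisor $E$, writing $K_{\bP(\sE)} \equiv \varphi^*K_Y + \delta E$ with $\delta > 0$ and intersecting with $\tC$ gives $E.\tC < 0$, hence $ad \geq 2$ (and $\delta = n-2$). Next I restrict $s$ to minimal rational curves: by Lemma~\ref{lem:splitting} every minimal $C$ has $\sE|_C \simeq \cO(2, 1^{n-3})$, so $\sE(-a)|_C \simeq \cO(2-ad, (1-ad)^{n-3})$. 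If $ad \geq 3$, every summand has negative degree, so $s|_C = 0$ for all curves $C$ of the covering family of Proposition~\ref{prop:rcc}, forcing $s \equiv 0$, a contradiction. Hence $ad = 2$, $\sE(-a)|_C \simeq \cO(0, (-1)^{n-3})$, and a nonzero section of this bundle is nowhere vanishing on $C$. As the minimal curves cover $X$, the section $s$ is nowhere vanishing, i.e.\ $\cO_X(a) \hookrightarrow \sE$ is a subbundle. Thus $E = \bP_X(\sQ)$ for the ample rank-$(n-3)$ quotient $\sQ := \sE/\cO_X(a)$, and $\pi|_E$ is its $\bP^{n-4}$-bundle projection.

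The final step is to contradict the existence of this $E$. Since $\rho(E) = 2$, the contraction $\varphi|_E \colon E \to Z$ is the second elementary contraction of $\bP_X(\sQ)$, necessarily of fiber type as $\dim Z < \dim E = 2n-4$. Corollary~\ref{cor:seq} gives $l_X D_F = c_1(\sE|_F)$ on a fiber $F$, whence $\pi^*H_X|_F = \tfrac{n-1}{r_X}D_F$; combined with $ad = 2$ and $r_X d = n-1$ this yields the normal bundle $N_{E/\bP(\sE)}|_F = (\xi - a\,\pi^*H_X)|_F = (1-ad)D_F = -D_F$. When $\dim Z = n-2$ the fibers are $(\bP^{n-2}, \cO(1))$, so $\varphi|_E$ is a $\bP^{n-2}$-bundle and $E$ carries two projective-bundle structures, with fibers of the \emph{unequal} dimensions $n-4$ and $n-2$ over bases of dimension $n$ and $n-2$; by the classification of manifolds with two projective-bundle structures (\cite{Sat76}, \cite{OSWW14}) this forces $E \simeq \bP^n \times \bP^{n-4}$, hence $\dim Z = n-4$, a contradiction.

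The main obstacle is the case $\dim Z = n-3$, where the $(n-1)$-dimensional fibers $F$ satisfy only $\Delta(F, D_F) = 0$ (Ando's theorem, \cite{And95}) and need not be projective spaces, so the two-bundle-structure classification does not apply verbatim. Here I expect to exploit Fujita's classification of $\Delta = 0$ polarized varieties together with the finiteness of $\pi|_F$ and the ambient structure $F \subset \bP_X(\sQ)$ to show $F \simeq \bP^{n-1}$ (reducing again to the unequal-fiber-dimension contradiction above), or else to extract a direct numerical contradiction from $N_{E/\bP(\sE)}|_F = -D_F$ and the discrepancy $\delta = n-2$ (which is already inconsistent with $\varphi$ being the blow-up of a smooth centre of codimension $n$). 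This $\Delta = 0$ analysis is where the real work lies.
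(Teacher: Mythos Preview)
Your argument through step 6 is essentially the paper's: you correctly reduce to $E\in|\xi-a\pi^*H_X|$, show $ad=2$ (the paper reads this off directly from the identification $E_{\bP}=f^*E$ inside $\bP(\cO(2,1^{n-3}))$ in the proof of Lemma~\ref{lem:dimZ}, but your two-sided bound works too), and conclude that the section $s\in H^0(\sE(-a))$ is nowhere vanishing because $\sE(-a)|_C\simeq\cO(0,(-1)^{n-3})$ on every minimal rational curve.

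Where you go astray is everything after that. The two-projective-bundle analysis on $E=\bP_X(\sQ)$ is unnecessary and, as stated, does not close: for $\dim Z=n-2$ you have not shown $\varphi|_E$ is a \emph{smooth} $\bP^{n-2}$-bundle (Ando only gives this for general fibers), and the references \cite{Sat76}, \cite{OSWW14} do not yield the conclusion $E\simeq\bP^n\times\bP^{n-4}$ in the generality you need. The $\dim Z=n-3$ case you leave open entirely.

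The paper's finish is a one-liner you already have the ingredients for. Once $s$ is nowhere vanishing, the quotient $\sE(-a)/\cO_X$ is a vector bundle which restricts to $\cO(-1^{n-3})$ on every minimal rational curve, i.e.\ it is uniform of a single type. By \cite[Proposition~1.2]{AW01} together with chain-connectedness (Proposition~\ref{prop:rcc}) this forces $\sE(-a)/\cO_X$ to be a direct sum of copies of a single line bundle, hence (since $H^1$ of that line bundle's dual vanishes by Kodaira on $X$) the extension splits and $\sE\simeq\cO_X(2,1^{n-3})$. For this split bundle one reads off immediately that $\varphi$ collapses $E=\bP(\cO(1^{n-3}))\simeq X\times\bP^{n-4}$ onto the second factor, so $\dim Z=n-4$, contradicting $\dim Z\geq n-3$. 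Replace your steps 7--8 with this and the proof is complete.
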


\begin{proof}
Assume to the contrary that $\dim Z \geq n-3$.
We use the same notation as in the proof of Lemma~\ref{lem:dimZ}.
Then  $f^*E = E_{\bP}$ and $\sE |_{\bP} \simeq \cO(2,1^{n-4})$ by the proof of Lemma~\ref{lem:dimZ}.
Hence we have $\cO_\bP(a j^* H_X) = \cO_\bP(2)$.
This implies $\cO_{\bP^1}(aH_X|_{\bP^1})= \cO_{\bP^1}(2 H_{\bP^1})$ for a minimal rational curve $\bP^1 \to X$.

Let $s \colon \cO \to \sE(-a)$ be a section  corresponding to $E \in \left|\xi- a  {\pi^*H_X} \right|$ and $W$ the zero locus of the section $s$.
Assume $W \neq \emptyset$.
Then by Proposition~\ref{prop:rcc} there is a minimal rational curve $f \colon \bP^1 \to X$ such that $f(\bP^1) \cap W \neq \emptyset$ and  $f(\bP^1) \not \subset W $.
On the other hand, if $f \colon \bP^1 \to X$ is a minimal rational curve, then the restriction of the section
\[
f^*s \colon \cO_{\bP^1} \to f^*\sE(-a) \simeq \cO (0,(-1)^{n-3})
\]
is non-vanishing or the zero morphism.
This gives a contradiction.
Hence $s$ is a non-vanishing section.

Therefore the quotient $\sE(-a)/\cO$ is a uniform vector bundle of type $\cO(-1^{n-3})$ and hence a direct sum of line bundles by \cite[Proposition~1.2]{AW01}.
This implies that $\sE$ is also a direct sum of line bundles and $\sE \simeq \cO_X(2,1^{n-2})$.
Then $\dim Z = n-4$, which contradicts our assumption that $\dim Z = n-2$ or $n-3$. 
\end{proof}

\begin{proof}[Proof of Theorem~\ref{thm:DPbir}]
By Proposition~\ref{lem:dimZn-4},
we have $\dim Z =n-4$ and any component of a non-trivial fiber has dimension $n$ by Lemma~\ref{lem:IWineq1}.
Hence each $n$-dimensional component of a fiber is a section of $\pi$ by Lemma~\ref{lem:sec}.

Let $C$ be a minimal rational curve, $n \colon \bP^1 \to C \subset X$ the normalization and $x \in \bP^1$ a point.
We fix a decomposition $\sE|_{\bP^1} \simeq \cO_{\bP^1}(2,1^{n-3})$ as in Lemma~\ref{lem:splitting}.
Then by taking a base change of the diagram, we obtain the following diagram:

\[
\xymatrix{
E_{\bP^1} \simeq \bP^1 \times \bP^{n-4} \ar[d]  \\
    \bP(\sE |_{\bP^1}) \ar[d]_-{\pi_{\bP^1}}  \ar[r]^-{m}& \bP(\sE) \ar[d]_-{\pi} \ar[r]^-{{\varphi}} & Y         \\
 \bP^1 \ar[r]^-{n}& X,                                        & \\
}
\]
where $E_{\bP^1}$ is the subbundle corresponding to the direct summand $\cO_{\bP^1}(1^{n-3}) \subset \sE |_{\bP^1}$.

Corresponding to each direct summand $\cO_{\bP^1}(1)$, there are $n-3$ minimal sections $\widetilde \bP^1_1 , \dots \widetilde \bP^1_{n-3}$ of $\pi _{\bP^1}$.

Note that the morphism $\varphi \circ m \colon \bP(\sE|_{\bP^1}) \to Y$ contracts $E_{\bP^1}$.
Hence there are sections $\tX_i$ of $\pi$ such that $m^{-1}(\tX_i) = \widetilde\bP^1_i$.
Note that each section $\tX_i$ defines a surjection $\sE \to \cO_X(1)$ and hence we have a morphism $a \colon \sE \to \cO_X(1^{n-3})$.

\begin{claim}
The morphism $a$ is surjective.
\end{claim}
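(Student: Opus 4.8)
The plan is to prove that the non-surjectivity locus
\[
D \coloneqq \left\{\, x \in X \mid a \otimes k(x) \text{ is not surjective} \,\right\}
\]
is empty; as $a$ is a homomorphism of vector bundles, $D$ is closed. First I would check that $D \neq X$. By construction $m^{-1}(\tX_i) = \widetilde{\bP^1}_i$, so each $a_i|_{\bP^1}$ is the projection of $\sE|_{\bP^1} \simeq \cO_{\bP^1}(2,1^{n-3})$ onto its $i$-th summand $\cO_{\bP^1}(1)$; hence $a|_{\bP^1}$ is the projection onto $\cO_{\bP^1}(1^{n-3})$, which is surjective, and therefore $C \not\subset D$.

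The key step will be to show that $D$ is a union of minimal rational curves. For an arbitrary minimal rational curve $C'$, Lemma~\ref{lem:splitting} together with $l_X = n-1$ gives $\sE|_{C'} \simeq \cO(2,1^{n-3})$; since $\operatorname{Hom}(\cO(2),\cO(1)) = 0$ and $\operatorname{Hom}(\cO(1),\cO(1)) = \bC$, the restriction $a|_{C'} \colon \cO(2,1^{n-3}) \to \cO(1^{n-3})$ annihilates the $\cO(2)$-summand and is given on the remaining summands by a constant $(n-3) \times (n-3)$ matrix. Consequently $a \otimes k(p)$ has the same rank at every point $p \in C'$, so $C'$ is either contained in $D$ or disjoint from $D$. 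In other words, if $x \in D$ then every minimal rational curve through $x$ lies in $D$, that is $\Locus(M)_x \subset D$.

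I would then conclude by chain connectedness. By Proposition~\ref{prop:rcc}, $X$ is chain connected by the curves in $M$; so if $D$ were nonempty, picking $x \in D$ and applying the inclusion $\Locus(M)_y \subset D$ successively to the points $y$ of a connecting chain would force $D = X$, contradicting $C \not\subset D$. Hence $D = \emptyset$ and $a$ is surjective. The main obstacle is the constant-rank observation over minimal curves, which hinges on the vanishing $\operatorname{Hom}(\cO(2),\cO(1)) = 0$; granting it, the chain connectedness of $X$ upgrades the surjectivity already known along $C$ to surjectivity everywhere. As an alternative to the chain argument, one could first identify $\Exc(\varphi) = \bP(\sG)$ for a rank $n-3$ quotient $\sE \twoheadrightarrow \sG$ with $c_1(\sG) = (n-3)H_X$, note that $\tX_i \subset \Exc(\varphi)$ forces each $a_i$ to factor as $\sE \to \sG \xrightarrow{b_i} \cO_X(1)$, and reduce the claim to showing that $\det b \in H^0(\cO_X) = \bC$ (for $b = (b_i) \colon \sG \to \cO_X(1^{n-3})$) is a nonzero constant, which holds because $b$ restricts to the identity over $C$.
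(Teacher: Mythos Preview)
Your proof is correct and follows essentially the same route as the paper's: establish surjectivity along the chosen curve $C$, observe via the splitting type $\sE|_{C'}\simeq\cO(2,1^{n-3})$ that surjectivity at one point of any minimal rational curve $C'$ implies surjectivity along all of $C'$, and then propagate by chain connectedness (Proposition~\ref{prop:rcc}). Your explicit justification of the constant-rank step through $\operatorname{Hom}(\cO(2),\cO(1))=0$ is exactly what underlies the paper's terse phrase ``since the bundle is isomorphic to $\cO_{\bP^1}(2,1^{n-3})$ on the normalization''; the alternative route you sketch at the end is extraneous but harmless.
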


\begin{proof}[Proof of Claim]
The assertion is true on any point $x \in C$.
Let $C'$ be a minimal rational curve on $X$.
Assume that the assertion is true at a point $x' \in C'$.
Then the assertion is true for any point on $C'$, since the bundles is isomorphic to $ \cO_{\bP^1}(2,1^{n-3})$
 on the normalization.
 Hence the assertion follows from Proposition~\ref{prop:rcc}.
\end{proof}
By the above claim, we have the following exact sequence:
\[
0 \to \cO_X(2) \to \sE \to \cO_X(1^{n-3}) \to 0.
\]
This sequence splits since $H^1(\cO_X(1))=0$, and
the assertion follows.
\end{proof}

\section{Case $l_{X} = n-1$ and $\varphi$ is of fiber type}\label{sect:DPfib}
This section deals with the remaining case where $l_X=n-1$ and $\varphi$ is of fiber type:

\begin{theorem}\label{thm:DPfib}
Let $(X,\sE)$ be a pair as in Setting~\ref{set:comp}.
Assume that $l_X=n-1$ and $\varphi$ is of fiber type.
Then the pair $(X,\sE)$ is isomorphic to one of the pairs \ref{d}--\ref{g} in Theorem~\ref{thm:main}.
\end{theorem}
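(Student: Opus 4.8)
The plan is to follow the route sketched for Section~\ref{sect:DPfib}: extract the geometry forced by $\varphi$ being a fibration, then bound $\dim X$, and finally read off $X$ and $\sE$ from the resulting short list. First I would record the consequences of Theorem~\ref{thm:comp}. Since $\bR_{\geq0}[\tC]=R_\varphi$, the nef divisor $l_X\xi-\pi^*c_1(\sE)$ supports the fibration $\varphi$, and a general fibre $F$ satisfies $n-3\le\dim F\le n$ by Lemma~\ref{lem:IWineq1}. The restriction $\pi|_F\colon F\to X$ is finite, so the images $G\coloneqq\pi(F)$ form a covering family of subvarieties of codimension at most three; being images of the $\varphi$-fibres, they chain-connect $X$. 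Corollary~\ref{cor:seq} supplies on each $F$ a nef, semiample bundle $\sE|_F(-D_F)$ with $c_1=(l_X-n+2)D_F=D_F$, sitting in an exact sequence $0\to\Omega_\pi|_F\to\sE|_F(-D_F)\to\cO_F\to0$, and this is the main source of numerical control on the subvarieties $G$.

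The crux, and the step I expect to be hardest, is the dimension bound $\dim X\le 6$ (Proposition~\ref{prop:dimY}). My approach would be to restrict $T_X$ (and $\sE(-1)$) to a general member of a covering family of curves carried by the subvarieties $G=\pi(F)$, and to analyse the Harder--Narasimhan filtration of the restriction by Miyaoka's semistability criterion \cite{Miy87}: the maximal destabilizing subsheaf along a general covering curve is induced by a saturated subsheaf of $T_X$ defined over all of $X$. This subsheaf is essentially the distribution tangent to the family of the $G$, and because $\rho_X=1$, $X$ is Fano, and $X$ is chain-connected by the $G$, its rank and slope are tightly constrained; the resulting inequalities, read against $\dim Y=\dim\bP(\sE)-\dim F$, force $n\le 6$. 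The delicate feature is that $\dim F$ may be as small as $n-3$, so the fibres are barely larger than the index permits, and squeezing the semistability estimate together with chain-connectedness to eliminate $n\ge 7$ is the real obstacle.

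With $n\in\{5,6\}$ secured, $X$ is a Fano manifold of Picard number one and dimension $5$ or $6$ with $l_X=n-1$ and $i_X\ge n-2$, hence of coindex at most three. Since $l_X<n$ rules out $\bP^n$ and $\bQ^n$, the classification of manifolds of small coindex, combined with the existence of the ample rank-$(n-2)$ bundle $\sE$ (so that $\sE(-1)$ is nef) and of the fibration $\varphi$, should exclude the Mukai manifolds and the low-degree del Pezzo manifolds and leave only the two degree-five del Pezzo manifolds $V_5$ $(n=5)$ and $\Gr(2,5)$ $(n=6)$, so that $r_X=n-1$. It then remains to identify $\sE$: splitting into cases according to $\dim F$ and the structure of $\varphi$ (recognising, in the indecomposable cases, a $\bP^2$-bundle/flag structure on $\bP(\sE)$), I would match the pushforward $\varphi_*\cO_{\bP(\sE)}(1)$ and an exact sequence $0\to\sE_1\to\sE\to\cO_X(1)\to0$ coming from Lemma~\ref{lem:sec} with the universal bundles $\sS$, $\sQ$ and the line bundles $\cO(1)$ on $V_5$ and $\Gr(2,5)$, thereby recognising $(X,\sE)$ as one of the pairs \ref{d}--\ref{g} of Theorem~\ref{thm:main}.
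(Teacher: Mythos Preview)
Your outline has the right overall shape, but two of the three main steps diverge from the paper and, as written, contain genuine gaps.

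\textbf{The dimension bound.} The paper does not attack $n\le 6$ through the Harder--Narasimhan filtration of $T_X$ and a global foliation. Instead it first pins down $\dim Y\in\{n-1,n\}$ by ruling out $\dim Y\le n-2$ (using \cite[Corollary~1.3]{PSW92b} on $\sE|_F(-1)$), and then classifies $\sE|_F(-1)$ on the general fibre $F\simeq\bQ^{n-2}$ or $\bP^{n-3}$ via Corollary~\ref{cor:seq} and the known classification of nef bundles with $c_1=1$ on these varieties. The Miyaoka criterion enters only at a single, sharply focused point: when $\dim Y=n$ and one must exclude $\sE|_F\simeq T_{\bP^{n-3}}\oplus\cO(1)$. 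There it is applied to $\sE|_C$ for a curve $C\subset F$ (not to $T_X$) to show that $\theta_C\colon\bP(\sE|_C)\to Y$ is surjective onto the image $V_y$ of $\theta_F$; combined with chain-connectedness of $X$ by the $\pi(F)$'s, this traps $\varphi(\pi^{-1}(x))\subset V_y$ for general $x$, contradicting surjectivity of $\varphi$. Your proposed route via a saturated subsheaf of $T_X$ is a different mechanism, and the assertion that ``this subsheaf is essentially the distribution tangent to the family of the $G$'' is not justified; without that identification it is unclear how you would extract $n\le 6$.

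\textbf{Determining $X$ and $\sE$.} The paper does \emph{not} first identify $X$ from the coindex classification and then match $\sE$. It proceeds in the opposite order: in all cases except $(n,\dim Y)=(5,4)$ it finds jumping $\varphi$-fibres of dimension $n$, which by Lemma~\ref{lem:sec} are sections of $\pi$ and force $r_X=n-1$ together with a splitting $\sE\simeq\sE_1\oplus\cO(1)^{\oplus(n-4)}$ (Proposition~\ref{prop:decomposition}); in the remaining case $r_X=4$ is obtained by a direct Chern-class computation (Proposition~\ref{prop:index}). Then $\varphi_1\colon\bP(\sE_1)\to Y_1$ is shown to be equidimensional with $Y_1\simeq\bP^4$ (via \cite{Laz84}), and the resulting surjection $\cO_X^{\,5}\to\sE_1(-1)$ yields a finite morphism $X\to\Gr(2,5)$ pulling back $\cO(1)$ to $\cO_X(1)$, hence an isomorphism (onto a linear section when $n=5$). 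Your plan instead presupposes $r_X=n-1$ in order to invoke the del Pezzo classification, but nothing in your sketch establishes this (from $l_X=n-1$ and $i_X\ge n-2$ alone one cannot conclude $r_X\ge n-2$), and even granted $r_X=n-1$ you would still need an argument to exclude the low-degree del Pezzo manifolds, which the paper never needs because it produces the map to $\Gr(2,5)$ directly.
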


Let $F$ be a general $\varphi$-fiber and set $D_F \coloneqq \xi |_F$.
By taking the base change of $\pi$ by $\pi|_{F}$, we have the following diagram:
\begin{equation}\label{diagram:fiber}
\vcenter{
\xymatrix{
\tF \ar[r]^-{\iota} &  \bP(\sE |_{F}) \ar[d]_-{\pi_{F}}  \ar[r] \ar@(ur,ul)[rr]^-{\theta _{F}}  & \bP(\sE) \ar[d]_-{\pi} \ar[r]_-{{\varphi}} & Y         \\
&F \ar[r]^-{\pi|_{F}} & X,                                        & \\
}
}
\end{equation}
where $\tF$ is the section of $\pi_{F}$ corresponding to the original fiber $F$.
Let 
\[
\bP(\sE|_F) \xrightarrow{\varphi_F} Y' \to Y
\]
 be the Stein factorization of $\theta_{F}$.
 Then $\varphi_F$ is defined by the semiample divisor $\xi_{\sE|_F} - \pi_F^*D_F$ by the proof of Corollary~\ref{cor:seq}.

\subsection{Bounding the dimension of $X$}
The first step of the proof is to show $n \leq 6$.
In addition, $(\dim Y; F, \cO(D_F), \sE|_F)$ is also determined:

\begin{proposition}\label{prop:dimY}
Under the assumption of Theorem~\ref{thm:DPfib}, we have
$n \leq 6$ and the quadruple
$(\dim Y; F, \cO(D_F), \sE|_F)$ is one of the following:
\begin{enumerate}
\item  $(n;\bP^{n-3},\cO_{\bP^{n-3}}(1),\cO(2,1^{n-3}))$, \label{prop:dimY1}
\item  $(n-1; \bQ^{n-2}, \cO_{\bQ^{n-2}}(1), \sS_\bQ^*(1) \oplus \cO (1^{n-4}))$,\label{prop:dimY2}
\end{enumerate}
where $F$ is a general $\varphi$-fiber.
\end{proposition}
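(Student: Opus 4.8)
The plan is to read off the structure of a \emph{general} fibre $F$ of $\varphi$ from the comparison theorem, and to postpone bounding $n$. Since $l_X=n-1$, Theorem~\ref{thm:comp} makes $(n-1)\xi-\pi^*c_1(\sE)$ a supporting divisor of $\varphi$, so Corollary~\ref{cor:seq} applies with $l_X-n+2=1$: on $F$ the bundle $\sG:=\sE|_F(-D_F)$ is nef and semiample of rank $n-2$ with $c_1(\sG)=D_F$, and it fits in
\[
0\to\Omega_\pi|_F\to\sG\to\cO_F\to0 .
\]
As $F$ is a general fibre of the fibre-type contraction $\varphi$ it is smooth, and $-K_F=-K_{\bP(\sE)}|_F=(n-2)D_F$, whereas $-K_X|_F=(n-1)D_F$. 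Because $\pi|_F$ is finite we have $\dim F\le n$, and Lemma~\ref{lem:IWineq1} gives $\dim F\ge n-3$; note $\dim Y=2n-3-\dim F$. Hence $(F,D_F)$ is a Fano manifold with $-K_F=(n-2)D_F$, so its coindex is $\dim F-n+3\in\{0,1,2,3\}$, and the classifications of Kobayashi--Ochiai, Fujita and Mukai determine $(F,D_F)$ in each of the four admissible dimensions.

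First I would determine $\sE|_F$. The key is that $\sG$ is nef with determinant the primitive ample class $D_F$ and carries a trivial quotient $\cO_F$. On $\bP^m$ and $\bQ^m$ the nef bundles whose first Chern class is the hyperplane class are classified (\cite{SW90c,PSW92a,APW94,AW01}): up to trivial summands the indecomposable ones are $\cO(1)$, $T_{\bP^m}(-1)$, and---only on quadrics---the dual spinor bundle $\sS_\bQ^*$. Using the trivial quotient and semiampleness to discard $T_{\bP^m}(-1)$, I reduce $\sG$ to $\cO(1,0^{n-3})$ when $F\simeq\bP^{n-3}$, giving $\sE|_F\simeq\cO(2,1^{n-3})$ (case~\eqref{prop:dimY1} with $\dim Y=n$), and to $\sS_\bQ^*\oplus\cO^{\oplus(n-4)}$ when $F\simeq\bQ^{n-2}$, giving $\sE|_F\simeq\sS_\bQ^*(1)\oplus\cO(1^{n-4})$ (case~\eqref{prop:dimY2} with $\dim Y=n-1$). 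At this point a Chern-class computation finishes the dimension bound: on $\bQ^m$ one has $c_1(\sS_\bQ^*)=D_F$ \emph{only} for $m\le4$, since $c_1(\sS_\bQ^*)=2D_F$ once $m\ge5$. Thus a non-split $\sE$ with a quadric fibre forces $\dim F=n-2\in\{3,4\}$, i.e.\ $n\le6$, and the del Pezzo ($\dim F=n-1$) and Mukai ($\dim F=n$) fibres should be excluded because the corresponding nef $\sG$ with $c_1=D_F$ would then be forced to split, making $\sE$ split against our standing hypothesis.

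The genuinely global input---and the main obstacle---is to justify that the general fibre is uniformly of the shape above, i.e.\ that $\sG$ really is semistable of the homogeneous (split or spinor) type, so that the $c_1$-dichotomy of the previous step may be invoked and the large-fibre cases excluded cleanly. Here I would exploit that $\varphi$ is of fibre type: the images $\pi(F)$ then cover and chain connect $X$ (Proposition~\ref{prop:rcc}, together with Theorem~\ref{thm:comp}, by which the minimal lifts sweeping out the $\varphi$-fibres project to minimal rational curves). Restricting $\sE$, equivalently $\sG$, to a general member of this covering, chain-connecting family and applying Miyaoka's semistability criterion \cite{Miy87} propagates the splitting type across $X$ and pins $\sG$ on the general fibre to the claimed bundle, whereupon the Chern-class dichotomy yields $n\le6$. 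I expect the delicate point to be precisely this semistability step: controlling how the Harder--Narasimhan data of $\sE$ on the general member of the chain-connecting family globalises, and ensuring that the exceptional fibres $\dim F=n-1,n$ cannot survive the propagation.
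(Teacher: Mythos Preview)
Your outline has the right skeleton but contains a concrete error and two genuine gaps that the paper handles by quite different means.

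\textbf{The false exclusion of $T_{\bP^{n-3}}(-1)$.} You write that ``trivial quotient and semiampleness'' discard $T_{\bP^{n-3}}(-1)$. They do not. On $F\simeq\bP^{n-3}$ the bundle $\sG=\sE|_F(-D_F)$ has rank $n-2$ and $c_1=1$, so from the classification it is either $\cO(1,0^{n-3})$ or $T_{\bP^{n-3}}(-1)\oplus\cO$. The second option is globally generated (hence semiample) and visibly has a trivial quotient, so neither criterion rules it out; equivalently $\sE|_F\simeq T_{\bP^{n-3}}\oplus\cO(1)$ is a genuine a priori possibility. The paper eliminates it by a global argument: the $\pi$-images of $\varphi$-fibres chain-connect $X$; for two fibres with intersecting images one shows the intersection is at least a curve; then Miyaoka's criterion forces $\theta_C\colon\bP(\sE|_C)\to Y$ to surject onto the fixed image $V_y=\im\theta_{F_y}$ for any curve $C\subset F_y$, because $T_{\bP^{n-3}}\subset\sE|_{F_y}$ is destabilising. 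Propagating along chains then traps $\varphi(\pi^{-1}(x))$ inside the fixed $(n-2)$-dimensional $V_y$ for general $x$, contradicting surjectivity of $\varphi$. Your final paragraph gestures at Miyaoka and chain connectedness, but as a device to ``propagate splitting type''; in fact its role is precisely to kill this tangent-bundle case, and the mechanism is the one above.

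\textbf{The bound $n\le 6$ in case \eqref{prop:dimY1}.} Your Chern-class dichotomy for spinor bundles gives $n\le 6$ only when $F\simeq\bQ^{n-2}$. For $F\simeq\bP^{n-3}$ you give no bound. The paper's argument here is separate and simple: once $\sE|_F\simeq\cO(2,1^{n-3})$, the Stein factorisation $\varphi_F$ of $\theta_F$ has image of dimension $2n-6$, and since this factors through $Y$ one gets $2n-6\le\dim Y=n$, hence $n\le 6$.

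\textbf{The large-fibre cases.} For $\dim F=n$ (i.e.\ $\dim Y=n-3$) the paper does not argue via splitting of $\sG$ on a Mukai fibre; it invokes \cite[Lemma~4.1]{NO07} to conclude the $\bP^{n-3}$-bundle is trivial, contradicting $\sE|_{\bP^1}\simeq\cO(2,1^{n-3})$ on a minimal rational curve. For $\dim F=n-1$ ($F$ del Pezzo) there is no available classification of nef bundles with $c_1=D_F$, and the paper instead uses \cite[Corollary~1.3]{PSW92b}: the vanishing $H^i(\det\sG\otimes\omega_F)=0$ and $(\xi_\sG-\pi_F^*D_F)^{n-1}=0$ yield a presentation $0\to\cO_F(-1)\to\cO_F^{\,n-1}\to\sG\to0$, whose dual shows $\cO_F(1)$ is spanned by $n-1$ sections on an $(n-1)$-fold, a contradiction. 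Your claim that $\sG$ ``would be forced to split'' on such fibres, and that this makes $\sE$ split, is neither established nor how the argument goes.
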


Note that, by Lemma~\ref{lem:IWineq1}, we have $n-3 \leq \dim  Y \leq n$ in this case.

\begin{lemma}\label{lem:dimYn-1}
$\dim Y \geq n-1$.
\end{lemma}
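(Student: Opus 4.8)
The plan is to rephrase the statement as a bound on the general fibre dimension and then to exclude the two extremal values. Since $\varphi$ is of fiber type, a general fibre $F$ satisfies $\dim Y = \dim \bP(\sE) - \dim F = (2n-3) - \dim F$, so that $\dim Y \geq n-1$ is equivalent to $\dim F \leq n-2$. By Lemma~\ref{lem:IWineq1} every fibre has $\dim F \leq n$, so it suffices to rule out $\dim F = n$ and $\dim F = n-1$. Throughout I would use that, by Theorem~\ref{thm:comp}, $R_\varphi = \bR_{\geq0}[\tC]$; restricting the supporting divisor gives $(n-1)D_F = \pi^* c_1(\sE)|_F$, while adjunction on a general (hence smooth) fibre gives $-K_F = -K_{\bP(\sE)}|_F = (n-2)D_F$ with $D_F = \xi|_F$ ample. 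In particular $F$ is a smooth Fano manifold.

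First I would dispose of $\dim F = n$. Taking $V = F$, the inclusion $\NE(F,\bP(\sE)) \subset R_\varphi = \bR_{\geq0}[\tC]$ lets Lemma~\ref{lem:sec} apply, so $F$ is a section of $\pi$ and $l_X = r_X = n-1$. The general fibres are then disjoint $\pi$-sections, hence the morphism $\Phi := (\pi,\varphi)\colon \bP(\sE) \to X \times Y$ contracts no curve (a contracted curve would lie in $R_\pi \cap R_\varphi = \{0\}$) and has degree one over a general point (a section meets a $\pi$-fibre once); being finite and birational onto the smooth $X \times Y$, it is an isomorphism by Zariski's main theorem. Then $\pi$ is the first projection, so $Y \simeq \bP^{n-3}$ and $\sE \simeq \cO_X(a)^{\oplus(n-2)}$; comparing first Chern classes gives $a(n-2) = n-1$, which has no integral solution for $n \geq 5$. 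This excludes $\dim F = n$.

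The main obstacle is $\dim F = n-1$. Now $\pi|_F\colon F \to X$ is finite onto the divisor $B := \pi(F)$, which is ample since $\rho_X = 1$, and $F$ is a smooth Fano $(n-1)$-fold with $-K_F = (n-2)D_F$, hence of coindex at most two. Comparing $-K_F = (n-2)D_F$ with the index computation excludes $F \simeq \bP^{n-1}$ and $F \simeq \bQ^{n-1}$ (these would force $D_F = \tfrac{n}{n-2}H$, resp. $\tfrac{n-1}{n-2}H$, non-integral for $n \geq 5$), leaving only the case in which $F$ is a del Pezzo $(n-1)$-fold with $D_F$ its fundamental divisor. Eliminating this last possibility is the crux, and Lemma~\ref{lem:sec} is of no help here because a subvariety with $\NE \subset R_\varphi$ must lie in a single fibre and so cannot have dimension $n$.

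To handle the del Pezzo fibre I would combine three ingredients. By Corollary~\ref{cor:seq}, on $F$ the bundle $\sE|_F(-D_F)$ is semiample with trivial quotient $\cO_F$, and $\Omega_\pi|_F$ is nef with $c_1 = D_F$, which is a strong positivity constraint on a cotangent-type bundle over a del Pezzo manifold. Second, the restriction $\sE|_C \simeq \cO(2,1^{n-3})$ along a general minimal rational curve is unstable, so Miyaoka's semistability criterion \cite{Miy87} yields a canonical destabilizing subsheaf of $\sE$, hence a distinguished sub-line-bundle. Third, the divisors $\pi(F_y)$ are ample, so any two meet (their dimensions sum to $2(n-1) > n$), giving the chain-connectedness of $X$ by the images of $\varphi$-fibres needed to globalize local data. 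The hard part will be turning this positivity and semistability information into an actual contradiction — most plausibly by showing the destabilizing subbundle forces an $n$-dimensional $\pi$-section, reducing to the already-excluded case $\dim F = n$, or by contradicting the nefness of $\Omega_\pi|_F$ against $c_1(\Omega_\pi|_F) = D_F$ on the del Pezzo $(n-1)$-fold. Once $\dim F \leq n-2$ is established, the desired inequality $\dim Y \geq n-1$ follows.
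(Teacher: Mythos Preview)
Your handling of the case $\dim F = n$ (equivalently $\dim Y = n-3$) is correct and essentially reproves \cite[Lemma~4.1]{NO07} by hand: the paper simply cites that lemma to conclude $\bP(\sE)$ is a trivial $\bP^{n-3}$-bundle, which contradicts $\sE|_{\bP^1}\simeq\cO(2,1^{n-3})$. Your route via Lemma~\ref{lem:sec} and the product decomposition reaches the same contradiction and is fine, if slightly longer.

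The genuine gap is the del Pezzo case $\dim F = n-1$. You correctly reduce to $F$ being a del Pezzo $(n-1)$-fold with fundamental divisor $D_F$, but then you stop at a sketch: Miyaoka's criterion, chain connectedness, and a hoped-for destabilizing subbundle do not coalesce into an argument, and you say so yourself. None of those ingredients is what closes this case. In particular, the destabilizing sub-line-bundle of $\sE$ along curves does not globalize to a section of $\pi$ without additional input (that mechanism is used elsewhere in the paper, but only after much more structure is available), and nefness of $\Omega_\pi|_F$ with $c_1 = D_F$ is not by itself contradictory on a del Pezzo manifold.

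The paper's argument is short and direct. With $\sF\coloneqq\sE|_F(-1)$ semiample of rank $n-2$ and $c_1(\sF)=D_F$ (Corollary~\ref{cor:seq}), the hypothesis $\dim Y = n-2$ forces $(\xi_{\sF})^{n-1}=0$ on $\bP(\sF)$. Kodaira vanishing on $F$ and on $\bP(\sF)$ gives $H^i(F,\det\sF\otimes\omega_F)=0$ for $i\geq 0$ and $H^i(\bP(\sF),t\xi_\sF)=0$ for $i>0$, $t>0$. These are exactly the hypotheses of \cite[Corollary~1.3]{PSW92b}, which produces a resolution
\[
0 \to \cO_F(-1) \to \cO_F^{\oplus n-1} \to \sF \to 0.
\]
Dualizing shows that the ample line bundle $\cO_F(1)$ is globally generated by $n-1$ sections, impossible since $\dim F = n-1$. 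That is the missing step; the semistability and chain-connectedness ideas you propose play no role here (they enter later, in the $\dim Y = n$ analysis of Proposition~\ref{prop:dimY}).
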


\begin{proof}
We have $\dim Y \neq n-3$.
Otherwise the projective bundle $\bP(\sE)$ is trivial  by \cite[Lemma~4.1]{NO07}, which contradicts the fact that $\sE|_{\bP^1} \simeq \cO(2,1^{n-3})$ for a minimal rational curve $f \colon \bP^1 \to X$.

Assume $\dim Y = n-2$.
Then a general $\varphi$-fiber $F$ is a smooth projective manifold of dimension $n-1$ with $-K_{F}= (n-2) \xi |_{F}$ by adjunction.
Hence $F$ is a del Pezzo manifold.
Set $D_F \coloneqq \xi |_{F}$ and $\cO _{F}(1) \coloneqq \cO(D_F)$.
Note that $(n-1)D_F = (\pi |_F)^*(-K_X)$ by Theorem~\ref{thm:comp}.

By Corollary~\ref{cor:seq}, $\sE|_{F}(-1)$ is a semiample vector bundle with $c_{1}(\sE|_{F}(-1)) = D_{F}$.
Since $\dim Y = n-2$, we have $(\xi_{{\sE|_{F}}} -  \pi_{F} ^* D_{F})^{n-1} =0$.

The Kodaira vanishing theorem implies 
\[
H^{i}(F,\det(\sE|_{F}(-1))\otimes\cO(K_{F}))=0
\]
 and 
\[
H^{i}(\bP(\sE|_F(-1)),t\xi_{\sE|_{F}(-1)})=0
\] for $i>0$ and $t > 0 $.
Also 
\[
H^{0}(F,\det(\sE|_{F}(-1))\otimes\cO(K_{F}))= H^0(F, \cO_F(-n+3))=0.
\]
Hence, by \cite[Corollary~1.3]{PSW92b}, we have the following exact sequence:
\[
0 \to \cO_{F}(-1) \to \cO_{F} ^{\oplus n-1} \to \sE |_{F} (-1) \to 0.
\]
By dualizing this sequence, we see that the ample line bundle $\cO_F(1)$ is generated by $n-1$ sections.
This contradicts $\dim {F} = n-1$.
\end{proof}

\begin{proof}[Proof of Proposition~\ref{prop:dimY}]
By Lemma~\ref{lem:dimYn-1} and Lemma~\ref{lem:IWineq1} we have $\dim Y = n-1$ or  $n$.
Note that $-K_F = (n-2) D_F$ by adjunction.

\begin{case}
$\dim Y = n-1$. 
\end{case}

In this case  $F \simeq \bQ^{n-2}$ and $\cO(D_F) \simeq \cO(1)$ by the Kobayashi-Ochiai theorem. Note that $\varphi_{F}$ is of fiber type since $\dim \bP(\sE) > \dim Y$.

By Corollary~\ref{cor:seq}, $\Omega _{\pi}|_F$ is a nef vector bundle of rank $n-3$ with $c_{1}(\sE|_{F} (-1))=1$.
Thus, by \cite{PSW92a}, the bundle $\Omega _{\pi}|_F$ is either
\begin{itemize}
 \item a direct sum of line bundles,
 \item $\sS_{\bQ}^* \oplus \cO$ with $n=6$ or
 \item $\sS_{\bQ}^*$ with $n=5$.
\end{itemize}

Hence, by Corollary~\ref{cor:seq} and \cite[Theorem~2.3]{Ott88}, $\sE|_{F}(-1)$ is either 
\begin{itemize}
 \item a direct sum of line bundles,
 \item $\sS^*_{\bQ} \oplus \cO ^{\oplus 2}$ with $n=6$ or
 \item $\sS^*_{\bQ} \oplus \cO$ with $n=5$.
\end{itemize}

Since $\varphi _F$ is a morphism of fiber type, the first case does not occur.

\begin{case}
$\dim Y = n$. 
\end{case}

In this case, $F\simeq \bP^{n-3}$ and $\cO(D_F) \simeq \cO_{\bP^{n-3}}(1)$ by Kobayashi-Ochiai theorem.
Also $\varphi$ is an adjunction theoretic scroll by Proposition~\ref{prop:lengthphi}.
Thus the morphism $\varphi$ is a smooth $\bP^{n-3}$-bundle over a open subset $Y^0$ of $Y$.
Set $\bP(\sE)^0 \coloneqq \varphi^{-1}(Y^0)$.
We will denote by $F_y $ a fiber $(\varphi^0) ^{-1}(y) \simeq \bP^{n-3}$ for $y \in Y^0$.

\setcounter{step}{0}
\begin{step}
By Corollary~\ref{cor:seq}, $\Omega _{\pi} |_F$ is a nef vector bundle with  $c_{1}(\Omega _{\pi} |_F)=1$.
Hence $\Omega _{\pi} |_F \simeq T_{\bP ^{n-3}} (-1)$ or $\cO (1,0^{n-4})$ by Theorem~\ref{thm:rankn}.
Therefore $\sE |_F \simeq T_{\bP ^{n-3}} \oplus \cO (1)$ or $\cO (2,1^{n-3})$ by Corollary~\ref{cor:seq}.
Thus one of the following holds:
\begin{itemize}
 \item $\dim \im \varphi _F  =  n-2$ and $\sE |_F \simeq T_{\bP ^{n-3}} \oplus \cO (1)$,
 \item $\dim \im \varphi _F  =  2n-6$ and $\sE |_F \simeq \cO (2,1^{n-3})$.
\end{itemize}
Since $\dim \im \varphi _{F_y}$ do not depend on $y \in Y^0$, the isomorphic classes of $\sE |_{F_y}$ also do not depend on $y \in Y^0$.
If the latter case occurs then $2n-6 \leq n$, or equivalently $n \leq 6$ and the assertion follows.
Hence it is enough to  show that $\sE |_F \simeq\cO (2,1^{n-3})$.
In the following we assume to the contrary that $\sE |_F \simeq T_{\bP ^{n-3}} \oplus \cO (1)$.
\end{step}

\begin{step}\label{step:quot}
General two points in $X$ can be connected by a chain of ($\pi$-images of) $\varphi^0$-fibers. In fact, since $\rho _X =1$, general two points in $X$ can be connected by a chain of lines contained in $\varphi ^0$-fibers (see \cite[Proof of Proposition~5.8]{Deb01} or \cite[Proof of Lemma~3]{KMM92b}).
Hence the assertion follows.
\end{step}

\begin{step}\label{step:int}
Let $F_1$ and $F_2$ be two $\varphi^0$-fibers.
In this step, we show that  $\dim (\pi (F_1) \cap \pi (F_2)) \geq 1$ if $\pi (F_1) \cap \pi (F_2) \neq \emptyset$.

Assume $\pi (F_1) \cap \pi (F_2) \neq \emptyset$ and take a point $x \in \pi(F_1) \cap \pi (F_2)$.
Then there exists a point $p \in \pi^{-1}(x) \cap F_1$.
Since $\varphi _F $ is a morphism of fiber type, there exists a curve $C \subset \pi^{-1}(\pi (F_2))$ such that $p \in C$ and $C$ is contracted by $\varphi$.
Since $F_1$ is a fiber, we have $C \subset F_1$.
Hence $\pi (C) \subset \pi (F_1) \cap \pi (F_2)$.

\end{step}

\begin{step}\label{step:surj}
Set $V_y \coloneqq \im \theta _{F_y}$. Note that $\dim  V_y =n-2$.
Let $C$ be the normalization of  a curve contained in $F_y$.
Then we have the following diagram:
\[
\xymatrix{
&&&V_y \ar@{^{(}->}[d]\\
\bP(\sE |_{C}) \ar[d]_-{\pi_{C}}  \ar[r] \ar@(ur,l)[urrr]^-{\theta _{C}}     &\bP(\sE|_{F_y}) \ar[d]_-{\pi_{F_y}} \ar[r] \ar[urr]^-{\theta _{F_y}}  & \bP(\sE) \ar[d]_-{\pi} \ar[r]_-{{\varphi}} & Y         \\
C \ar[r] & F_y \ar[r] & X.                                & \\
}
\]

\begin{claim}
$\theta _C$ is surjective onto $V_y$.
\end{claim}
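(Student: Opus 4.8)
The plan is to read the morphism $\theta_C$ off from the semiample divisor defining $\varphi$ and then reduce the claim to a single intersection number. First I would record that, by the proof of Corollary~\ref{cor:seq}, the supporting divisor $l_X\xi-\pi^*c_1(\sE)$ of $\varphi$ restricts on $\bP(\sE|_F)$ to $l_X(\xi_{\sE|_F}-\pi_F^*D_F)=l_X\,\xi_{\sE|_F(-D_F)}$. Hence its restriction to the subvariety $\bP(\sE|_C)\subset\bP(\sE|_{F_y})$ is $l_X\,\xi_{\sE|_C(-D_F)}$, so that $\theta_C$ is (up to the positive multiple $l_X$) the morphism associated with the nef divisor $\xi_{\sE|_C(-D_F)}$, and the dimension of $\theta_C(\bP(\sE|_C))$ equals the numerical dimension of this divisor.

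Next I would compute that numerical dimension. Since $\dim\bP(\sE|_C)=n-2$, it is enough to check $(\xi_{\sE|_C(-D_F)})^{n-2}>0$. On a curve this top self-intersection is exactly $\deg(\sE|_C(-D_F))=c_1(\sE|_F(-D_F))\cdot C$, and by Corollary~\ref{cor:seq}~\ref{cor:seq1} we have $c_1(\sE|_F(-D_F))=(l_X-n+2)D_F=D_F$ (using $l_X=n-1$). As $F_y\simeq\bP^{n-3}$ with $\cO(D_F)\simeq\cO_{\bP^{n-3}}(1)$ ample, $D_F\cdot C>0$ for every curve $C\subset F_y$. Therefore $(\xi_{\sE|_C(-D_F)})^{n-2}=D_F\cdot C>0$, so $\theta_C$ is generically finite onto its image and $\dim\theta_C(\bP(\sE|_C))=n-2$.

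Finally I would compare dimensions. The image $\theta_C(\bP(\sE|_C))$ is a closed subvariety of $V_y=\im\theta_{F_y}$, which is irreducible, being the image of the irreducible variety $\bP(\sE|_{F_y})$, and of dimension $n-2$. An $(n-2)$-dimensional closed subvariety of an irreducible $(n-2)$-dimensional variety must be the whole variety, so $\theta_C(\bP(\sE|_C))=V_y$, which is the assertion.

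I do not expect a genuine obstacle here: the content is entirely the identification of the divisor class cutting out $\theta_C$ together with the elementary relation $\xi^{n-2}=\deg$ for a rank-$(n-2)$ bundle on a curve. The only point requiring care is to make sure the relevant divisor is $\xi_{\sE|_C(-D_F)}$ rather than $\xi_{\sE|_C}$, which is precisely where the twist by $D_F$ furnished by Corollary~\ref{cor:seq} enters; once $c_1(\sE|_F(-D_F))=D_F$ is seen to be ample on the fiber, positivity of the top self-intersection is automatic for every curve $C\subset F_y$, and no hypothesis that $C$ be a line is needed.
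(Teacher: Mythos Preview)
Your proof is correct and takes a genuinely different route from the paper's. The paper argues by contradiction: if $\theta_C$ were not surjective onto $V_y$, then $\dim\theta_C(\bP(\sE|_C))=n-3$, so the nef bundle $\sE|_C(-D_F)$ has degree $0$ on $C$, and by Miyaoka's criterion \cite{Miy87} this forces $\sE|_C$ to be semistable; but $T_{\bP^{n-3}}\subset\sE|_{F_y}=T_{\bP^{n-3}}\oplus\cO(1)$ restricts to a destabilizing subsheaf on $C$, a contradiction. You instead compute the top self-intersection $\xi_{\sE|_C(-D_F)}^{\,n-2}=\deg\sE|_C(-D_F)=D_F\cdot C>0$ directly, which via the projection formula for $\theta_C^*A$ (with $A$ ample on $Y$) gives $(\theta_C)_*[\bP(\sE|_C)]\neq 0$ and hence $\dim\im\theta_C=n-2$. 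This is more elementary---it avoids Miyaoka's theorem altogether---and it uses only the identity $c_1(\sE|_F(-D_F))=D_F$ from Corollary~\ref{cor:seq}, not the specific splitting $\sE|_{F_y}\simeq T_{\bP^{n-3}}\oplus\cO(1)$ on which the paper's destabilization step depends. One small wording issue: saying that $\theta_C$ ``is the morphism associated with the nef divisor $\xi_{\sE|_C(-D_F)}$'' and that the image dimension ``equals the numerical dimension'' is slightly loose, since $\theta_C$ is not literally the Iitaka map of that divisor; but the actual argument you run (positive top power of the pullback of an ample implies generically finite) is exactly what is needed and is correct.
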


\begin{proof}
If $\theta _C$ is not surjective, then $\dim \theta _C(\bP(\sE|_C)) =n-3$.
Hence $\sE |_C$ is semistable by \cite[Theorem~3.1]{Miy87}.
On the other hand ${T_{\bP^{n-3}}} \subset \sE|_{F_y}$ is a destabilizing subsheaf, which gives a contradiction. 
\end{proof}
\end{step}

\begin{step}
Fix general points $x_1, x_2 \in X^0$.
Then there exists a point $y \in Y^0$ such that $x_1 \in \pi(F_y)$, and hence  $\varphi(\pi^{-1}(x_1)) \subset V_y$.

By Step~\ref{step:quot}, $x_1$ and $x_2$ can be connected by a chain of $\varphi^0$-fibers.
Then by Step~\ref{step:int} and \ref{step:surj} we have $\varphi(\pi^{-1}(x_2)) \subset V_y$.
Hence $\varphi(\pi^{-1}(x)) \subset V_y$ for every general point  $x \in X$, which contradicts the surjectivity of $\varphi$.
\end{step}
This completes the proof.
\end{proof}

\subsection{Decomposition of $\sE$}
We now turn to prove that the bundle $\sE$ admits a decomposition except for one case.
Recall that each bundle $\sE$ of pairs \ref{d}--\ref{f} in Theorem~\ref{thm:main} is decomposable.

\begin{proposition}\label{prop:decomposition}
The following hold:
\begin{enumerate}
 \item \label{prop:decomposition1} If Proposition~\ref{prop:dimY}~\ref{prop:dimY1} occurs, then $r_X = n-1$ and $\sE \simeq \sE_1 \oplus \cO(1^{n-4})$ with  an ample vector bundle $\sE_1$ of rank two.
 \item \label{prop:decomposition2} If Proposition~\ref{prop:dimY}~\ref{prop:dimY2} occurs and $n=6$, then $r_X = 5$ and $\sE \simeq \sE_1 \oplus \cO(1)$ with  an ample vector bundle $\sE_1$ of rank three.
\end{enumerate}
\end{proposition}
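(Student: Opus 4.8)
The plan is to peel off copies of $\cO_X(1)=\cO_X(H_X)$ from $\sE$ one summand at a time, reducing to the ample ``core'' $\sE_1$. The starting data is the fibrewise description: for a general $\varphi$-fibre $F$, Corollary~\ref{cor:seq} gives the exact sequence $0\to\Omega_\pi|_F\to\sE|_F(-D_F)\to\cO_F\to0$ with $\sE|_F(-D_F)$ nef of first Chern class $D_F$, and Proposition~\ref{prop:dimY} pins down $\sE|_F\simeq\cO(2,1^{n-3})$ in case~\ref{prop:dimY1} and $\sE|_F\simeq\sS_\bQ^*(1)\oplus\cO(1^2)$ in case~\ref{prop:dimY2} (with $n=6$). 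Thus $\sE|_F(-D_F)$ has $n-3$ respectively $2$ trivial summands, and these are exactly the data I must globalise into inclusions $\cO_X(1)\hookrightarrow\sE$. Throughout, the natural global object is $\sF:=\sE(-1)=\sE\otimes\cO_X(-H_X)$, and the whole construction hinges on the identity $r_X=n-1$, which makes $c_1(\sF)=H_X$ and turns $X$ into a del Pezzo manifold.

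The main obstacle is precisely $r_X=n-1$. Since $-K_X=r_XH_X$ and every curve has integral $H_X$-degree, $r_X$ divides $-K_X.C=l_X=n-1$ for a minimal free curve $C$; set $k:=(n-1)/r_X$. In $\Pic(\bP(\sE))=\bZ\xi\oplus\bZ\pi^*H_X$ the primitive generator of $\varphi^*\Pic(Y)$ is then $k\xi-\pi^*H_X$, and a direct computation in the nef cone $\langle\pi^*H_X,\,k\xi-\pi^*H_X\rangle$ shows that $\sF=\sE(-1)$ is nef if and only if $k=1$. If $k\geq2$ then $r_X\leq(n-1)/2<n-2\leq i_X$ (using $n\geq5$ and Lemma~\ref{lem:splitting}), so $X$ would carry no rational curve of $H_X$-degree one, while each $\pi$-fibre $\bP^{n-3}$ would be sent by $\varphi$ to a degree-$k$ Veronese $(n-3)$-plane in $Y$. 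I expect to exclude this by bringing in the symmetric pair $(Y,\sE_Y)$ with $\sE_Y:=\varphi_*\cO_{\bP(\sE)}(\xi)$: since $\bP(\sE_Y)\simeq\bP(\sE)$ has index $n-2$, the pair $(Y,\sE_Y)$ is again a Mukai pair of corank three to which Proposition~\ref{prop:dimY} applies, and confronting the resulting Veronese configuration with the ampleness of $\sE$ (equivalently, tracing a low-degree rational curve on $Y$ back through $\pi$) should contradict $i_X\geq n-2$. Establishing $k=1$, equivalently that the $\pi$-fibres embed linearly under the adjunction-theoretic scroll $\varphi$ (Proposition~\ref{prop:lengthphi}), is the crux of the whole statement.

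Granting $r_X=n-1$, the bundle $\sF=\sE(-1)$ is nef with $c_1(\sF)=H_X$, and I would first show it is globally generated by the Castelnuovo--Mumford/Kodaira-vanishing argument of Proposition~\ref{prop:spanned}, transported to the del Pezzo manifold $X$. The adjunction reduction of Proposition~\ref{prop:reduce} then applies on $X$, its threshold rank being $c_1(\sF)-c_1(X)+n+1=1-(n-1)+n+1=3$: a chain of elementary modifications $0\to\cO_X\to\sF_i\to\sF_{i+1}\to0$ splits off trivial sub-line-bundles of $\sF$ until the rank reaches $3$, and each extension splits because $H^1(X,\sF_{i+1}^{*})=0$ by Kodaira vanishing on $\bP(\sF_{i+1})$. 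Twisting back, this produces $\cO_X(1)$ as a direct summand at every stage. In case~\ref{prop:dimY2} with $n=6$ the threshold rank $3$ already equals $\rank\sE_1$, so $\sF\simeq\cO_X\oplus\sF_1$ with $\sF_1$ of rank three, whence $\sE\simeq\sE_1\oplus\cO(1)$. In case~\ref{prop:dimY1} one further splitting is required: the rank-three remainder $\sF_k$ has $\sF_k|_F\simeq\cO(1,0^2)$, so a section of $\sF_k^{*}$ yields a surjection $\sF_k\twoheadrightarrow\cO_X$ whose rank-two kernel splits off by the same vanishing, giving $\sE\simeq\sE_1\oplus\cO(1^{n-4})$ with $\sE_1$ of rank two. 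In both cases $\sE_1$ is ample as a direct summand of the ample bundle $\sE$, and the splitting type on a minimal rational curve shows the split factors are $\cO_X(H_X)$ and re-confirms $r_X=n-1$.
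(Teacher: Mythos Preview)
Your proposal has a genuine gap at precisely the point you yourself flag as ``the crux'': establishing $r_X=n-1$. The argument you sketch via the symmetric pair $(Y,\sE_Y)$ and Veronese configurations is not carried out, and the observation that $k\geq 2$ would force $r_X<i_X$ is not a contradiction by itself (there is no general theorem asserting $r_X=i_X$ for Fano manifolds with $\rho=1$; this is exactly the circle of ideas around the Mukai conjecture). So the proof of $r_X=n-1$ is missing, and everything downstream depends on it.

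Even granting $r_X=n-1$, two further steps are not justified. First, transporting the spannedness argument of Proposition~\ref{prop:spanned} to an \emph{unknown} del Pezzo manifold $X$ is not automatic: that proof uses Castelnuovo--Mumford regularity with respect to a very ample $\cO(1)$, and $H_X$ need not be very ample on a del Pezzo manifold. Second, the claim that the extensions $0\to\cO\to\sF_i\to\sF_{i+1}\to 0$ split because $H^1(X,\sF_{i+1}^{*})=0$ ``by Kodaira vanishing on $\bP(\sF_{i+1})$'' does not follow: $H^1(\sF_{i+1}^{*})$ is Serre-dual to $H^{n-1}(\sF_{i+1}\otimes\omega_X)$, and writing this on $\bP(\sF_{i+1})$ gives $\xi-(n-1)\pi^*H_X=K_{\bP(\sF_{i+1})}+(s+1)\xi-\pi^*H_X$ with $s=\rank\sF_{i+1}$; the class $(s+1)\xi-\pi^*H_X$ is not obviously nef and big. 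The extra step in case~\ref{prop:dimY1} (producing a section of $\sF_k^{*}$ to descend from rank $3$ to rank $2$) is likewise unjustified.

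The paper's proof is entirely different and \emph{geometric}. It never attempts to prove $r_X=n-1$ a priori. Instead, it analyses the morphism $\varphi$: starting from the explicit $\sE|_F$, it traces minimal lifts of minimal rational curves into the locus $E'\subset\pi^{-1}(\pi(F))$ and, using flatness of $\varphi$ at equidimensional points (via \cite[Lemma~2.12]{Fuj87} or \cite[Theorem~B]{ABW93}), forces the existence of jumping $\varphi$-fibres. A cascade argument then pushes the dimension of some jumping fibre up to $n$. At that point Lemma~\ref{lem:sec} applies to any $n$-dimensional fibre component $V$: it \emph{simultaneously} yields $r_X=n-1$ and the surjection $\sE\to\cO_X(1)$ defining $V$ as a section of $\pi$. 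When $n=6$ one finds a second disjoint section $V'$ giving $\sE\to\cO(1^2)$. Only then does one invoke a single vanishing $H^1(\sE_1(-1))=0$, where $\sE_1=\ker(\sE\to\cO(1^{n-4}))$; here $\sE_1(-1)$ is nef because in $0\to\sE_1(-1)\to\sE(-1)\to\cO\to 0$ the kernel of a surjection from a nef bundle onto the \emph{trivial} line bundle is nef, and Kodaira vanishing on $\bP(\sE_1)$ finishes.

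In short: the identity $r_X=n-1$ and the quotient $\sE\to\cO(1)$ are not separate problems; the paper obtains both at once from the geometry of jumping fibres via Lemma~\ref{lem:sec}. Your cohomological route would need an independent proof of $r_X=n-1$, which you have not supplied.
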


\begin{proof}
\ref{prop:decomposition1}
Assume that Proposition~\ref{prop:dimY}~\ref{prop:dimY1} occurs.
Let $F$ be a general $\varphi$-fiber and consider the following diagram:
\[
\xymatrix{
E \simeq \bP(\cO(1^{n-3})) \ar@{^{(}->}[d] \ar[r] & E ' \ar@{^{(}->}[d] \\
    \bP(\sE |_{F}) \simeq \bP(\cO(2,1^{n-3})) \ar@(ur,ul)[rrr]^-{\theta _{F}} \ar[d]_-{\pi_{F}}  \ar[r]&\pi^{-1}(\pi(F)) \ar[d] \ar@{^{(}->}[r]  & \bP(\sE) \ar[d]_-{\pi} \ar[r]_-{{\varphi}} & Y         \\
F \simeq \bP^{n-3} \ar[r]^-{\pi|_{F}}&\pi(F) \ar@{^{(}->}[r] & X,                                        & \\
}
\]
where $E$ is the subbundle corresponding to the direct summand $\cO(1^{n-3}) \subset \sE|_{F}$ and $E'$ is the image of $E$ in $\pi^{-1}(\pi(F))$.
A \emph{minimal section} of $\pi_F$ is defined to be a section corresponding to a surjection $\sE|_{F} \to \cO(1)$.
Since $\varphi_F$ is defined by $\xi_{\sE|_F} - \pi^*D_F$,  the exceptional divisor of the contraction $\varphi_F$ is $E$ and hence
each minimal section of $\pi_F$ is contracted to a point by $\theta_F$.

\setcounter{step}{0}
\begin{step} \label{step:jumpingfiber}

By Proposition~\ref{prop:rcc}
there exists a rational curve $[C] \in M$ such that $C \cap \pi (F) \neq \emptyset$ and $C \not \subset \pi(F)$.
Let $x \in C \cap \pi(F)$ be a point.
Then the deformations of minimal lifts $\tC$ of $C$ sweep out at least  a divisor in $\pi ^{-1}(x)$ by Lemma~\ref{lem:splitting}.
Hence
\begin{align}\label{eq:jumping}
\dim \bigcup_{\tC} (\tC \cap E ' \cap \pi^{-1}(x)) \geq n-5.
\end{align}

Fix a minimal lift $\tC$ with $\tC \cap E '\cap \pi^{-1}(x) \neq \emptyset$  and let $w$  be a point in $ \tC \cap E '\cap \pi^{-1}(x)$.
If $\varphi^{-1}(\varphi(w))$ has dimension $n-3$, then $\varphi$ is flat at $\varphi (w)$ by Proposition~\ref{prop:lengthphi} and \cite[Lemma~2.12]{Fuj87}.
The flatness at $\varphi (w)$ implies $\varphi^{-1}(\varphi(w)) \subset E'$
(In fact it is a projective bundle near $\varphi$ and the above conclusion $\varphi^{-1}(\varphi(w)) \subset E'$ is trivial, but, here we use only flatness to apply a similar argument also for the case \ref{prop:decomposition2}).
Thus $\tC  \subset \varphi^{-1}(\varphi(w)) \subset E'$.
This contradicts the fact that $C \not \subset \pi(F)$.
Hence $\varphi$ is not equidimensional at $w$.
By \eqref{eq:jumping}, the family of jumping fibers of $\varphi$ has dimension at least $n-5$. 
\end{step}

\begin{step}\label{prop:decompositionstep2}
Let $F'$ be a component of a jumping fiber of $\varphi$ with $\dim F' \geq n-2$.

Assume that $\dim F' =n-2$.
Then  $F'$ is isomorphic to $\bP^{n-2}$ and $\cO_{\bP(\sE)}(1)|_{F'} \simeq \cO_{\bP^{n-2}}(1)$ by \cite[Theorem~2.1]{And95}.
By Corollary~\ref{cor:seq}, $\Omega _{\pi}|_{F '}$ is a nef vector bundle of rank $n-3$ with $c_1=1$ and hence isomorphic to $ \cO (1) \oplus \cO ^{\oplus {n-4}}$ by \cite{PSW92a}.
Thus $\sE|_{F '}(-1) \simeq \cO (1) \oplus \cO ^{\oplus n-3}$ by  Corollary~\ref{cor:seq}.
Then by a similar argument to Step~\ref{step:jumpingfiber} we have a jumping fiber of dimension  $\geq n-1$.
Also note that if $n=6$ then every jumping fiber has dimension $\geq n-1$, otherwise the inequality $\dim \im \theta_{F'} =2n-5 > n = \dim Y$ yields a contradiction.
\end{step}

\begin{step}
Let $F'$ be a component of a jumping fiber of $\varphi$ with $\dim F' \geq n-1$ and $F$ a general fiber.
Then the image $\pi (F')$ contains a non-zero effective divisor on $X$.
Since $\rho _X =1$, we have $\pi(F) \cap \pi (F') \neq \emptyset$.
Hence $\pi^{-1}(\pi(F)) \cap F' \neq \emptyset$ of dimension $\geq n-4$.
Since $\theta _F$ contracts only minimal sections, there exists a minimal section $\widetilde \bP^{n-3} \subset E$ of $\pi_F$ such that the image $P'$ in $E'$ contains an $(n-4)$-dimensional component of $\pi^{-1}(\pi(F)) \cap F'$.
Hence we have $P' \subset F'$.
Since $\pi (P') = \pi (F)$ and $F$ is a general fiber, a general point on $X$ is contained in $\pi(F')$.
Hence $\dim F' =n$.
\end{step}

\begin{step}
Hence we have an $(n-5)$-dimensional family of jumping fibers of dimension $n$.
Let $V$ be an $n$-dimensional component of a fiber.
Then $r_X=n-1$ and $V$ is a section of $\pi$ corresponding to the following exact sequence by Lemma~\ref{lem:sec}:
\[
0 \to \sE_1' \to \sE \to \cO(1) \to 0.
\]
Set $\sE_1 \coloneqq \sE_1'$ if $n=5$.
If $n=6$, then we can find in the same way another section $V'$ with $V \cap V' = \emptyset$, and hence we have the following exact sequence:
\[
0 \to \sE_1 \to \sE \to \cO(1^2) \to 0.
\]

Now $\sE(-1)$ is a nef vector bundle by Theorem~\ref{thm:comp}.
Hence $\sE_1(-1)$ is a nef vector bundle of rank two with $c_1(\sE_1(-1))= 1$ by \cite[Proposition~1.2~(8)]{CP91}.
Then, by the Kodaira vanishing theorem on $\bP(\sE_1)$, we have  $H^1(\sE_1 (-1))=0$.
Therefore $\sE \simeq \sE_1 \oplus \cO(1^{n-4})$.
This completes the proof in the case where $\varphi$ is an adjunction theoretic  scroll.
\end{step}

\ref{prop:decomposition2}
Assume that Proposition~\ref{prop:dimY}~\ref{prop:dimY2} occurs and $n=6$.
Then consider the following diagram:
\[
\xymatrix{
 E \simeq \bQ^{4} \times \bP^{1} \ar[d] \ar[r]                            & E ' \ar@{^{(}->}[d]                     &                                            &   \\
 \bP(\sE |_{F}) \ar@(ur,ul)[rrr]^-{\theta _{F}} \ar[d]_-{\pi_{F}} \ar[r] & \pi^{-1}(\pi(F)) \ar[d] \ar@{^{(}->}[r] & \bP(\sE) \ar[d]_-{\pi} \ar[r]_-{{\varphi}} & Y \\
 F \simeq \bQ^{4} \ar[r]^-{\pi|_{F}}                                      & \pi(F) \ar@{^{(}->}[r]                  & X,                                         &   \\
}
\]
where $E$ is the subbundle corresponding to the direct summand $\cO(1^2) \subset \sE|_{F}$.
Then the contraction $\varphi_F$ is an adjunction theoretic scroll and each jumping fiber of the contraction is a section of $\pi_F$ contained in $E$.
By a similar argument to the above case the assertion follows also in this case.
Note that $\varphi$ is flat at a point $y \in Y$ if $\varphi$ is equidimensional at $y$ by \cite[Theorem~B]{ABW93}.
\end{proof}

\subsection{Index of $X$}
By Proposition~\ref{prop:decomposition}, we have already seen that the index of $X$ is $n-1$ except for the case $n=5$ and $\dim Y = 4$.
The same thing also holds in the remaining case:
\begin{proposition}\label{prop:index}
Assume that $n=5$ and $\dim Y = 4$.
Then $r_X$ is four.
\end{proposition}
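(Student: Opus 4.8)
Since $n=5$ and $\dim Y = n-1 = 4$, we are in case \ref{prop:dimY2} of Proposition~\ref{prop:dimY}: a general $\varphi$-fiber is $F \simeq \bQ^3$ with $\cO(D_F) \simeq \cO_{\bQ^3}(1)$ and $\sE|_F \simeq \sS_\bQ^*(1) \oplus \cO(1)$. The goal $r_X = 4$ is exactly the equality $r_X = l_X$, since $l_X = n-1 = 4$. Indeed, by Theorem~\ref{thm:comp} the divisor $l_X\xi + \pi^*K_X = 4\xi - \pi^*c_1(\sE)$ is trivial on every $\varphi$-fiber, so restricting to $F$ yields $(\pi|_F)^*(-K_X) = 4D_F$; as $\Pic(\bQ^3) = \bZ\cdot D_F$ and $\pi|_F$ is finite, writing $-K_X = r_X H_X$ and $(\pi|_F)^*H_X = aD_F$ with $a \in \bZ_{>0}$ gives $r_X a = 4$, so $r_X \in \{1,2,4\}$, and $r_X = 4$ is equivalent to $a=1$, equivalently to $r_X = l_X$. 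Thus everything reduces to proving $r_X = l_X$.

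By Lemma~\ref{lem:sec}, it therefore suffices to produce a closed subvariety $V \subset \bP(\sE)$ of dimension $n = 5$ with $\NE(V,\bP(\sE)) \subset \langle \bR_{\geq0}[\tC], R_\varphi\rangle$: its existence forces $l_X = r_X$, hence $r_X = 4$. Since every curve in a $\varphi$-fiber lies in $R_\varphi$, any $5$-dimensional component of a $\varphi$-fiber automatically satisfies the required cone condition. The plan is therefore to show that $\varphi$ possesses a fiber of dimension $n = 5$, i.e.\ a jumping fiber, the general fiber being of dimension only $n-2 = 3$.

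To produce it I would follow the jumping-fiber argument of Proposition~\ref{prop:decomposition}\ref{prop:decomposition1}. Form the base change of $\pi$ along $\pi|_F$; the induced contraction $\varphi_F$ of $\bP(\sE|_F)$ is defined by $\xi_{\sE|_F} - \pi_F^*D_F = \xi_{\sE|_F(-1)}$, where $\sE|_F(-1) \simeq \sS_\bQ^* \oplus \cO$, and it contracts precisely the minimal sections of $\pi_F$ (those corresponding to surjections $\sE|_F \to \cO(1)$); denote by $E'$ the union of their images in $\pi^{-1}(\pi(F))$. By Proposition~\ref{prop:rcc} choose a minimal rational curve $C$ with $C \cap \pi(F) \neq \emptyset$ and $C \not\subset \pi(F)$. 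By Lemma~\ref{lem:splitting} the minimal lifts of $C$ sweep out a divisor inside the $\pi$-fiber over a point of $C \cap \pi(F)$, so some minimal lift $\tC$ meets $E'$ at a point $w$. If $\varphi$ were equidimensional of fiber dimension $3$ at $w$, it would be flat there by Proposition~\ref{prop:lengthphi} together with \cite[Lemma~2.12]{Fuj87} and \cite[Theorem~B]{ABW93}, forcing $\varphi^{-1}(\varphi(w)) \subset E'$ and hence $\tC \subset E'$, i.e.\ $C \subset \pi(F)$, a contradiction. Thus $\varphi$ has a jumping fiber $F'$ with $\dim F' \geq n-1 = 4$. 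Then, as in Proposition~\ref{prop:decomposition}, $\pi(F')$ contains an effective divisor, so by $\rho_X = 1$ it meets $\pi(F)$, and propagating the contracted minimal sections of $\pi_F$ along this intersection raises $\dim F'$ to $n = 5$. A $5$-dimensional component $V$ of $F'$ is then the desired subvariety, and Lemma~\ref{lem:sec} gives $r_X = l_X = 4$.

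The main obstacle is the construction in the previous paragraph: establishing the jumping fiber and pushing its dimension up to $n$ in this quadric-bundle setting. Unlike the scroll case of Proposition~\ref{prop:decomposition}, here $\varphi$ is not an adjunction-theoretic scroll and the general fiber is a quadric $\bQ^3$ with the non-split restriction $\sE|_F \simeq \sS_\bQ^*(1)\oplus\cO(1)$, so the contracted locus of $\varphi_F$, the sweeping estimate drawn from Lemma~\ref{lem:splitting}, and the flatness input \cite[Theorem~B]{ABW93} must all be re-examined in the presence of the spinor summand.
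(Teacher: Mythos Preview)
Your strategy cannot succeed: in this case $\varphi$ is equidimensional, so the $5$-dimensional $\varphi$-fiber you are looking for does not exist. Concretely, the step where you claim ``some minimal lift $\tC$ meets $E'$'' fails numerically. Since $\sE|_F \simeq \sS_\bQ^*(1)\oplus\cO(1)$ and $H^0(\bQ^3,\sS_\bQ)=0$, there is a \emph{unique} surjection $\sE|_F\to\cO(1)$; hence $E'$ is a single section and $E'\cap\pi^{-1}(x)$ is a point in $\pi^{-1}(x)\simeq\bP^2$, while the minimal lifts of $C$ sweep out only a line $\bP^1\subset\bP^2$. The intersection estimate gives $0+1-2=-1$, so there is no reason for $\tC$ to meet $E'$. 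This is precisely why the argument of Proposition~\ref{prop:decomposition} stops at $n=6$ in case~\ref{prop:dimY2}: for $n=6$ one has the extra $\cO(1)$ summand, $E'$ meets $\pi^{-1}(x)$ in a $\bP^1\subset\bP^3$, and the count $1+2-3=0$ just barely works. Moreover, once one knows $\sE|_F\simeq\sS_\bQ^*(1)\oplus\cO(1)$, the contraction $\varphi_F$ of $\bP(\sE|_F)$ has image $\bP^4$ and its unique fiber of dimension $\geq 2$ is the tautological section $\tF$; the argument used later in the paper (Case $n=6$, $\dim Y=5$) then shows, without using $r_X$, that any putative jumping fiber $F'$ of $\varphi$ would force $F\cap F'\neq\emptyset$, a contradiction. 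So no jumping fiber exists and Lemma~\ref{lem:sec} is unavailable.

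The paper instead argues numerically. Writing $a\coloneqq 4/r_X$, Theorem~\ref{thm:comp} gives $\varphi^*H_Y=a\xi-\pi^*H_X$; the conditions $\dim Y=4$ and $F\simeq\bQ^3$ translate into $(a\xi-\pi^*H_X)^5=0$ and $(a\xi-\pi^*H_X)^4\cdot(r_X\pi^*H_X)^3=2^7$. Expanding these in Chern classes of $\sE$ yields a linear system whose solution forces $c_3(\sE)H_X^2$ to be a non-integer when $a=2$ or $a=4$, leaving $a=1$, i.e.\ $r_X=4$. This Chern-class computation replaces the missing geometric input.
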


\begin{proof}
Set $a \coloneqq 4/r_X \in \bZ$.
Since $\varphi$ is defined by the semiample divisor $4\xi - r_X {\pi^*H_X}$ by Theorem~\ref{thm:comp},
we have $\varphi^*H_Y= a \xi -{\pi^*H_X}$.
Let $F \simeq \bQ^3$ be a general fiber of $\varphi$.

Then, since $F \nequiv \varphi^*H_Y^4= (a \xi -{\pi^*H_X})^4$ and $r_X{\pi^*H_X}|_F = 4\xi |_F = 4 D_F$, we have 
\[
(a \xi -{\pi^*H_X})^4(r_X{\pi^*H_X})^3=2^7.
\]
This is equivalent to 
\[
\frac{4^3}{a^3}\left(a^4\left(c_1^2-c_2\right) H_X^3-4a^3c_1H_X^4+6a^2H_X^5 \right)=2^7,
\]
where $c_i = c_i(\sE$).

On the other hand, since $\dim Y = 4$, we have $(a \xi -{\pi^*H_X})^5=0$. This implies:
\begin{align*}
 a^5(c_1^3-2c_1c_2+c_3)-5a^4(c_1^2-c_2)H_X+10a^3c_1H_X^2-10a^2H_X^3=0, \\
 a^5(-c_1^2c_2+c_1c_3+c_2^2)-5a^4(-c_1c_2+c_3)H_X-10a^3c_2H_X^2+5aH_X^4=0,\\
 a^5(c_1^2c_3-c_2c_3)-5a^4c_1c_3H_X+10a^3c_3H_X^2-H_X^5=0.
\end{align*}
Since $c_1 = r_X H_X = \dfrac{4}{a}H_X$, the above four equations are equivalent to the following:
\begin{align*}
 6H_X^5-a^2c_2H_X^3=2a,\\
 14a^2H_X^5-3a^4c_2H_X^3+a^5c_3H_X^2=0,\\
 5aH_X^5-6a^3c_2H_X^3-a^4c_3H_X^2+a^5c_2^2H_X=0,\\
 H_X^5 -6a^3c_3H_X^2+a^5c_2c_3=0.
\end{align*}

By solving these equations for $H_X^5$, $c_2H_X^3$, $c_3H_X^2$ and $c_2c_3$, we have:
\begin{align*}
 H_X^5=\frac{18a+a^4c_2^2H_X}{35},\\
 c_2H_X^3=\frac{38+6a^3c_2^2H_X}{35a},\\
 c_3H_X^2=\frac{-138+4a^3c_2^2H_X}{35a^2},\\ 
 c_2c_3=\frac{-846+23a^3c_2^2H_X}{35a^4}.
\end{align*}

\medskip
If $a=4$, then $c_3H_X^2=\dfrac{-69+16c_2^2H_X}{70}$, which cannot be an integer. This gives a contradiction.
Also if $a=2$, then the equation $c_3H_X^2=\dfrac{-69+128c_2^2H_X}{280}$ gives a contradiction again.
Hence we have $a=1$ and the assertion follows.
\end{proof}

\subsection{Proof of Theorem~\ref{thm:DPfib}}
In any case, $(n-1)\xi_{\sE} + \pi^* K_X = (n-1)(\xi_{\sE} - \pi^* H_X)$.
Therefore $\xi_{\sE} - \pi^* H_X = \varphi ^*H_Y$ for an ample Cartier divisor $H_Y$ on $Y$.
 
Let us consider the following diagram unless $n=5$ and $\dim Y = 4$:
\[
\xymatrix{
\bP(\sE_1) \ar[d]_-{\pi_{1}} \ar[r]_-{{\varphi_{1}}} & Y_{1} \\
X,                                                   &       \\
}
\]
where $\varphi_{1}$ is obtained by taking the Stein factorization $\bP(\sE) \xrightarrow{\varphi_1} Y_1 \to Y$ of the composite $\bP(\sE_1) \to \bP(\sE) \xrightarrow{\varphi} Y$.
Thus $\varphi_1$ is defined by the semiample divisor $(n-1)\xi_{\sE_1} + \pi_1^* K_X$.

Since $r_X = n-1$, we have $(n-1)\xi_{\sE_1} + \pi_1^* K_X = (n-1)(\xi_{\sE_1} - \pi_1^* H_X)$.
Thus $\xi_{\sE_1} - \pi_1^* H_X = \varphi_1^* H_{Y_1} $ for an ample cartier divisor $H_{Y_1}$ on $Y_1$.

\begin{lemma}
Let the notation be as above.
Then $\varphi_1$ is defined by the semiample divisor $K_{\bP(\sE_1)}+ (n-2)\xi_{\sE_1}$, $\dim {Y_1} =4$ and 
\begin{enumerate}
 \item If Proposition~\ref{prop:dimY}~\ref{prop:dimY1} occurs, then   general $\varphi_{1}$-fibers are isomorphic to $\bP^{n-3}$. 
 \item If Proposition~\ref{prop:dimY}~\ref{prop:dimY2} occurs and $n=6$, then and  general $\varphi_{1}$-fibers are isomorphic to $\bQ^{4}$.
\end{enumerate}
\end{lemma}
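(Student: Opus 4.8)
The plan is to separate the two assertions: the identification of the supporting divisor of $\varphi_1$ (a formal computation) and the description of $Y_1$ and the general fibre (the geometric heart).

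For the supporting divisor I would argue by a Chern class computation. Write $r_1 = \rank \sE_1$, so $r_1 = 2$ in case~\ref{prop:dimY1} and $r_1 = 3$ in case~\ref{prop:dimY2}. The relative canonical formula reads
\[
K_{\bP(\sE_1)} = -r_1\,\xi_{\sE_1} + \pi_1^*\left(K_X + c_1(\sE_1)\right).
\]
Since $r_X = n-1$ by Propositions~\ref{prop:decomposition} and \ref{prop:index}, we have $-K_X = (n-1)H_X$, and the splitting $\sE \simeq \sE_1 \oplus \cO(1^{n-2-r_1})$ together with $c_1(\sE) = c_1(X) = (n-1)H_X$ forces $c_1(\sE_1) = (r_1+1)H_X$. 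Substituting gives
\[
K_{\bP(\sE_1)} + (n-2)\xi_{\sE_1} = (n-2-r_1)\left(\xi_{\sE_1} - \pi_1^*H_X\right) = (n-2-r_1)\,\varphi_1^*H_{Y_1},
\]
which is a positive multiple of $\varphi_1^*H_{Y_1}$ (indeed $n-2-r_1 = n-4 \geq 1$ in case~\ref{prop:dimY1} and $=1$ in case~\ref{prop:dimY2}). Hence it is semiample and defines the same contraction $\varphi_1$.

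Next I would analyse $\varphi_1$ intrinsically as the second elementary contraction of $\bP(\sE_1)$ (which has Picard number two, $\pi_1$ being the first), of fibre type. Let $G$ be a general fibre. Restricting the displayed identity to $G$ and using $\varphi_1^*H_{Y_1}|_G \equiv 0$ yields simultaneously $-K_G = (n-2)\,\xi_{\sE_1}|_G$ by adjunction and $\xi_{\sE_1}|_G = (\pi_1|_G)^*H_X$; in particular $\xi_{\sE_1}|_G$ is ample, so $\pi_1|_G \colon G \to X$ is finite and $G$ is a Fano manifold of pseudoindex at least $n-2$. Since any curve $\Gamma$ in a $\varphi_1$-fibre satisfies $-K_{\bP(\sE_1)}.\Gamma = (n-2)\,\xi_{\sE_1}.\Gamma \geq n-2$, the length of $\varphi_1$ is at least $n-2$, and the inequality of Ionescu and Wi\'sniewski applied to $\varphi_1$ gives $\dim G \geq n-3$, hence $\dim Y_1 \leq r_1 + 2$.

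The main obstacle is to pin the dimension down exactly, namely $\dim Y_1 = 4$ (equivalently $\dim G = n-3$ in case~\ref{prop:dimY1} and $\dim G = 4$ in case~\ref{prop:dimY2}). Granting this, the identification of $G$ is immediate: in case~\ref{prop:dimY1}, $\dim G = n-3$ with pseudoindex $\geq n-2 = \dim G + 1$, so $G \simeq \bP^{n-3}$ by Kobayashi--Ochiai; in case~\ref{prop:dimY2}, $\dim G = 4$ with $-K_G = 4\,\xi_{\sE_1}|_G$ forces coindex $\leq 1$, and since the index $4$ is incompatible with $\bP^4$ (whose index is $5$, so $4\,\xi_{\sE_1}|_G = 5\cO(1)$ has no integral solution) the only option is $G \simeq \bQ^4$. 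To bound $\dim G$ from above I would relate the fibres of $\varphi_1$ to those of the original contraction $\varphi$: a general $\varphi_1$-fibre $G$ is contracted by $\varphi$, hence lies in a $\varphi$-fibre over the subvariety $\varphi(\bP(\sE_1)) \subset Y$, and I would control these using Proposition~\ref{prop:dimY} together with the splitting $\sE(-1) = \sE_1(-1) \oplus \cO^{\oplus(n-2-r_1)}$ --- noting that the general $\varphi$-fibres avoid $\bP(\cO(1^{n-2-r_1}))$, over whose image $\varphi$ contracts the sections of $\pi$. Equivalently, $\dim Y_1$ is the numerical dimension of the nef divisor $\xi_{\sE_1(-1)}$, computable from the Segre classes of the nef bundle $\sE_1(-1)$ of first Chern class $H_X$; the delicate point here is that $A^2(X)$ need not be cyclic (for instance when $X = \Gr(2,5)$), so this count must exploit the specific structure of $\sE_1(-1)$ rather than a naive intersection computation.
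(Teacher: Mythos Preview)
Your computation of the supporting divisor is correct (and more explicit than the paper's), and your reduction of the fibre identification to Kobayashi--Ochiai once $\dim Y_1=4$ is known matches the paper exactly.

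The genuine gap is the determination of $\dim Y_1=4$. Your Ionescu--Wi\'sniewski bound yields only $\dim Y_1\le r_1+2$, which is $4$ in case~\ref{prop:dimY1} but $5$ in case~\ref{prop:dimY2}; and in neither case do you supply a working lower bound. Your approach~(a) stalls because you need to know that a \emph{general} $\varphi_1$-fibre sits inside a \emph{general} $\varphi$-fibre, i.e.\ that $\varphi(\bP(\sE_1))$ meets the locus over which $\varphi$ has fibres of the expected dimension --- but a priori $\varphi(\bP(\sE_1))$ could lie in the jumping locus. (Your remark about general $\varphi$-fibres avoiding $\bP(\cO(1^{n-2-r_1}))$ does not help: the two sub-projective-bundles are disjoint, so avoiding one says nothing about being contained in the other.) Your approach~(b) via Segre classes would require knowing $c_2(\sE_1)$, which is not yet available.

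The paper's argument is a single observation that gives equality directly. Peel off one $\cO(1)$ summand: writing $\sE=\sE'\oplus\cO(1)$, the sub-projective-bundle $\bP(\sE')\subset\bP(\sE)$ is the zero locus of the induced section of $\cO(\xi_\sE-\pi^*H_X)=\cO(\varphi^*H_Y)$. Since $\varphi_*\cO=\cO$, every member of $|\varphi^*H_Y|$ is a pullback, so $\bP(\sE')=\varphi^{-1}(D_Y)$ for some $D_Y\in|H_Y|$; hence $\dim\varphi(\bP(\sE'))=\dim Y-1$ \emph{exactly}. Iterating (once in case~\ref{prop:dimY2} with $n=6$ and in case~\ref{prop:dimY1} with $n=5$; twice in case~\ref{prop:dimY1} with $n=6$) gives $\dim Y_1=\dim Y-(n-2-r_1)=4$. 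This replaces both of your proposed approaches with a two-line computation, and incidentally shows that general $\varphi_1$-fibres \emph{are} general $\varphi$-fibres, so your fibre identification then goes through.
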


\begin{proof}
The assertion on supporting divisor is only a computation.
If $\dim Y_1$ is as stated, then the statement about  fibers follows from adjunction and Kobayashi-Ochiai theorem.

Thus it is enough to see that $\dim Y_1 =4$.
By Proposition~\ref{prop:decomposition}, $\sE$ admits a decomposition $\sE \simeq \sE' \oplus \cO(1)$.
Then $\bP(\sE')$ is a divisor on $\bP(\sE)$, which is linearly equivalent to $\xi_{\sE}-\pi^*H_X = \varphi ^*H_Y$.
Thus $\dim \varphi (\bP(\sE')) = \dim Y -1$.
If Proposition~\ref{prop:dimY}~\ref{prop:dimY1} occurs and $n=6$, then, by repeating the procedure, we have the assertion on $\dim Y_1$.
\end{proof}

Also we obtain the following diagram as in \eqref{diagram:fiber} for a general $\varphi_1$-fiber $F$:
\begin{equation}\label{diagram:fiber2}
\vcenter{
\xymatrix{
\tF \ar[r]^-{\iota} &  \bP(\sE_1 |_{F}) \ar[d]_-{\pi_{1,F}}  \ar[r] \ar@(ur,ul)[rr]^-{\theta _{1,F}} & \bP(\sE_1) \ar[d]_-{\pi_1} \ar[r]_-{{\varphi}_1} & Y_1 \\
                    & F \ar[r]^-{\pi_1|_{F}}                                                          & X.                                               &     \\
}
}
\end{equation}
Let $\bP(\sE_1|_F) \xrightarrow{\varphi_{1,F}} Y_1' \to Y_1$ be the Stein factorization of $\theta _{1,F}$.

Note that general $\varphi_1$-fiber $F$ maps isomorphically on to $\varphi$-fiber.
Thus:
\begin{enumerate}
 \item $\sE_1|_F \simeq \cO(2,1)$ if Proposition~\ref{prop:dimY}~\ref{prop:dimY1} occurs.
 \item $\sE_1|_F \simeq \sS^*_{\bQ}(1) \oplus \cO(1)$ if Proposition~\ref{prop:dimY}~\ref{prop:dimY2} occurs and $n=6$.
\end{enumerate}
Hence $Y'_1$ is a projective space.

\begin{proof}[Proof of Theorem~\ref{thm:DPfib}]

\begin{case}
$n=6$ and $\dim Y = 6$.
\end{case}

Then $\sE$ is isomorphic to ${\sE_1} \oplus \cO(1^2)$, $\dim Y_{1}=4$ and $X$ is a del Pezzo manifold by Proposition~\ref{prop:decomposition}~\ref{prop:decomposition1}.

$\varphi _1$ is equidimensional.
Otherwise  there exists a jumping fiber of $\varphi _ {1}$.
Let $F'$ be a component of the jumping fiber with $\dim F' \geq 4$.
If $\dim F' =4$, then $F'$ is isomorphic to $\bP^{4}$ and $\cO_{\bP(\sE_1)}(1)|_{F'} \simeq \cO_{\bP^{4}}(1)$ by \cite[Theorem~2.1]{And95}.
Then by a similar argument to the Step~\ref{prop:decompositionstep2} of the proof of  Proposition~\ref{prop:decomposition}, we have $\sE_1|_{F '} \simeq \cO (2) \oplus \cO(1)$, which yields a contradiction to $\dim Y_{1}=4$.
Hence we have $\dim F' \geq 5$.
Let $F$ be a general fiber.
Then $\pi_1(F) \cap \pi_1(F') \neq \emptyset$ since $\rho _X =1$.
Then $\pi_1^{-1}(\pi_1(F)) \cap F' \neq \emptyset$, hence $\dim \pi_1^{-1}(\pi_1(F)) \cap F' \geq 2$ by the Serre inequality.
Thus we have $F \cap F' \neq \emptyset$ since $\varphi _{1,F}$ contracts only $\tF$.
This gives a contradiction and hence $\varphi _1$ is equidimensional.

By \cite[Lemma~2.12]{Fuj87}, $\varphi_{1}$ is a projective bundle and $Y_{1}$ is smooth.
Since $Y'_1 \simeq \bP^4$, we have $Y_{1} \simeq \bP ^4$ by \cite[Theorem~4.1]{Laz84}.

Now $\varphi ^ {*} H_{Y_{1}} = \xi _{\sE_1} - \pi_{1}^* H_X$, where $H_{Y_1}$ is the ample generator of $\Pic(Y_1)$.
Hence we have a surjection between vector bundles:
\[
\cO_X^{5} \to {\sE_1}(-1).
\]
This gives a finite surjective morphism $j \colon X \to {\Gr}(2,5)$ with $j^* \sS_{\Gr}^* = \sE_1(-1)$ and hence $j^* \cO(1) = \cO_X(1)$.
Thus $j$ is an isomorphism.

\begin{case}
$n=5$ and $\dim Y = 5$.
\end{case}

Then $\sE$ is isomorphic to ${\sE_1} \oplus \cO(1)$, $\dim Y_{1}=4$ and $X$ is a del Pezzo $5$-fold of $\rho _X =1$.

Let $F \simeq \bP^2$ be a general $\varphi _{1}$-fiber.
Then $\pi_{1} (F)$ does not meet $\bs \left|H_X\right|$ since $\dim \bs \left|H_X\right| \leq 0$ by \cite{Fuj82b}.
Note that $H_X|_F$ is linearly equivalent to the class of a line. 
Hence $\pi _{1} |_{F}$ is an isomorphism onto its image.
Since $F$ is a general fiber, $T_{\bP({\sE_1})}|_{F}$ is nef and hence $T_{X}|_F$ is also nef with the following diagram:
\[
0\to T_F \to T_X|_F \to N_{\pi_{1} (F)/X} \to 0.
\]
This implies that the normal bundle $N_{\pi_{1} (F)/X}$ is a nef vector bundle of rank three with $c_1(N_{\pi_{1} (F)/X})=1$.
Hence the normal bundle $N_{\pi_{1} (F)/X}$ is isomorphic to $\cO(1,0^2)$ or $T_{\bP^2}(-1) \oplus \cO$ by \cite{SW90c}.
Then, by the above exact sequence, the Chern classes $(c_1(T_X|_F),c_2(T_X|_F))$ are $(4,6)$ or $(4,7)$.
By using the classification of del Pezzo manifolds, we see that this is possible only if $X$ is a linear section of ${\Gr}(2,5)$ (cf.\ \cite{NO11}).

Set $\sF \coloneqq {\sE_1}(-1)$.
Then on $\bP(\sF)$ we have $\xi_\sF ^6 = \xi_ \sF^5 =0 $.
This is equivalent to 
$c_1(\sF)^4-3c_1(\sF)^2c_2(\sF)^3+c_2(\sF)^2=0$
and
$c_1(\sF)^3c_2(\sF)-2c_1(\sF)c_2(\sF)^2=0$.
Set $c_2(\sF) \coloneqq a \sigma_{2,0} + b\sigma _{1,1}$, where $\sigma_{2,0}$ and $\sigma _{1,1}$ are restrictions of Schubert cycles on $\Gr (2,5)$.
Then, since $c_1(\sF)=H_X$, we have
\begin{align}
5-9a-6b+2a^2+2ab+b^2=0,\\
3a+2b-4a^2-4ab-2b^2=0.
\end{align}
By solving these equations we have $(a,b)=(0,1)$.
In this case, the following holds:
\[
(c_1(\sF),c_2(\sF))=(c_1(\sS^*_X),c_2(\sS^*_X)),
\]
where $\sS^*_X$ is the restriction of the universal subbundle $\sS_{\Gr}^*$ on ${\Gr}(2,5)$.
By the Kodaira vanishing theorem on $\bP(\sF)$, we know that $\chi (\sF) = h^0(\sF)$ and this is equal to $h^0(\sS^*_X)=5$ by the Riemann-Roch theorem.
Now $h^0(H_{Y_{_1}})= h^0(\sF) =5$ and $H_{Y_{1}}^4= \xi_\sF^4.(\pi_{1}^*H_X)^2=1$.
Hence the delta-genus $\Delta(Y_{1}, H_{Y_{1}})$ is zero and $\deg H_{Y_{1}}=1$.
This implies $Y_{1} \simeq \bP^4$ by \cite{KO73,Fuj75,Fuj82b}.

Therefore, similarly to the above case, we have a finite surjective morphism $j \colon X \to {\Gr}(2,5)$ with $j^*\sS_{\Gr}^*=\sF$ and hence $j^*\cO(1)=\cO_X(1)$.
Thus $j$ is an isomorphism onto its image.

\begin{case}
$n=6$ and $\dim Y = 5$.
\end{case}

Then $\sE$ is isomorphic to ${\sE_1} \oplus \cO(1) $ and $\dim Y_{1} =4$.

In this case, $\varphi_{1}$ is equidimensional and hence a quadric fibration by \cite[Theorem~B]{ABW93}.
This can be seen as follows:
Assume that  there exists a jumping fiber of $\varphi _ {1}$.
Let $F'$ be a component of the jumping fiber with $\dim F' \geq 5$ and $F$ a general fiber.
Then $\pi_1(F) \cap \pi_1(F') \neq \emptyset$.
Hence $\pi_1^{-1}(\pi_1(F)) \cap F' \neq \emptyset$ of dimension $\geq 3$ by the Serre inequality.
Since the contraction defined by $\theta_{1,F}$ is a scroll with only one jumping fiber $\tF$, we have $F \cap F' \neq \emptyset$, which gives a contradiction.

Thus $\varphi_1$ is equidimensional and hence $Y_{1}$ is smooth by \cite[Theorem~B]{ABW93}.
Since $Y'_1 \simeq \bP^4$, we have $Y_{1} \simeq \bP ^4$ by \cite[Theorem~4.1]{Laz84}.

Similarly to the above cases, this gives a finite surjective morphism $j \colon X \to {\Gr}(2,5)$ with $j^*\sQ_{\Gr}={\sE_1}(-1)$ and $j^* \cO(1) = \cO_X(1)$, and hence $j$ is an isomorphism.

\begin{case}
$n=5$ and $\dim Y = 4$.
\end{case}

In this case, $\varphi$ is equidimensional by a similar argument as above,  and hence $\varphi$ is a quadric fibration and $Y$ is smooth by \cite[Theorem~B]{ABW93}.

Since the image of the contraction $\varphi _{F}$ is $\bP^4$, we have $Y\simeq \bP ^4$ by \cite{Laz84}.

Now $r_X=4$ by Proposition~\ref{prop:index} and hence $\xi - {\pi^*H_X} = \varphi^*H_Y$.
Therefore we have a surjection $\cO_X^5 \to \sE(-1)$.
This gives a finite surjective morphism $j \colon X \to {\Gr}(2,5)$ with $j^*\sQ_{\Gr}=\sE(-1)$ and $j^* \cO(1) = \cO_X(1)$. Therefore $j$ is an isomorphism onto its image.
This completes the proof.
\end{proof}

\bibliographystyle{amsplain}
\bibliography{../VB}

\end{document}